\numberwithin{equation}{section}
\newtheorem{Theorem}[equation]{Theorem}
\newtheorem{Proposition}[equation]{Proposition}
\newtheorem{Lemma}[equation]{Lemma}
\newtheorem{Corollary}[equation]{Corollary}
\newtheorem{Definition/Proposition}[equation]{Definition/Proposition}
\newtheorem{Example}[equation]{Example}
\newtheorem{Warning}[equation]{Warning}
\theoremstyle{definition}
\newtheorem{Remark}[equation]{Remark}
\newtheorem{eg}[equation]{Example}
\newtheorem{Definition}[equation]{Definition}
\newtheorem{Assumption}[equation]{Assumption}
\newcommand{\R}{\mathbb{R}}
\newcommand{\A}{\mathbb{A}}
\newcommand{\N}{{\mathbb{\Z}_{\geq 0}}}
\newcommand{\Z}{\mathbb{Z}}
\newcommand{\Q}{\mathbb{Q}}
\newcommand{\C}{\mathbb{C}}
\newcommand{\Ne}{{\mathbb{Z}_{\geq 1}}}
\newcommand{\I}{\mathcal{I}}
\newcommand{\T}{\mathcal{T}}
\newcommand{\conv}{\mathrm{conv}}
\newcommand{\HC}{\mathcal{H}}
\newcommand{\htt}{\mathrm{ht}}
\newcommand{\qp}{\pi}
\newcommand{\efface}[1]{}
\newcommand{\wb}{\mathbf{w}}
\newcommand{\Tb}{\mathbf{T}}
\renewcommand{\Tb}{T}
\newcommand{\Zb}{\mathbf{Z}}
\renewcommand{\Zb}{Z}
\newcommand{\RY}{\cR[\![Y^+]\!]}
\newcommand{\pr}{\mathrm{proj}}
\newcommand{\coeff}{\mathrm{coeff}}
\newcommand{\WAF}{\mathrm{WAF}}
\newcommand{\sA}{\mathscr{A}}
\newcommand{\bB}{\mathbf{B}}
\newcommand{\cF}{\mathcal{F}}
\newcommand{\bG}{\mathbf{G}}
\newcommand{\cH}{\mathcal{H}}
\newcommand{\cK}{\mathcal{K}}
\newcommand{\cM}{\mathcal{M}}
\newcommand{\cO}{\mathcal{O}}
\newcommand{\sP}{\mathscr{P}}
\newcommand{\cR}{\mathcal{R}}
\newcommand{\cS}{\mathcal{S}}
\newcommand{\sS}{\mathscr{S}}
\newcommand{\bT}{\mathbf{T}}
\newcommand{\cT}{\mathcal{T}}
\newcommand{\sT}{\mathscr{T}}
\newcommand{\cZ}{\mathcal{Z}}
\newcommand{\sZ}{\mathscr{Z}}
\newcommand{\fm}{\mathfrak{m}}
\newcommand{\bq}{\mathbf{q}}
\newcommand{\bu}{\mathbf{u}}
\newcommand{\bv}{\mathbf{v}}
\newcommand{\bw}{\mathbf{w}}
\renewcommand{\AA}{\mathbb{A}}
\newcommand{\CC}{\mathbb{C}}
\newcommand{\TT}{\mathbb{T}}
\newcommand{\Red}{\mathrm{Red}}
\renewcommand{\phi}{\varphi}
\renewcommand{\emptyset}{\varnothing}
\renewcommand{\tilde}[1]{\widetilde{#1}}
\def\Ddots{\mathinner{\mkern1mu\raise\p@
\vbox{\kern7\p@\hbox{.}}\mkern2mu
\raise4\p@\hbox{.}\mkern2mu\raise7\p@\hbox{.}\mkern1mu}}
\DeclareMathOperator{\supp}{Supp}
\newcommand{\suchthat}{\mid}
\newcommand{\textif}{\text{ if }}
\newcommand{\textand}{\text{ }\mathrm{and}\text{ }}
\newcommand{\textor}{\text{ or }}
\newcommand{\One}{\mathbbm{1}}
\newcommand{\kk}{\Bbbk}
\newcommand{\HBL}{\widehat{\HC}}
\newcommand{\HCW}{\HC_{W_0}}
\begin{document}

\title{Completed Iwahori-Hecke algebra for Kac-Moody groups over local fields}
\maketitle

\author{Auguste \textsc{Hébert}  \and Dinakar \textsc{Muthiah}}

\begin{abstract}
Let $G$ be a split Kac-Moody group over a non-Archimedean local field, and let $\cH$ be the Iwahori-Hecke algebra of $G$. In this paper, we construct a completed Iwahori-Hecke algebra $\widehat{\cH}$ and prove that it contains a large center isomorphic to Looijenga's invariant ring. By the Kac-Moody Satake isomorphism, Looijenga's invariant ring is isomorphic to the spherical Hecke algebra. Our completion is constructed by considering Iwahori biinvariant functions on $G$ satisfying a support condition that we call \emph{Weyl almost finite support}. We contrast our construction with another completion $\widetilde{\cH}$, defined early by Abdellatif and H\'ebert, which is defined algebraically via the Bernstein-Lusztig presentation and not in terms of functions on $G$.

\end{abstract}

\tableofcontents

\section{Introduction}

Let $G$ be a split Kac-Moody group over a non-Archimedean local field. Associated to $G$ is an Iwahori-Hecke algebra $\cH$ (the \emph{Kac-Moody affine Hecke algebra}), which was constructed by Braverman, Kazhdan, and Patnaik in the untwisted affine case \cite{braverman2014affine} and Bardy-Panse, Gaussent, and Rousseau in the general Kac-Moody case \cite{bardy2016iwahori}.

In this paper, we construct a completion of $\cH$ allowing certain infinite sums. Without such a completion, the center of the algebra is much smaller than one would naturally expect. In particular, one expects the center to be isomorphic to Looijenga's invariant ring \cite{looijenga1980invariant}, which involves infinite sums. Additionally, the Satake isomorphism, due to Braverman and Kazhdan in the untwisted affine case \cite{braverman2011spherical} and due to Gaussent and Rousseau in the general Kac-Moody case \cite{gaussent2014spherical}, says that Looijenga's invariant ring is isomorphic the spherical Hecke algebra.

In both Looijenga's invariant ring and the spherical Hecke algebra, one must consider infinite sums governed by a support condition known as \emph{almost-finite support}. Our goal in the present article is to construct a completion $\widehat{\cH}$ consisting of Iwahori biinvariant functions on $G$ satisfying a slightly more subtle support condition that we call \emph{Weyl almost finite support}. We further compute the center of $\widehat{\cH}$ and show that it is isomorphic to the Looijenga's invariant ring.

We mention that another completion of $\cH$ was constructed by Abdellatif and H\'ebert. This was defined only in terms of Bernstein-Lusztig generators, and not in terms of functions on the $G$ subject to a support condition. We compare the constructions in \S \ref{ss_comparison_HBL_tilde_cH}.

\subsection{The reductive case}
We first recall the setting of reductive groups over non-Archimedean fields. Let $\mathbf{G}$ be a split reductive group and $\cK$ be a non-Archimedean local field. Let $\bT$ be a split maximal torus of $\bG$ and $\bB$ be a Borel subgroup containing $\bT$. Let $\cK$ be a non-Archimedean local field, $\cO$ be its  ring of integers, $\fm$ be the maximal ideal of $\cO$ and $\Bbbk$ be the residual field $\cO/\fm$. Let $G=\mathbf{G}(\cK)$. The Iwahori subgroup $I$ of $G$ is the preimage of $\mathbf{B}(\mathbf{\kk})$ by the natural projection $\mathbf{G}(\cO)\twoheadrightarrow \mathbf{G}(\kk)$. The group $G$ is a unimodular locally compact group, so there is a unique Haar measure normalized by the condition that $I$ has measure one. We can define the Iwahori-Hecke algebra $\cH$ of $G$ to be the set of $I$-biinvariant functions from $G$ to $\C$, with compact support. The algebra structure on $\cH$ is defined by convolution with respect to the Haar measure.

Alternatively, the convolution can also be characterized without directly mentioning the Haar measure. For $g,g'\in G$, one can check that 
\begin{equation}\label{e_product_H_L}
 \One_{IgI}\cdot \One_{Ig'I}=\sum_{Ig''I\in I\backslash G/I} |IgI\cap g''I g'^{-1}I/I| \One_{Ig''I}.
\end{equation}
where for a subset $S \subseteq G$, $\One_{S}$ denotes the indicator function of the set $S$.
This characterization of the multiplication reformulation is essential when we define Iwahori-Hecke algebras in the Kac-Moody setting, since there is no Haar measure in that case. Let $Y$ be the cocharacter lattice of $\bT$, and let $W_0$ be the Weyl group of $\bG$ with respect to $\bT$. Let $\pi\in \fm$ be a uniformizer. For every $\lambda \in Y$, we define $\pi^\lambda$ to be the image of $\pi$ under the map $\cK^\times \rightarrow T$ induced by the cocharacter $\lambda$.

The Iwahori-Matsumoto decomposition is the following double coset decomposition:
\begin{equation}
 G=\bigsqcup_{\lambda\in Y, w\in W_0} I\pi^\lambda w I
\end{equation}
For each $\lambda\in Y$ and $w\in W_0$, write $\TT_{\pi^\lambda w}=\One_{I\pi^\lambda w I}$. These elements obviously form a basis of $\cH$.

\subsubsection{Bernstein-Lusztig elements}

For every $\lambda \in Y$. we have the Bernstein-Lusztig elements $\Theta^\lambda \in \cH$. These elements satisfy the multiplcation rule $\Theta^\lambda \cdot \Theta^\mu=\Theta^{\lambda+\mu}$, for $\lambda,\mu\in Y$. Additionally, for $\lambda$ dominant, $\Theta^\lambda = q^{-\htt(\lambda)} \TT_{\pi^\lambda}$ where $\htt(\lambda)$ denotes the height of $\lambda$. Moreover, the formula for dominant coweights and the multiplication rule completely characterize the Bernstein-Lusztig generators. Additionally, the algebra $\cH$ is generated by $\cH_{W_0}$, the Hecke algebra of the Coxeter group $W_0$, and the Bernstein-Lusztig generators. Let $\CC[Y]$ denote the group algebra of $Y$. The Bernstein-Lusztig elements give rise to an embedding $\CC[Y] \hookrightarrow \cH$. We will identify $\CC[Y]$ with its image in $\cH$. As a vector space, $\cH$ is a tensor product of $\cH_{W_0}$ and $\CC[Y]$.

One can prove that the center $\cZ(\cH)$ of $\cH$ is equal $\C[Y]^{W_0}$, the $W_0$-invariants in $\C[Y]$. Using the Satake isomorphism, one further deduces that $\cZ(\cH)$ is isomorphic to the spherical Hecke algebra of $G$.

\subsection{The Kac-Moody case}

Let $\bG$ be a split Kac-Moody group (functor) as defined by by Tits in \cite{tits1987uniqueness}, and let $G=\bG(\cK)$. We keep the same notation as above in the case of reductive groups. 

A difficulty in the Kac-Moody framework is the lack of an Iwahori-Matsumoto decomposition on the entirety of $G$.  Let $Y^{++}$ denote the set of dominant cocharacters.  Then define
\begin{equation}
  G^+=\bigsqcup_{\lambda\in Y^+,w\in W_0} I\qp^\lambda w I
  \end{equation}
  where $Y^+=W_0\cdot Y^{++}$ is the integral Tits cone. The set $G^+$ is a semi-group, and unless $\bG$ is a finite-type Kac-Moody group, it is a proper subset of $G$. The Iwahori-Hecke algebra $\cH$ of $G$ is defined as the set of functions on $G^+$, which are $I$-biinvariant and supported on finitely many $I$-double cosets. In particular, $\cH$ has a basis consisting of elements of the form $\TT_{\pi^\lambda w}$ where $\lambda \in Y^+$ and $w \in W_0$.

As in the reductive case, $\cH$ admits Bernstein-Lusztig elements denoted $\Theta^\lambda$ for $\lambda \in Y^+$. In particular, we obtain an embedding $\CC[Y^+] \hookrightarrow \cH$ where $\CC[Y^+]$ denotes the semi-group algebra of the semi-group $Y^+$.
  
By the same argument as in the reductive case, one compute that the center of $\cH$ is $\C[Y^+]^{W_0}$ (\cite[Lemma 4.31]{abdellatif2019completed}). However as $W_0$ is infinite in general, $\C[Y^+]^{W_0}$ is often very small. For example, in the affine case $\C[Y^+]^{W_0}$ is spanned by just the elements $\Theta^{n \delta}$, where $\delta$ is the minimal positive imaginary coroot. In particular, unlike in the reductive case, the center of $\cH$ is much smaller than spherical Hecke algebra.

\subsubsection{Support conditions and the Looijenga invariant ring}
\label{sec:Support-conditions-and-the-Looijenga-invariant-ring}

Let $E\subset Y$.  We say that $E$ is  \textbf{almost finite}  if there exists a finite set $J\subset Y$ such that $E\subset J-Q^\vee_+$, where $Q_+$ is the positive coroot cone. A typical example of an almost finite set is an orbit $W_0\cdot \lambda$, where $\lambda\in Y^+$. We define the Looijenga algebra $\C[\![Y^+]\!]$ as the set of formal series $\sum_{\lambda\in Y^+}x_\lambda \Theta^\lambda$ such that $(x_\lambda)\in \C^{Y^+}$ has almost finite support. Because series in $\C[\![Y^+]\!]$ are supported on the integral Tits cone, the group $W_0$ acts on $\C[\![Y^+]\!]$, and we can therefore define the {\bf Looijenga's invariant ring} to be $\C[\![Y^+]\!]^{W_0}$.

\subsubsection{Main theorem}
We call a subset $E$ of $Y^+$ {\bf Weyl almost finite} if $W_0\cdot E$ is almost finite.
We define $\widehat{\cH}$ to be the set of series $\sum_{\lambda\in Y^+, w \in W_0}  a_{\pi^\lambda w} \TT_{\pi^\lambda w}$ subject to the following support conditions. First, the set
\begin{align}
  \label{eq:9}
  \left\{ \lambda \in Y^+ \suchthat a_{\pi^\lambda w} \neq 0 \text{ for some } w \in W_0 \right\}
\end{align}
should be Weyl almost finite. Additionally, for each $\lambda \in Y^+$, the set
\begin{align}
  \label{eq:10}
  \left\{ w \in W_0 \suchthat a_{\pi^\lambda w } \neq 0 \right\}
\end{align} 
should be finite. In particular, $\widehat{\cH}$ is equal to the set of Iwahori-biinvariant functions on $G^+$ subject to the above support condition.

Our main result is the following (see Theorem~\ref{thmConvolution_summable_family}, Definition~\ref{d_convolution_product}, Theorem~\ref{t_center} and Theorem~\ref{t_T_presentation_HBL}).
\begin{Theorem}
  \label{thm-intro-main-theorem}
\begin{enumerate}
\item The convolution product $\cH$ naturally extends to a product on $\widehat{\cH}$ making $\widehat{\cH}$ into an associative algebra.
\item Every element of $\widehat{\cH}$ can be written uniquely as a series $\sum_{\lambda\in Y^+}  \Theta^\lambda h_\lambda$, with $(h_\lambda)\in (\HCW)^{Y^+}$ having Weyl almost finite support. Conversely every element of this form is an element of $\widehat{\cH}$.

\item The center $\cZ(\widehat{\cH})$ is isomorphic to Looijenga's invariant ring $\C[\![Y^+]\!]^{W_0}$. 
\end{enumerate}
\end{Theorem}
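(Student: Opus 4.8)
The plan is to leverage part (2), which gives a "Bernstein--Lusztig" normal form for elements of $\widehat{\cH}$ as series $\sum_{\lambda \in Y^+} \Theta^\lambda h_\lambda$ with $(h_\lambda) \in (\HCW)^{Y^+}$ of Weyl almost finite support. The first step is to establish that the embedding $\C[Y^+] \hookrightarrow \cH$ extends to an embedding $\C[\![Y^+]\!] \hookrightarrow \widehat{\cH}$; this is essentially immediate from part (2) by taking all $h_\lambda$ to be scalar multiples of $\TT_1 = \One_I$, since an almost finite subset of $Y^+$ is in particular Weyl almost finite only if the $W_0$-saturation is almost finite --- but series in $\C[\![Y^+]\!]$ are by definition supported on $Y^+$ with almost finite support, and $W_0 \cdot (\text{almost finite subset of } Y^+)$ need not be almost finite in general, so one must first check that the support of an element of $\C[\![Y^+]\!]$, being a union of finitely many sets of the form $J - Q^\vee_+$ intersected with $Y^+$, automatically has $W_0$-saturation almost finite. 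This is a combinatorial fact about the Tits cone (any almost finite set is contained in finitely many "downward cones", and saturating by $W_0$ keeps one inside a controlled region); I would cite or prove it as a lemma. Then $\C[\![Y^+]\!]^{W_0}$ sits inside $\widehat{\cH}$, and one checks it is central by continuity: central elements of $\cH$ are exactly $\C[Y^+]^{W_0}$, and both the commutator bracket and the relevant infinite sums are "continuous" for the natural topology in which $\widehat{\cH}$ is a completion of $\cH$, so a Weyl-almost-finitely-supported limit of central elements is central. Concretely, for $x = \sum_\mu x_\mu \Theta^\mu \in \C[\![Y^+]\!]^{W_0}$ and any $h \in \widehat{\cH}$, the bracket $[x,h]$ is computed term-by-term and each finite truncation of $x$ is central in $\cH$, forcing $[x,h]=0$.

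The second and main step is the reverse inclusion: $\cZ(\widehat{\cH}) \subseteq \C[\![Y^+]\!]^{W_0}$. Take $z = \sum_{\lambda \in Y^+} \Theta^\lambda h_\lambda \in \cZ(\widehat{\cH})$ in the normal form of part (2). The strategy is to show each $h_\lambda$ must be a scalar (a multiple of $\TT_1$), and that the resulting scalar family $(x_\lambda)$ is $W_0$-invariant. To extract information, I would commute $z$ against two families of elements: (i) the Bernstein--Lusztig elements $\Theta^\nu$ for $\nu \in Y^+$ (or better, against $\C[Y]$ when it embeds, using dominant and anti-dominant directions to move freely in $Y$), and (ii) the finite Hecke algebra $\HCW \subset \cH$. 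Commuting with $\HCW$: the relation $z \cdot T_s = T_s \cdot z$ for simple reflections $s$, combined with the Bernstein--Lusztig commutation relations (the "Bernstein relation" expressing $T_s \Theta^\lambda - \Theta^{s\lambda} T_s$ as a lower-order correction term), should force, degree by degree in $\lambda$ along the $Q^\vee_+$-filtration, that $h_\lambda \in \C \cdot \TT_1$ and that the leading scalars satisfy $x_{s\lambda} = x_\lambda$. This is exactly the argument that proves $\cZ(\cH) = \C[Y^+]^{W_0}$ in the finite-support setting (as in \cite[Lemma 4.31]{abdellatif2019completed}), now run inside the completion; the point is that the Bernstein relation only involves coweights differing by elements of $Q^\vee_+$, so it is compatible with the Weyl-almost-finite support condition and each "coefficient equation" involves only finitely many terms. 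One subtlety: to run the induction one orders $Y^+$ by the partial order $\mu \leq \lambda \iff \lambda - \mu \in Q^\vee_+$; Weyl almost finiteness guarantees the support is contained in finitely many cones $J - Q^\vee_+$, so there are maximal elements to start the induction, and the induction is well-founded downward.

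The main obstacle I anticipate is \emph{convergence and well-definedness bookkeeping} rather than any deep new idea: one must verify that when commuting an infinite series $z$ past $T_s$ or $\Theta^\nu$, the resulting doubly-infinite sum can be legitimately reorganized into the normal form of part (2), that the support conditions are preserved under these manipulations (this is where Weyl almost finiteness --- as opposed to plain almost finiteness --- is doing real work, cf. the contrast the authors draw with $\widetilde{\cH}$), and that "coefficient of $\TT_{\pi^\lambda w}$" is a continuous linear functional so that $[z,h]=0$ genuinely reduces to a system of scalar equations. A secondary obstacle is the purely combinatorial lemma that $W_0$-saturation of an almost-finite-support set supported in $Y^+$ remains almost finite, which is needed for $\C[\![Y^+]\!]^{W_0} \subseteq \widehat{\cH}$ to make sense; this should follow from standard properties of the Tits cone and the fact that $W_0 \cdot \lambda \subseteq \lambda - Q^\vee_+$ for $\lambda \in Y^{++}$, but it needs to be stated carefully. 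Once these are in place, combining the two inclusions gives the isomorphism $\cZ(\widehat{\cH}) \cong \C[\![Y^+]\!]^{W_0}$, and one checks it is a ring isomorphism (not just a bijection) since the multiplication on both sides is induced from that of $\cH$ and $\C[Y^+]$ respectively, which are compatible via the Bernstein--Lusztig embedding.
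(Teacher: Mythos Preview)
Your proposal has a genuine error in the easy inclusion $\C[\![Y^+]\!]^{W_0} \subseteq \cZ(\widehat{\cH})$: you claim that finite truncations of a $W_0$-invariant series $x$ are central in $\cH$, but this is false in the Kac-Moody setting. Since $W_0$-orbits in $Y^+$ are infinite, any finite truncation of $x$ destroys $W_0$-invariance and hence centrality (recall $\cZ(\cH) = \C[Y^+]^{W_0}$ is tiny precisely because $W_0$ is infinite). The paper instead argues directly: for $z \in \RY^{W_0}$ and each $i \in I_A$, split $z = x + y$ where $x$ is supported on $\ker(\alpha_i)$ and $y = \sum_{\alpha_i(\lambda)>0} a_\lambda(Z^\lambda + Z^{r_i(\lambda)})$; both pieces commute with $T_i$ termwise, and the summability machinery (Theorem~\ref{thmConvolution_summable_family}) then gives $zT_i = T_iz$. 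Your secondary concern about embedding $\C[\![Y^+]\!] \hookrightarrow \widehat{\cH}$ is also misplaced: the full Looijenga algebra does \emph{not} embed (Example~\ref{ex_Weyl_almost_finite_sets}(2) exhibits almost-finite subsets of $Y^+$ that are not Weyl almost finite), but the invariant subring $\C[\![Y^+]\!]^{W_0}$ embeds trivially since a $W_0$-stable almost-finite set is Weyl almost finite by definition.

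For the hard inclusion, your strategy---normal form $\sum_\lambda \Theta^\lambda h_\lambda$, commute with $T_s$, induct along $\leq_{Q^\vee}$---differs from the paper's and is less tractable than you suggest. The Bernstein relation produces correction terms supported on the entire discrete segment between $\lambda$ and $r_i(\lambda)$, so comparing coefficients at a $\leq_{Q^\vee}$-maximal $\mu$ of $\supp^Z(z)$ does not isolate $h_\mu$: many $\lambda < \mu$ with $\alpha_i(\lambda) < 0$ and $r_i(\lambda) \geq_{Q^\vee} \mu$ still contribute. The paper takes the dual route: write $z = \sum_{w \in W_0} T_w \theta_w$ (Proposition~\ref{propDescription_HBL_H_basis}, indexed by $W_0$ rather than $Y^+$) and commute with $Z^\lambda$ for $\lambda \in C^v_f \cap Y$ regular. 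The key input is the Bruhat-triangular relation $Z^\lambda T_w = T_w Z^{w^{-1}(\lambda)} + \sum_{v<w} T_v(\cdots)$ with controlled support (Lemma~\ref{lemCommutation_relation_Zlambda_Hw}); choosing $\mu$ maximal in $\bigcup_{v\geq w}\supp(\theta_v)$ and then $w'$ Bruhat-maximal among those with $\mu \in \supp(\theta_{w'})$ isolates a single term and forces $w^{-1}(\lambda) = \lambda$, hence $w' = 1$. The paper remarks explicitly that this argument is \emph{not} the one from \cite{abdellatif2019completed} you cite as your model, because one cannot form $Z^{-\mu} h Z^\mu$ inside $\widehat{\cH}$.
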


In the main text, we actually take Theorem \ref{thm-intro-main-theorem}(2) as our initial definition of $\widehat{\cH}$ and then prove the above definition as a non-trivial result (Theorem \ref{t_T_presentation_HBL})).

\subsection{Motivation from the study of the spherical Hecke algebra of $G$}

Our project started from the following question.

First we recall the case of reductive groups $G$. Let $K=\bG(\cO)$.  Then the spherical Hecke algebra $\cH_K$ consists of the functions from $G$ to $\C$, which are $K$-biinvariant and whose support is compact. As $I$ is contained in $K$, we have an embedding $\cH_K$ in $\cH$, which is non-unital, but respects the multiplication up to a scalar factor given by index of $I$ in $K$.

In the Kac-Moody case, then as we have mentioned above, the definition of $\cH_K$ requires one to allow infinite sums. A natural question arises: Can we embed $\cH_K$ in  $\cH$, or in some ``completion'' of $\cH$, as a set of functions?

At the level of vector spaces, this is is fairly straightforward. For $\tilde{\lambda}\in Y^{++}$, we have $K\pi^{\tilde{\lambda}} K=\bigsqcup_{\mu\in W_0\cdot\tilde{\lambda},w\in W_0} I \pi^\mu w I$ and thus we could try to embed $\cH_K$ in a completion of $\cH$ via  
\begin{equation}
\label{eq:spherical-expand-iwahori}
\One_{K\pi^{\tilde{\lambda}} K} \mapsto \sum_{\mu\in W_0\cdot\tilde{\lambda},w\in W_0}\TT_{\pi^\mu w}. 
\end{equation}
However, when we try to expand a product of two expressions of the form \ref{eq:spherical-expand-iwahori}, we obtain infinite coefficients. In particular, we do not know how to embed $\cH_K$ in a completed Iwahori-Hecke algebra in a way compatible with their relations as functions on $G^+$.

Nonetheless, in a forthcoming paper we introduce an $\cH-\cH_K$ bi-module $\widehat{\cM}$ consisting of the series $\sum_{\lambda\in Y^+} a_\lambda \One_{I\pi^\lambda K}$ with Weyl almost finite support. We extend the left action of $\cH$ on $\widehat{\cM}$ to a right action of $\widehat{\cH}$ on $\widehat{\cM}$. Using this bimodule we study the interplay between  $\widehat{\cH}$ and $\cH_K$. In particular, we directly obtain an isomorphism between $\cZ(\widehat{\cH)}$ and $\cH_K$ that does not make use of the Satake isomorphism. In fact, we can use our methods to obtain new proofs of the Macdonald formula for the Satake isomorphism obtained in \cite{braverman2016iwahori} and \cite{bardy2019macdonald}. 

\subsection{Conventions}
\label{subsec:intro-conventions}

There are two conventions regarding the torus action on $Y$. The first one, compatible with this introduction and \cite{braverman2016iwahori}, has $\pi^\lambda$ acting by  $\pi^\lambda.\mu=\mu+\lambda$ for $\lambda,\mu\in Y$. In the second one, used in \cite{bardy2016iwahori} and in the body of the text, $\pi^\lambda$ acts on $Y$ by $\pi^\lambda.\mu=\mu-\lambda$, for $\lambda,\mu\in Y$. This leads to two different Iwahori-Hecke algebras. The first, $\cH^+$ considered in \cite{braverman2016iwahori} and denoted $\cH$ in this introduction, corresponds to Iwahori-biiinvariant functions on $G^+$. The second, $\cH^-$ considered in \cite{bardy2016iwahori} and denoted $\cH$ in the body of the text, corresponds to Iwahori biinvariant functions on $G^{-}=\bigsqcup_{w\in W_0,\lambda\in Y^+}I\pi^{-\lambda}w I$\index{g@$G^{-}$}. For $\lambda\in Y^+,w\in  W_0$, we will follow the notation of \cite{bardy2016iwahori} and write $T_{\lambda\cdot w}$ for the indicator function $\One_{I\pi^{-\lambda}wI}$.

We can pass from $\cH^+$ to $\cH^-$ via an anti-isomorphism $\iota$ defined in Appendix \ref{sec:appendix}. Therefore it is easy to pass between the two conventions, and the main difference is a swap between left and right. In the reductive case, the two Hecke algebras agree so the distinction can be safely ignored. However, the distinction is very important in the Kac-Moody case. As explained in Subsection~\ref{ss_definitionr_right}, we cannot swap left and right factors in Theorem \ref{thm-intro-main-theorem} as such a swap leads to infinities in the structure coefficients.

\subsection{Acknowledgements}

D.M. was supported by JSPS KAKENHI grant number JP19K14495 and by the Engineering and Physical Sciences Research Council grant UKRI167 ``Geometry of Double Loop Groups''.
We thank Anna Pusk\'as and Manish Patnaik for stimulating conversations.

\section{Recollections and preliminaries on Iwahori-Hecke algebras of Kac-Moody groups}\label{s_IH_algebras}

\subsection{Basic notations}

If $E$ is a set, we denote by $\sP(E)$ its power set. If $E$ and $F$ are sets, $E^F$ denotes the set of families $(e_f)_{f\in F}$, where $e_f\in E$ for all $f\in F$. If $E$ is  a vector space, we denote by $E^{(F)}$ the set of families of $E^F$ whose support is finite.

\begin{Definition}
  If $\leq$ is a preorder on a set $E$ and $a,b\in E$, we set $]-\infty,a]_{\leq}=\{c\in E\mid c\leq a\}$,  $]-\infty,a[_{\leq}=(]-\infty,a]_{\leq})\setminus\{a\}$, $[a,b]_{\leq}=\{c\in E\mid a\leq c\leq b\}$, $[a,+\infty[_{\leq}=\{c\in E\mid c\geq a\}$, etc. \index{ @$[\cdot,\cdot]_{\leq}, ]\cdot,\cdot]_{\leq}, ]-\infty,\cdot]_{\leq},[\cdot,+\infty[_{\leq},\ldots$}
\end{Definition}

If $a,b\in \Z$, we write $\llbracket a,b\rrbracket$ the set $[a,b]\cap \Z$, $\llbracket a,b\llbracket$ the set $[a,b[\cap \Z$ etc.

\subsection{Kac-Moody root data and related objects}\label{ss_Standard_apartment}

\subsubsection{Kac-Moody root data}
A \textbf{ Kac-Moody matrix} (or { generalized Cartan matrix}) is a square matrix $A=(a_{i,j})_{i,j\in I_A}$ indexed by a finite set $I_A$, with integral coefficients, and such that :
\begin{enumerate}
\item[\tt $(i)$] $\forall \ i\in I_A,\ a_{i,i}=2$;

\item[\tt $(ii)$] $\forall \ (i,j)\in I_A^2, (i \neq j) \Rightarrow (a_{i,j}\leq 0)$;

\item[\tt $(iii)$] $\forall \ (i,j)\in I_A^2,\ (a_{i,j}=0) \Leftrightarrow (a_{j,i}=0$).
\end{enumerate}
A \textbf{Kac-Moody root data} is a $5$-tuple $\mathcal{S}=(A,X,Y,(\alpha_i)_{i\in I_A},(\alpha_i^\vee)_{i\in I_A})$\index{s@$\mathcal{S}$}\index{y@$Y$} made of a Kac-Moody matrix $A$ indexed by the finite set $I_A$, of two dual free $\Z$-modules $X$ and $Y$ of finite rank, and of a free family $(\alpha_i)_{i\in I_A}$ (respectively $(\alpha_i^\vee)_{i\in I_A}$) of elements in $X$ (resp. $Y$) called \textbf{simple roots} (resp. \textbf{simple coroots}) that satisfy $a_{i,j}=\alpha_j(\alpha_i^\vee)$ for all $i,j$ in $I_A$. Elements of $X$ (respectively of $Y$) are called \textbf{characters} (resp. \textbf{cocharacters}).

Fix such a Kac-Moody root data $\mathcal{S}=(A,X,Y,(\alpha_i)_{i\in I_A},(\alpha_i^\vee)_{i\in I_A})$ and set $\A=Y\otimes \R$\index{a@$\A$}. Each element of $X$ induces a linear form on $\A$, hence $X$ can be seen as a subset of the dual $\A^*$. In particular, the $\alpha_{i}$ (with $i \in I_A$) will be seen as linear forms on $\A$. This allows us to define, for any $i \in I_A$, an involution $r_{i}$ of $\A$ by setting $r_{i}(v) = v-\alpha_i(v)\alpha_i^\vee$ for any $v \in \A$.   One defines the \textbf{Weyl group of $\mathcal{S}$} as the subgroup $W_0$\index{w@$W_0$} of $\mathrm{GL}(\A)$ generated by $\{r_i\mid i\in I_A\}$. The pair $(W_0, \{r_i\mid i\in I_A\})$ is a Coxeter system, hence we can consider the length $\ell(w)$ with respect to $\{r_i\mid i\in I_A\}$ of any element $w$ of $W_0$.

\subsubsection{Root systems} The following formula defines an action of the Weyl group $W_0$ on $\A^{*}$:  
\[\displaystyle \forall \ x \in \A , w \in W_0 , \alpha \in \A^{*} , \ (w.\alpha)(x)= \alpha(w^{-1}.x).\]
Let $\Phi= \{w.\alpha_i\mid w\in W_0,i\in  I_A\}$\index{p@$\Phi,\Phi^\vee$} (resp. $\Phi^\vee=\{w.\alpha_i^\vee\mid w\in W_0, i\in I_A\}$) be the set of \textbf{real roots} (resp. \textbf{real coroots}). Then  $\Phi$ (resp. $\Phi^\vee$) is a subset of the \textbf{root lattice} $Q = \displaystyle \bigoplus_{i\in I_A}\Z\alpha_i$ (resp. \textbf{coroot lattice} $Q^\vee=\bigoplus_{i\in I_A}\Z\alpha_i^\vee$). By \cite[1.2.2 (2)]{kumar2002kac}, one has $\R \alpha^\vee\cap \Phi^\vee=\{\pm \alpha^\vee\}$ and $\R \alpha\cap \Phi=\{\pm \alpha\}$ for all $\alpha^\vee\in \Phi^\vee$ and $\alpha\in \Phi$.

Let $\htt:Q^\vee\rightarrow \Z$\index{h@$\htt$} be defined by \begin{equation}\label{e_definition_htt}
    \htt(\sum_{i\in I_A} x_i\alpha_i^\vee)=\sum_{i\in I_A} x_i,
\end{equation} for $(x_i)\in \Z^{I_A}$. We extend $\htt$ to a $\Q$-linear map $\htt:Y\otimes \Q\rightarrow \Q$. Although the restriction of $\htt$ to $Q^\vee$ is uniquely defined, there can be several possible choices for its extension to $Y\otimes \Q$ in general (when the Kac-Moody matrix $A$ is not invertible). Note that $\htt$ is denoted $\rho$ in \cite{braverman2016iwahori}.

For $E\subset \A$ is non-empty, we denote by $\conv(E)$\index{c@$\conv(E)$} the convex hull of $E$ in $\A$. We set $\conv_{Q^\vee}(E)=(E+Q^\vee)\cap \conv(E)$\index{c@$\conv_{Q^\vee}$}. 

For $\lambda,\mu\in Y$ such that $\lambda-\mu\in Q^\vee$, we set $[\lambda,\mu]_{Q^\vee}=[\lambda,\mu]\cap (\lambda+Q^\vee)$\index{ @$[\cdot,\cdot]_{Q^\vee}$, $]\cdot,\cdot]$, ...}, $]\lambda,\mu]_{Q^\vee}=]\lambda,\mu]\cap (\lambda+Q^\vee)$, $\ldots$ Note that if $\lambda\in Y$ and $w\in W_0$, then $w(\lambda)-\lambda\in Q^\vee$, by an immediate induction on $\ell(w)$.

\subsubsection{Fundamental chamber and Tits cone}
As in the reductive case, define the \textbf{fundamental chamber} as $C_{f}^{v}= \{v\in \A \ \vert \ \forall i\in {I_A},  \alpha_i(v)>0\}$\index{c@$C^v_f$}. 

 Let $\mathcal{T}= \displaystyle \bigcup_{w\in W_0} w.\overline{C^{v}_{f}}$\index{t@$\T$} be the \textbf{Tits cone}. This is a convex cone and $\overline{C^v_f}$ is a fundamental domain for the action of $W_0$ on $\cT$ (see \cite[1.4]{kumar2002kac}). In particular, if $x\in \cT$, then $W_0\cdot x\cap \overline{C^v_f}$ contains a unique element, that we denote $x^{++}$\index{x@$x^{++}$}.
 
 For $J\subset {I_A}$, set $F^v(J)=\{x\in \A\mid  \alpha_j(x)=0, \forall j\in J, \alpha_j(x)>0, \forall j\in {I_A}\setminus J\}$\index{f@$F^v(J)$}. A \textbf{positive vectorial face} (resp. \textbf{negative}) is a set of the form $w.F^v(J)$ (resp. $-w.F^v(J)$) for some $w\in W_0$ and $J\subset {I_A}$. Then by \cite[5.1 Th\'eor\`eme (ii)]{remy2002groupes}, the family of positive vectorial faces of $\A$ forms a partition of $\T$.

We set $Y^{++}=Y\cap\overline{C^v_f}$\index{y@$Y^+$, $Y^{++}$} and $Y^+=Y\cap \T$.

For $x,y\in \A$, we write $x\leq_{Q^\vee}y$\index{ @$\leq_{Q^\vee}$} (resp. $x\leq_{Q^\vee_\R}y$\index{ @$\leq_{Q^\vee_\R}$}) if $y-x\in Q^\vee_+=\bigoplus_{i\in {I_A}} \Z_{\geq 0}\alpha_i^\vee$\index{q@$Q^\vee_+$} (resp. $y-x\in Q^\vee_{\R,+}=\bigoplus_{i\in {I_A}} \R_{\geq 0}\alpha_i^\vee$\index{q@$Q^\vee_{\R,+}$}).

Note that if $a,b\in Y$, then $[a,b]_{\leq_{Q^\vee}}=\{c\in Y\mid a\leq_{Q^\vee} c\leq_{Q^\vee} b\}$ is finite since $(Y,\leq_{Q^\vee})$ is isomorphic to $(\Z^n,\leq)$ where $n$ is the rank of $Y$ and $\underline{a}=(a_1,\ldots,a_n)\leq \underline{b}=(b_1,\ldots,b_n)$ if and only if $a_i\leq b_i$ for all $i\in \llbracket 1,n\rrbracket$, for all $\underline{a},\underline{b}\in \Z^n$.

\subsubsection{Parabolic subgroups and minimal cosets representatives}\label{sss_parabolic_subgroups}

For $E$ a non-empty subset of $\A$, we denote by $W_{0,E}$\index{w@$W_{0,E}$} its fixator in $W_0$. If $\lambda\in \A$, we write $W_{0,\lambda}$ instead of $W_{0,\{\lambda\}}$. Let $\tilde{\lambda}\in \overline{C^v_f}$. Then by \cite[1.4.2 Proposition]{kumar2002kac}, $W_{0,\tilde{\lambda}}=\langle r_i\mid i\in {I_A}, \alpha_i(\tilde{\lambda})=0\rangle\subset W_0$\index{w@$W_{0,\tilde{\lambda}}$}. Subgroups of this form are called \textbf{standard parabolic subgroups}. In particular an element $\lambda\in \cT$ is \textbf{regular}, i.e  does not belong to any wall, if, and only we have $W_{0,\lambda}=\{1\}$. Moreover, if $J\subset I_A$, then the fixator of $F^v(J)$ is the fixator of any element $\lambda$ of $F^v(J)$. 

For $\tilde{\lambda}\in \overline{C^v_f}$, we denote by $W^{\tilde{\lambda}}_0$\index{w@$W^{\tilde{\lambda}}_0$} the set of minimal length representatives for left cosets $W_0/ W_{0,\tilde{\lambda}}$. By \cite[1.3.17]{kumar2002kac}, for every $w\in W^{\tilde{\lambda}}_0$ and $v\in W_{0,\tilde{\lambda}}$, we have $\ell(wv)=\ell(w)+\ell(v)$. In particular, for every $w\in W_0$, $wW_{0,\tilde{\lambda}}$ contains a unique minimal length element $\pr_{W^{\tilde{\lambda}}_0}(w)$\index{p@$\pr_{W^{\tilde{\lambda}}_0}$} of $W^{\tilde{\lambda}}_0$, and we call it the projection of $w$ on $W^{\tilde{\lambda}}_0$. Let $\lambda\in Y^{+}$. Let $v\in W_0$ be such that $\lambda=v(\lambda^{++})$. Denote by $w_\lambda$\index{w@$w_\lambda$} the projection of $v$ on $W_0^{\lambda^{++}}$. Then we have: \begin{equation}\label{e_definition_w_lambda}
    \lambda=w_\lambda(\lambda^{++})\text{ and }\ell(w_\lambda)<\min\{\ell(u)\mid u\in W_0\setminus\{w_\lambda\}, u(\lambda^{++})=\lambda\}\leq +\infty.
\end{equation}

By \cite[Proposition 3.12 c)]{kac1994infinite}  and \cite[Lemma 2.4]{gaussent2014spherical}, we have \begin{equation}\label{e_GR2.4}
    \lambda \leq_{Q^\vee} \lambda^{++}, x\leq_{Q^\vee_\R}x^{++}
\end{equation} for every $\lambda\in Y^{+}$ and $x\in \cT$. Moreover, by \cite[Lemma 4.8]{abdellatif2019completed}, if $\lambda\in Y$, then  we have: \begin{equation}\label{e_AH4.8}
    (W_0\cdot \lambda \text{ is upper bounded for $\leq_{Q^\vee})$ if and only if }(\lambda\in Y^+).
\end{equation}

\begin{Lemma}\label{l_Wlambda}
    Let $\tilde{\lambda}\in Y^{++}$, $w\in W^{\tilde{\lambda}}_0\setminus\{1\}$ and $i\in {I_A}$ be such that $r_iw<w$. Then $r_iw\in W^{\tilde{\lambda}}_0$.
\end{Lemma}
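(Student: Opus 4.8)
The statement is the standard fact that the set of minimal-length left coset representatives $W_0^{\tilde\lambda}$ for $W_0/W_{0,\tilde\lambda}$ is stable under passing to a left descent. I would prove it directly from the two ingredients already recalled in the text: the description of $W_0^{\tilde\lambda}$ as the set of minimal-length elements of the cosets $wW_{0,\tilde\lambda}$ (together with the projection $\pr_{W_0^{\tilde\lambda}}$), and the length-additivity property $\ell(uv)=\ell(u)+\ell(v)$ for $u\in W_0^{\tilde\lambda}$ and $v\in W_{0,\tilde\lambda}$ coming from \cite[1.3.17]{kumar2002kac}, plus the elementary subadditivity $\ell(r_i u)\le \ell(u)+1$.

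\textbf{Steps.} First, since $r_iw<w$ for a simple reflection $r_i$, we have $\ell(r_iw)=\ell(w)-1$. Set $u=\pr_{W_0^{\tilde\lambda}}(r_iw)\in W_0^{\tilde\lambda}$, so that $uW_{0,\tilde\lambda}=(r_iw)W_{0,\tilde\lambda}$ and hence $v:=u^{-1}r_iw\in W_{0,\tilde\lambda}$; by \cite[1.3.17]{kumar2002kac} this factorization is length-additive, i.e. $\ell(r_iw)=\ell(u)+\ell(v)$. Next, from $w=r_iuv$ and $v\in W_{0,\tilde\lambda}$ we get $wW_{0,\tilde\lambda}=(r_iu)W_{0,\tilde\lambda}$, so $r_iu$ lies in the same coset as $w$; since $w$ is the minimal-length element of that coset, $\ell(w)\le \ell(r_iu)$. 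Finally, combining $\ell(r_iu)\le \ell(u)+1$ with $\ell(u)=\ell(r_iw)-\ell(v)=\ell(w)-1-\ell(v)$ yields $\ell(w)\le \ell(w)-\ell(v)$, forcing $\ell(v)=0$, hence $v=1$. Therefore $r_iw=u\in W_0^{\tilde\lambda}$, as desired. (The case $w=1$ is vacuous since then no $r_i$ satisfies $r_iw<w$; the hypothesis $w\neq 1$ only serves to make the setup non-trivial, and $r_iw=1$ is allowed, the identity lying in $W_0^{\tilde\lambda}$.)

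\textbf{Main obstacle.} There is no serious difficulty here; the only point requiring a little care is to track the cosets correctly and to invoke the minimality of $w$ against the right element $r_iu$ of its coset, rather than against $r_iw$ itself. An alternative route, if one prefers, is to use the characterization of $W_0^{\tilde\lambda}$ by right ascents (no $j$ with $\alpha_j(\tilde\lambda)=0$ and $wr_j<w$) and argue by contradiction: if $r_iwr_j<r_iw$ for such a $j$, then on one hand $\ell(wr_j)=\ell(r_i(r_iw)r_j)\le \ell(r_iwr_j)+1\le \ell(r_iw)=\ell(w)-1$, contradicting $wr_j>w$. This needs the converse direction of \cite[1.3.17]{kumar2002kac}, which is standard but not spelled out in the text, so I would present the coset-minimality argument above as the primary proof.
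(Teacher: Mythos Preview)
Your proof is correct and follows essentially the same idea as the paper's: both compare the minimal coset representative of $r_iw$ (your $u$, the paper's $v$) with $w$ via left multiplication by $r_i$, and use minimality of $w$ in its coset to conclude. The paper phrases this as a short contradiction argument (if $r_iw\notin W_0^{\tilde\lambda}$, take $v$ with $v(\tilde\lambda)=r_iw(\tilde\lambda)$ and $\ell(v)<\ell(r_iw)$, then $r_iv$ lies in $w$'s coset with $\ell(r_iv)<\ell(w)$), while you unwind the same comparison directly via the length-additive factorization $r_iw=uv$ to force $v=1$.
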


\begin{proof}
    If $r_iw\notin W^{\tilde{\lambda}}_0$, then we can write $r_iw(\tilde{\lambda})=v(\tilde{\lambda})$, with $\ell(v)<\ell(w)$. But then $w(\tilde{\lambda})=r_iv(\tilde{\lambda})$ and $\ell(r_iv)<\ell(w)$: a contradiction since $w\in W^{\tilde{\lambda}}_0$.
\end{proof}

The \textbf{extended affine Weyl group} is $W^a=W_0\ltimes Y$\index{w@$W^a$}. We also set $W^+=W_0\ltimes Y^+$\index{w@$W^+$}, the \textbf{affine semi-group.}  We use bold face letters to denote the elements of $W^+$.

\subsection{Split Kac-Moody group à la Tits}\label{ss_KM_groups}

In this subsection, we briefly recall some facts about split Kac-Moody groups over local fields. We describe  the Iwahori-Hecke algebra associated to such a group in the next subsection. As the computation rules and properties we give are sufficient for our purpose, very few backgrounds on Kac-Moody groups are necessary to understand the paper. We refer to \cite[7]{marquis2018introduction} for more information on the subject.

Let $\cS$ be a Kac-Moody root data. The Kac-Moody functor $\mathbf{G}=\mathbf{G}_\cS$ is a functor  from the category of fields to the category of groups, where $\cS$ is a Kac-Moody data as defined in \S\ref{ss_Standard_apartment}. It was introduced by Tits in \cite{tits1987uniqueness}. The functor $\mathbf{G}$ contains a split maximal torus $\bT$, isomorhic to $\cF\mapsto (\cF^\times)^{\mathrm{rank}_\Z(X)}$. Moreover $\mathbf{G}$ contains  a subgroup functor $x_\alpha$ isomorphic to $\mathbb{G}_a$ for every $\alpha\in \Phi$. If $\cF$ is a field, we have $\mathbf{G}(\cF)=\langle \bT(\cF),x_\alpha(\cF)\mid \alpha\in \Phi\rangle$.

Let $\cK$ be a non-Archimedean local field. Let $N$ be the normalizer of $T$ in $G$. We have $W_0\simeq N/T$ and $W^a\simeq N/\bT(\cO)$, where $\cO$ is the ring of  integers of $\cF$. For each $\bw\in W^a$, we choose an element $n_\bw\in N$ corresponding to $\bw$ for the bijection above. Let $G=\bG(\cK)$. In order to study $G$, Gaussent and Rousseau defined a masure $\I$ on which $G$ acts in \cite{gaussent2008kac} and \cite{rousseau2016groupes}. We have $\I=\bigcup_{g\in G}g.\A$. The Iwahori subgroup  $I$ is the fixator of some alcove of $\I$, see \cite[2.4.1 1)]{bardy2025twin} for a description of this fixator. A sub-semigroup $G^{\geq 0}$\index{g@$G^{\geq 0}$} of $G$ is defined (and denoted $G^+$) in \cite[1.5]{bardy2016iwahori}. By the Iwahori-Matsumoto decomposition  (\cite[1.11]{bardy2016iwahori}), we have $G^{\geq 0}=\bigsqcup_{\bw\in W^+}I n_\bw I.$ With our conventions, $G^{\geq 0}$ corresponds to the semigroup $G^-$ of \cite[1.5]{bardy2016iwahori}. The ``$\geq 0$'' refers to the Tits preorder (that we do not introduce here) on the masure $\I$ of $G$.

\subsection{Definition of the Iwahori-Hecke algebras associated with $G$ and $\cS$}\label{ss_Def_IH_algebras}

We now introduce the Iwahori-Hecke algebra that we study. It will be associated with a triple $(\cS,\cR,q)$ where $\cS$ is a Kac-Moody root data, $\cR$ is a commutative ring, and  $q\in \cR$ is a fixed element.

We first introduce the Iwahori-Hecke algebra of $\mathbf{G}(\cK)$, where $\cR$ is $\Z$ and $q$ is the residue cardinality of $\cK$. For $\wb\in W^+$, we denote by $T_{\wb}$\index{t@$T_\wb$} the indicator function of $I n_\wb I$.  Then the \textbf{Iwahori-Hecke algebra of }$G$ with coefficients in $\Z$ is the free $\Z$-module with basis $(T_{\wb})_{\wb\in W^+}$ equipped with the product defined by
\begin{equation}\label{e_product_indicator_functions}  
T_{\mathbf{v}}T_{\wb}=\sum_{\mathbf{u}\in W^+} a^{\mathbf{u}}_{\mathbf{v},\mathbf{w}}(\cK)T_\bu,
\end{equation} where 
\begin{equation}\label{e_structure_coefficients}
a^{\mathbf{u}}_{\mathbf{v},\mathbf{w}}(\cK)=|(I n_\mathbf{v} I \cap n_\mathbf{u} I n_\mathbf{w}^{-1} I)/I|
\end{equation} for $\mathbf{u},\mathbf{v},\mathbf{w}\in W^+$. The fact that such an algebra is well defined is \cite[5]{braverman2016iwahori} if $\bG$ is affine and \cite[Theorem 2.4]{bardy2016iwahori} in general.

By \cite[Theorem 3.22]{muthiah2018iwahori}, \cite[6.7]{bardy2016iwahori} or \cite{bardy2021structure}, for all $\bu,\bv,\bw\in W^+$, there exists $a_{\bv,\bw}^\bu(\bq)\in \Z[\bq]$, where $\bq$ is an indeterminate, such that $a_{\bv,\bw}^{\bu}(\cK)$ is the evaluation of $a_{\bv,\bw}^{\bu}$ at $q$, for all non-Archimedean local fields $\cK$ with residue cardinality $q$.

Moreover $a_{\bv,\bw}^{\bu}(\bq)$ only depends on $\bu,\bv,\bw$ and on the root generating data $\cS$. We return to the case of general $\cR$ and $q\in \cR$. The free $\cR$-module $\cH=\bigoplus_{\bw\in W^+} \cR T_{\bw}$ equipped with the product
\begin{equation}
  \label{eq:11}
T_{\mathbf{v}}T_{\wb}=\sum_{\mathbf{u}\in W^+} a^{\mathbf{u}}_{\mathbf{v},\mathbf{w}}(\cK)T_\bu,
\end{equation}
for $\mathbf{u},\mathbf{v},\mathbf{w}\in W^+$ 
is an associative algebra, called the Iwahori-Hecke algebra of $\cS$ with coefficients in $\cR$ with parameter $q$.

\subsection{Bernstein-Lusztig presentation}\label{ss_Bernstein_Lusztig_presentation_IH_algebras}
If $\bv,\bw\in W^+$, it is in general complicated to explicitly compute $T_{\bv}T_{\bw}$ or even to get information on its support. In \eqref{e_structure_coefficients}, the coefficients $a^{\bu}_{\bv,\bw}(q)$  are defined in \cite{bardy2016iwahori} as the cardinalities of certain sets of alcoves in a masure associated to $\cS$. They  are hard to compute and thus the convolution product is poorly understood in the $T$-basis. We refer to \cite{muthiah2024pursuing} for some results and conjectures on this subject. 

A way to make computations in $\cH$ is to use its Bernstein-Lusztig basis, which we describe below. 

From now onward, we make the following assumptions on $q \in \cR$.

\begin{Assumption}\label{a_assumption_ring}
We assume $q\in \cR^\times$. Recall the definition of $\htt$ in \eqref{e_definition_htt}.     Let $N\in \Z_{>0}$ be such that $\htt(Y)=\tfrac{1}{N}\Z$.

We further assume the existence of an $N$-th root $q^{1/N}$ of $q$ in $\cR$.  We fix $q^{1/N}$ once and for all.  Let $\delta:Q^\vee \rightarrow \cR^\times$ be defined by $\delta(x)=q^{2\htt(x)}$, for $x\in Q^\vee$.  If we   set $\delta^{1/2}(\lambda)=(q^{1/N})^{N \htt(\lambda)}$\index{d@$\delta^{1/2}$}, for $\lambda\in Y$, we have $(\delta^{1/2}(\lambda))^2=\delta(\lambda)$ for all $\lambda\in Q^\vee$ and thus the assumption of \cite[5.7]{bardy2016iwahori} is satisfied. Note that depending on the Kac-Moody matrix, $N$ can be arbitrarily large, see  \cite[Remark 2.13]{muthiah2024pursuing}.
\end{Assumption}

\begin{Definition}\label{d_BL_algebra}
  Let $^{BL}\cH$ be the free $\cR$-module with basis $(Z^\lambda T_w)_{\lambda\in Y,w\in W_0}$. For short, one sets $T_{w} = Z^{0}T_{w}$ for $w \in W_0$ and $Z^{\lambda} = Z^{\lambda}T_{1}$\index{z@$Z^\lambda$} for $\lambda \in Y$. The \textbf{Bernstein-Lusztig-Hecke algebra} $^{BL}\cH_{\cR}$ is the module $^{BL}\cH_{\cR}$ equipped with the unique product that turns it into an associative algebra and satisfies the following \textbf{Bernstein-Lusztig relations}, for all $\lambda,\mu\in Y$, for all $i\in I_A$ and $w\in W_0$:

\begin{itemize}
\item (BL1) $Z^{\lambda}  T_{w} = Z^{\lambda}T_{w}$;
\item (BL2) $T_{i}T_{w}=\left\{\begin{aligned} & T_{r_iw} &\mathrm{\ if\ }\ell(r_iw)=\ell(w)+1\\ & (q-1) 
T_{w}+q T_{r_i w} &\mathrm{\ if\ }\ell(r_iw)=\ell(w)-1 \end{aligned}\right . \ $;
\item (BL3) $Z^{\lambda}  Z^{\mu} = Z^{\lambda + \mu}$;
\item (BL4) $T_{i}Z^{\lambda}-Z^{r_i(\lambda)}T_{i} =(q-1)\frac{Z^\lambda-Z^{r_i(\lambda)}}{1-Z^{-\alpha_i^\vee}}$.
\end{itemize}

The existence and uniqueness of such a product is standard (see e.g. \cite[Theorem 6.2]{bardy2016iwahori}). 
\end{Definition}

\begin{Definition}
    Let $\HCW=\bigoplus_{w\in W_0} \cR T_w$. By (BL2), this is the Hecke algebra of the Coxeter group $(W_0,\{r_i\mid i\in I_A\})$ associated with the coefficient $q$.
\end{Definition}

\begin{Remark}\label{r_explicit_BL_relations}
\begin{enumerate}
\item\label{itPolynomiality_Bernstein_Lusztig} Let $i\in {I_A}$ and $\lambda\in Y$. Then $\frac{Z^\lambda-Z^{r_i(\lambda)}}{1-Z^{-\alpha_i^\vee}}\in \cR[Y]$. Indeed, $\frac{1-Z^{-\alpha_i(\lambda)\alpha_i^\vee}}{1-Z^{-\alpha_i^\vee}}=\sum_{j=0}^{\alpha_i(\lambda)-1}Z^{-j\alpha_i^\vee}$   if $\alpha_i(\lambda)\geq 0$ and $\frac{1-Z^{-\alpha_i(\lambda)\alpha_i^\vee}}{1-Z^{-\alpha_i^\vee}}= -\sum_{j=1}^{-\alpha_i(\lambda)}Z^{j\alpha_i^\vee}$  if $\alpha_i(\lambda)\leq 0,$ and thus \begin{equation}\label{e_computation_BL}
\frac{Z^\lambda-Z^{r_i(\lambda)}}{1-Z^{-\alpha_i^\vee}}=Z^\lambda \frac{1-Z^{-\alpha_i(\lambda)\alpha_i^\vee}}{1-Z^{-\alpha_i^\vee}}=\left\{\begin{aligned}&\sum_{\mu\in [\lambda,r_i(\lambda)[_{Q^\vee}}Z^\mu  &\mathrm{\ if\ }\alpha_i(\lambda)\geq 0\\ & \sum_{\mu\in ]\lambda,r_i(\lambda)]_{Q^\vee}}-Z^\mu  &\mathrm{\ if\ }\alpha_i(\lambda)\leq 0.\end{aligned}\right.
\end{equation}

\item From (BL4) we deduce that for all $i\in {I_A}$, $\lambda\in Y$, \[Z^\lambda T_i-T_iZ^{r_i(\lambda)}=(q-1)\frac{Z^\lambda-Z^{r_i(\lambda)}}{1-Z^{-\alpha_i^\vee}}.\]

\item By (BL4), the family $(T_wZ^\lambda)_{w\in W_0, \lambda\in Y}$ is also a basis of $\cH_\cR$.

\end{enumerate}

\end{Remark}

Let $^{BL}\cH^+=\bigoplus_{\lambda\in Y^+,w\in W_0} \cR T_w Z^\lambda \subset {^{BL}\cH}$. Then as $Y^+$ is stabilized by $W_0$, and as $Y^+=Y\cap \cT$, where $\cT$ is a convex cone, we have that $^{BL}\cH_{\cS,\cR}^+$ is a sub-algebra of $^{BL}\cH_{\cS,\cR}$.

 By \cite[5]{bardy2016iwahori}, we have the following theorem.

\begin{Theorem}\label{t_BL_relations}

There is a unique isomorphism
  \begin{equation}
    \label{eq:12}
\cH\overset{\sim}{\rightarrow} {^{BL}\cH^+}
  \end{equation}
such that $T_w \mapsto T_w$ for all $w \in W_0$, and $T_{\lambda} \mapsto \delta^{1/2}(\lambda) Z^\lambda$ for all $\lambda \in Y^{++}$.

\end{Theorem}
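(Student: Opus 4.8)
The statement is due to Bardy-Panse, Gaussent and Rousseau \cite[\S5]{bardy2016iwahori}; here is the shape of the argument I would follow. The key preliminary point, which also gives \emph{uniqueness}, is that $\cH$ is generated as an $\cR$-algebra by $\{T_w\mid w\in W_0\}$ together with $\{T_\lambda\mid \lambda\in Y^{++}\}$ (writing $T_\lambda=T_{\lambda\cdot 1}$). Indeed, for $\bw=\lambda\cdot w\in W^+$ with $\lambda\in Y^{++}$ one has length additivity in $W^+$, which forces the relevant structure constant in \eqref{e_structure_coefficients} to be $1$ and the product $T_\lambda T_w$ to have no lower terms, so $T_{\bw}=T_\lambda T_w$; and for $\mu\in Y^+$ with $\mu=u(\mu^{++})$ and $\ell(u)$ minimal, $T_{\mu\cdot w}$ is produced from $T_{\mu^{++}\cdot 1}$ and the $T_v$ ($v\in W_0$) by induction on $\ell(u)$, peeling off simple reflections using the quadratic relation (BL2). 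Once this is known, any algebra homomorphism out of $\cH$ is determined by its values on these generators, hence unique.

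For \emph{existence} I would build the inverse map $\Phi\colon {^{BL}\cH^+}\to \cH$ by setting $\Phi(T_w)=T_w$ and $\Phi(Z^\lambda)=\widetilde Z^\lambda$ for suitable elements $\widetilde Z^\lambda\in\cH$, checking that the $T_w$ and the $\widetilde Z^\lambda$ satisfy (BL1)--(BL4) inside $\cH$ (so $\Phi$ is a well-defined algebra map), and then that $\Phi$ is bijective. The construction has three stages. (i) One shows $\HCW=\bigoplus_{w\in W_0}\cR T_w$ is the Hecke algebra of $(W_0,\{r_i\})$: the relations (BL2) involve one simple reflection at a time and reduce to the rank-one situation inside the masure $\I$, which is that of an ordinary building over $\kk$, and the braid relations among the $T_w$ follow likewise from the local building structure at a vertex. (ii) For $\tilde\lambda,\tilde\mu\in Y^{++}$ length is additive on the dominant cone, so $T_{\tilde\lambda}T_{\tilde\mu}=T_{\tilde\lambda+\tilde\mu}$; setting $\widetilde Z^{\tilde\lambda}=\delta^{1/2}(\tilde\lambda)^{-1}T_{\tilde\lambda}$ gives commuting elements satisfying (BL3) on $Y^{++}$. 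One then extends $\widetilde Z^\lambda$ to all of $Y^+$ by a rank-one recursion along $W_0$-orbits, in the spirit of Lusztig's treatment of the reductive case: whenever $\alpha_i(\lambda)\geq 0$, use (BL4) — whose right-hand side is, by Remark~\ref{r_explicit_BL_relations}, a finite $\cR$-combination of $Z^\mu$ with $\mu\in[\lambda,r_i(\lambda)]_{Q^\vee}$, a finite interval since $(Y,\leq_{Q^\vee})\cong(\Z^n,\leq)$ — to define $\widetilde Z^{r_i(\lambda)}$ (using invertibility of $T_i$), and check independence of the chosen chain of simple reflections, a braid-type verification; one must also check that the resulting $\widetilde Z^\lambda$ actually lie in $\cH$, i.e.\ are supported on $G^{\geq 0}$, and satisfy all of (BL1)--(BL4). (iii) $\Phi$ is onto since its image contains the generating set found above; for injectivity one establishes triangularity: order $Y^+\times W_0$ by first comparing dominant representatives $\lambda^{++}$ via $\leq_{Q^\vee}$ (refined by $\ell(w_\lambda)$), then the $W_0$-parts by Bruhat order; then $\Phi(T_wZ^\lambda)$ equals the expected $T_{\bu}$ up to a unit (a power of $q$, invertible by Assumption~\ref{a_assumption_ring}) plus strictly lower terms. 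Hence $\Phi$ carries the standard basis of ${^{BL}\cH^+}$ to an $\cR$-basis of $\cH$, so it is an isomorphism, and its inverse is the asserted map.

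The crux, and the step I expect to be the main obstacle, is stage (ii): extending the translation elements off the dominant cone while preserving the cross relation (BL4), and proving the extension is consistent. In the reductive case this is dispatched by Lusztig's lemma together with manipulations using invertibility of the $T_\lambda$; in the Kac-Moody setting the $T_\lambda$ are not invertible in $\cH$, and the needed identities among structure constants are genuinely geometric, established in \cite{bardy2016iwahori} by counting alcoves in the masure. The finiteness of the $Q^\vee$-intervals recorded in \S\ref{ss_Standard_apartment} is precisely what keeps every sum occurring in the recursion finite; checking that the recursively defined $\widetilde Z^\lambda$ land in $\cH$ and that the triangularity in stage (iii) is exact (leading coefficients being units) are the remaining technical points.
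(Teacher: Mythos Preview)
The paper does not give its own proof of this theorem: it simply records the statement and cites \cite[5]{bardy2016iwahori}. Your sketch is a reasonable outline of the argument carried out there, and the overall shape---generation of $\cH$ by $\HCW$ and the dominant $T_\lambda$, construction of the translation elements, and a triangularity argument---is correct in spirit.

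A couple of minor comments on the sketch itself. First, the phrase ``length additivity in $W^+$'' is a bit loose: there is no Coxeter length on $W^+$ in the Kac-Moody setting, so the fact that $T_{\lambda\cdot w}=T_\lambda T_w$ for $\lambda\in Y^{++}$ (and more generally the conjugation formula $T_\mu=T_u T_{\mu^{++}} T_u^{-1}$ used later in the paper, see Remark~\ref{r_invertibility_Z_T}) is established in \cite{bardy2016iwahori} directly from the masure combinatorics, not from a length function. Second, in \cite{bardy2016iwahori} the map is actually built in the forward direction $\cH\to{^{BL}\cH^+}$ rather than via your inverse $\Phi$: one first proves the needed product formulas among the $T_\bw$ inside $\cH$ (this is the geometric step you flag as the crux), and then observes they match the Bernstein--Lusztig relations. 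Your inverse-map strategy would also work, but the delicate point you identify---showing the recursively defined $\widetilde Z^\lambda$ land in $\cH$ rather than only in a formal completion---is precisely what forces one back to the masure computations anyway.
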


From now on, we identify $\cH$ and its image in ${{^{BL}\cH}^+}$ and we drop the notation ${^{BL}\cH^+}$. Note that when $G$ is reductive, we recover the usual Iwahori-Hecke algebra of $G$, since $Y\cap \T=Y$.

\begin{Remark}\label{r_invertibility_Z_T}
    Let $\lambda\in Y^+$. Then $Z^\lambda$ and $T_\lambda$ are invertible in  ${^{BL}}\cH$. Indeed,  $Z^\lambda Z^{-\lambda}=1$. By \cite[Corollary 4.3]{bardy2016iwahori}, if $w\in W_0$ is such that $\lambda=w(\lambda^{++})$, then we have $T_\lambda=T_w T_{\lambda^{++}} T_w^{-1}$ and by Theorem~\ref{t_BL_relations}, $T_{\lambda^{++}}\in \cR^\times Z^{\lambda^{++}}$, which proves that $T_\lambda$ is invertible in ${^{BL}}\cH$. 
\end{Remark}

\subsection{Left and right actions of $\HC_{W_0}$ on $\HC$ }\label{ss_left_right_actions_HCW_HC}

Let $\bw\in W^+$ and $i\in {I_A}$. Then explicit formulas for $T_{\bw}T_i$ and $T_iT_{\bw}$ are given in \cite[4]{bardy2016iwahori} and \cite[3]{muthiah2018iwahori}. We use them to study  $\cH$ as a left and right module over  $\cH_{W_0}$.  Note that we use the conventions of \cite{bardy2016iwahori} rather than the ones of \cite{muthiah2018iwahori}. As they slightly differ, we need to use the dictionary of the appendix to pass from  \cite{muthiah2018iwahori} results to ours. Roughly speaking, multiplication on the right in \cite{muthiah2018iwahori} setting corresponds to a multiplication on the left here.

For $i\in {I_A}$, we have \begin{equation}\label{e_inverse_T_i}
    T_i^{-1}=q^{-1}T_i-(1-q^{-1}).
\end{equation}

Let $\bw=\lambda\cdot w\in W^+$.  By \cite[Proposition 4.1]{bardy2016iwahori} or \cite[3]{muthiah2018iwahori}, we have the Iwahori-Matsumoto relations: \begin{align}
    T_{r_i\bw}&=\begin{cases}T_iT_{\bw} &\text{ if }\alpha_i(\lambda)>0 \text{ or }\alpha_i(\lambda)=0 \text{ and }\ell(r_iw)>\ell(w) \\ T_i^{-1}T_{\bw} \ &\text{ otherwise }   
    \end{cases}\label{e_IM_relations_l}\\
     T_{\bw r_i}&=\begin{cases}T_{\bw}T_i &\text{ if }w(\alpha_i)(\lambda)<0 \text{ or }\ell(wr_i)>\ell(w)  \\ T_{\bw}T_i^{-1} \ &\text{ otherwise}   \label{e_IM_relations_r}
    .\end{cases}
\end{align}

If $E$ is a subset of $Y^+$, one sets $\HC_E=\bigoplus_{\lambda\in E} \HC_{W_0}\Tb_{\lambda}$\index{h@$\cH_{E}$}.

\begin{Proposition}\label{p_HCW_bimodule}
\begin{enumerate}
\item Let $\lambda\in Y^+$. Then $\HCW\Tb_{\lambda}=\bigoplus_{w\in W_0} \cR \Tb_{w(\lambda)\cdot w}$ and  $T_\lambda\cH_{W_0}=\bigoplus_{w\in W_0} \cR T_{\lambda.w}$.

\item We have $\HC=\bigoplus_{\lambda\in Y^+}\HC_{W_0}\Tb_{\lambda}=\bigoplus_{\lambda\in Y^+} T_\lambda\cH_{W_0}$.

\item Let  $E\subset Y^+$. If $E$ is stable under the action of $W_0$, then $\cH_{E}=\bigoplus_{\lambda\in E} \cH_{W_0} T_\lambda$ is a two-sided $\cH_{W_0}$-submodule of $\cH$.

\end{enumerate}  
\end{Proposition}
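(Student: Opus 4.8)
The plan is to reduce all three assertions to a single triangularity statement in the $T$-basis, proved by induction on length via the Iwahori--Matsumoto relations \eqref{e_IM_relations_l}, \eqref{e_IM_relations_r} and the formula \eqref{e_inverse_T_i} for $T_i^{-1}$. First I would record the bookkeeping inside $W^+ = W_0\ltimes Y^+$: for $\mu\in Y^+$, $v\in W_0$ and $i\in I_A$ one has $r_i\cdot(\mu\cdot v) = r_i(\mu)\cdot(r_iv)$ and $(\mu\cdot v)\cdot r_i = \mu\cdot(vr_i)$, and since $Y^+$ is $W_0$-stable the maps $(\lambda,w)\mapsto w(\lambda)\cdot w$ and $(\lambda,w)\mapsto\lambda\cdot w$ are both bijections $Y^+\times W_0\to W^+$ (one reads off the $W_0$-component directly and then solves for $\lambda$). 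In particular $\{T_{w(\lambda)\cdot w}\}$ and $\{T_{\lambda\cdot w}\}$ are just relabellings of the standard basis, each of $\bigoplus_{w\in W_0}\cR T_{w(\lambda)\cdot w}$ and $\bigoplus_{w\in W_0}\cR T_{\lambda\cdot w}$ is a free direct summand of $\cH$, and their unions over $\lambda\in Y^+$ are all of $\cH$.

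The core step is the claim that for $\lambda\in Y^+$ and $w\in W_0$,
\[T_wT_\lambda = \sum_{v\leq w}c_{v,w}\,T_{v(\lambda)\cdot v}, \qquad T_\lambda T_w = \sum_{v\leq w}d_{v,w}\,T_{\lambda\cdot v},\]
with $c_{v,w},d_{v,w}\in\cR$, the sums taken over $v$ below $w$ in the Bruhat order, and $c_{w,w},d_{w,w}\in\cR^\times$. I would prove this by induction on $\ell(w)$. Writing $w = r_iw'$ with $\ell(w) = \ell(w')+1$, expand $T_wT_\lambda = T_i\big(T_{w'}T_\lambda\big)$ and apply \eqref{e_IM_relations_l} to each $T_iT_{v'(\lambda)\cdot v'}$ occurring in the inductive expression for $T_{w'}T_\lambda$: by \eqref{e_inverse_T_i}, this product is an $\cR$-combination of $T_{(r_iv')(\lambda)\cdot r_iv'}$ and $T_{v'(\lambda)\cdot v'}$, with coefficient $1$ or $q$ on the first term. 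The indices stay inside $\{v(\lambda)\cdot v : v\leq w\}$ because $v'\leq w'<w$, so $v'\leq w$, together with the standard Coxeter fact that $r_iw<w$ and $v'\leq w$ force $r_iv'\leq w$. Finally, the only contribution to $T_{w(\lambda)\cdot w} = T_{(r_iw')(\lambda)\cdot r_iw'}$ comes from the term $v' = w'$ (no smaller $v'$ yields that index), so its coefficient is $c_{w',w'}$ times $1$ or $q$, a unit by Assumption~\ref{a_assumption_ring}. The right-handed identity is the same argument run through \eqref{e_IM_relations_r}, where the $Y^+$-component is left untouched and the induction is simply on $\ell(w)$.

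Granting the claim, part~(1) follows: on each finite Bruhat interval $[1,w]$ the transition between $\{T_vT_\lambda\}_{v\leq w}$ and $\{T_{v(\lambda)\cdot v}\}_{v\leq w}$ is triangular with invertible diagonal entries, hence invertible over $\cR$, giving $\HCW T_\lambda = \bigoplus_{w\in W_0}\cR T_wT_\lambda = \bigoplus_{w\in W_0}\cR T_{w(\lambda)\cdot w}$, and symmetrically $T_\lambda\HCW = \bigoplus_{w\in W_0}\cR T_{\lambda\cdot w}$. Part~(2) is then immediate: using the bijections of the first paragraph, one groups the $T$-basis by its $\lambda$-coordinate and applies part~(1) to get $\cH = \bigoplus_{\lambda\in Y^+}\HCW T_\lambda = \bigoplus_{\lambda\in Y^+}T_\lambda\HCW$. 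For part~(3), by part~(1) we have $\cH_E = \bigoplus_{\lambda\in E}\HCW T_\lambda = \bigoplus_{\nu\in E,\,v\in W_0}\cR T_{\nu\cdot v}$; left $\HCW$-stability is clear since $\HCW$ is a subalgebra, and for right $\HCW$-stability one notes via \eqref{e_IM_relations_r} and \eqref{e_inverse_T_i} that $T_{\nu\cdot v}T_i\in\cR T_{\nu\cdot vr_i}+\cR T_{\nu\cdot v}$, whose $Y^+$-components are both $\nu\in E$, hence it lies in $\cH_E$ (equivalently \eqref{e_IM_relations_l} gives $T_iT_{\nu\cdot v}\in\cR T_{r_i(\nu)\cdot r_iv}+\cR T_{\nu\cdot v}\subseteq\cH_E$ since $r_i(\nu)\in E$). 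The main obstacle is the inductive triangularity claim itself --- keeping straight how the $Y^+$- and $W_0$-components transform under left versus right multiplication and confirming the diagonal coefficient stays a unit; everything else is relabelling.
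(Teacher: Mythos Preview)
Your proof is correct and uses the same core mechanism as the paper --- induction on $\ell(w)$ via the Iwahori--Matsumoto relations \eqref{e_IM_relations_l}, \eqref{e_IM_relations_r} and the formula \eqref{e_inverse_T_i}. The difference is only in packaging: you prove a refined triangularity statement $T_wT_\lambda = \sum_{v\leq w} c_{v,w} T_{v(\lambda)\cdot v}$ with $c_{w,w}\in\cR^\times$, which requires the Bruhat lifting property to control the support and a coefficient chase to verify the diagonal is a unit. The paper bypasses both by proving the two containments separately: the inclusion $\HCW T_\lambda \subseteq \bigoplus_w \cR T_{w(\lambda)\cdot w}$ comes from \eqref{e_left_multiplication_Ti} exactly as you do it, but for the reverse inclusion the paper simply observes that $T_{r_iv(\lambda)\cdot r_iv}$ equals either $T_i T_{v(\lambda)\cdot v}$ or $T_i^{-1} T_{v(\lambda)\cdot v}$, hence lies in $\HCW T_{v(\lambda)\cdot v}$ regardless, and inducts on $\ell(v)$. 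This avoids Bruhat combinatorics and any tracking of coefficients. Your route buys a sharper intermediate statement (the triangular change-of-basis matrix), which is sometimes useful elsewhere; the paper's route is shorter for the result at hand.
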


\begin{proof} 
Let $v\in W_0$ and $i\in {I_A}$. By \eqref{e_inverse_T_i}, \eqref{e_IM_relations_l} and \eqref{e_IM_relations_r}, we have: \begin{equation}\label{e_left_multiplication_Ti}\Tb_i\Tb_{v(\lambda)\cdot v}\in \cR \Tb_{v(\lambda)\cdot  v}\oplus \cR \Tb_{r_iv(\lambda)\cdot r_iv}\subset \bigoplus_{w\in W_0} \cR \Tb_{w(\lambda)\cdot w},\end{equation} \begin{equation}\label{e_right_multiplication_T_i}
    T_{\lambda \cdot v}T_i\in \cR T_{\lambda\cdot v}\oplus \cR T_{\lambda\cdot vr_i}\subset \bigoplus_{w\in W_0} \cR T_{\lambda\cdot w},
\end{equation} \[\Tb_{r_iv(\lambda)\cdot r_iv}\in \{\Tb_i\Tb_{v(\lambda)\cdot v},\Tb_i^{-1}\Tb_{v(\lambda)\cdot v}\}\subset \cH_{W_0}T_{v(\lambda)\cdot v}\text{ and }T_{\lambda\cdot vr_i}\in \{T_{\lambda\cdot v}T_i,T_{\lambda\cdot v}T_i^{-1}\}\subset T_{\lambda\cdot v}\cH_{W_0}.\]  By induction on $\ell(v)$, we deduce (1) and then (2). 

Let now $E\subset Y^+$ be a $W_0$-stable subset of $Y^+$. In order to prove (3), it suffices to prove that $\cH_E$ is stable by right multiplication by $\cH_{W_0}$. As the algebra $\cH_{W_0}$ is generated the $T_i$, $i\in {I_A}$, it suffices to prove that $\cH_{E}$ is stable by right multiplication by $T_i$, for $i\in {I_A}$. Let $\lambda\in E$ and $i\in {I_A}$. Then by \eqref{e_right_multiplication_T_i} and \eqref{e_left_multiplication_Ti}, we have \[T_\lambda T_i\in \cR T_\lambda\oplus \cR T_{\lambda\cdot r_i}\subset \cR T_\lambda\oplus  \cH_{W_0} T_{r_i(\lambda)}\subset \bigoplus_{\mu\in W_0\cdot \lambda}\cH_{W_0} T_\mu\subset \cH_{E}.\]

\end{proof}

\begin{Definition}\label{d_Z_T_support}

We have $\cH=\bigoplus_{\lambda\in Y^+} \HCW   T_\lambda=\bigoplus_{\lambda\in Y^+} \HCW Z^\lambda$. Let $x\in \cH$. Write $x=\sum_{\lambda\in Y^+} h^T_\lambda T_\lambda=\sum_{\lambda\in Y^+} h^Z_\lambda Z^\lambda$, with $h^T_\lambda,h^Z_\lambda\in \HCW$, for $\lambda\in Y^+$. For $\lambda\in Y^{+}$, we set $\pr^T_\lambda(h)=h_\lambda^T T_\lambda$\index{p@$\pr^T_\lambda$} and  $\pr^Z_\lambda(h)=h_\lambda^Z Z^\lambda$\index{p@$\pr^Z_\lambda$}. We  set $\coeff^T_\lambda(h)=h^T_\lambda$\index{c@$\coeff^T_\lambda$} and $\coeff^Z_\lambda(h)=h^Z_\lambda$\index{c@$\coeff^Z_\lambda$}. This is well defined by Remark~\ref{r_invertibility_Z_T}. Note that we have $\pr_\lambda^T(h)=\coeff^T_\lambda(h)T_\lambda$ and $\pr_\lambda^Z(h)=\coeff^Z_\lambda(h)Z^\lambda$ for $\lambda\in Y^+$.

We then set $\supp^Z(x)=\{\lambda\in Y^+\mid \pr_\lambda^Z(x)\neq 0\}$ and $\supp^T(x)=\{\lambda\in Y^+\mid \pr_\lambda^T(x)\neq 0\}$\index{s@$\supp^Z,\supp^T$}. These are finite sets, which we call the \textbf{$Z$-support }and the \textbf{$T$-support} of $x$ respectively.
\end{Definition}

\section{From the $Z$-basis to the $T$-basis and vice-versa}\label{s_Z_basis_to_T_basis}

\subsection{Study of the support of $Z^\lambda T_w$ and $Z^\lambda T_{w}^{-1}$, for $\lambda\in Y^+$ and $w\in W_0$}\label{ss_study_support}

We now define inductively subsets $\sT_w(E),\overline{\sT_w}(E),\widetilde{\sT_w}(E)$ and $\widehat{\sT}_w(E)$ of $Y^+$, for $E\subset Y^+$ and $w\in W_0$ that enable to control the support of $Z^\lambda T_w$ and $Z^\lambda T_{w}^{-1}$, for $w\in W_0$ and $\lambda\in Y^+$. These sets will more generally be useful to obtain information on the support of elements of $\cH$.

\begin{Definition}\label{d_sT}

Let $\underline{W}^v={I_A}^{(\N)}$ be the set of finite words $(r_{i_1},\ldots,r_{i_k})$ such that $k\in {\Z_{\geq 0}}$ and $i_1,\ldots,i_k\in {I_A}$. If $w\in W_0$, we denote by $\Red(w)$\index{r@$\Red$} the set reduced decompositions of $w$, i.e the set of elements $(r_{i_1},\ldots,r_{i_k})\in \underline{W}^v$ such that $k=\ell(w)$ and $w=r_{i_1}\ldots r_{i_k}$.  For $\underline{m}\in \underline{{W_0}}$,  the \textbf{length} $\ell(\underline{m})$ of $\underline{m}$ is the number of terms of $\underline{m}$.

Let $\lambda\in Y$ and $i\in {I_A}$. One sets \[\sT_i(\lambda)=\begin{cases}\{r_i(\lambda)\} &\text{ if }\alpha_i(\lambda)\geq 0 \\ \{\lambda,r_i(\lambda)\} &\text{ if }\alpha_i(\lambda)<0   
    \end{cases}, \overline{\sT_i}(\lambda)=\sT_i(\lambda)\cup]\lambda,r_i(\lambda)[_{Q^\vee}=\begin{cases}]\lambda,r_i(\lambda)]_{Q^\vee} &\text{ if }\alpha_i(\lambda)\geq 0 \\ [\lambda,r_i(\lambda)]_{Q^\vee} &\text{ if }\alpha_i(\lambda)<0   
    \end{cases},\]  \[\tilde{\sT_i}(\lambda)=\begin{cases}[\lambda,r_i(\lambda)]_{Q^\vee}&\text{ if }\alpha_i(\lambda)\geq 0 \\ ]\lambda,r_i(\lambda)]_{Q^\vee} &\text{ if }\alpha_i(\lambda)<0   
    \end{cases} \text{ and } \widehat{\sT_i}(\lambda)=[\lambda,r_i(\lambda)]_{Q^\vee}.\]

 We write $\overrightarrow{\cdot}\in \{\emptyset, \overline{\cdot}, \widetilde{\cdot},\widehat{\cdot}\}$ to mean that $\overrightarrow{}$ denotes either no symbol or one of the three remaining symbols, which will be used as an overset symbol.

  If $E$ is a set and $\overrightarrow{\cdot}\in \{\emptyset, \overline{\cdot}, \widetilde{\cdot},\widehat{\cdot}\}$, one sets $\overrightarrow{\sT_i}(E)=\bigcup_{\lambda\in E} \overrightarrow{\sT_i}(\lambda)$, which defines a map $\overrightarrow{\sT_i}:\sP(Y)\rightarrow \sP(Y)$ (where $\sP(Y)$ is the  powerset of $Y$). 

Let $E$ be a subset of $Y$ and  $\underline{m}=(r_{i_1},\ldots,r_{i_k})\in \underline{{W_0}}$. We set  \[\overrightarrow{\sT_{\underline{m}}}(E)=\overrightarrow{\sT_{i_1}}\circ \ldots \circ \overrightarrow{\sT_{i_k}}(E)\text{ and }\overrightarrow{\sT_w}(E)=\bigcup_{\underline{m}\in \Red(w)}\overrightarrow{\sT_{\underline{m}}}(E).\]\index{t@$\overrightarrow{\sT_w}(E)$} If $\lambda\in Y$ and $\underline{m}\in \underline{{W_0}}$, we often write $\overrightarrow{\sT_{\underline{m}}}(\lambda)$ instead of $\overrightarrow{\sT_{\underline{m}}}(\{\lambda\})$. Note that since $\cT$ is a convex cone, we have $\overrightarrow{\sT_w}(E)\subset Y^+$ when $E\subset Y^+$. \end{Definition}

Note that for $\overrightarrow{\cdot}\in \{\emptyset,\overline{\cdot},\widetilde{\cdot},\widehat{\cdot}\}$ we are not claiming that for 
$\underline{m_1},\underline{m_2}\in \Red(w)$ that  $\overrightarrow{\sT_{\underline{m_1}}}(\lambda)$ and $\overrightarrow{\sT_{\underline{m_2}}}(\lambda)$ are equal.

\begin{Lemma}\label{l_sT_as_a_union}
  Let $\overrightarrow{\cdot}\in \{\emptyset,\overline{\cdot},\widetilde{\cdot},\widehat{\cdot}\}$.
  
 Let $\underline{m}\in \underline{W_0}$ and $E\subset Y$. Then we have $\overrightarrow{\sT_{\underline{m}}}(E)=\bigcup_{\lambda\in E}\overrightarrow{\sT_{\underline{m}}}(\lambda)$. 
\end{Lemma}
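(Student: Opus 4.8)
The plan is to reduce the statement to the single observation that each elementary operator $\overrightarrow{\sT_i}\colon \sP(Y)\to\sP(Y)$ is, by its very construction in Definition~\ref{d_sT}, determined by its values on singletons: $\overrightarrow{\sT_i}(E)=\bigcup_{\lambda\in E}\overrightarrow{\sT_i}(\lambda)$ for all $E\subset Y$. Call a map $f\colon\sP(Y)\to\sP(Y)$ \emph{union-compatible} if $f(E)=\bigcup_{\lambda\in E}f(\{\lambda\})$ for every $E\subset Y$ (equivalently, $f$ commutes with arbitrary unions, since $E=\bigcup_{\lambda\in E}\{\lambda\}$). The lemma is then just the assertion that the composite $\overrightarrow{\sT_{\underline m}}$ of finitely many union-compatible maps is again union-compatible.

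First I would record the elementary closure fact: if $f,g\colon\sP(Y)\to\sP(Y)$ are union-compatible, then for any $E\subset Y$,
\[
(f\circ g)(E)=f\!\Big(\bigcup_{\lambda\in E}g(\{\lambda\})\Big)=\bigcup_{\lambda\in E}f\big(g(\{\lambda\})\big)=\bigcup_{\lambda\in E}(f\circ g)(\{\lambda\}),
\]
where the second equality applies union-compatibility of $f$ to the set $\bigcup_{\lambda\in E}g(\{\lambda\})$ together with the trivial rearrangement of a union of unions. The identity map of $\sP(Y)$ is union-compatible, which handles the empty word. Then, writing $\underline m=(r_{i_1},\dots,r_{i_k})$, the map $\overrightarrow{\sT_{\underline m}}=\overrightarrow{\sT_{i_1}}\circ\cdots\circ\overrightarrow{\sT_{i_k}}$ is a $k$-fold composition of the union-compatible operators $\overrightarrow{\sT_{i_j}}$, so an immediate induction on $k$ (using the displayed fact at each step) shows $\overrightarrow{\sT_{\underline m}}$ is union-compatible; unwinding the definition gives exactly $\overrightarrow{\sT_{\underline m}}(E)=\bigcup_{\lambda\in E}\overrightarrow{\sT_{\underline m}}(\lambda)$, for each of the four decorations $\overrightarrow{\cdot}\in\{\emptyset,\overline{\cdot},\widetilde{\cdot},\widehat{\cdot}\}$ uniformly.

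I do not expect any genuine obstacle: this is a soft, formal lemma. The only points requiring a small amount of care are the bookkeeping in the ``union of unions'' rearrangement and the degenerate base case $\ell(\underline m)=0$, where $\overrightarrow{\sT_{\underline m}}$ is the identity. I would also remark in passing that, since unions commute with one another, the same reasoning upgrades to $\overrightarrow{\sT_w}(E)=\bigcup_{\lambda\in E}\overrightarrow{\sT_w}(\lambda)$ after taking the union over $\underline m\in\Red(w)$, although only the word-level statement is needed here.
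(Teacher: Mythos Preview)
Your proposal is correct and takes essentially the same approach as the paper: both argue by induction on $\ell(\underline{m})$, using that each $\overrightarrow{\sT_i}$ is by definition a union over singletons, and that this property is preserved under composition. Your phrasing via ``union-compatible'' maps is a mild abstraction of exactly the same computation the paper writes out explicitly.
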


\begin{proof}
    We prove it by induction on $k=\ell(\underline{m})$. If $k\leq 1$, then this is clear by definition.   We assume that $k\geq 2$ and that for all $\underline{m'}\in \underline{W_0}$ such that $\ell(\underline{m'})\leq k-1$, we have $\overrightarrow{\sT_{\underline{m'}}}(E)=\bigcup_{\lambda\in E}\overrightarrow{\sT_{\underline{m'}}}(\lambda)$. Write $\underline{m}=(i_1,\ldots,i_k)$. Then we have \[\overrightarrow{\sT_{\underline{m}}}(E)=\overrightarrow{\sT_{i_1}}(\overrightarrow{\sT_{(i_2,\ldots,i_k)}}(E))=\overrightarrow{\sT_{i_1}}\left(\bigcup_{\lambda\in E}\overrightarrow{\sT_{(i_2,\ldots,i_k)}}(\lambda)\right)=\bigcup_{\lambda\in E}\overrightarrow{\sT_{i_1}}(\overrightarrow{\sT_{(i_2,\ldots,i_k)}}(\lambda))=\bigcup_{\lambda\in E} \overrightarrow{\sT_{\underline{m}}}(\lambda).\] Lemma follows.
\end{proof}

\begin{Lemma}\label{l_t}
\begin{enumerate}
    \item For every subset $E$ of $Y$, for every $w\in W_0$, we have: \begin{equation}\label{e_inclusions_sT}
    \sT_w(E)\subset \overline{\sT_w}(E), \widetilde{\sT_w}(E)\subset \widehat{\sT_w}(E)\subset \conv_{Q^\vee}(W_0\cdot E).
\end{equation}

 \item Let $\lambda\in Y$ and $w\in W_0$. Then $w(\lambda)\in \sT_w(\lambda)$. Moreover if $\lambda\in Y^{++}$, we have $\sT_w(\lambda)=\{w(\lambda)\}$. 
\end{enumerate}
\end{Lemma}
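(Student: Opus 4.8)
The plan is to prove both parts by induction on the length of (reduced) words, using two elementary observations. First, each operator $\overrightarrow{\sT_i}\colon\sP(Y)\to\sP(Y)$ is monotone: by construction (and Lemma~\ref{l_sT_as_a_union}) it is the union of its values on singletons, so $S\subset S'$ implies $\overrightarrow{\sT_i}(S)\subset\overrightarrow{\sT_i}(S')$. Second, at the level of single reflections one has, directly from Definition~\ref{d_sT}, the pointwise facts $r_i(\mu)\in\sT_i(\mu)$ for every $\mu\in Y$, and $\sT_i(\mu)\subset\overline{\sT_i}(\mu)\subset\widehat{\sT_i}(\mu)$, $\widetilde{\sT_i}(\mu)\subset\widehat{\sT_i}(\mu)$ (indeed $\overline{\sT_i}(\mu)$, $\widetilde{\sT_i}(\mu)$ and $\widehat{\sT_i}(\mu)$ are all contained in the full interval $[\mu,r_i(\mu)]_{Q^\vee}=\widehat{\sT_i}(\mu)$). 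Combining these gives a ``lifting principle'': if $A_\bullet,B_\bullet$ are two of the four families $\sT_\bullet,\overline{\sT_\bullet},\widetilde{\sT_\bullet},\widehat{\sT_\bullet}$ with $A_i(\mu)\subset B_i(\mu)$ for all $i,\mu$, then $A_{\underline m}(E)\subset B_{\underline m}(E)$ for every word $\underline m$ and every $E\subset Y$, hence $A_w(E)\subset B_w(E)$ for all $w\in W_0$; this is an induction on $\ell(\underline m)$, peeling off the first letter and applying monotonicity of the outer operator.

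For part (1), the lifting principle applied to the pointwise inclusions above yields $\sT_w(E)\subset\overline{\sT_w}(E)$ and $\widetilde{\sT_w}(E)\subset\widehat{\sT_w}(E)$ (and likewise $\overline{\sT_w}(E)\subset\widehat{\sT_w}(E)$). It remains to show $\widehat{\sT_w}(E)\subset\conv_{Q^\vee}(W_0\cdot E)$; by Lemma~\ref{l_sT_as_a_union} it suffices to treat $E=\{\lambda\}$, and I claim $\widehat{\sT_w}(\lambda)\subset\conv_{Q^\vee}(W_0\cdot\lambda)$ by induction on $\ell(\underline m)$ for $\underline m\in\Red(w)$. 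The base case is $\{\lambda\}\subset\conv_{Q^\vee}(W_0\cdot\lambda)$, since $\lambda\in W_0\cdot\lambda$. For the step it is enough to check $\widehat{\sT_i}(\mu)\subset\conv_{Q^\vee}(W_0\cdot\lambda)$ whenever $\mu\in\conv_{Q^\vee}(W_0\cdot\lambda)$: then $r_i(\mu)\in\conv(W_0\cdot\lambda)$ because $W_0$ acts linearly and $W_0\cdot\lambda$ is $W_0$-stable, so $[\mu,r_i(\mu)]\subset\conv(W_0\cdot\lambda)$ by convexity; moreover $\mu+Q^\vee=W_0\cdot\lambda+Q^\vee$ because $\mu-w'(\lambda)\in Q^\vee$ for some $w'\in W_0$; hence $\widehat{\sT_i}(\mu)=[\mu,r_i(\mu)]\cap(\mu+Q^\vee)\subset\conv(W_0\cdot\lambda)\cap(W_0\cdot\lambda+Q^\vee)=\conv_{Q^\vee}(W_0\cdot\lambda)$. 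Taking unions over $\underline m\in\Red(w)$ and then over $\lambda\in E$, and using $\conv_{Q^\vee}(W_0\cdot\lambda)\subset\conv_{Q^\vee}(W_0\cdot E)$, completes part (1); the case $E=\emptyset$ is trivial.

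For part (2), the membership $w(\lambda)\in\sT_w(\lambda)$ follows by induction on $\ell(\underline m)$ for a fixed $\underline m=(r_{i_1},\dots,r_{i_k})\in\Red(w)$: writing $w'=r_{i_2}\cdots r_{i_k}$, the inductive hypothesis gives $w'(\lambda)\in\sT_{(r_{i_2},\dots,r_{i_k})}(\lambda)$, monotonicity gives $\sT_{i_1}(w'(\lambda))\subset\sT_{\underline m}(\lambda)\subset\sT_w(\lambda)$, and $w(\lambda)=r_{i_1}(w'(\lambda))\in\sT_{i_1}(w'(\lambda))$. For the ``moreover'', assume $\lambda\in Y^{++}$; I induct on $\ell(w)$ to show $\sT_w(\lambda)=\{w(\lambda)\}$, the case $w=1$ being clear. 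For $w\neq1$, $\Red(w)$ consists exactly of the words $(r_j,\underline m')$ with $r_jw<w$ and $\underline m'\in\Red(r_jw)$, so $\sT_w(\lambda)=\bigcup_{r_jw<w}\sT_j\bigl(\sT_{r_jw}(\lambda)\bigr)=\bigcup_{r_jw<w}\sT_j\bigl(r_jw(\lambda)\bigr)$, the last equality by the inductive hypothesis applied to $r_jw$. Now $r_jw<w$ is equivalent to $w^{-1}(\alpha_j)\in\Phi^-$, hence $\alpha_j\bigl(r_jw(\lambda)\bigr)=-\bigl(w^{-1}(\alpha_j)\bigr)(\lambda)\ge0$, since $-w^{-1}(\alpha_j)\in\Phi^+$ is a nonnegative integral combination of simple roots and $\lambda\in\overline{C^v_f}$; therefore $\sT_j\bigl(r_jw(\lambda)\bigr)=\{r_j(r_jw(\lambda))\}=\{w(\lambda)\}$ and the union collapses to $\{w(\lambda)\}$.

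The monotonicity inductions and the union formula of Lemma~\ref{l_sT_as_a_union} are routine bookkeeping. The two genuine inputs are: in part (1), the $W_0$-stability of $\conv(W_0\cdot\lambda)$ together with the description $\widehat{\sT_i}(\mu)=[\mu,r_i(\mu)]\cap(\mu+Q^\vee)$; and in part (2), the classical Coxeter-group equivalence $r_jw<w\iff w^{-1}(\alpha_j)\in\Phi^-$ (see e.g.\ \cite{kumar2002kac}), which is the step I expect to be the main obstacle, though it is standard.
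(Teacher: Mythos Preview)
Your proof is correct and follows essentially the same approach as the paper's. The paper's argument for (1) is the same invariance-under-$\widehat{\sT_i}$ idea (stated directly for $\conv_{Q^\vee}(W_0\cdot E)$ rather than reducing to singletons first), and for (2) the paper simply writes ``is clear''; your induction using $r_jw<w\iff w^{-1}(\alpha_j)\in\Phi^-$ makes that step explicit.
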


\begin{proof}
    (1) Except the last inclusion, it follows from the same inclusions for $w$ a simple reflection and $E$ a singleton. For the  last inclusion, take $\mu\in \conv_{Q^\vee}(W_0\cdot E)$ and $i\in {I_A}$. Then  we have $\widehat{\sT_i}(\mu)=[r_i(\mu),\mu]_{Q^\vee}\subset \conv_{Q^\vee}(W_0\cdot E)$, for all  $\mu\in \conv_{Q^\vee}(W_0\cdot E)$. Therefore, we have $\widehat{\sT}_i(\conv_{Q^\vee}(W_0\cdot E))\subset \conv_{Q^\vee}(W_0\cdot E)$. We deduce that $\widehat{\sT_w}(E)\subset \conv_{Q^\vee}(W_0\cdot E)$ for every $w\in W_0$ by induction on $\ell(w)$. 

    (2) is clear.
\end{proof}
 
\begin{Lemma}\label{lemBernstein_Lusztig_relation_inverse}
Let $\lambda\in Y^+$ and $i\in {I_A}$. Then: \begin{equation}\label{e_Z_lambda_Tinverse}
    Z^{\lambda}\Tb_i^{-1}=q^{-1}\Tb_iZ^{r_i(\lambda)}+(1-q^{-1})(\frac{Z^\lambda-Z^{r_i(\lambda)}}{1-Z^{-\alpha_i^\vee}}-Z^\lambda).
\end{equation} For all $w\in W_0$, we have 
\begin{equation}\label{eqSupport_Zlambda_Twinverse}
\supp^Z(\Zb^\lambda \Tb_w)\subset \widetilde{\sT_{w^{-1}}}(\lambda) \text{ and }\supp^Z(\Zb^\lambda \Tb_w^{-1})\subset \overline{\sT_w}(\lambda).
\end{equation}
\end{Lemma}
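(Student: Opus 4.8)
The statement has two parts. The identity \eqref{e_Z_lambda_Tinverse} is a one‑line manipulation: substitute $T_i^{-1}=q^{-1}T_i-(1-q^{-1})$ from \eqref{e_inverse_T_i} to get $Z^\lambda T_i^{-1}=q^{-1}Z^\lambda T_i-(1-q^{-1})Z^\lambda$, then replace $Z^\lambda T_i$ by $T_iZ^{r_i(\lambda)}+(q-1)\frac{Z^\lambda-Z^{r_i(\lambda)}}{1-Z^{-\alpha_i^\vee}}$ using Remark~\ref{r_explicit_BL_relations}(2), and collect terms using $q^{-1}(q-1)=1-q^{-1}$. Everything involved lies in $\cH$: since $\lambda\in Y^+$ and $\cT$ is a cone, $r_i(\lambda)\in Y^+$ and $[\lambda,r_i(\lambda)]_{Q^\vee}\subset Y^+$, and the $\sT$‑sets stay in $Y^+$ by Definition~\ref{d_sT}, so $\supp^Z$ is defined for every element that appears below. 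The support estimate \eqref{eqSupport_Zlambda_Twinverse} I would prove by induction on $\ell(w)$, after first settling the case of a simple reflection.

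For $w=r_i$: for $Z^\lambda T_i$, combine Remark~\ref{r_explicit_BL_relations}(2) with the explicit expansion \eqref{e_computation_BL}; the summand $T_iZ^{r_i(\lambda)}$ has $Z$‑support $\{r_i(\lambda)\}$ and the other summand is supported on $[\lambda,r_i(\lambda)[_{Q^\vee}$ if $\alpha_i(\lambda)\geq 0$ and on $]\lambda,r_i(\lambda)]_{Q^\vee}$ if $\alpha_i(\lambda)<0$, and in both cases the union is, by definition, $\widetilde{\sT_i}(\lambda)$. For $Z^\lambda T_i^{-1}$, use \eqref{e_Z_lambda_Tinverse}: the term $q^{-1}T_iZ^{r_i(\lambda)}$ contributes $\{r_i(\lambda)\}$, and for $\frac{Z^\lambda-Z^{r_i(\lambda)}}{1-Z^{-\alpha_i^\vee}}-Z^\lambda$ one runs a short case check on the sign of $\alpha_i(\lambda)$ via \eqref{e_computation_BL}: when $\alpha_i(\lambda)>0$ the expansion of the fraction contains $Z^\lambda$, which is cancelled by the $-Z^\lambda$, leaving $Z$‑support $]\lambda,r_i(\lambda)[_{Q^\vee}$; when $\alpha_i(\lambda)<0$ the expansion does not contain $Z^\lambda$, so the support is $[\lambda,r_i(\lambda)]_{Q^\vee}$; when $\alpha_i(\lambda)=0$ it is immediate, since $Z^\lambda$ then commutes with $T_i^{\pm 1}$. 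Taking the union with $\{r_i(\lambda)\}$ gives $\overline{\sT_i}(\lambda)$ in each case.

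For the inductive step, let $\ell(w)\geq 1$ and write $w=r_iw'$ with $\ell(w)=\ell(w')+1$, so that $T_w=T_iT_{w'}$ by (BL2) and $T_w^{-1}=T_{w'}^{-1}T_i^{-1}$. For the first inclusion, write $Z^\lambda T_w=(Z^\lambda T_i)T_{w'}$, expand $Z^\lambda T_i=\sum_\mu h_\mu Z^\mu$ with $h_\mu\in\HCW$ and $\mu$ ranging over a subset of $\widetilde{\sT_i}(\lambda)$; since $\cH=\bigoplus_{\nu\in Y^+}\HCW Z^\nu$ and $\HCW$ acts on the left of each summand, $\supp^Z(Z^\lambda T_w)\subset\bigcup_{\mu\in\widetilde{\sT_i}(\lambda)}\supp^Z(Z^\mu T_{w'})\subset\bigcup_{\mu\in\widetilde{\sT_i}(\lambda)}\widetilde{\sT_{(w')^{-1}}}(\mu)$ by the induction hypothesis. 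By Lemma~\ref{l_sT_as_a_union} and $\widetilde{\sT_{(\underline{m'},r_i)}}(\lambda)=\widetilde{\sT_{\underline{m'}}}(\widetilde{\sT_i}(\lambda))$, this set equals $\bigcup_{\underline{m'}\in\Red((w')^{-1})}\widetilde{\sT_{(\underline{m'},r_i)}}(\lambda)$, and since $w^{-1}=(w')^{-1}r_i$ with $\ell(w^{-1})=\ell((w')^{-1})+1$, every word $(\underline{m'},r_i)$ here lies in $\Red(w^{-1})$, so the union is $\subset\widetilde{\sT_{w^{-1}}}(\lambda)$. The second inclusion is entirely parallel: $Z^\lambda T_w^{-1}=(Z^\lambda T_{w'}^{-1})T_i^{-1}$, expand $Z^\lambda T_{w'}^{-1}$ in the $Z$‑basis with $Z$‑support inside $\overline{\sT_{w'}}(\lambda)$ (induction hypothesis), apply the $w=r_i$ case for $T_i^{-1}$ termwise, and conclude using $\overline{\sT_{(r_i,\underline{m'})}}(\lambda)=\overline{\sT_i}(\overline{\sT_{\underline{m'}}}(\lambda))$ together with $(r_i,\underline{m'})\in\Red(w)$ for $\underline{m'}\in\Red(w')$. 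The base case $\ell(w)=0$ is trivial, since $\widetilde{\sT_1}(\lambda)=\overline{\sT_1}(\lambda)=\{\lambda\}=\supp^Z(Z^\lambda)$.

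The only real obstacle is bookkeeping: one must track the exact cancellation in the $T_i^{-1}$ base case so as to stay inside $\overline{\sT_i}(\lambda)$ rather than a strictly larger interval, and one must correctly match the reduced decompositions of $w$ (resp.\ $w^{-1}$) with those of $w'$ (resp.\ $(w')^{-1}$) obtained by prepending (resp.\ appending) $r_i$. Since the four variants $\sT,\overline{\sT},\widetilde{\sT},\widehat{\sT}$ differ only by inclusion or exclusion of boundary lattice points, these endpoint distinctions are exactly where attention is required.
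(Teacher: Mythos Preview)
Your proof is correct and follows essentially the same approach as the paper: derive \eqref{e_Z_lambda_Tinverse} from \eqref{e_inverse_T_i} and the Bernstein--Lusztig relation, verify the base case $w=r_i$ via the explicit expansion \eqref{e_computation_BL}, and induct on $\ell(w)$ by peeling off a simple reflection. The only cosmetic difference is that the paper pulls $r_i$ from the right of $w$ when handling $T_w$ and from the left when handling $T_w^{-1}$, whereas you pull from the left in both cases; either choice works since the definitions of $\widetilde{\sT_w}$ and $\overline{\sT_w}$ take a union over all reduced expressions.
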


\begin{proof}
Let $\mu\in Y^+$. We have $\Tb_i^{-1}=q^{-1}\Tb_i+q^{-1}-1$. Moreover, 
\[Z^\mu \Tb_i=(q-1)\frac{Z^\mu-Z^{r_i(\mu)}}{1-Z^{-\alpha_i^\vee}}+\Tb_i  Z^{r_i(\mu)},\] and hence we have \eqref{e_Z_lambda_Tinverse}. By \eqref{e_computation_BL}, we deduce \begin{equation}\label{e_Zmu_Ti_inverse}
 \supp^Z(Z^\mu T_i)=\widetilde{\sT_i}(\mu) \text{ and }   \supp^Z(Z^\mu T_i^{-1})=\overline{\sT_i}(\mu).
\end{equation} 

Let $\lambda\in Y^+$. Let $w\in W_0$ be such that $\ell(w)\geq 2$. We assume that for all $v\in W_0$ such that $\ell(v)<\ell(w)$, we have $\supp^Z(Z^\lambda T_{v})\subset \widetilde{\sT_{v^{-1}}}(\lambda)$ and $\supp^Z(Z^\lambda T_{v}^{-1})\subset \overline{\sT_v}(\lambda)$. Let $i,j\in {I_A}$ be such that $\ell(wr_i)=\ell(r_jw)<\ell(w)$. Let $u=wr_i$ and $v=r_jw$. Then \begin{align*}
    \supp^Z(Z^\lambda T_{w})&=\supp^Z\left(Z^\lambda T_u T_i\right) \\
    &\subset \bigcup_{\mu\in \supp^Z(Z^\lambda T_u)}\supp^Z(Z^\mu T_i) \text{ by \eqref{e_Zmu_Ti_inverse}}\\
    &\subset \bigcup_{\mu\in \supp^Z(Z^\lambda T_u)} \widetilde{\sT_i}(\mu) \\
    &\subset \widetilde{\sT_i}(\widetilde{\sT_{u^{-1}}}(\lambda))\subset \widetilde{\sT_{w^{-1}}}(\lambda).
\end{align*}

 and

\begin{align*}
    \supp^Z(Z^\lambda (T_{w})^{-1})&=\supp^Z\left((Z^\lambda T_v^{-1}) T_j^{-1}\right)\\
    &\subset \bigcup_{\mu\in \supp^Z(Z^\lambda T_v^{-1})}\supp^Z(Z^\mu T_j^{-1})\text{ by \eqref{e_Zmu_Ti_inverse}}\\
    &\subset \bigcup_{\mu\in \supp^Z(Z^\lambda T_v^{-1})} \overline{\sT_j}(\mu) \\
    &\subset \overline{\sT_j}(\overline{\sT_v}(\lambda))\subset \overline{\sT_w}(\lambda).
\end{align*}

We deduce the lemma by induction.
\end{proof}

\begin{Remark}\label{r_comparison_T_R}

In \cite{bardy2016iwahori}, the authors associate an Iwahori-Hecke algebra $\cH$ to a group  $G$ acting strongly transitively on a thick masure of finite thickness (satisfying additional conditions, see \cite[1.4.3]{bardy2016iwahori}). This enables them to associate $\cH$ to an \emph{almost split} Kac-Moody group over a local field $G$.

This Hecke algebra also admits a (slighly more involved) Bernstein-Lusztig presentation \cite[Theorem 6.2]{bardy2016iwahori}. In particular, instead of a single $q$, these relations involve different $q_i$, for any $i\in I_A$. Despite this difference, we can also prove \eqref{eqSupport_Zlambda_Twinverse} in this framework and thus obtain the main results of this paper with slight modifications. Note however that in this framework, we do not know whether we can take the $q_i$ to be indeterminates, or elements of an arbitrary ring (see \cite[Conjecture 2]{bardy2021structure}).
\end{Remark}

\subsection{Depth and line segments}\label{ss_depth_line_segments}

\begin{Definition}
    Let $\lambda\in Y^+$. The depth with respect to $\lambda$ is the map $\mathrm{depth}_\lambda:(\lambda+Q^\vee)\cap Y^+\rightarrow \Z$\index{d@$\mathrm{depth}_\lambda$} defined by $\mu\mapsto \htt(\lambda^{++}-\mu^{++})$. 
\end{Definition}

\begin{Lemma}\label{lemGrowth_dominant_height_segments}
 Let $x\in \T$ and $w\in {W_0}$ be such that $w(x)\neq x$. Then for all $y \in ]x,w(x)[$, we have $y^{++}<_{Q^\vee_\R} x^{++}$, i.e $y^{++}-x^{++}\in Q^\vee_{\R,+}\setminus\{0\}$, where $Q^\vee_{\R,+}=\bigoplus_{i\in {I_A}}\R_{\geq 0}\alpha_i^\vee$. In particular, $\htt(x^{++}-y^{++})> 0$. Consequently if $x\in Y^+$, then $y^{++}-x^{++}\in (\bigoplus_{i\in {I_A}}\Z_{\geq 0} \alpha_i^\vee)\setminus\{0\}$ and  $\htt(x^{++}-y^{++})\in \Ne$.
 \end{Lemma}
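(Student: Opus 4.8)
The plan is to work entirely inside the single $W_0$-orbit of $z:=x^{++}$. The point is that the height function $\htt$ is \emph{not} $W_0$-invariant, so heights of dominant representatives cannot be compared head-on; instead the comparison will come from convexity of the cone $Q^\vee_{\R,+}$ together with \eqref{e_GR2.4}. First I would record the set-up: $w(x)$ lies in the $W_0$-orbit of $x$, so $w(x)^{++}=z$ as well, and $x,w(x)\in W_0\cdot z$; since $\cT$ is a $W_0$-stable convex cone, the segment $[x,w(x)]$ is contained in $\cT$, hence $y\in\cT$ and $y^{++}$ is defined. Write $y=(1-t)x+t\,w(x)$ with $t\in\,]0,1[$, and fix $u\in W_0$ with $u(y)=y^{++}$.

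For the inequality $y^{++}\leq_{Q^\vee_\R}x^{++}$ I would apply $u$ to the convex combination: $y^{++}=u(y)=(1-t)\,u(x)+t\,u(w(x))$, where $u(x)$ and $u(w(x))$ again lie in $W_0\cdot z$. By \eqref{e_GR2.4} applied to these two points (whose dominant representative is $z$), both $z-u(x)$ and $z-u(w(x))$ lie in $Q^\vee_{\R,+}$, hence so does their convex combination $z-y^{++}$, because $Q^\vee_{\R,+}$ is closed under nonnegative linear combinations. This yields $y^{++}\leq_{Q^\vee_\R}z=x^{++}$, and in particular $\htt(x^{++}-y^{++})\geq 0$.

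The real content is the strictness, i.e.\ $y^{++}\neq x^{++}$. I would argue by contradiction: if $y^{++}=z$, the identity above becomes $z=(1-t)(z-a)+t(z-b)$ with $a:=z-u(x)\in Q^\vee_{\R,+}$ and $b:=z-u(w(x))\in Q^\vee_{\R,+}$, so $(1-t)a+tb=0$. Since the simple coroots $(\alpha_i^\vee)_{i\in I_A}$ form a free family, $Q^\vee_{\R,+}$ is a pointed cone, $Q^\vee_{\R,+}\cap(-Q^\vee_{\R,+})=\{0\}$; from $(1-t)a=-tb$ and $t\in\,]0,1[$ I conclude $a=b=0$, hence $u(x)=u(w(x))$ and so $x=w(x)$, contradicting the hypothesis. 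Therefore $x^{++}-y^{++}\in Q^\vee_{\R,+}\setminus\{0\}$; writing it as $\sum_i c_i\alpha_i^\vee$ with all $c_i\geq 0$ and not all zero gives $\htt(x^{++}-y^{++})=\sum_i c_i>0$, which is the asserted strict inequality $y^{++}<_{Q^\vee_\R}x^{++}$.

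Finally, for the integral refinement I would specialize to $x\in Y^+$ (and, as in every application of the lemma, to $y\in x+Q^\vee$, so that $y\in Y^+$ and $x^{++},y^{++}\in Y^{++}$): by \eqref{e_GR2.4} both $x^{++}-x$ and $y^{++}-y$ lie in $Q^\vee_+$, whence $x^{++}-y^{++}\in Q^\vee$; combined with the previous paragraph, $x^{++}-y^{++}\in Q^\vee\cap Q^\vee_{\R,+}=Q^\vee_+$ (again by freeness of the simple coroots), and being nonzero it has $\htt$-value in $\Ne$. I expect Step~3 to be pure bookkeeping and Step~1 nearly automatic; the one genuine idea, and the only potential obstacle, is the strictness argument, where one must resist manipulating $\htt$ directly and instead transport the entire configuration into the orbit $W_0\cdot x^{++}$ so that pointedness of $Q^\vee_{\R,+}$ can be exploited.
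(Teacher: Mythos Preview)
Your proof is correct and follows essentially the same approach as the paper's: transport $y$ to the dominant chamber by some $u\in W_0$, apply \eqref{e_GR2.4} to $u(x)$ and $u(w(x))$ in the orbit $W_0\cdot x^{++}$, and read off $x^{++}-y^{++}$ as a nonnegative combination of elements of $Q^\vee_{\R,+}$. You are in fact more careful than the paper on two points: you spell out the strictness argument via pointedness of $Q^\vee_{\R,+}$ (the paper's proof stops at the identity $y^{++}=x^{++}-(tq^\vee+(1-t)\tilde{q}^\vee)$ and leaves this implicit), and you correctly flag that the integral conclusion requires $y\in x+Q^\vee$, which is how the lemma is actually applied throughout the paper.
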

 
 \begin{proof}
Write $y=tx+(1-t)w(x)$, where $t\in ]0,1[$. Let $v\in W_0$ be such that $v(y)=y^{++}$. By \eqref{e_GR2.4}, there exist $q^\vee,\tilde{q}^\vee\in Q^\vee_{\R,+}$ such that $v(x)=x^{++}-q^\vee$ and $vw(x)=x^{++}-\tilde{q}^\vee$. Then  \[y^{++}=v(y)=tv(x)+(1-t)vw(x)=x^{++}-(tq^\vee+(1-t)\tilde{q}^\vee),\] which proves the lemma.
 \end{proof}

\begin{Remark}
Fix $\lambda,\mu\in Y^+$ such that $\lambda-\mu\in Q^\vee$. Let $(\lambda_n)\in (Y^+)^\N$ be such $\lambda_0\in W_0\cdot\lambda$ and for all $n\in \N$, there exists $i_n\in {I_A}$ such that $\lambda_{n+1}\in ]\lambda_n,r_{i_n}(\lambda_n)[_{Q^\vee}$. Then by Lemma~\ref{lemGrowth_dominant_height_segments}, the sequence $(\mathrm{depth}_{\lambda}(\lambda_n))=(\htt(\lambda^{++}-\lambda_n^{++}))$ is a strictly increasing sequence of $\N$. Hence for $n> \mathrm{depth}_\lambda(\mu)$, we have $\lambda_n\neq \mu$ and consequently $\{n\in \N\mid \lambda_n=\mu\}$ is finite. Our proof of the main finiteness result of this paper (Lemma~\ref{lemFiniteness_support_TWhat}) is based on the same phenomenon but is more subtule since the sequences ($\lambda_n)$ we need to consider now satisfy either $\lambda_{n+1}\in ]\lambda_n,r_{i_n}(\lambda_n)]_{Q^\vee}$ or $\lambda_{n+1}\in [\lambda_n,r_{i_n}(\lambda_n)]_{Q^\vee}$, depending on $\lambda_n$ and $i_n$. 
\end{Remark}

\begin{Lemma}\label{lemDominant_height_segments}
Let $\lambda\in Y^{+}$ and $i\in {I_A}$. Suppose that $\alpha_i(\lambda)>0$. Then 
\[\lambda^{++}>_{Q^\vee} (\lambda-\alpha_i^\vee)^{++}>_{Q^\vee}\ldots >_{Q^\vee} (\lambda-\lfloor \frac{\alpha_i(\lambda)}{2}\rfloor\alpha_i^\vee)^{++},\] In particular for all $k\in \llbracket 0,\frac{\alpha_i(\lambda)}{2}\rrbracket$, we have $\htt(\lambda^{++}-(\lambda-k\alpha_i^\vee)^{++})\geq k$. 
\end{Lemma}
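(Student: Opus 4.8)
The plan is to walk along the line segment starting at $\lambda$ in the direction $-\alpha_i^\vee$ and apply Lemma~\ref{lemGrowth_dominant_height_segments} to each unit step. Set $m=\lfloor \alpha_i(\lambda)/2\rfloor$ and, for $0\le k\le m$, put $\lambda_k=\lambda-k\alpha_i^\vee$, so $\lambda_0=\lambda$. A one-line computation gives $\alpha_i(\lambda_k)=\alpha_i(\lambda)-2k$ and hence $r_i(\lambda_k)=\lambda_k-\alpha_i(\lambda_k)\alpha_i^\vee=\lambda_{\alpha_i(\lambda)-k}$.

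First I would check, by induction on $k$, that $\lambda_k\in Y^+$ for all $0\le k\le m$ and that $\lambda_k^{++}>_{Q^\vee}\lambda_{k+1}^{++}$ for all $0\le k\le m-1$. The base case $\lambda_0=\lambda\in Y^+$ is given. For the step, the key numerical fact is that $\alpha_i(\lambda)-2k\ge 2$ whenever $k\le m-1$ (checked separately for $\alpha_i(\lambda)=2m$, where $k\le m-1$ gives $\ge 2$, and for $\alpha_i(\lambda)=2m+1$, where it gives $\ge 3$). Therefore $r_i(\lambda_k)=\lambda_{\alpha_i(\lambda)-k}\ne \lambda_k$, and $\lambda_{k+1}$ equals the point $(1-t)\lambda_k+t\,r_i(\lambda_k)$ with $t=\tfrac{1}{\alpha_i(\lambda)-2k}\in\,]0,1[$, i.e. $\lambda_{k+1}\in\,]\lambda_k,r_i(\lambda_k)[$. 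Since $\lambda_k\in Y^+\subseteq\T$ and $r_i(\lambda_k)\in W_0\cdot\lambda_k\subseteq\T$, convexity of the Tits cone gives $\lambda_{k+1}\in\T\cap Y=Y^+$; and Lemma~\ref{lemGrowth_dominant_height_segments}, applied with $x=\lambda_k$, $w=r_i$ and $y=\lambda_{k+1}$, yields $\lambda_{k+1}^{++}-\lambda_k^{++}\in\bigl(\bigoplus_{j\in I_A}\Z_{\ge 0}\alpha_j^\vee\bigr)\setminus\{0\}$ together with $\htt(\lambda_k^{++}-\lambda_{k+1}^{++})\ge 1$. Concatenating these inequalities for $k=0,\dots,m-1$ produces the displayed chain $\lambda^{++}>_{Q^\vee}(\lambda-\alpha_i^\vee)^{++}>_{Q^\vee}\cdots>_{Q^\vee}(\lambda-m\alpha_i^\vee)^{++}$.

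For the ``in particular'' clause I would simply telescope: $\lambda^{++}-\lambda_k^{++}=\sum_{j=0}^{k-1}(\lambda_j^{++}-\lambda_{j+1}^{++})$, each summand lies in $\bigl(\bigoplus_{j}\Z_{\ge 0}\alpha_j^\vee\bigr)\setminus\{0\}$ by the previous paragraph, and $\htt$ of such an element is the sum of its (nonnegative, not all zero) coordinates, hence at least $1$. Summing, $\htt(\lambda^{++}-(\lambda-k\alpha_i^\vee)^{++})\ge k$ for every $k\in\llbracket 0,\tfrac{\alpha_i(\lambda)}{2}\rrbracket$.

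The only genuinely delicate point is the index bookkeeping around the parity of $\alpha_i(\lambda)$: one must make sure that the last step $k=m-1$ really places $\lambda_m$ strictly inside $]\lambda_{m-1},r_i(\lambda_{m-1})[=\,]\lambda_{m-1},\lambda_{m+1}[$ and not at an endpoint, which is exactly what the inequality $\alpha_i(\lambda)-2(m-1)\ge 2$ guarantees. Once that bound is in place, everything reduces to a direct application of Lemma~\ref{lemGrowth_dominant_height_segments} and convexity of $\T$, so I do not anticipate further obstacles.
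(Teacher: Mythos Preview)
Your proof is correct and follows exactly the same approach as the paper: for each $k\in\llbracket 0,m-1\rrbracket$ you show $\lambda_{k+1}\in\,]\lambda_k,r_i(\lambda_k)[$ and apply Lemma~\ref{lemGrowth_dominant_height_segments}. The only blemish is a sign slip where you write $\lambda_{k+1}^{++}-\lambda_k^{++}\in Q^\vee_+\setminus\{0\}$ (it should be $\lambda_k^{++}-\lambda_{k+1}^{++}$), but you use the correct sign in the very next clause and in the telescoping paragraph, so the argument is unaffected.
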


\begin{proof}
Let $k\in \llbracket 0,\lfloor \alpha_i(\lambda)/2\rfloor-1\rrbracket$. Then $\lambda-(k+1)\alpha_i^\vee\in ]\lambda-k\alpha_i^\vee,r_i(\lambda-k\alpha_i^\vee)[_{Q^\vee}$. By Lemma~\ref{lemGrowth_dominant_height_segments}, we deduce $(\lambda-(k+1)\alpha_i^\vee)^{++}<_{Q^\vee} (\lambda-k\alpha_i^\vee)^{++}$ and we get the lemma by induction.
\end{proof}

\subsection{Passage from the $Z$-basis to the $T$-basis, and vice-versa}\label{ss_Z_to_T_to_Z}

For $\lambda\in Y^+$, we do not know any close or recursive formula describing the $Z$-support of $T_\lambda$ or the $T$-support of $Z^\lambda$. In this subsection, we give information on these supports. We prove that the passage between the $T$-basis and the $Z$-basis satisfies a triangularity property, see Proposition~\ref{P_BL_support_IM_basis}.

Recall that if $E$ is a subset of $\A$,   $\conv_{Q^\vee}(E)$ denotes the set $\conv(E)\cap (E+Q^\vee)$.

 \begin{Lemma}\label{lemInequality_convex_hull_orbit}
 Let $\lambda\in Y^+$ and $\mu\in \conv_{Q^\vee}({W_0}\cdot\lambda)$. Then $\mu^{++}\leq_{Q^\vee} \lambda^{++}$. 
 \end{Lemma}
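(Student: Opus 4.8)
The plan is to reduce the statement to a fact about the usual dominance order. Recall $\mu^{++}$ and $\lambda^{++}$ both lie in $\overline{C^v_f}$, and that for elements $\nu, \nu'$ of $\overline{C^v_f}$ one has the standard equivalence: $\nu \leq_{Q^\vee_\R} \nu'$ if and only if $\langle \alpha, \nu\rangle \le \langle \alpha, \nu'\rangle$ for every dominant weight $\alpha$ (equivalently, for every real dominant coweight pairing); combined with the fact that $\nu' - \nu \in Q^\vee$ here, this upgrades $\leq_{Q^\vee_\R}$ to $\leq_{Q^\vee}$. So the real content is to show $\langle \alpha, \mu^{++}\rangle \le \langle \alpha, \lambda^{++}\rangle$ for all dominant $\alpha$, or more precisely to show $\mu^{++} \leq_{Q^\vee_\R} \lambda^{++}$ and that $\mu^{++} - \lambda^{++} \in Q^\vee$.

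For the divisibility part, first note $\mu \in W_0 \cdot \lambda + Q^\vee$, so $\mu - \lambda \in Q^\vee$ (since $w(\lambda) - \lambda \in Q^\vee$ for all $w$, as recalled in the excerpt), and then $\mu^{++} - \mu \in Q^\vee$, $\lambda^{++} - \lambda \in Q^\vee$, whence $\mu^{++} - \lambda^{++} \in Q^\vee$. For the inequality, I would argue as follows. Write $\mu = \nu + q^\vee$ with $\nu \in \conv(W_0 \cdot \lambda)$ and $q^\vee \in Q^\vee$; actually it is cleaner to use $\mu \in \conv(W_0 \cdot \lambda)$ directly after observing $\mu^{++} \in \conv(W_0\cdot\lambda)$ as well. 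Indeed $\conv(W_0\cdot\lambda)$ is $W_0$-stable and $\mu^{++} \in W_0\cdot\mu \subset W_0 \cdot \conv(W_0\cdot\lambda) = \conv(W_0\cdot\lambda)$. So it suffices to prove: for $x \in \conv(W_0\cdot\lambda) \cap \overline{C^v_f}$, one has $x \leq_{Q^\vee_\R} \lambda$ (note $\lambda \in Y^+$, so $\lambda^{++}$ is its dominant representative; if $\lambda$ itself were dominant this reads $x \leq_{Q^\vee_\R} \lambda$, and in general we compare with $\lambda^{++}$, replacing $\lambda$ by $\lambda^{++}$ throughout since $\conv(W_0\cdot\lambda) = \conv(W_0\cdot\lambda^{++})$).

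So the crux is the classical statement that the dominant points of $\conv(W_0 \cdot \lambda^{++})$ are exactly the dominant points $x$ with $x \leq_{Q^\vee_\R} \lambda^{++}$ (one direction suffices here: $\conv(W_0\cdot\lambda^{++}) \cap \overline{C^v_f} \subset \{x : x \leq_{Q^\vee_\R} \lambda^{++}\}$). I would prove this using \eqref{e_GR2.4}: writing $x = \sum_i t_i w_i(\lambda^{++})$ as a convex combination, apply $x^{++} = x$ (as $x$ is dominant) and the bound $w_i(\lambda^{++}) = \lambda^{++} - q^\vee_i$ with $q^\vee_i \in Q^\vee_{\R,+}$, to get $x = \lambda^{++} - \sum_i t_i q^\vee_i$, and $\sum_i t_i q^\vee_i \in Q^\vee_{\R,+}$. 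This is essentially the same one-line convexity computation as in the proof of Lemma~\ref{lemGrowth_dominant_height_segments}. Combining this with the divisibility observation $\mu^{++} - \lambda^{++} \in Q^\vee$ gives $\lambda^{++} - \mu^{++} \in Q^\vee_{\R,+} \cap Q^\vee = Q^\vee_+$, i.e. $\mu^{++} \leq_{Q^\vee} \lambda^{++}$. The main (minor) obstacle is just making sure the passage between $\leq_{Q^\vee_\R}$ on $\A$ and $\leq_{Q^\vee}$ on the lattice is handled correctly, i.e. that a lattice vector lying in the real cone $Q^\vee_{\R,+}$ and in $Q^\vee$ actually lies in $Q^\vee_+$ — which holds because $Q^\vee_{\R,+} \cap Q^\vee = Q^\vee_+$ as $(\alpha_i^\vee)$ is a $\Z$-basis of $Q^\vee$.
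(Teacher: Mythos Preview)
Your proof is correct and follows essentially the same route as the paper: write $\mu^{++}$ as a convex combination of elements of $W_0\cdot\lambda^{++}$, use \eqref{e_GR2.4} to bound each term by $\lambda^{++}$, and conclude. The only differences are cosmetic: the paper obtains $\mu^{++}$ as a convex combination by applying the element $v\in W_0$ with $v(\mu)=\mu^{++}$ directly to a convex expression for $\mu$, rather than invoking $W_0$-stability of $\conv(W_0\cdot\lambda)$ as you do; and the paper writes $\leq_{Q^\vee}$ throughout, leaving implicit the integrality step $Q^\vee_{\R,+}\cap Q^\vee=Q^\vee_+$ that you spell out. Your opening remarks about pairings with dominant weights are unnecessary and you rightly drop them.
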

 
 \begin{proof}
 Write $\mu=\sum_{w\in {W_0}} a_w w(\lambda)$, with $(a_w)\in [0,1]^{({W_0})}$ such that $\sum_{w\in {W_0}} a_w=1$. Let $v\in {W_0}$ be such that $v(\mu)=\mu^{++}$. Then by \eqref{e_GR2.4}, we have: \[\mu^{++}=v(\mu)=\sum_{w\in {W_0}} a_w vw(\lambda)\leq_{Q^\vee} \sum_{w\in {W_0}} a_w \lambda^{++}=\lambda^{++}.\] 
 \end{proof}

\begin{Lemma}\label{lemT_support_Z_convex_hull}
Let $\lambda\in Y^+$. Then  $Z^\lambda\in \HC_{\conv_{Q^\vee}({W_0}\cdot \lambda)}$.
\end{Lemma}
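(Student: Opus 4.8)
The plan is to prove the statement by induction on $\ell(w_\lambda)$, where $w_\lambda \in W_0^{\lambda^{++}}$ is the minimal element with $\lambda = w_\lambda(\lambda^{++})$ (see \eqref{e_definition_w_lambda}). The base case is $\lambda \in Y^{++}$: here $w_\lambda = 1$, and by Theorem~\ref{t_BL_relations} we have $T_\lambda \in \cR^\times Z^\lambda$, so $Z^\lambda = \cR^\times T_\lambda \in \HCW T_\lambda \subset \HC_{\conv_{Q^\vee}(W_0 \cdot \lambda)}$ since $\lambda \in \conv_{Q^\vee}(W_0 \cdot \lambda)$. For the inductive step, suppose $\ell(w_\lambda) \geq 1$ and pick $i \in I_A$ with $r_i w_\lambda < w_\lambda$. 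Then by Lemma~\ref{l_Wlambda}, $\mu := r_i(\lambda) = r_i w_\lambda(\lambda^{++})$ satisfies $w_\mu = r_i w_\lambda$, so $\ell(w_\mu) = \ell(w_\lambda) - 1$ and $\mu^{++} = \lambda^{++}$. Note also $\alpha_i(\lambda) < 0$ (since $r_i w_\lambda < w_\lambda$ means $w_\lambda^{-1}(\alpha_i) < 0$, equivalently $\alpha_i(\lambda) = \alpha_i(w_\lambda \lambda^{++}) = (w_\lambda^{-1}\alpha_i)(\lambda^{++}) < 0$).

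The key computation is to express $Z^\lambda$ in terms of $Z^{r_i(\lambda)} = Z^\mu$ and lower-order terms. From (BL4), rewritten as in Remark~\ref{r_explicit_BL_relations}(2), we have
\[
  Z^\lambda T_i - T_i Z^{r_i(\lambda)} = (q-1)\frac{Z^\lambda - Z^{r_i(\lambda)}}{1 - Z^{-\alpha_i^\vee}},
\]
and since $\alpha_i(\lambda) < 0$, the right-hand side is $-(q-1)\sum_{\nu \in \,]\lambda, r_i(\lambda)]_{Q^\vee}} Z^\nu$ by \eqref{e_computation_BL}, i.e.\ a combination of $Z^\nu$ with $\nu \in \,]\lambda, \mu]_{Q^\vee}$. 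Solving for $Z^\lambda T_i$ and then right-multiplying by $T_i^{-1}$ (using \eqref{e_inverse_T_i}), I obtain $Z^\lambda$ as an $\HCW$-combination of $Z^\mu$, $Z^\mu T_i^{-1}$, and terms $Z^\nu$, $Z^\nu T_i^{-1}$ for $\nu \in \,]\lambda, \mu[_{Q^\vee}$. By the inductive hypothesis $Z^\mu \in \HC_{\conv_{Q^\vee}(W_0 \cdot \mu)}$; moreover $Z^\mu T_i^{-1} \in \HCW Z^\mu + \HCW Z^{r_i(\mu)} = \HCW Z^\mu + \HCW Z^\lambda$ (by Proposition~\ref{p_HCW_bimodule} applied to $\supp^Z(Z^\mu T_i^{-1}) \subset \overline{\sT_i}(\mu)$, cf.\ \eqref{e_Zmu_Ti_inverse}), but one must check $r_i(\mu) = \lambda$ does not reintroduce a circular dependence — instead, combining both equations one gets a linear relation over $\HCW$ between $Z^\lambda$ and $Z^\mu$ with an invertible coefficient for $Z^\lambda$, since the relevant coefficient is $q^{-1} \in \cR^\times$.

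The cleanest way to organize this: show directly that $Z^\lambda \in \HCW Z^\mu + \sum_{\nu \in \,]\lambda,\mu[_{Q^\vee}} \HCW Z^\nu$, where each such $\nu$ lies strictly between $\lambda$ and $\mu = r_i(\lambda)$ in the $Q^\vee$-order, hence (since $\lambda, r_i(\lambda) \in W_0\cdot\lambda$ and $\conv_{Q^\vee}$ contains the segment) $\nu \in \conv_{Q^\vee}(W_0 \cdot \lambda)$; and apply the inductive hypothesis to $\mu$, together with the monotonicity $\conv_{Q^\vee}(W_0 \cdot \mu) \subset \conv_{Q^\vee}(W_0 \cdot \lambda)$, which follows from $\mu^{++} = \lambda^{++}$ and Lemma~\ref{lemInequality_convex_hull_orbit}. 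The main obstacle I anticipate is the bookkeeping in the inductive step: carefully tracking that, after multiplying the (BL4)-derived identity by $T_i^{-1}$, the only $Z$-exponents appearing outside $\HCW Z^\mu$ are genuinely the interior points $]\lambda, \mu[_{Q^\vee}$ (so no new orbit points outside $\conv_{Q^\vee}(W_0 \cdot \lambda)$ sneak in via the $\overline{\sT_i}$ operation), and verifying the coefficient of $Z^\lambda$ is invertible so the relation can be solved. Once that is done, all the convex-hull containments are routine from Lemmas~\ref{lemInequality_convex_hull_orbit} and~\ref{lemGrowth_dominant_height_segments} and the convexity of $\conv_{Q^\vee}$.
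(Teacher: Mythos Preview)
The paper's proof is a one-line citation of \cite[Theorem~5.5~(2)]{bardy2016iwahori}, which directly gives $Z^\lambda \in \sum_{\nu \in \conv_{Q^\vee}(W_0 \cdot \lambda)} T_\nu \HCW$; combined with Proposition~\ref{p_HCW_bimodule}(3) this yields $Z^\lambda \in \HC_{\conv_{Q^\vee}(W_0 \cdot \lambda)}$. Your self-contained inductive approach through the Bernstein--Lusztig relations is a genuinely different route.

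Your argument does cleanly produce the identity
\[
Z^\lambda \;=\; q\, T_i^{-1} Z^\mu T_i^{-1} \;-\; (q-1)\sum_{\nu \in\, ]\lambda,\mu[_{Q^\vee}} Z^\nu T_i^{-1}
\]
(after separating off the $\nu=\mu$ summand and using $T_i - (q-1) = qT_i^{-1}$; the feared circular $Z^\lambda$-dependence does not actually arise, so the ``invertible coefficient'' discussion is unnecessary). The first term lies in $\HC_{\conv_{Q^\vee}(W_0\cdot\lambda)}$ by the inductive hypothesis on $\mu$ together with Proposition~\ref{p_HCW_bimodule}(3). But for each interior $\nu\in\,]\lambda,\mu[_{Q^\vee}$ you need $Z^\nu T_i^{-1}\in\HC_{\conv_{Q^\vee}(W_0\cdot\lambda)}$, and here is the gap: you only argue that the \emph{coweight} $\nu$ lies in $\conv_{Q^\vee}(W_0\cdot\lambda)$, whereas what is needed is that the \emph{$T$-support of $Z^\nu$} does. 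That requires the lemma itself for $\nu$ (plus the inclusion $\conv_{Q^\vee}(W_0\cdot\nu)\subset\conv_{Q^\vee}(W_0\cdot\lambda)$, which is fine since the latter set is $W_0$-stable and convex). By Lemma~\ref{lemGrowth_dominant_height_segments} these $\nu$ satisfy $\nu^{++}<_{Q^\vee}\lambda^{++}$, hence lie in strictly lower $W_0$-orbits, and your induction on $\ell(w_\lambda)$ alone says nothing about them. The natural fix---an outer induction on $\lambda^{++}$ under $\leq_{Q^\vee}$---is delicate in the Kac-Moody setting because $\leq_{Q^\vee}$ is not well-founded on $Y^{++}$ in general (e.g.\ in affine type, subtracting multiples of the canonical central element $c\in Q^\vee_+$ gives infinite strictly decreasing chains in $Y^{++}$); so making your scheme rigorous would still require a separate termination argument that you have not supplied.
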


\begin{proof}
    By  \cite[Theorem 5.5 (2)]{bardy2016iwahori} and Proposition~\ref{p_HCW_bimodule}, we have $Z^\lambda\in \sum_{\nu\in \conv_{Q^\vee}({W_0}\cdot \lambda)}\Tb_{\nu} \HC_{W_0}\subset \cH_{\conv_{Q^\vee}(W_0\cdot \lambda)}$ (by \cite[5.7]{bardy2016iwahori}, $Z^\lambda$ is an element of $\cR X^\lambda$).
\end{proof}

\begin{Definition}\label{d_dominant_preorder}
    We define $\leq^{++}$\index{ @$\leq^{++}$} on $Y^+$ by $\lambda\leq^{++}\mu$ if and only if $\lambda^{++}\leq_{Q^\vee}\mu^{++}$, for every $\lambda,\mu\in Y^+$. Note that this is a preorder but not an order since $\lambda\leq^{++}\mu\leq^{++}\lambda$ for all $\lambda,\mu\in Y^+$ such that $\mu\in W_0\cdot \lambda$.

\end{Definition}

For $\lambda\in Y^{++}$, set \begin{equation}\label{e_notation_bimodule}
    \cH_{ \leq^{++}\lambda}=\bigoplus_{\mu\in ]-\infty,\lambda]_{ \leq^{++}}}\HCW  T_{\mu} \text{ and }\cH_{<^{++}\lambda}=\bigoplus_{\mu\in ]-\infty,\lambda[_{ \leq^{++}}}\HCW* T_{\mu}.
\end{equation} These are two-sided $\HCW$-submodules of $\HC$ by Proposition~\ref{p_HCW_bimodule}.

For $\lambda\in Y^+$, let $\pi_\lambda:\cH_{ \leq^{++}\lambda}\twoheadrightarrow \cH_{ \leq^{++}\lambda}/\cH_{<^{++}\lambda}$\index{p@$\pi_\lambda$} be the natural projection. This is a morphism of $\HCW$-bi-modules by Proposition~\ref{p_HCW_bimodule}.

\begin{Lemma}\label{l_pi_lambda_Z_w_lambda}
Let $\tilde{\lambda}\in Y^{++}$ and $w\in W_0^{\tilde{\lambda}}$. Then we have: \[
    \pi_{\tilde{\lambda}}(Z^{w({\tilde{\lambda}})})=\pi_{\tilde{\lambda}}(q^{\ell(w)}T_{w^{-1}}^{-1}Z^{\tilde{\lambda}} T_w^{-1})=\pi_{\tilde{\lambda}}(q^{\ell(w)}\delta^{-1/2}({\tilde{\lambda}})T_{w^{-1}}^{-1}T_w^{-1} T_{w({\tilde{\lambda}})}).\]
\end{Lemma}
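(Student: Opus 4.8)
The plan is to check the two claimed equalities separately, the second one being the real content.

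\emph{The rightmost equality.} This one holds already before applying $\pi_{\tilde\lambda}$. Since $(w(\tilde\lambda))^{++}=\tilde\lambda$ and $w$ sends $\tilde\lambda$ to $w(\tilde\lambda)$, Remark~\ref{r_invertibility_Z_T} gives $T_{w(\tilde\lambda)}=T_wT_{\tilde\lambda}T_w^{-1}$, and Theorem~\ref{t_BL_relations} gives $T_{\tilde\lambda}=\delta^{1/2}(\tilde\lambda)Z^{\tilde\lambda}$; substituting into $q^{\ell(w)}\delta^{-1/2}(\tilde\lambda)T_{w^{-1}}^{-1}T_w^{-1}T_{w(\tilde\lambda)}$ and cancelling $\delta^{\pm1/2}(\tilde\lambda)$ and $T_w^{-1}T_w$ (using that $T_w,T_{w^{-1}}$ are invertible in $\HCW$) produces exactly $q^{\ell(w)}T_{w^{-1}}^{-1}Z^{\tilde\lambda}T_w^{-1}$.

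\emph{The first equality.} Here I would prove $\pi_{\tilde\lambda}(Z^{w(\tilde\lambda)})=\pi_{\tilde\lambda}\bigl(q^{\ell(w)}T_{w^{-1}}^{-1}Z^{\tilde\lambda}T_w^{-1}\bigr)$ by induction on $\ell(w)$, for $w\in W^{\tilde\lambda}_0$. Both arguments lie in $\cH_{\leq^{++}\tilde\lambda}$: for $Z^{w(\tilde\lambda)}$ and $Z^{\tilde\lambda}$ this is Lemma~\ref{lemT_support_Z_convex_hull} together with Lemma~\ref{lemInequality_convex_hull_orbit}, and then one invokes Proposition~\ref{p_HCW_bimodule} (so $\cH_{\leq^{++}\tilde\lambda}$ is a two-sided $\HCW$-submodule and $T_{w^{-1}}^{-1},T_w^{-1}\in\HCW$). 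The case $w=1$ is trivial. For $w\neq1$, pick $i\in I_A$ with $r_iw<w$; by Lemma~\ref{l_Wlambda}, $v:=r_iw\in W^{\tilde\lambda}_0$, and $w=r_iv$ with $\ell(w)=\ell(v)+1$. From $w^{-1}=v^{-1}r_i$ (with $\ell(v^{-1}r_i)=\ell(v^{-1})+1$) and $w=r_iv$ (with $\ell(r_iv)=1+\ell(v)$) we get $T_{w^{-1}}^{-1}=T_i^{-1}T_{v^{-1}}^{-1}$ and $T_w^{-1}=T_v^{-1}T_i^{-1}$, hence
\[
q^{\ell(w)}T_{w^{-1}}^{-1}Z^{\tilde\lambda}T_w^{-1}=q\,T_i^{-1}\bigl(q^{\ell(v)}T_{v^{-1}}^{-1}Z^{\tilde\lambda}T_v^{-1}\bigr)T_i^{-1}.
\]
Since $\cH_{<^{++}\tilde\lambda}$ is a two-sided $\HCW$-submodule and $T_i^{-1}\in\HCW$, the inductive hypothesis $q^{\ell(v)}T_{v^{-1}}^{-1}Z^{\tilde\lambda}T_v^{-1}\equiv Z^{v(\tilde\lambda)}\pmod{\cH_{<^{++}\tilde\lambda}}$ may be substituted in the middle, and the claim reduces to
\[
q\,T_i^{-1}Z^{v(\tilde\lambda)}T_i^{-1}\equiv Z^{r_iv(\tilde\lambda)}\pmod{\cH_{<^{++}\tilde\lambda}}.
\]

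\emph{The key congruence.} Set $\mu:=v(\tilde\lambda)\in Y^+$, so $\mu^{++}=\tilde\lambda$. The crucial point is $\alpha_i(\mu)>0$: with $\beta:=v^{-1}(\alpha_i)$, the inequality $\ell(r_iv)>\ell(v)$ forces $\beta$ to be a positive real root; if moreover $\beta(\tilde\lambda)=0$ then $r_\beta\in W_{0,\tilde\lambda}$ and $r_iv=vr_\beta$, so $v$ and $r_iv$ would be two distinct minimal-length representatives of the same left $W_{0,\tilde\lambda}$-coset — impossible; hence $\beta(\tilde\lambda)>0$ because $\tilde\lambda\in\overline{C^v_f}$, i.e.\ $\alpha_i(\mu)=(v^{-1}\alpha_i)(\tilde\lambda)>0$. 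Now apply \eqref{e_Z_lambda_Tinverse} to $Z^\mu T_i^{-1}$; since $\alpha_i(\mu)>0$, formula \eqref{e_computation_BL} gives $\frac{Z^\mu-Z^{r_i(\mu)}}{1-Z^{-\alpha_i^\vee}}-Z^\mu=\sum_{\nu\in\,]\mu,r_i(\mu)[_{Q^\vee}}Z^\nu$, and every such $\nu$ lies strictly inside the open segment $]\mu,r_i(\mu)[$, so $\nu^{++}<_{Q^\vee}\mu^{++}=\tilde\lambda$ by Lemma~\ref{lemGrowth_dominant_height_segments}, whence $Z^\nu\in\cH_{<^{++}\tilde\lambda}$ by Lemmas~\ref{lemT_support_Z_convex_hull} and~\ref{lemInequality_convex_hull_orbit}. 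Therefore $Z^\mu T_i^{-1}\equiv q^{-1}T_iZ^{r_i(\mu)}\pmod{\cH_{<^{++}\tilde\lambda}}$, and left-multiplying by $q\,T_i^{-1}$ yields $q\,T_i^{-1}Z^\mu T_i^{-1}\equiv Z^{r_i(\mu)}=Z^{r_iv(\tilde\lambda)}$, closing the induction.

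\emph{Main obstacle.} The argument is essentially bookkeeping once one knows to induct on $\ell(w)$; the one load-bearing point — and the step I would be most careful about — is the positivity $\alpha_i(v(\tilde\lambda))>0$, since this is precisely what makes the error terms of the inverse Bernstein–Lusztig relation \eqref{e_Z_lambda_Tinverse} drop strictly below $\tilde\lambda$ in the $\leq^{++}$-preorder (the congruence genuinely fails if $\alpha_i(\mu)$ is zero or negative). A secondary point to watch is verifying at each stage that the elements in play really lie in $\cH_{\leq^{++}\tilde\lambda}$, which is exactly where the two-sided $\HCW$-module structure of $\cH_{\leq^{++}\tilde\lambda}$ and $\cH_{<^{++}\tilde\lambda}$ (Proposition~\ref{p_HCW_bimodule}) is needed.
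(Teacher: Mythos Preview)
Your proof is correct and follows essentially the same approach as the paper's: induction on $\ell(w)$, picking $i$ with $r_iw<w$, using Lemma~\ref{l_Wlambda} to stay in $W_0^{\tilde\lambda}$, and then invoking \eqref{e_Z_lambda_Tinverse}, \eqref{e_computation_BL}, and Lemma~\ref{lemGrowth_dominant_height_segments} to kill the error terms modulo $\cH_{<^{++}\tilde\lambda}$; the rightmost equality via $T_{w(\tilde\lambda)}=T_wT_{\tilde\lambda}T_w^{-1}$ and $T_{\tilde\lambda}=\delta^{1/2}(\tilde\lambda)Z^{\tilde\lambda}$ matches the paper's final sentence. Your write-up is in fact more complete than the paper's terse version: you make explicit the strict positivity $\alpha_i(v(\tilde\lambda))>0$ (which the paper leaves to the reader), and your argument for it---that $v,w\in W_0^{\tilde\lambda}$ would collapse to the same minimal coset representative if $\alpha_i(v(\tilde\lambda))=0$---is the right one.
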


\begin{proof}
    
Let $w\in W_0^{\tilde{\lambda}}$. Assume that $w\neq 1$ and take $i\in {I_A}$ such that $v<w$, where $v=r_iw$. Then by Lemma~\ref{l_Wlambda}, $v\in W_0^{\tilde{\lambda}}$.  By Lemma~\ref{lemGrowth_dominant_height_segments}, \eqref{e_Z_lambda_Tinverse}  and \eqref{e_computation_BL}, we have $\pi_{{\tilde{\lambda}}}(\Zb^{v({\tilde{\lambda}})} T_{i}^{-1})=\pi_{\tilde{\lambda}}(q^{-1}T_{i} \Zb^{w({\tilde{\lambda}})})$ and hence $\pi_{\tilde{\lambda}}(Z^{w({\tilde{\lambda}})})= \pi_{\tilde{\lambda}}(qT_{i}^{-1}\Zb^{v({\tilde{\lambda}})})T_i^{-1}$. By induction, we deduce that $
    \pi_{\tilde{\lambda}}(Z^{w({\tilde{\lambda}})})=\pi_{\tilde{\lambda}}(q^{\ell(w)}T_{w^{-1}}^{-1}Z^{\tilde{\lambda}} T_w^{-1}).$ By Theorem~\ref{t_BL_relations} and \cite[Corollary 4.3]{bardy2016iwahori} we deduce the result.
    \end{proof}

Recall the definition of $w_\lambda$ in \eqref{e_definition_w_lambda}.

\begin{Proposition}\label{P_BL_support_IM_basis}
Let $\lambda\in Y^+$.  Then:\begin{enumerate}
\item Write $\lambda=w(\lambda^{++})$, with $w\in W_0$. Then $\supp^{Z}(\Tb_{\lambda})\subset \overline{\sT_{w}}(\{\lambda^{++}\})\subset \conv_{Q^\vee}(W_0\cdot \lambda)\subset ]-\infty,\lambda]_{\leq^{++}}$ and  $\supp^T(Z^\lambda)\subset \conv_{Q^\vee}(W_0\cdot \lambda) \subset ]-\infty,\lambda]_{\leq^{++}}$.

\item We have $\supp^T(Z^\lambda)\cap W_0\cdot \lambda=\{\lambda\}$, $\supp^Z(T_\lambda)\cap W_0\cdot \lambda=\{\lambda\}$, \[\pr^Z_\lambda(T_\lambda)=\left(q^{-\ell(w_\lambda)}\delta^{1/2}(\lambda^{++})T_{w_\lambda}T_{w_\lambda^{-1}}\right)Z^\lambda\in (\HCW)^\times Z^\lambda\] and \[\pr^T_\lambda(Z^\lambda)=\left(q^{\ell(w_\lambda)}\delta^{-1/2}(\lambda^{++})T_{w_\lambda^{-1}}^{-1} T_{w_{\lambda}}^{-1} \right)T_\lambda\in (\HCW)^\times T_\lambda.\]

\end{enumerate} 
\end{Proposition}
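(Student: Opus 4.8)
<br>

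The plan is to handle the two parts separately, deriving everything from three ingredients already in hand: the $\overline{\sT}$-support bound of Lemma~\ref{lemBernstein_Lusztig_relation_inverse}, the leading-term identity of Lemma~\ref{l_pi_lambda_Z_w_lambda}, and the conjugation formula $T_\lambda = T_w T_{\lambda^{++}} T_w^{-1}$ of Remark~\ref{r_invertibility_Z_T} together with $T_{\lambda^{++}} = \delta^{1/2}(\lambda^{++}) Z^{\lambda^{++}}$ from Theorem~\ref{t_BL_relations}.

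For (1), the claim about $\supp^T(Z^\lambda)$ is immediate: Lemma~\ref{lemT_support_Z_convex_hull} gives $Z^\lambda\in\cH_{\conv_{Q^\vee}(W_0\cdot\lambda)}$, i.e.\ $\supp^T(Z^\lambda)\subset\conv_{Q^\vee}(W_0\cdot\lambda)$, and Lemma~\ref{lemInequality_convex_hull_orbit} with Definition~\ref{d_dominant_preorder} gives $\conv_{Q^\vee}(W_0\cdot\lambda)\subset\,]-\infty,\lambda]_{\leq^{++}}$. For $\supp^Z(T_\lambda)$ I would write $\lambda=w(\lambda^{++})$ and combine the two formulas above into $T_\lambda = \delta^{1/2}(\lambda^{++})\,T_w\,(Z^{\lambda^{++}} T_w^{-1})$. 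Since $\delta^{1/2}(\lambda^{++})T_w$ is a unit of $\HCW$ (by Assumption~\ref{a_assumption_ring} and \eqref{e_inverse_T_i}) and left multiplication by a unit of $\HCW$ preserves the $Z$-support relative to $\cH=\bigoplus_\mu\HCW Z^\mu$, this yields $\supp^Z(T_\lambda)=\supp^Z(Z^{\lambda^{++}}T_w^{-1})\subset\overline{\sT_w}(\lambda^{++})$ by Lemma~\ref{lemBernstein_Lusztig_relation_inverse}; the remaining inclusions $\overline{\sT_w}(\lambda^{++})\subset\conv_{Q^\vee}(W_0\cdot\lambda^{++})=\conv_{Q^\vee}(W_0\cdot\lambda)\subset\,]-\infty,\lambda]_{\leq^{++}}$ follow from Lemma~\ref{l_t}(1) and Lemma~\ref{lemInequality_convex_hull_orbit}.

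For (2), part (1) places $Z^\lambda$ and $T_\lambda$ in $\cH_{\leq^{++}\lambda^{++}}$, and its triangularity (feeding the $T$-support bound for $Z^\mu$ and the $Z$-support bound for $T_\mu$ into one another) shows $\cH_{\leq^{++}\lambda^{++}}=\bigoplus_{\mu\leq^{++}\lambda^{++}}\HCW Z^\mu$ and likewise for $<^{++}$; hence the quotient $\cH_{\leq^{++}\lambda^{++}}/\cH_{<^{++}\lambda^{++}}$ is a free left $\HCW$-module on the images of the $T_\mu$ with $\mu\in W_0\cdot\lambda$, and equally on the images of the $Z^\mu$. As $T_\lambda$ and $Z^\lambda$ are basis vectors, $\pi_{\lambda^{++}}(T_\lambda)$ and $\pi_{\lambda^{++}}(Z^\lambda)$ are the corresponding basis vectors of the quotient, while $\pi_{\lambda^{++}}(Z^\lambda)=\sum_{\mu\in W_0\cdot\lambda}\coeff^T_\mu(Z^\lambda)\,\pi_{\lambda^{++}}(T_\mu)$ and $\pi_{\lambda^{++}}(T_\lambda)=\sum_{\mu\in W_0\cdot\lambda}\coeff^Z_\mu(T_\lambda)\,\pi_{\lambda^{++}}(Z^\mu)$. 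Now I apply Lemma~\ref{l_pi_lambda_Z_w_lambda} with $\tilde\lambda=\lambda^{++}$ and $w=w_\lambda$ — legitimate since $w_\lambda\in W_0^{\lambda^{++}}$, $w_\lambda(\lambda^{++})=\lambda$ and $T_{w_\lambda(\lambda^{++})}=T_\lambda$ — which, since $\pi_{\lambda^{++}}$ is a morphism of $\HCW$-bimodules, gives $\pi_{\lambda^{++}}(Z^\lambda)=c\cdot\pi_{\lambda^{++}}(T_\lambda)$ with $c:=q^{\ell(w_\lambda)}\delta^{-1/2}(\lambda^{++})T_{w_\lambda^{-1}}^{-1}T_{w_\lambda}^{-1}\in(\HCW)^\times$ (all of $q$, $\delta^{1/2}(\lambda^{++})$, $T_{w_\lambda}$, $T_{w_\lambda^{-1}}$ being units). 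Comparing this with the $T$-expansion of $\pi_{\lambda^{++}}(Z^\lambda)$ forces $\coeff^T_\lambda(Z^\lambda)=c\neq0$ and $\coeff^T_\mu(Z^\lambda)=0$ for the other $\mu\in W_0\cdot\lambda$, i.e.\ $\supp^T(Z^\lambda)\cap W_0\cdot\lambda=\{\lambda\}$ and $\pr^T_\lambda(Z^\lambda)=cT_\lambda$; inverting $c$ (so $c^{-1}=q^{-\ell(w_\lambda)}\delta^{1/2}(\lambda^{++})T_{w_\lambda}T_{w_\lambda^{-1}}$) and comparing $\pi_{\lambda^{++}}(T_\lambda)=c^{-1}\pi_{\lambda^{++}}(Z^\lambda)$ with its $Z$-expansion gives $\supp^Z(T_\lambda)\cap W_0\cdot\lambda=\{\lambda\}$ and $\pr^Z_\lambda(T_\lambda)=c^{-1}Z^\lambda$ — precisely the two asserted formulas.

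The only step that needs genuine care is the bookkeeping in (2): identifying the $T$-defined module $\cH_{\leq^{++}\lambda^{++}}$ with its $Z$-basis description, and checking that the quotient acquires free $\HCW$-bases from both the $T_\mu$ and the $Z^\mu$ (this is exactly where the triangularity of (1) is used), together with keeping straight the order of the factors $T_{w_\lambda}$, $T_{w_\lambda^{-1}}$ and the side on which $\HCW$ acts. The two substantive inputs — the $\overline{\sT}$-support bound and the leading-term identity — are already established, so I do not expect any further obstacle.
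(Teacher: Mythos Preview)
Your proof is correct and follows essentially the same route as the paper: for (1) you use the conjugation formula $T_\lambda=T_wT_{\lambda^{++}}T_w^{-1}$ together with Lemma~\ref{lemBernstein_Lusztig_relation_inverse} for $\supp^Z(T_\lambda)$, and Lemma~\ref{lemT_support_Z_convex_hull} for $\supp^T(Z^\lambda)$; for (2) you invoke Lemma~\ref{l_pi_lambda_Z_w_lambda}. The paper's own proof of (2) is a one-liner (``(2) follows from Lemma~\ref{l_pi_lambda_Z_w_lambda}''), whereas you spell out the identification $\cH_{\leq^{++}\lambda^{++}}=\bigoplus_{\mu\leq^{++}\lambda^{++}}\HCW Z^\mu$ via the mutual triangularity of (1) and then read off both sets of coefficients in the quotient---this extra care is justified and makes the argument self-contained.
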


\begin{proof}

By \cite[Corollary 4.3]{bardy2016iwahori}, we have $T_\lambda=T_w T_{\lambda^{++}} T_{w}^{-1}$.  By Theorem~\ref{t_BL_relations}, we have  $\Tb_{\lambda^{++}}\in \cR^\times \Zb^{\lambda^{++}}$ and then $\supp^Z(\Tb_{\lambda^{++}})=\{\lambda^{++}\}$. Therefore $\supp^Z(T_\lambda)=\supp^Z(T_{\lambda^{++}} T_{w}^{-1})=\supp^Z(Z^{\lambda^{++}} T_w^{-1})$. By Lemma~\ref{lemBernstein_Lusztig_relation_inverse} we deduce 1). By \eqref{e_inclusions_sT} and Lemma~\ref{lemInequality_convex_hull_orbit}, we deduce that $\supp^Z(T_\lambda)\subset \conv_{Q^\vee}(W_0\cdot \lambda)\subset ]-\infty,\lambda]_{\leq^{++}}$.

By Lemma~\ref{lemT_support_Z_convex_hull} and Lemma~\ref{lemInequality_convex_hull_orbit}, we have $\supp^T(Z^\lambda)\subset \conv_{Q^\vee}(W_0\cdot \lambda)\subset  ]-\infty,\lambda]_{\leq^{++}}$, which proves (1). Then (2) follows from  Lemma~\ref{l_pi_lambda_Z_w_lambda}.
\end{proof}

We have \begin{equation}\label{e_injectivity_product}
    \forall h\in \HCW, \forall \lambda\in Y^+, hZ^\lambda=0\Rightarrow h=0.
\end{equation} Indeed in $^{BL}\cH$ (see Definition~\ref{d_BL_algebra}), we have $hZ^\lambda Z^{-\lambda}=h=0$ and thus $h=0$. We now prove a similar property for $T$. 

\begin{Corollary}\label{c_support_Tlambda_Tmu}
Let $\lambda,\mu\in Y^+$ and $h\in \HCW$. Then $\supp^T(T_\lambda h T_\mu)\subset \,\,]-\infty,\lambda^{++}+\mu^{++}]_{\leq^{++}}$. 
\end{Corollary}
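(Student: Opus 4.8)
The plan is to reduce the statement about $T_\lambda h T_\mu$ to what we already know about the $Z$-supports of $T_\lambda$ and $Z^\mu$, using the triangularity from Proposition~\ref{P_BL_support_IM_basis} together with the fact that the modules $\cH_{\leq^{++}\nu}$ are two-sided $\HCW$-submodules.

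First I would expand $T_\mu$ in the $Z$-basis: by Proposition~\ref{P_BL_support_IM_basis}(1), $T_\mu \in \HC_{\conv_{Q^\vee}(W_0\cdot\mu)}$, and since $Z^\nu \in \HC_{\conv_{Q^\vee}(W_0\cdot\nu)}$ for each $\nu$ appearing, it suffices (by linearity and because $\HCW$ acts on both sides) to bound $\supp^T(T_\lambda h Z^\nu)$ for $\nu \in \supp^Z(T_\mu) \subset \conv_{Q^\vee}(W_0\cdot\mu)$, which by Lemma~\ref{lemInequality_convex_hull_orbit} satisfies $\nu^{++} \leq_{Q^\vee} \mu^{++}$. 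Next I would handle $T_\lambda$ similarly: writing $T_\lambda = T_w T_{\lambda^{++}} T_w^{-1}$ via \cite[Corollary 4.3]{bardy2016iwahori} and using $T_{\lambda^{++}} \in \cR^\times Z^{\lambda^{++}}$, the element $T_\lambda h Z^\nu$ becomes (up to a unit) $T_w Z^{\lambda^{++}} T_w^{-1} h Z^\nu$. Absorbing $T_w^{-1} h \in \HCW$ and commuting everything past using (BL4), one is reduced to understanding $\supp^T(Z^{\lambda^{++}} h' Z^\nu)$ for $h' \in \HCW$.

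The key computation is then: in $^{BL}\cH$ we have the relation (BL3) $Z^\alpha Z^\beta = Z^{\alpha+\beta}$, and (BL4) lets us move $Z^{\lambda^{++}}$ past $h'$ at the cost of introducing terms $Z^{\lambda^{++} - q^\vee}$ with $q^\vee \in Q^\vee_+$ of the appropriate sign. More precisely, by Remark~\ref{r_explicit_BL_relations}(2) and induction on $\ell$ (as in the proof of Lemma~\ref{lemBernstein_Lusztig_relation_inverse}), $\supp^Z(Z^{\lambda^{++}} h')$ is contained in $\widehat{\sT_{w'}}(\lambda^{++})$ for suitable $w'$, hence in $\conv_{Q^\vee}(W_0\cdot\lambda^{++})$, so every $Z$-term that arises has exponent $\kappa$ with $\kappa^{++} \leq_{Q^\vee} \lambda^{++}$ by Lemma~\ref{lemInequality_convex_hull_orbit}. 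Multiplying by $Z^\nu$ on the right via (BL3) gives exponents $\kappa + \nu$; and then expanding back in the $T$-basis, $\supp^T(Z^{\kappa+\nu})$ lands in $\conv_{Q^\vee}(W_0\cdot(\kappa+\nu)) \subset\,]-\infty,(\kappa+\nu)^{++}]_{\leq^{++}}$ by Proposition~\ref{P_BL_support_IM_basis}(1). So I need the inequality $(\kappa+\nu)^{++} \leq_{Q^\vee} \kappa^{++} + \nu^{++} \leq_{Q^\vee} \lambda^{++} + \mu^{++}$; the first inequality follows from \eqref{e_GR2.4} (pick $v$ with $v(\kappa+\nu) = (\kappa+\nu)^{++}$, then $(\kappa+\nu)^{++} = v(\kappa) + v(\nu) \leq_{Q^\vee} \kappa^{++} + \nu^{++}$), and the second from $\kappa^{++} \leq_{Q^\vee} \lambda^{++}$ and $\nu^{++} \leq_{Q^\vee} \mu^{++}$.

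The main obstacle I expect is bookkeeping the non-commutativity cleanly: the relations (BL4) mix $Z$'s and $T$'s, so moving $h$ through $Z^{\lambda^{++}}$ is not a single step but an induction that produces a sum of terms, each of which must be tracked for its $Z$-support. The cleanest route is probably to avoid explicit commutation entirely and instead argue module-theoretically: show directly that $T_\lambda \cdot \HCW \cdot T_\mu \subseteq \cH_{\leq^{++}\lambda^{++}+\mu^{++}}$ by observing that $\cH_{\leq^{++}\nu}$ is a two-sided $\HCW$-module (the displayed fact after \eqref{e_notation_bimodule}), that $T_{\lambda^{++}} \in \cR^\times Z^{\lambda^{++}}$ and $Z^{\lambda^{++}} \cdot \HCW \subseteq \bigoplus_{\kappa \in \conv_{Q^\vee}(W_0\cdot\lambda^{++})} \HCW Z^\kappa$ (a consequence of Lemma~\ref{lemT_support_Z_convex_hull} applied after expanding, or directly from Remark~\ref{r_explicit_BL_relations}(2)), and finally that $Z^\kappa Z^\nu = Z^{\kappa+\nu} \in \cH_{\leq^{++}\kappa^{++}+\nu^{++}} \subseteq \cH_{\leq^{++}\lambda^{++}+\mu^{++}}$ by Lemma~\ref{lemT_support_Z_convex_hull}, Lemma~\ref{lemInequality_convex_hull_orbit}, and the superadditivity $(\kappa+\nu)^{++} \leq_{Q^\vee} \kappa^{++}+\nu^{++}$ noted above. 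Reassembling with $T_\lambda = T_w T_{\lambda^{++}} T_w^{-1}$ and using that $T_w^{-1}, T_w \in \HCW$ then finishes the proof.
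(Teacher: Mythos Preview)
Your proposal is correct and follows essentially the same route as the paper: expand in the $Z$-basis, control the $Z$-support of $Z^\kappa \cdot (\text{element of }\HCW)$ via Lemma~\ref{lemBernstein_Lusztig_relation_inverse}, use the subadditivity $(\kappa+\nu)^{++}\leq_{Q^\vee}\kappa^{++}+\nu^{++}$ (which is exactly \cite[Lemma 4.17 (1)]{abdellatif2019completed}), and convert back to the $T$-support via Proposition~\ref{P_BL_support_IM_basis}. The one inefficiency is your detour through $T_\lambda = T_w T_{\lambda^{++}} T_w^{-1}$: the paper simply invokes Proposition~\ref{P_BL_support_IM_basis}(1) to write $T_\lambda = \sum_{\nu\in\,]-\infty,\lambda^{++}]_{\leq^{++}}} h_\nu Z^\nu$ directly, so the conjugation step and the module-theoretic repackaging add nothing.
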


\begin{proof}
By Proposition~\ref{P_BL_support_IM_basis}, we can write \[T_\lambda=\sum_{\nu\in ]-\infty,\lambda^{++}]_{\leq^{++}}} h_\nu Z^\nu\text{ and }T_\mu=\sum_{\tau\in ]-\infty,\mu^{++}]_{\leq^{++}}}h_\tau' Z^\tau,\] with $(h_\nu),(h'_\tau)\in (\HCW)^{(Y^+)}$. 
By Lemma~\ref{lemBernstein_Lusztig_relation_inverse}, $\supp^Z(Z^\nu hh_\tau')\subset ]-\infty,\nu^{++}]_{\leq^{++}}$ for $\nu,\tau \in Y^+$.
Using \cite[Lemma 4.17 (1)]{abdellatif2019completed} we deduce $\supp^Z(T_\lambda hT_\mu)\subset ]-\infty,\lambda^{++}+\mu^{++}]_{\leq^{++}}$ and thus   by Proposition~\ref{P_BL_support_IM_basis}, $\supp(T_\lambda hT_\mu)\subset ]-\infty,\lambda^{++}+\mu^{++}]_{\leq^{++}}$. 
\end{proof}

\subsection{Equivalence of two finiteness properties}\label{ss_equivalence_finitenesss}

We want to prove that every $T$-series $\sum_{\lambda\in Y^+} h_\lambda T_\lambda$ (resp. $Z$-series $\sum_{\lambda\in Y^+} h_{\lambda} Z_\lambda$), with $(h_\lambda)\in (\HCW)^{Y^+}$ having Weyl almost finite support can be decomposed as a $Z$-series $\sum_{\lambda\in Y^+} h_\lambda Z_\lambda$ (resp. $\sum_{\lambda\in Y^+} h_\lambda T_\lambda$). To that end, we need to prove that for $\tilde{\lambda}\in Y^{++},\mu\in Y^+$, $\{\lambda\in W_0\cdot \tilde{\lambda}
\mid \mu\in \supp^Z(T_\lambda)\}$  (resp. $\{\lambda\in W_0\cdot \tilde{\lambda}
\mid \mu\in \supp^T(Z_\lambda)\}$) is finite. We prove that these two properties are actually equivalent, see Proposition~\ref{P_equivalence_finiteness}. Thus we only need to prove one of them, which we do in Lemma~\ref{lemFiniteness_support_TWhat}.

\begin{Proposition}\label{P_equivalence_finiteness}
The following two properties are equivalent:\begin{enumerate}
\item For every $\tilde{\lambda}\in Y^{++}$, $\mu\in Y^+$,  the set $\{\lambda\in W_0\cdot \tilde{\lambda}\mid \mu\in \supp^Z(\Tb_\lambda)\}$ is finite,

\item For every $\tilde{\lambda}\in Y^{++}$, $\mu\in Y^+$, the set $\{\lambda\in W_0 \cdot \tilde{\lambda}\mid \mu\in \supp^T(\Zb^\lambda)\}$ is finite.
\end{enumerate}
\end{Proposition}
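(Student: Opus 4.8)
The plan is to invert the change of basis between the $\Zb$-basis $(\Zb^{\lambda})_{\lambda\in Y^+}$ and the $\Tb$-basis $(\Tb_\lambda)_{\lambda\in Y^+}$ of $\HC$, viewed as a free left $\HCW$-module, using the triangularity for the preorder $\leq^{++}$ recorded in Proposition~\ref{P_BL_support_IM_basis}. Since the two properties are exchanged by swapping the roles of $\Tb$ and $\Zb$, it is enough to prove the implication $(1)\Rightarrow(2)$; the converse then follows from the symmetric argument.

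First I would record the relevant matrix coefficients. For $\lambda,\mu\in Y^+$ set $A_{\lambda,\mu}=\coeff^Z_\mu(\Tb_\lambda)\in\HCW$ and $B_{\lambda,\mu}=\coeff^T_\mu(\Zb^\lambda)\in\HCW$, so that $\Tb_\lambda=\sum_{\mu\in Y^+}A_{\lambda,\mu}\Zb^\mu$ and $\Zb^\lambda=\sum_{\mu\in Y^+}B_{\lambda,\mu}\Tb_\mu$, both sums being finite by Definition~\ref{d_Z_T_support}. Substituting one expansion into the other and using uniqueness of the decompositions $\HC=\bigoplus_\mu\HCW\Tb_\mu=\bigoplus_\mu\HCW\Zb^\mu$ yields, for all $\lambda\neq\nu$ in $Y^+$, the identities $\sum_{\mu}A_{\lambda,\mu}B_{\mu,\nu}=\sum_{\mu}B_{\lambda,\mu}A_{\mu,\nu}=0$ in $\HCW$ (again finite sums, each indexed by a $Z$- or $T$-support). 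From Proposition~\ref{P_BL_support_IM_basis} I would extract the three facts I need: if $A_{\lambda,\mu}\neq0$ or $B_{\lambda,\mu}\neq0$ then $\mu^{++}\leq_{Q^\vee}\lambda^{++}$; the equality $\mu^{++}=\lambda^{++}$ holds only for $\mu=\lambda$; and $A_{\lambda,\lambda},B_{\lambda,\lambda}\in(\HCW)^\times$.

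Assuming $(1)$, I would fix $\tilde\lambda\in Y^{++}$, put $O=W_0\cdot\tilde\lambda$ (so that $\lambda^{++}=\tilde\lambda$ for all $\lambda\in O$) and fix $\nu\in Y^+$, and then prove that $F_\nu:=\{\lambda\in O\mid B_{\lambda,\nu}\neq0\}$ is finite by induction on $n:=\htt(\tilde\lambda-\nu^{++})$ (with $F_\nu=\emptyset$ when $\nu^{++}\not\leq_{Q^\vee}\tilde\lambda$, so we may assume $\nu^{++}\leq_{Q^\vee}\tilde\lambda$). For $n=0$ we have $\nu\in O$, and $B_{\lambda,\nu}\neq0$ with $\lambda\in O$ forces $\nu\in\supp^T(\Zb^\lambda)\cap W_0\cdot\lambda=\{\lambda\}$ by Proposition~\ref{P_BL_support_IM_basis}(2), so $F_\nu\subseteq\{\nu\}$. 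For $n\geq1$ we have $\lambda\neq\nu$ for every $\lambda\in O$, so $B_{\lambda,\nu}A_{\nu,\nu}=-\sum_{\mu\neq\nu}B_{\lambda,\mu}A_{\mu,\nu}$; since $A_{\nu,\nu}$ is a unit, $B_{\lambda,\nu}\neq0$ forces some $\mu\neq\nu$ with $B_{\lambda,\mu}\neq0$ and $A_{\mu,\nu}\neq0$, and then either $\mu=\lambda$ (so $A_{\lambda,\nu}\neq0$) or $\mu\notin\{\lambda,\nu\}$, in which case the triangularity together with the ``equality only for equal'' clause forces $\nu^{++}<_{Q^\vee}\mu^{++}<_{Q^\vee}\tilde\lambda$. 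Hence
\[F_\nu\ \subseteq\ \{\lambda\in O\mid A_{\lambda,\nu}\neq0\}\ \cup\ \bigcup_{\mu\in S_\nu}\{\lambda\in O\mid B_{\lambda,\mu}\neq0\},\qquad S_\nu:=\{\mu\in Y^+\mid \nu^{++}<_{Q^\vee}\mu^{++}<_{Q^\vee}\tilde\lambda,\ A_{\mu,\nu}\neq0\}.\]
The first set is finite by $(1)$. The set $S_\nu$ is finite because $\{\mu^{++}\mid\mu\in S_\nu\}$ is contained in the finite set $[\nu^{++},\tilde\lambda]_{\leq_{Q^\vee}}$, while for each value $\xi$ the fibre $\{\mu\in W_0\cdot\xi\mid A_{\mu,\nu}\neq0\}$ is finite by $(1)$. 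Finally, for $\mu\in S_\nu$ the difference $\mu^{++}-\nu^{++}$ is a nonzero element of $Q^\vee_+$, so $\htt(\tilde\lambda-\mu^{++})<n$ by \eqref{e_definition_htt}, whence $\{\lambda\in O\mid B_{\lambda,\mu}\neq0\}$ is finite by the induction hypothesis. Thus $F_\nu$ is finite. This proves $(1)\Rightarrow(2)$; exchanging $A\leftrightarrow B$ (equivalently $\Tb\leftrightarrow\Zb$) and using $\sum_\mu A_{\lambda,\mu}B_{\mu,\nu}=0$ in place of $\sum_\mu B_{\lambda,\mu}A_{\mu,\nu}=0$ gives $(2)\Rightarrow(1)$ by the identical argument.

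The only delicate point I anticipate is the bookkeeping: since $\HCW$ is noncommutative one must keep the unit factors $A_{\nu,\nu}$, $B_{\nu,\nu}$ on the correct side when solving the linear relations, and one must check that every sum manipulated above is genuinely finite (each is indexed by some $\supp^Z$ or $\supp^T$). The conceptual content --- inverting a matrix triangular for a preorder whose down-sets meet each $W_0$-orbit finitely --- is then routine.
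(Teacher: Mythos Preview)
Your proof is correct and follows essentially the same strategy as the paper: both exploit the triangularity of the change of basis for $\leq^{++}$ (Proposition~\ref{P_BL_support_IM_basis}), the invertibility of the diagonal entries, and induction on $\htt(\tilde{\lambda}-\nu^{++})$ to derive a recursive inclusion bounding $\{\lambda\in W_0\cdot\tilde{\lambda}\mid B_{\lambda,\nu}\neq 0\}$ in terms of sets controlled by hypothesis~(1) and the induction hypothesis. Your packaging via the matrix identity $\sum_\mu B_{\lambda,\mu}A_{\mu,\nu}=\delta_{\lambda,\nu}$ is arguably cleaner than the paper's derivation (which solves for $T_\lambda$ from the expansion of $Z^\lambda$), but the content is the same.
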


\begin{proof}
For $\mu\in Y^+$, set $S^Z(T,\mu)=\{\lambda\in Y^+\mid \mu\in \supp^Z(T_\lambda)\}$ and $S^T(Z,\mu)=\{\lambda\in Y^+\mid \mu\in \supp^T(Z^\lambda)\}$. 

    Let $\mu\in Y^+$. By Proposition~\ref{P_BL_support_IM_basis}, \begin{equation}\label{e_sets_E}
        S^Z(T,\mu)\cup S^T(Z,\mu)\subset \{\mu\}\cup ]\mu,+\infty[_{\leq^{++}}.
    \end{equation} 
    
 Let $\lambda,\mu\in Y^{+}$. Then by Proposition~\ref{P_BL_support_IM_basis}, we can write \[Z^\lambda=hT_\lambda+\sum_{\nu\in \supp^T(Z^\lambda)\setminus \{\lambda\}}h_\nu T_\nu,\] with $h\in (\HCW)^\times$ and $h_\nu\in \HCW$ for all $\nu\in \supp^T(Z^\lambda)\setminus \{\lambda\}$. Hence \[T_\lambda=h^{-1} Z^\lambda-\sum_{\nu\in \supp^T(Z_\lambda)\setminus\{\lambda\}} h^{-1}h_\nu T_\nu.\] Therefore \begin{align*}\lambda\in S^T(Z,\mu)&\Rightarrow \mu\in \supp^Z(Z^\lambda)\cup \bigcup_{\nu\in \supp^T(Z^\lambda)\setminus \{\lambda\}}\supp^Z(T_\nu)=\{\lambda\}\cup \bigcup_{\nu\in \supp^T(Z^\lambda)\setminus \{\lambda\}}\supp^Z(T_\nu)\\
    &\Rightarrow  \exists \nu\in S^Z(T,\mu)\mid \lambda\in S^T(Z,\nu).
    \end{align*} Using \eqref{e_sets_E} we deduce \begin{equation}\label{e_recursive_description_E}
        S^T(Z,\mu)\subset \bigcup_{\nu\in S^Z(T,\mu)}S^T(Z,\nu)\subset S^Z(T,\mu)\cup \bigcup_{\nu\in S^Z(T,\mu)\cap ]\mu,+\infty[_{\leq^{++}}}S^T(Z,\nu).
    \end{equation}

We assume (1).     For $n\in \N$, set $\sP(n):$ ``for all $\mu\in Y^+$, for all $\tilde{\lambda}\in Y^{++}\cap [\mu,+\infty[_{\leq^{++}}$, if $\htt(\tilde{\lambda}-\mu^{++})\leq n$, then $S^T(Z,\mu)\cap W_0\cdot \tilde{\lambda}$ is finite.'' 

If $\tilde{\lambda}\in Y^{++}$ and $\mu\in W_0\cdot \tilde{\lambda}$, then according to \eqref{e_sets_E} we have $S^T(Z,\mu)\cap W_0\cdot \tilde{\lambda}=\{\mu\}$ and thus $\sP(0)$ is true. Let $n\in \N$ be such that $\sP(n)$ is true. Let $\tilde{\lambda}\in Y^{++}$ and $\mu\in Y^+$ be such that $\htt(\tilde{\lambda}-\mu)\geq 0$.  Using \eqref{e_recursive_description_E} we have: \begin{equation}\label{e_recursive_description_E_orbit}    
S^T(Z,\mu)\cap W_0\cdot \tilde{\lambda}\subset \left(S^Z(T,\mu)\cap W_0\cdot \tilde{\lambda}\right) \cup \bigcup_{\nu\in S^Z(T,\mu)\cap ]\mu,\tilde{\lambda}[_{\leq^{++}}} \left(S^T(Z,\nu)\cap W_0\cdot \tilde{\lambda}\right).\end{equation}
    
    Now  $]\mu,\tilde{\lambda}[_{\leq^{++}}=\bigcup_{\tilde{\nu}\in Y^{++}\cap ]\mu^{++},\tilde{\lambda}[_{\leq^\vee}} W_0\cdot \tilde\nu$ and thus $]\mu,\tilde{\lambda}[_{\leq^{++}}$ is  a finite union of $W_0$-orbits. Thus by (1), $]\mu,\tilde{\lambda}[_{\leq^{++}}\cap S^Z(T,\mu)$ is finite.  Moreover if $\nu\in ]\mu,\tilde{\lambda}[_{\leq^{++}}$ we have $\htt(\tilde{\lambda}-\nu^{++})\leq n$ and thus $S^T(Z,\nu)\cap W_0\cdot \tilde{\lambda}$ is finite for every $\nu\in ]\mu,\tilde{\lambda}[_{\leq^{++}}$, by $\sP(n)$. Therefore the right hand side of \eqref{e_recursive_description_E_orbit} is finite and hence $S^T(Z,\mu)\cap W_0\cdot \tilde{\lambda}$ is finite. Consequently $\sP(n+1)$ is true and hence (2) is true. We proved that (1) implies (2). To obtain the other sense, it suffices to switch $Z$ and $T$ in the proof above.
\end{proof}

\section{Completed Iwahori-Hecke algebra:  Bernstein-Lusztig presentation}\label{s_Z_presentation}

When $\cH$ is associated with a reductive group and $W_0$ is finite, the center of $\cH$ is isomorphic to $\cR[Y]^{W_0}$ (see \cite[Theorem 8.1]{lusztig1983singularities}). By the Satake isomorphism, it is therefore isomorphic to the spherical Hecke algebra.  When $\cH$ is associated with a non-reductive Kac-Moody group, the center is still isomorphic to  $\cR[Y]^{W_0}$, but the invariant ring for the infinite group $W_0$ is very small and can even be trivial (see \cite[Lemma 4.31]{abdellatif2019completed}). In particular, it is not isomorphic to the spherical Hecke algebra. 

We introduce here a new \emph{completed Iwahori-Hecke algebra} $\HBL$. We prove can be interpretted ast a set of $I$-biinvariant functions on $G^{\geq 0}$ (Theorem~\ref{c_T_version_completed_algebra}) satisfying an explicit support condition. This algebra has a large center isomorphic to Looijenga's invariant ring and therefore the spherical Hecke algebra. In this section, we define and study $\HBL$ using the Bernstein-Lusztig generators. We introduce \emph{Weyl almost finite sets} in \S\ref{ss_WAF_sets}.  We define $\HBL$ and prove that it is an algebra in \S\ref{ss_definition_BL_completed} and we determine its center in \S\ref{ss_center}.

\subsection{Weyl almost finite sets and Looijenga algebra}\label{ss_WAF_sets}

In order to define the support condition of the elements of $\HBL$, we use the notion of ``Weyl almost finite sets'', that we introduce here.

Recall that if $x\in \cT$, we denote by $x^{++}$ the unique element of $W_0\cdot x\cap \cT$. If $E$ is a subset of $Y^+$, we denote by $E^{++}$ the set $\{\lambda^{++}\mid \lambda\in E\}$.

\begin{Definition}\label{d_Weyl_almost_finiteness}
  Let $E\subset Y$. We call $E$ \textbf{almost finite} if there exists a finite set $J\subset Y$ such that $E\subset J-Q^\vee_+$
 
  We say that  $E$ is \textbf{Weyl almost finite} if $W_0\cdot E=\{w(\lambda)\mid w\in W_0,\lambda\in E\}$ is almost finite. 
\end{Definition}

By \eqref{e_AH4.8}, a Weyl almost finite subset of $Y$  is necessarily contained in $Y^+$. By  \eqref{e_GR2.4}, a subset $E$ of $Y^+$ is Weyl almost finite if and only if $E^{++}$ is almost finite.

\begin{Proposition}\label{p_Weyl_almost_finite_sets}
    Let $E\subset Y^+$ be non-empty. Then $E$ is Weyl almost finite if and only the following two conditions are satisfied:\begin{enumerate}
        \item the set $M$ of maximal elements (for $\leq_{Q^\vee}$) of $W_0\cdot E$ is finite and contained in $E^{++}$,

        \item $W_0\cdot E\subset \bigcup_{\nu\in M} \nu-Q^\vee_+$. 
    \end{enumerate}
\end{Proposition}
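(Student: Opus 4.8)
The plan is to set $S := W_0\cdot E$ and work with the poset $(S,\leq_{Q^\vee})$; note that $S$ is $W_0$-stable and that $S\cap\overline{C^v_f}=E^{++}$, since the dominant elements of $S$ are exactly the $\lambda^{++}$ with $\lambda\in E$ by uniqueness of the dominant representative. The easy implication is ``(1) and (2) $\Rightarrow$ Weyl almost finite'': if $M$ is finite and $W_0\cdot E\subset\bigcup_{\nu\in M}(\nu-Q^\vee_+)$, then $S$ is almost finite with witnessing finite set $J=M$, so $E$ is Weyl almost finite by Definition~\ref{d_Weyl_almost_finiteness}.

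For the converse, I would start from a finite $J\subset Y$ with $S\subset J-Q^\vee_+$. The first observation is that every up-set of $S$ is finite: for $\mu\in S$,
\[
  \{\, s\in S\mid \mu\leq_{Q^\vee} s\,\}\;\subset\;(J-Q^\vee_+)\cap(\mu+Q^\vee_+)\;=\;\bigcup_{j\in J}[\mu,j]_{\leq_{Q^\vee}},
\]
a finite union of finite intervals (finiteness of the intervals $[\mu,j]_{\leq_{Q^\vee}}$ was recorded in \S\ref{ss_Standard_apartment}). Hence each nonempty up-set, being a finite poset, has a maximal element, and any maximal element of $\{\,s\in S\mid\mu\leq_{Q^\vee}s\,\}$ is automatically maximal in $S$ (anything in $S$ lying above it is already in the up-set). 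This yields at once that $M\neq\emptyset$ and that every $\mu\in S$ satisfies $\mu\leq_{Q^\vee}\nu$ for some $\nu\in M$, which is exactly condition (2). Next, for $\nu\in M$ one has $\nu\leq_{Q^\vee}\nu^{++}$ by \eqref{e_GR2.4} and $\nu^{++}\in W_0\cdot\nu\subset S$, so maximality forces $\nu=\nu^{++}$; thus $\nu$ is dominant, and $M\subset S\cap\overline{C^v_f}=E^{++}$.

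It remains to show that $M$ is finite, and this is where the real content sits. Since distinct maximal elements of a poset are incomparable, $M$ is an antichain for $\leq_{Q^\vee}$. For each $j\in J$, the map $x\mapsto j-x$ is an order-reversing bijection from $(j-Q^\vee_+,\leq_{Q^\vee})$ onto $(Q^\vee_+,\leq_{Q^\vee})$, and the latter poset is isomorphic to $(\N^{|I_A|},\leq)$ because $(\alpha_i^\vee)_{i\in I_A}$ is a free family. By Dickson's lemma, $(\N^{|I_A|},\leq)$ contains no infinite antichain, so $M\cap(j-Q^\vee_+)$ is finite; and since $M\subset S\subset J-Q^\vee_+$ we get $M=\bigcup_{j\in J}\bigl(M\cap(j-Q^\vee_+)\bigr)$, hence $M$ is finite, which is condition (1).

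The only step that is not pure bookkeeping with the partial order $\leq_{Q^\vee}$ together with the basic Tits-cone inequality \eqref{e_GR2.4} is the appeal to Dickson's lemma / the well-partial-order property of $\N^{|I_A|}$; I expect that to be the one point worth spelling out, though it is classical. A mild care point is to keep track of the fact that $S$ being almost finite must actually be used (in the finiteness of up-sets in paragraph two, via the fixed finite set $J$), rather than only the weaker statement that each individual orbit $W_0\cdot\lambda$ is bounded above.
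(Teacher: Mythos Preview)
Your proof is correct and follows the same route as the paper: both rest on the well-quasi-ordering of $Q^\vee_+\cong\N^{|I_A|}$ (Dickson's lemma). The paper outsources this step to \cite[Lemma~4.4]{abdellatif2019completed}, which produces a finite $J\subset E^{++}$ dominating $E^{++}$ and then observes $M\subset J$, whereas you unfold the Dickson argument directly via finiteness of up-sets and antichains; the content is the same.
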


\begin{proof}
    If a set $E$ satisfies (1) and (2), then it is clearly Weyl almost finite. Conversely, let $E$ be a Weyl almost finite subset of $Y$. Let $E'=W_0 \cdot E$. Then $E'$ is Weyl almost finite. By \cite[Lemma 4.4]{abdellatif2019completed} applied with $F=E^{++}\subset E'$, there exists a finite subset $J$ of $E^{++}$ such that $E^{++}\subset \bigcup_{j\in J} j-Q^\vee_+$. By \eqref{e_GR2.4}   and by definition, the set $M$ of maximal elements of $W_0\cdot E$ is contained in $J$ and we can replace $J$ by $M$.   
    By \eqref{e_GR2.4},  we deduce $W_0\cdot E=W_0\cdot E^{++}\subset \bigcup_{\nu\in F} \nu-Q^\vee_+$ and we get the proposition.
\end{proof}

\begin{Example}\label{ex_Weyl_almost_finite_sets} \begin{enumerate}
\item Let $\lambda\in Y$. Then $\{\lambda\}$ is almost finite. By \cite[Lemma 4.8]{abdellatif2019completed}, $\{\lambda\}$ is Weyl almost finite if and only if $\lambda\in Y^+$. 

\item Suppose that $Y$ is associated with an affine Kac-Moody matrix. In this case, there exists $\delta\in X$ (the dual of $Y$) such that $Y^+=\bigcap_{i\in {I_A}} \ker(\alpha_i)\cup \delta^{-1}(\R_{>0})$ and $\delta(\alpha_i^\vee)=0$, for all $i\in {I_A}$, by \cite[Proposition 5.8 b)]{kac1994infinite}. Let $\lambda\in Y^{+}$ and $i\in {I_A}$. Then $\lambda-{\Z_{\geq 0}} \alpha_i^\vee$ is almost finite but $r_i\cdot (\lambda-{\Z_{\geq 0}} \alpha_i^\vee)=r_i(\lambda)+{\Z_{\geq 0}} \alpha_i^\vee$ is not almost finite. Thus $\lambda-{\Z_{\geq 0}} \alpha_i^\vee$ is an almost finite subset of $Y^+$ which is not Weyl almost finite. 

\item Suppose that $Y$ is associated with an indefinite size $2$ Kac-Moody matrix. By \cite[4.2.2]{abdellatif2019completed}, every subset of $Y^+$ is almost finite and thus the Weyl almost finite subsets of $Y$ are exactly the subsets of $Y^+$.

\item Suppose that $Y$ is associated with a Cartan matrix. Then by \cite[Lemma 3.16]{abdellatif2022erratum}, the Weyl almost finite subsets of $Y$ are exactly the finite subsets of $Y$ (since Weyl almost finite subsets of $Y$ are in particular $W_0$-almost finite in the sense of \cite[Definition 3.1]{abdellatif2022erratum}). 

\end{enumerate}
\end{Example}

\begin{Lemma}\label{l_sum_WAF_sets}
    Let $S_1,S_2$  be two Weyl almost finite subsets of $Y^+$. Then $S_1+S_2$ is Weyl almost finite.
\end{Lemma}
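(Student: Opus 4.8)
The plan is to reduce the statement to a claim about dominant representatives and then to exploit the elementary ``subadditivity'' of the map $\lambda\mapsto\lambda^{++}$. First I would observe that $S_1+S_2\subset Y^+$: indeed $Y^+=Y\cap\cT$ and $\cT$ is a convex cone, hence stable under addition, so $S_1+S_2\subset(\cT+\cT)\cap Y=\cT\cap Y=Y^+$. By the remark following Definition~\ref{d_Weyl_almost_finiteness} (a consequence of \eqref{e_GR2.4}), it therefore suffices to show that $(S_1+S_2)^{++}$ is an almost finite subset of $Y$, where as usual $E^{++}=\{\lambda^{++}\mid\lambda\in E\}$ for $E\subset Y^+$.

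Next I would establish the key inequality: for $\lambda,\mu\in Y^+$ one has $(\lambda+\mu)^{++}\leq_{Q^\vee}\lambda^{++}+\mu^{++}$. To see this, pick $v\in W_0$ with $v(\lambda+\mu)=(\lambda+\mu)^{++}$, so that $(\lambda+\mu)^{++}=v(\lambda)+v(\mu)$. Since $Y^+$ is $W_0$-stable, $v(\lambda)\in Y^+$ with $(v(\lambda))^{++}=\lambda^{++}$, so \eqref{e_GR2.4} gives $v(\lambda)\leq_{Q^\vee}\lambda^{++}$, and likewise $v(\mu)\leq_{Q^\vee}\mu^{++}$; adding yields the claim. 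This is just the coweight analogue of the reasoning behind Corollary~\ref{c_support_Tlambda_Tmu}.

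Finally, since $S_1,S_2$ are Weyl almost finite, $S_1^{++}$ and $S_2^{++}$ are almost finite, so I may choose finite sets $J_1,J_2\subset Y$ with $S_i^{++}\subset J_i-Q^\vee_+$. For $\lambda\in S_1$ and $\mu\in S_2$, combining the inequality above with $\lambda^{++}\in J_1-Q^\vee_+$, $\mu^{++}\in J_2-Q^\vee_+$, and the identity $Q^\vee_++Q^\vee_+=Q^\vee_+$, I obtain $(\lambda+\mu)^{++}\in(J_1+J_2)-Q^\vee_+$. Hence $(S_1+S_2)^{++}\subset(J_1+J_2)-Q^\vee_+$ with $J_1+J_2$ finite, so $(S_1+S_2)^{++}$ is almost finite, and therefore $S_1+S_2$ is Weyl almost finite.

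I do not expect a real obstacle here. The only points needing a little care are the closure of $\cT$ under addition (which is what keeps the sum inside $Y^+$, and hence makes the criterion ``$E^{++}$ almost finite $\Leftrightarrow$ $E$ Weyl almost finite'' available) and the observation that \eqref{e_GR2.4} applies to an arbitrary Weyl translate $v(\lambda)$, not merely to $\lambda$ itself — which is immediate since $W_0$ preserves $Y^+$ and the common dominant representative.
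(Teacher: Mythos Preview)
Your proof is correct and follows essentially the same approach as the paper. The only cosmetic difference is packaging: the paper enlarges to the $W_0$-stable set $W_0\cdot S_1+W_0\cdot S_2$ and bounds it directly using Proposition~\ref{p_Weyl_almost_finite_sets} together with the cited inequality \cite[Lemma 4.17 (1)]{abdellatif2019completed}, whereas you work with $(S_1+S_2)^{++}$ and prove the subadditivity $(\lambda+\mu)^{++}\leq_{Q^\vee}\lambda^{++}+\mu^{++}$ in-line; these are the same argument.
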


\begin{proof}
   Let $S=W_0\cdot S_1+W_0\cdot S_2$. As $S_1+S_2\subset S$, it suffices to prove that $S$ is Weyl almost finite. Let $M_1$ and $M_2$ be the set of maximal elements of $S_1^{++}$ and $ S_2^{++}$ respectively. Let $\lambda\in S$. Write $\lambda=w_1(\lambda_1)+w_2(\lambda_2)$, with $w_1,w_2\in W_0$ and $\lambda_1\in S_1,\lambda_2\in  S_2$. Then by Proposition~\ref{p_Weyl_almost_finite_sets}, there exist $m_1\in M_1$ and $m_2\in M_2$ such that $w_1(\lambda_1)\leq_{Q^\vee}m_1$ and $w_2(\lambda_2)\leq_{Q^\vee}m_2$. Therefore by \cite[Lemma 4.17 (1)]{abdellatif2019completed}, $\lambda=w_1(\lambda_1)+w_2(\lambda_2)\leq_{Q^\vee} m_1+m_2$ and hence $S$ is Weyl almost finite
\end{proof}

\begin{Definition}\label{d_Looijenga_algebra}

Let $\cR[\![Y]\!]$ be the \textbf{Looijenga algebra over $\cR$}, i.e $\RY$\index{r@$\RY$} is the set of infinite series $\sum_{\lambda\in Y} a_\lambda Z^\lambda$ such that $(a_\lambda)\in \cR^{Y}$ has   almost finite support. This is an algebra for the product \[\sum_{\lambda\in Y}a_\lambda Z^\lambda.\sum_{\mu\in Y} b_\mu Z^\mu=\sum_{\nu\in Y} \left(\sum_{\lambda,\mu\in Y\mid \lambda+\mu=\nu} a_\lambda b_\mu \right)Z^\nu,\] for $a=\sum_{\lambda\in Y} a_\lambda Z^\lambda$, $b=\sum_{\mu\in Y} b_\mu Z^\mu\in \cR[\![Y]\!]$. This product is well defined since for all $\nu\in Y$, there exist only finitely elements $(\lambda,\mu)$ of $\supp(a)\times \supp(b)$ such that $\lambda+\mu=\nu$, by the almost finiteness condition.

  Let $\RY$ be the subalgebra of $\cR[\![Y]\!]$ consisting of the elements having almost finite support.  Let   $\RY_{\WAF}$\index{r@$\RY_{\WAF}$} be the subset of $\RY$ consisting of the elements having Weyl almost finite support. It is a subalgebra of $\RY$ by Lemma~\ref{l_sum_WAF_sets}.

  The group $W_0$ acts on $\RY_{\WAF}$ via $w\cdot \sum_{\lambda\in Y^+} c_\lambda Z^\lambda=\sum_{\lambda\in Y^+} c_\lambda Z^{w(\lambda)}$, for all $\sum_{\lambda\in Y^{+}} c_\lambda Z^\lambda\in \RY_{\WAF}$ and $w\in W_0$. We denote by $\RY^{W_0}$\index{r@$\RY^{W_0}$} the subalgebra of $\RY_{\WAF}$ consisting of the elements of $\RY_{\WAF}$ which  are invariant under the action of $W_0$. We call this ring \textbf{Looijenga's invariant ring}.
\end{Definition}

\subsection{Definition of the completed Iwahori-Hecke algebra via the $Z$-basis }\label{ss_definition_BL_completed}

We now define a new completed Iwahori-Hecke algebra $\HBL$ different from the completed algebra $\widetilde{\cH}$ constructed in \cite{abdellatif2019completed} and \cite{abdellatif2022erratum}. 

We denote by $(\HCW)^{Y^+}_{\WAF}$ the set of families $(h_\lambda)_{\lambda\in Y^+}$ whose support $\{\lambda\in Y^+\mid h_\lambda\neq 0\}$ is Weyl almost finite. Let $\HBL$  be the set of formal series of the form $\sum_{\lambda\in Y^+} h_\lambda *\Zb^{\lambda}$, for $(h_\lambda)\in (\HC_{W_0})^{Y^+}_\WAF$. 

 For $\lambda\in Y^+$ and $x=\sum_{\mu\in Y^+} h_\mu \Zb^\mu\in \HBL$ we set $\coeff^Z_\lambda(x)=h_\lambda$ and $\supp^Z(x)=\{\mu\in Y^+\mid h_\mu\neq 0\}$.

\begin{Remark}\label{r_non_comparability}

When $\cS$ is associated with a Cartan matrix, then $\cH=\HBL=\widetilde{\cH}$, by Example~\ref{ex_Weyl_almost_finite_sets} (4)  and \cite[Lemma 3.16]{abdellatif2022erratum}. 

\end{Remark}

\begin{Definition}\label{d_Summable_family}
Let $J$ be a set. A family $(a_j)_{j\in J}\in (\HBL)^J$ is {\it summable} when the following properties hold.
\begin{itemize}
\item For all $\lambda \in Y^{+}$, the set $\{j \in J \mid \coeff_\lambda^Z(a_{j})\neq 0\}$ is finite.
\item $\displaystyle \bigcup_{j \in J} \supp^Z(a_{j})$ is Weyl almost finite.
\end{itemize}
If $(a_{j})_{j \in J} \in (\HBL)^{J}$ is a summable family, we define $\displaystyle \sum_{j \in J} a_{j} \in \HBL$ by 
\[\displaystyle \sum_{j \in J} a_{j} = \sum_{\lambda \in Y^{+}} h_{\lambda} Z^{\lambda} \ , \text{ with } h_\lambda = \displaystyle \sum_{j \in J} \coeff^Z_\lambda(a_{j})\in \HCW \text{ for all } \lambda\in Y^{+} .\] 
\end{Definition}

By Lemma~\ref{lemBernstein_Lusztig_relation_inverse}, we have the following lemma.

\begin{Lemma}\label{lemCommutation_relation} 
Let $\lambda\in Y^+$ and $h\in \HC_{W_0}$. Then \[\supp^Z(\Zb^\lambda h)\subset \bigcup_{u\in \supp(h)} \widetilde{\sT_{u^{-1}}}(\lambda)\subset \bigcup_{u\in \supp(h)} R_{u^{-1}}(\lambda),\] where $\supp(h)=\{w\in W_0\mid a_w\neq 0\}$, if $h=\sum_{w\in W_0}a_w T_w$. 
\end{Lemma}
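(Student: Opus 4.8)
The plan is to deduce the statement directly from the per–basis–element bound \eqref{eqSupport_Zlambda_Twinverse} of Lemma~\ref{lemBernstein_Lusztig_relation_inverse}, together with the (entirely formal) subadditivity of the $Z$-support under finite sums. First I would write $h=\sum_{w\in W_0}a_w T_w$ with $(a_w)\in\cR^{(W_0)}$, so that $\supp(h)=\{w\in W_0\mid a_w\neq 0\}$ is finite, and record that, since $\lambda\in Y^+$, both $Z^\lambda$ and each $T_w$ lie in $\cH$; hence so does the finite combination $Z^\lambda h=\sum_{w\in\supp(h)}a_w\,Z^\lambda T_w$, so that $\supp^Z(Z^\lambda h)$ is well defined (and finite).

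Next I would use that for each fixed $\mu\in Y^+$ the map $\coeff^Z_\mu\colon\cH\to\HCW$ is $\cR$-linear (Definition~\ref{d_Z_T_support}), which gives the elementary inclusions $\supp^Z(x+y)\subset\supp^Z(x)\cup\supp^Z(y)$ and $\supp^Z(ax)\subset\supp^Z(x)$ for $a\in\cR$. Applying this to the finite sum above yields
\[\supp^Z(Z^\lambda h)\ \subset\ \bigcup_{w\in\supp(h)}\supp^Z(Z^\lambda T_w).\]
Then \eqref{eqSupport_Zlambda_Twinverse} gives $\supp^Z(Z^\lambda T_w)\subset\widetilde{\sT_{w^{-1}}}(\lambda)$ for every $w$, and substituting (with $u=w$) produces the first inclusion $\supp^Z(Z^\lambda h)\subset\bigcup_{u\in\supp(h)}\widetilde{\sT_{u^{-1}}}(\lambda)$. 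The second inclusion $\bigcup_u\widetilde{\sT_{u^{-1}}}(\lambda)\subset\bigcup_u R_{u^{-1}}(\lambda)$ is then immediate termwise from the definition of the sets $R_{u^{-1}}(\lambda)$ (it is in any case already implied by the chain of inclusions in Lemma~\ref{l_t}(1)).

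I do not expect any genuine obstacle: the substantive content is carried entirely by Lemma~\ref{lemBernstein_Lusztig_relation_inverse}, and the only point deserving a moment's care is that $Z^\lambda h$ must first be recognised as an element of $\cH$, so that $\supp^Z$ is defined for it and the subadditivity argument applies — this is exactly where the hypothesis $\lambda\in Y^+$ is used.
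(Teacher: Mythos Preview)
Your proof is correct and follows exactly the same approach as the paper's own (one-line) proof, which simply cites Lemma~\ref{lemBernstein_Lusztig_relation_inverse}. One small correction: the parenthetical reference to Lemma~\ref{l_t}(1) for the second inclusion is misplaced, since that lemma does not mention the sets $R_w$; the inclusion $\widetilde{\sT_{u^{-1}}}(\lambda)\subset R_{u^{-1}}(\lambda)$ follows instead from Remark~\ref{r_Rw} (combined with $\widetilde{\sT}\subset\widehat{\sT}$ from Lemma~\ref{l_t}(1)), which is what your primary justification ``from the definition of the sets $R_{u^{-1}}(\lambda)$'' effectively amounts to.
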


\begin{Lemma}\label{l_summable_families}
    Let $S$ be a Weyl almost finite subset of $Y^+$. Let $\nu\in Y^+$. Then: \[E_2=E_2(\nu)=\{\mu\in S\mid \exists \lambda\ni S,w\in  W_0, \nu\in \supp^Z(Z^\lambda T_w Z^\mu)\}\] is finite. Choose a finite subset $W(\mu)$ of $W_0$, for every $\mu\in S$. Then 
    \[E=\{(\lambda,\mu)\in S^2\mid \exists w\in W(\mu), \nu\in \supp^Z(Z^\lambda T_w Z^\mu)\}\] is finite. Moreover   if $S'=\{\nu\in Y^+\mid E_2(\nu)\neq \emptyset\}$, then $S'$ is Weyl almost finite. 
\end{Lemma}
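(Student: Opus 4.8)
The plan is to prove the three assertions in order, reducing the second and third to the first by means of the support estimates of Section~\ref{s_Z_basis_to_T_basis}, and to isolate the one genuinely new combinatorial ingredient — a finiteness statement for ``reversed'' steps of the operators $\widetilde{\sT_i}$ — which I expect to be the main obstacle.

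The preliminary observation, used everywhere, is an inclusion. By \eqref{eqSupport_Zlambda_Twinverse} we may write $Z^\lambda T_w=\sum_{\zeta}h_\zeta Z^\zeta$ with $h_\zeta\in\HCW$ and $\zeta$ running over $\widetilde{\sT_{w^{-1}}}(\lambda)$; multiplying on the right by $Z^\mu$ and using (BL3) gives $Z^\lambda T_wZ^\mu=\sum_\zeta h_\zeta Z^{\zeta+\mu}$, so $\supp^Z(Z^\lambda T_wZ^\mu)\subseteq\widetilde{\sT_{w^{-1}}}(\lambda)+\mu\subseteq\conv_{Q^\vee}(W_0\cdot\lambda)+\mu$ by Lemma~\ref{l_t}(1). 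For finiteness of $E_2(\nu)$: if $\mu\in E_2(\nu)$ then $\nu-\mu\in\conv_{Q^\vee}(W_0\cdot\lambda)\subseteq Y^+$ for some $\lambda\in S$, hence $\nu-\mu\leq_{Q^\vee}(\nu-\mu)^{++}\leq_{Q^\vee}\lambda^{++}$ by \eqref{e_GR2.4} and Lemma~\ref{lemInequality_convex_hull_orbit}. Since $S$ is Weyl almost finite, $S^{++}$ is almost finite, so $\lambda^{++}\leq_{Q^\vee}j$ for some $j$ in a fixed finite set $J\subseteq Y$, giving $\mu\geq_{Q^\vee}\nu-j$; on the other hand $\mu\in S\subseteq Y^+$ forces $\mu\leq_{Q^\vee}\mu^{++}\leq_{Q^\vee}j'$ for some $j'\in J$. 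Thus $E_2(\nu)\subseteq\bigcup_{j,j'\in J}[\nu-j,j']_{\leq_{Q^\vee}}$, a finite union of finite sets. The Weyl almost finiteness of $S'$ then drops out of the same inclusion: $S'\subseteq T+S$ with $T=\bigcup_{\lambda\in S}\conv_{Q^\vee}(W_0\cdot\lambda)$, and $T$ is $W_0$-stable and contained in $\bigcup_{j\in J}(j-Q^\vee_+)$, hence Weyl almost finite, so $T+S$ is Weyl almost finite by Lemma~\ref{l_sum_WAF_sets}, while $S'\subseteq Y^+$ holds by definition.

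For finiteness of $E$: the $\mu$-coordinate of any $(\lambda,\mu)\in E$ lies in the finite set $E_2(\nu)$, and each $W(\mu)$ is finite, so it suffices to fix $w\in W_0$ and $\zeta=\nu-\mu\in Y^+$ and show $\{\lambda\in S\mid\zeta\in\widetilde{\sT_{w^{-1}}}(\lambda)\}$ is finite; since $\lambda\in S$ gives $\lambda^{++}\leq_{Q^\vee}j$ for some $j\in J$, it is enough to show, for fixed $\tilde{\lambda}\in Y^{++}$, that $\{\lambda\in Y^+\mid\zeta\in\widetilde{\sT_{w^{-1}}}(\lambda),\ \lambda^{++}\leq_{Q^\vee}\tilde{\lambda}\}$ is finite. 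This is the main obstacle, and I would resolve it with a ``reverse step'' lemma: for $i\in I_A$ and $\theta'\in Y^+$, the set $R_i(\theta',\tilde{\lambda})=\{\theta\in Y^+\mid\theta'\in\widetilde{\sT_i}(\theta),\ \theta^{++}\leq_{Q^\vee}\tilde{\lambda}\}$ is finite. Indeed $\widetilde{\sT_i}(\theta)\subseteq\theta+\Z\alpha_i^\vee$, so $\theta=\theta'+s\alpha_i^\vee$ for some $s\in\Z$; and both $\theta$ and $r_i(\theta)$ have dominant representative $\theta^{++}\leq_{Q^\vee}\tilde{\lambda}$, so $\theta\leq_{Q^\vee}\tilde{\lambda}$ and $r_i(\theta)\leq_{Q^\vee}\tilde{\lambda}$ by \eqref{e_GR2.4}. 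Writing $\tilde{\lambda}-\theta=\tilde{\lambda}-\theta'-s\alpha_i^\vee$ and $\tilde{\lambda}-r_i(\theta)=\tilde{\lambda}-\theta'+(\alpha_i(\theta')+s)\alpha_i^\vee$ in the basis of simple coroots and comparing the $\alpha_i^\vee$-coefficients bounds $s$ from above and from below. Iterating $R_i$ along the letters of each reduced word of $w^{-1}$ — at every intermediate stage the point produced automatically satisfies $(\cdot)^{++}\leq_{Q^\vee}\tilde{\lambda}$, since it lies in $\conv_{Q^\vee}(W_0\cdot\lambda)$ — and taking the finite union over $\Red(w^{-1})$ gives the desired finiteness, hence that of $E$.

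Everything else is routine: that a step $\widetilde{\sT_i}$ moves a point only along $\alpha_i^\vee$, that $\conv_{Q^\vee}(W_0\cdot\lambda)$ is contained in $Y^+$ and is $W_0$-stable, that $\leq_{Q^\vee}$-intervals in $Y$ are finite (already recorded in the excerpt), and that almost finiteness passes to subsets. The single point requiring an idea is the two-sided bound on $s$ in the reverse step lemma: controlling $\theta$ \emph{and} its reflection $r_i(\theta)$ by $\tilde{\lambda}$ simultaneously is what kills the infinite family $\theta=\theta'+s\alpha_i^\vee$, $s\to\pm\infty$, that would survive if one imposed only $\theta\leq_{Q^\vee}\tilde{\lambda}$.
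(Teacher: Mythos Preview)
Your proof is correct and follows essentially the same strategy as the paper's: the same support inclusion $\supp^Z(Z^\lambda T_w Z^\mu)\subseteq\widetilde{\sT_{w^{-1}}}(\lambda)+\mu$, the same two-sided $\leq_{Q^\vee}$-bound on $\mu$ for the finiteness of $E_2(\nu)$, and the same reduction of $E$ to a ``preimage'' finiteness question once $\mu$ and $w$ are pinned down. The only substantive difference is that for this last step the paper invokes \cite[Lemma~3.3]{abdellatif2022erratum} (which concerns the larger sets $R_w$ of Remark~\ref{r_Rw}), whereas you reprove the needed statement directly via your reverse-step lemma for $\widetilde{\sT_i}$; your two-sided bound on $s$ using both $\theta\leq_{Q^\vee}\tilde\lambda$ and $r_i(\theta)\leq_{Q^\vee}\tilde\lambda$ is exactly the right mechanism, and the iteration is sound because each intermediate point of the forward chain lies in $\conv_{Q^\vee}(W_0\cdot\lambda)$ and hence inherits the bound $(\cdot)^{++}\leq_{Q^\vee}\tilde\lambda$. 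Your treatment of $S'$ via $S'\subseteq T+S$ and Lemma~\ref{l_sum_WAF_sets} is a slight repackaging of the paper's direct bound $\nu^{++}\leq_{Q^\vee} m_1+m_2$, but equivalent.
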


\begin{proof}
Let $M\subset Y^{++}$ be the set of maximal elements of $S^{++}$.  Let $\nu\in Y^+$ and $\mu\in E_2=E_2(\nu)$.  Then by Lemma~\ref{lemCommutation_relation}, there exist $\lambda\in S$, $w\in W_0$  and $\lambda'\in R_{w^{-1}}(\lambda)$ such that \begin{equation}\label{e_relation_lambda'_mu_nu}
    \lambda'+\mu=\nu.
\end{equation} By \cite[Lemma 4.17 (2)]{abdellatif2019completed} and Proposition~\ref{p_Weyl_almost_finite_sets}, $\lambda'\leq_{Q^\vee}\lambda^{++}\leq_{Q^\vee} m_1$ for some $m_1\in M$. Let $m_2\in M$ be such that $\mu\leq_{Q^\vee}m_2$.  We have $m_2\geq_{Q^\vee} \mu=\nu-\lambda'\geq_{Q^\vee}\nu-m_1$ and hence $E_2\subset \bigcup_{m_1,m_2\in M}[\nu-m_1,m_2]_{\leq_{Q^\vee}}$, which is finite. 
    
   Let $W=\bigcup_{\mu\in E_2} W(\mu)$. Let $(\lambda,\mu)\in E$. Then $\mu\in E_2$ and $\nu\in \supp^Z(Z^\lambda T_w Z^\mu)$, for some $w\in W$. By Lemma~\ref{lemCommutation_relation}, there exists $\lambda'\in R_{w^{-1}}(\lambda)$ such that $\lambda'=\nu-\mu$. By \cite[Lemma 3.3]{abdellatif2022erratum} we deduce that $\lambda$ can take only finitely many values, which proves the finiteness of $E$. 

   Now let $\nu\in Y^+$ be such that $E_2(\nu)\neq \emptyset$. Then by \eqref{e_relation_lambda'_mu_nu} and \cite[Lemma 4.17 (2)]{abdellatif2019completed}, we have $\nu^{++}\leq_{Q^\vee}m_1+m_2$, for some $(m_1,m_2)\in M^2$ and thus $S'$ is Weyl almost finite.
\end{proof}

\begin{Theorem}\label{thmConvolution_summable_family}
Let $(a_j)_{j\in J}\in \HC^J$ and $(b_k)_{k\in K}\in \HC^K$ be two families. Assume that they are summable in $\HBL$. Then: \begin{enumerate}
\item $(a_j b_k)_{j\in J,k\in K}$ is  summable in $\HBL$,

\item $\sum_{j\in J,k\in  K}a_j b_k$ only depends on the two elements $\sum_{j\in J}a_j$ and $\sum_{k\in K} b_k$ of $\HBL$.
\end{enumerate} 
\end{Theorem}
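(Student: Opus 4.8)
\emph{Proof plan.} The plan is to read off both assertions from Lemma~\ref{l_summable_families}, which is tailored to exactly this situation. First I would fix notation: write $h_\lambda^{(j)}=\coeff^Z_\lambda(a_j)$ and $g_\mu^{(k)}=\coeff^Z_\mu(b_k)$, so that $a_j=\sum_{\lambda\in\supp^Z(a_j)}h_\lambda^{(j)}Z^\lambda$ and $b_k=\sum_{\mu\in\supp^Z(b_k)}g_\mu^{(k)}Z^\mu$ are \emph{finite} sums (each $a_j,b_k$ lies in $\HC$), and expand each $g_\mu^{(k)}=\sum_{w\in\supp(g_\mu^{(k)})}c_{w,\mu}^{(k)}T_w$ over a finite subset of $W_0$. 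Multiplying out and using that left multiplication by an element of $\HCW$ preserves the decomposition $\HC=\bigoplus_{\nu}\HCW Z^\nu$ (Proposition~\ref{p_HCW_bimodule}), one obtains $\supp^Z(a_jb_k)\subset\bigcup\supp^Z(Z^\lambda T_wZ^\mu)$, the union being over $\lambda\in\supp^Z(a_j)$, $\mu\in\supp^Z(b_k)$ and $w\in\supp(g_\mu^{(k)})$. Set $S=\bigcup_j\supp^Z(a_j)\cup\bigcup_k\supp^Z(b_k)$, which is Weyl almost finite since a union of two Weyl almost finite sets is Weyl almost finite.

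For (1) I would verify the two conditions of Definition~\ref{d_Summable_family} for the family $(a_jb_k)_{(j,k)\in J\times K}$. Fix $\nu\in Y^+$. If $\coeff^Z_\nu(a_jb_k)\neq 0$, then $\nu\in\supp^Z(Z^\lambda T_wZ^\mu)$ for some $\lambda\in\supp^Z(a_j)\subset S$, $\mu\in\supp^Z(b_k)\subset S$ and $w\in\supp(g_\mu^{(k)})$. By Lemma~\ref{l_summable_families} applied with the Weyl almost finite set $S$, the element $\mu$ lies in the finite set $E_2(\nu)$; for each such $\mu$ the index $k$ lies in the finite set $\{k'\mid \coeff^Z_\mu(b_{k'})\neq 0\}$ (summability of $(b_k)$); putting $W(\mu)=\bigcup_{k'\,:\,\coeff^Z_\mu(b_{k'})\neq 0}\supp(g_\mu^{(k')})$, a finite subset of $W_0$, the pair $(\lambda,\mu)$ lies in the finite set $E(\nu)$ of Lemma~\ref{l_summable_families}, so $\lambda$ ranges over a finite set, whence for each such $\lambda$ the index $j$ lies in the finite set $\{j'\mid\coeff^Z_\lambda(a_{j'})\neq 0\}$. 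Therefore $\{(j,k)\mid\coeff^Z_\nu(a_jb_k)\neq 0\}$ is finite. For the support condition, $\bigcup_{j,k}\supp^Z(a_jb_k)$ is contained in the set $S'$ of Lemma~\ref{l_summable_families}, which is Weyl almost finite. This proves (1).

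For (2) the plan is to refine and regroup, after first recording a Fubini-type property of summable families: if $(c_i)_{i\in\mathcal I}$ is summable in $\HBL$ and $\mathcal I=\bigsqcup_\alpha\mathcal I_\alpha$, then each subfamily $(c_i)_{i\in\mathcal I_\alpha}$ is summable, $(\sum_{i\in\mathcal I_\alpha}c_i)_\alpha$ is summable, and $\sum_\alpha\sum_{i\in\mathcal I_\alpha}c_i=\sum_i c_i$. This is a routine check against Definition~\ref{d_Summable_family} (both summability conditions only involve the union of the supports and, for each fixed $\nu$, a finite index set; the equality of sums is Fubini for finite sums in $\HCW$ together with the coefficientwise definition of the sum). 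Now set $A=\sum_j a_j$, $B=\sum_k b_k$ and consider the family $\big((h_\lambda^{(j)}Z^\lambda)(g_\mu^{(k)}Z^\mu)\big)$ indexed by $(j,\lambda,k,\mu)$ with $\lambda\in\supp^Z(a_j)$, $\mu\in\supp^Z(b_k)$. The families $(h_\lambda^{(j)}Z^\lambda)_{(j,\lambda)}$ and $(g_\mu^{(k)}Z^\mu)_{(k,\mu)}$ are summable with sums $A$ and $B$ (regroup the quadruple index by $j$, resp.\ by $k$; each block is finite and sums to $a_j$, resp.\ $b_k$), so by part (1) the refined quadruply-indexed family is summable. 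Regrouping it by $(j,k)$ gives $\sum_{j,k}a_jb_k$. Regrouping it instead by $(\lambda,\mu)$: for fixed $(\lambda,\mu)$ the block is finite (the relevant $j$'s and $k$'s form finite sets) and, by bilinearity of the product of $\HC$ together with $\sum_j h_{\lambda}^{(j)}=\coeff^Z_{\lambda}(A)$ and $\sum_k g_{\mu}^{(k)}=\coeff^Z_{\mu}(B)$, it sums to $\big(\coeff^Z_{\lambda}(A)\big)Z^{\lambda}\big(\coeff^Z_{\mu}(B)\big)Z^{\mu}$. Hence
\[
\sum_{j\in J,\,k\in K}a_jb_k=\sum_{\lambda,\mu\in Y^+}\big(\coeff^Z_{\lambda}(A)\big)Z^{\lambda}\big(\coeff^Z_{\mu}(B)\big)Z^{\mu},
\]
which depends only on $A=\sum_ja_j$ and $B=\sum_kb_k$, proving (2).

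The main obstacle is the bookkeeping in part (1): extracting the statement of Lemma~\ref{l_summable_families} requires choosing the auxiliary finite sets $W(\mu)$ so that they capture every $w$ produced by \emph{any} $b_k$ whose $Z$-support contains $\mu$, and then chaining the finiteness conclusions in the correct order ($\mu\in E_2(\nu)$, then finitely many pairs $(\mu,k)$, then $(\lambda,\mu)\in E(\nu)$, then finitely many pairs $(\lambda,j)$). Once this is set up, the rest — including the Fubini lemma for summable families used in (2) — is routine, the conceptual content being entirely absorbed into the finiteness and Weyl almost finiteness assertions of Lemma~\ref{l_summable_families}.
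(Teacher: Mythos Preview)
Your proof is correct and, for part~(1), follows the paper's argument essentially verbatim: the same set $S$, the same auxiliary sets $W(\mu)$, and the same appeal to Lemma~\ref{l_summable_families} to control first $\mu$, then $(\lambda,\mu)$, and hence $(j,k)$.

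For part~(2) your route differs slightly from the paper's. The paper fixes $\nu\in Y^+$ and computes $\coeff^Z_\nu\big(\sum_{j,k}a_jb_k\big)$ explicitly, expanding $Z^\lambda T_w=\sum_{\lambda'}x_{\lambda',w}Z^{\lambda'}$ and rearranging the resulting finite sum until only $\coeff^Z_\lambda(\sum_j a_j)$ and $\coeff^{T_w,Z}_\mu(\sum_k b_k)$ appear. You instead isolate a general Fubini principle for summable families in $\HBL$, apply part~(1) to the refined families $(h_\lambda^{(j)}Z^\lambda)$ and $(g_\mu^{(k)}Z^\mu)$, and regroup the resulting quadruply-indexed summable family in two ways. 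Your approach is a bit more conceptual and avoids the explicit $x_{\lambda',w}$ bookkeeping; the paper's is more hands-on but makes visible an explicit formula for the coefficients of the product. Both rest on the same finiteness input from Lemma~\ref{l_summable_families}, and your Fubini lemma is indeed the routine coefficientwise check you describe.
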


\begin{proof}
    Let $S=\bigcup_{j\in J} \supp^Z(a_j)\cup \bigcup_{k\in K} \supp^Z(b_k)\subset Y^+$.  Then $S$ is Weyl almost finite by assumption. For $j\in J,k\in K$, write $a_j=\sum_{\lambda\in S} h_{j,\lambda} Z^\lambda$ and $b_k=\sum_{\mu\in S}\tilde{h}_{k,\mu} Z^\mu$, where $(h_{j,k}),(\tilde{h}_{k,\mu})\in (\cH_{W_0})^{(Y^+)}$.

    For $\mu\in Y^+$, set $W(\mu)=\bigcup_{k\in K}\supp(\tilde{h}_{k,\mu})\subset W_0$. Then $W(\mu)$ is finite.   Let $\nu\in Y^+$. Define $E_2=E_2(\nu)$ and $E$ be as  in Lemma~\ref{l_summable_families}.  Let $j\in J,k\in K$ and assume that 
    $\nu\in\supp^Z(a_j b_k)$. Then there exists $(\lambda,\mu)\in (Y^+)^2$ such that $\nu\in \supp^Z(h_{j,\lambda}Z^\lambda \tilde{h}_{k,\mu}Z^\mu)$. 
    Then $\mu\in E_2$ and there exists $w\in W(\mu)$ such that $\nu\in \supp^Z(Z^\lambda T_w Z^\mu)$. 
    Consequently there exists $(\lambda,\mu)\in E$ such that $h_{j,\lambda}\neq 0$ and $\tilde{h}_{k,\mu}\neq 0$. Therefore $j\in J_\nu,k\in K_\nu$, where  \[J_\nu=\{\tilde{j}\in J \mid \exists (\lambda,\mu)\in E, h_{\tilde{j},\lambda}\neq 0\}\text{ and }K_\nu=\{\tilde{k}\in K\mid \exists(\lambda,\mu)\in E,\tilde{h}_{\tilde{k},\mu}\neq 0\}.\] These sets are finite  since $E$ is finite and thanks to the summability assumption. Moreover \[\bigcup_{j\in J,k\in  K } \supp^Z(a_j b_k)\subset \{\nu\in Y^+\mid E_2(\nu)\neq \emptyset\},\] and the right-hand side is  Weyl almost finite by Lemma~\ref{l_summable_families}. Therefore $(a_j b_k)_{j\in J,k\in  K}$ is summable.

It remains to prove that $\sum_{j\in J,k\in K}a_j b_k$ only depends on $\sum_{j\in J}a_j$ and $\sum_{k\in K} b_k$.  For  $w\in W_0$ and $\lambda\in {Y^{+}}$, write  $\Zb^\lambda \Tb_w=\sum_{\lambda'\in {Y^{+}}} x_{\lambda',w}  \Zb^{\lambda'}$, where  $(x_{\lambda',w})\in (\HCW)^{({Y^{+}})}$.
 For $k\in K$, write $b_k=\sum_{w\in W_0,\mu\in Y^+}b_{k,w,\mu} \Tb_w \Zb^{\mu}$, where $b_{k,w,\mu}\in \cR$ for all $w\in W_0$ and $\mu\in Y^+$. Let $\nu\in Y^+$ and $j\in J,k\in K$. Then \[\coeff^Z_\nu(a_j b_k)=\sum_{\lambda,\lambda',\mu\in Y^+\mid\lambda'+\mu=\nu,w\in W_0}b_{k,w,\mu}h_{j,\lambda} x_{\lambda',w}.\] Thus
 \[\begin{aligned}\coeff^Z_\nu(\sum_{(j,k)\in J\times K}a_j b_k)&=\sum_{(j,k)\in J_\nu\times K_\nu}\coeff^Z_\nu(a_j b_k) \\
 &=\sum_{(j,k)\in J_\nu\times K_\nu}\sum_{(\lambda,\mu)\in E,\lambda'\in {Y^{+}}\mid\lambda'+\mu=\nu,w\in W_0}b_{k,w,\mu}h_{j,\lambda} x_{\lambda',w}\\
 &=\sum_{(\lambda,\mu)\in E,\lambda'\in {Y^{+}}\mid\lambda'+\mu=\nu,w\in W_0}\sum_{k\in K_\nu}b_{k,w,\mu}\sum_{j\in J_\nu}h_{j,\lambda} x_{\lambda',w}\\
 &=\sum_{(\lambda,\mu)\in E,\lambda'\in {Y^{+}},w\in W_0}\coeff^{\Tb_w,Z}_{\mu}(\sum_{k\in K} b_k)\coeff^Z_\lambda(\sum_{j\in J} a_j) x_{\lambda',w},\end{aligned}\] where $\coeff_{\mu}^{\Tb_w,Z}$ is the coefficient in front of $\Tb_w \Zb^\lambda$ in the $\cR$-basis $(\Tb_u \Zb^\nu)_{u\in W_0,\nu\in Y^+}$ of $\cH$, i.e $\coeff_{\mu}^{\Tb_w,Z}(\sum_{\tau\in Y^+} h'_\tau Z^\tau)=c_{w,\lambda}$ if $\sum_{\tau\in Y^+} h_\tau' Z^\lambda\in \HBL$, with $h_\lambda'=\sum_{v\in W_0}c_{v,\lambda}T_v\in \HCW$. This proves (2) and completes the proof of the theorem. 
\end{proof}

\begin{Definition}\label{d_convolution_product}
Recall that  $\HBL$ is the set of formal series of the form $\sum_{\lambda\in Y^+} h_\lambda Z^\lambda$, with $(h_\lambda)\in (\HCW)^{Y^+}_{\WAF}$. We equip $\HBL$ with a convolution $ $ by setting \[(\sum_{\lambda\in Y^+}h_\lambda \Zb^{\lambda}) (\sum_{\mu\in Y^+} \tilde{h}_\mu \Zb^\mu)=\sum_{\lambda,\mu\in Y^+} \left((h_\lambda \Zb^\lambda) (\tilde{h}_\mu \Zb^\mu)\right),\] for all $\sum_{\lambda\in Y^+} h_\lambda \Zb^\lambda,\sum_{\mu\in Y^+} \tilde{h}_\mu Z^\mu\in \HBL$. This is well defined by Theorem~\ref{thmConvolution_summable_family} applied with the families $(h_\lambda Z^\lambda)_{\lambda\in Y^+}$, $(\tilde{h}_\mu Z^\mu)_{\mu\in Y^+}$. This equips $\HBL$ with the structure of an associative algebra over $\cR$ (the associativity of $ $ is a consequences of the associativity of $ $ on  $\HC$ and of Theorem~\ref{thmConvolution_summable_family}).

\end{Definition}

\begin{Proposition}\label{propDescription_HBL_H_basis}

Let $(\theta_w)_{w\in W_0}\in (\RY_{\WAF})^{W_0}$ be a family satisfying the following two conditions:
\begin{enumerate}
\item $\bigcup_{w\in W_0}\supp^Z(\theta_w)$ is Weyl almost finite,

\item for all $\lambda\in Y^+$, $\{w\in W_0\mid\lambda\in \supp^Z(\theta_w)\}$ is finite.
\end{enumerate} 
\end{Proposition}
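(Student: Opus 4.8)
The plan is to read Proposition~\ref{propDescription_HBL_H_basis} as a reorganisation of the defining $Z$-expansion of $\HBL$ along the $\cR$-basis $(T_w)_{w\in W_0}$ of $\HCW$, and to prove that $(\theta_w)_{w\in W_0}\mapsto\sum_{w\in W_0}T_w\theta_w$ is a well-defined bijection onto $\HBL$. First I would write $\theta_w=\sum_{\lambda\in Y^+}c_{w,\lambda}Z^\lambda$ with $c_{w,\lambda}\in\cR$ and observe that, since $T_w\in\HCW$, each $T_w Z^\lambda$ is the ``basis element'' $(T_w)\cdot Z^\lambda$ of $\HBL$ (it lies in the summand $\HCW Z^\lambda$ of $\HC=\bigoplus_{\mu}\HCW Z^\mu$ with $\HCW$-coefficient $T_w$). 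Hence $T_w\theta_w=\sum_{\lambda}(c_{w,\lambda}T_w)Z^\lambda\in\HBL$, with $\coeff^Z_\lambda(T_w\theta_w)=c_{w,\lambda}T_w$ and $\supp^Z(T_w\theta_w)=\supp^Z(\theta_w)$, the latter Weyl almost finite because $\theta_w\in\RY_{\WAF}$.

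Next I would check that the family $(T_w\theta_w)_{w\in W_0}$ is summable in the sense of Definition~\ref{d_Summable_family}. By the formula for $\coeff^Z_\lambda(T_w\theta_w)$ just obtained, the condition that $\{w\in W_0\mid\coeff^Z_\lambda(T_w\theta_w)\neq 0\}$ be finite for every $\lambda\in Y^+$ is literally hypothesis (2) (the sets $\{w\mid\lambda\in\supp^Z(\theta_w)\}$ are finite), and the condition that $\bigcup_{w\in W_0}\supp^Z(T_w\theta_w)=\bigcup_{w\in W_0}\supp^Z(\theta_w)$ be Weyl almost finite is literally hypothesis (1). Summability then gives $\sum_{w\in W_0}T_w\theta_w=\sum_{\lambda\in Y^+}h_\lambda Z^\lambda$ with $h_\lambda=\sum_{w\in W_0}c_{w,\lambda}T_w\in\HCW$ — a finite sum by (2) — and with $\{\lambda\mid h_\lambda\neq 0\}\subseteq\bigcup_w\supp^Z(\theta_w)$ Weyl almost finite, so the series genuinely lies in $\HBL$.

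For surjectivity and uniqueness I would run this backwards: given $x=\sum_{\lambda}h_\lambda Z^\lambda\in\HBL$, expand each $h_\lambda$ in the basis $(T_w)_{w\in W_0}$ of $\HCW$ as $h_\lambda=\sum_w c_{w,\lambda}T_w$ (finitely many terms), and set $\theta_w=\sum_{\lambda}c_{w,\lambda}Z^\lambda$. Then $\supp^Z(\theta_w)\subseteq\supp^Z(x)$ is Weyl almost finite so $\theta_w\in\RY_{\WAF}$; hypothesis (1) holds because $\supp^Z(x)$ is Weyl almost finite; hypothesis (2) holds because each $h_\lambda\in\HCW$ has finite support in $W_0$; and $\sum_w T_w\theta_w=x$ by the computation of the previous paragraph. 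Uniqueness is immediate, since the scalars $c_{w,\lambda}$, and hence the $\theta_w$, are recovered from $x$ as its coordinates in the family $(T_w Z^\lambda)_{w\in W_0,\lambda\in Y^+}$, using uniqueness of the $Z$-expansion in $\HBL$ together with linear independence of the $T_w$ in $\HCW$.

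The only delicate point is that interchanging the two summation orders — passing from $\sum_{\lambda}h_\lambda Z^\lambda$ to $\sum_w T_w\theta_w$ — is licit, which is exactly what the summability verification above guarantees; so I expect the (modest) main content to be the careful matching of Definition~\ref{d_Summable_family}'s two conditions against (1) and (2). (Should the intended statement instead place the Looijenga series on the right, i.e.\ assert $\sum_{w}\theta_w T_w\in\HBL$, then one must additionally re-expand each $Z^\lambda T_w$ in the $Z$-basis via the Bernstein--Lusztig relation of Lemma~\ref{lemBernstein_Lusztig_relation_inverse}, invoking Lemma~\ref{lemInequality_convex_hull_orbit} to keep $\bigcup_{\lambda\in\supp^Z(\theta_w),\,w}\supp^Z(Z^\lambda T_w)\subseteq\conv_{Q^\vee}(W_0\cdot S)$ Weyl almost finite, and the main finiteness result Lemma~\ref{lemFiniteness_support_TWhat} to bound, for each fixed $\mu\in Y^+$, the pairs $(w,\lambda)$ with $\mu\in\supp^Z(Z^\lambda T_w)$; that finiteness would then be the genuine obstacle.)
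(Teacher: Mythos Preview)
Your proof is correct and follows essentially the same route as the paper's own proof: both verify summability by matching conditions (1) and (2) directly to Definition~\ref{d_Summable_family} via the identity $\supp^Z(T_w\theta_w)=\supp^Z(\theta_w)$, then obtain surjectivity by expanding each $h_\lambda$ in the basis $(T_w)$ and uniqueness from the linear independence of the $T_w$ in $\HCW$. Your parenthetical remark about the reversed order $\theta_w T_w$ is an interesting aside but unnecessary here, since the proposition as stated only concerns $T_w\theta_w$.
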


Then $(\Tb_w \theta_w)_{w\in W_0}$ is summable in $\HBL$. Moreover, for every $a\in \HBL$, there exists a unique $(\theta_w)_{w\in W_0}\in (\RY_{\WAF})^{W_0}$ satisfying the conditions above and such that $a=\sum_{w\in W_0} \Tb_w \theta_w$.
\begin{proof}
Let $(\theta_w)_{w\in W_0}\in (\RY_{\WAF})^{W_0}$ be a family satisfying (1) and (2). Let $\lambda\in Y^+$ and $w\in W_0$. Then $\coeff_\lambda^Z(\Tb_w \theta_w)\neq 0$ if and only if $\lambda\in \supp^Z(\theta_w)$. Therefore 
$\{w\in W_0\mid\coeff_\lambda^Z(\Tb_w \theta_w)\neq 0\}$ is finite. Moreover $\bigcup_{w\in W_0} \supp^Z(\Tb_w \theta_w)=\bigcup_{w\in W_0} \supp^{Z}(\theta_w)$ is Weyl almost finite by (1), which proves that $(\Tb_w \theta_w)$ is summable.

Let $a=\sum_{\lambda\in Y^+}h_\lambda \Zb^\lambda\in \HBL$. For $\lambda\in Y^+$, write $h_\lambda=\sum_{w\in W_0,\lambda\in Y^+} x_{w,\lambda} \Tb_w$, where $(x_{w,\lambda})_{w\in W_0}\in (\cR)^{(Y^+)}$. For $w\in W_0$, set  $\theta_w=\sum_{\lambda\in Y^+} x_{w,\lambda} \Zb^\lambda$. Then $\supp^Z(\theta_w)\subset \{\lambda\in Y^+\mid\ h_\lambda\neq 0\}$ and thus $\bigcup_{w\in W_0} \supp^Z (\theta_w)$  is Weyl almost finite. In particular, $\theta_w$ is a well defined element of $\RY_{\WAF}$. Let $\lambda\in Y^+$. Then $\coeff_\lambda^Z(\Tb_w \theta_w)= x_{w,\lambda} \Tb_w$. Moreover, $\lambda\in \supp^Z(\theta_w)$ if and only if  $x_{w,\lambda}\neq 0$ if and only if $w\in \supp(h_\lambda)$, hence $\{w\in W_0\mid\lambda\in \supp^Z(\theta_w)\}$ is finite, which proves that $(\theta_w)$ satisfies (1) and (2). As $\coeff_\lambda^Z(\sum_{w\in W_0} \Tb_w \theta_w)=\sum_{w\in W_0} x_{w,\lambda} \Tb_w=h_\lambda$, we deduce that $a=\sum_{w\in W_0} \Tb_w \theta_w$.

Let $(\theta_w)\in (\HCW)^{W_0}$ be a family satisfying (1) and (2) and such that $\sum_{w\in W_0} \Tb_w \theta_w=0$. Let $\lambda\in Y^+$. Then \[\coeff_\lambda^Z(\sum_{w\in W_0} \Tb_w \theta_w)=\sum_{w\in W_0\mid \lambda\in \supp(\theta_w)}\coeff_\lambda^Z(\theta_w) \Tb_w=0\] and as $(\Tb_v)_{v\in W_0}$ is an $\cR$-basis of $\HCW$, we deduce that $\theta_w=0$, which completes the proof of the proposition.
\end{proof}

\begin{Definition}\label{d_coeff_w_Z}
Let $h\in \HBL$. Using Proposition~\ref{propDescription_HBL_H_basis}, we write $h=\sum_{w\in W_0} T_w\theta_w$, with $(\theta_w)\in (\RY_{\WAF})^{W_0}$. For $w\in W_0$, we set $\coeff^Z_w(h)=\theta_w$\index{c@$\coeff^Z_w$}, which is well defined by the uniqueness in Proposition~\ref{propDescription_HBL_H_basis}.
    
\end{Definition}

\subsection{Center of $\HBL$}\label{ss_center}

We now prove that the center of $\HBL$ is $\RY^{W_0}$ (where $\RY^{W_0}$ is defined in Definition~\ref{d_Looijenga_algebra}).

In order to use a result from \cite{abdellatif2019completed}, we use the following remark. 
\begin{Remark}\label{r_Rw}
 In \cite[4.4]{abdellatif2019completed}, a set $R_w(E)$ is defined, for $w\in W_0$ and $E\subset Y$. It is defined with the same process as in Definition~\ref{d_sT}, with $R_i(E)=\conv_{Q^\vee}(E\cup r_i(E))$, for $i\in {I_A}$. If $E\subset Y$,  then $\widehat{\sT_i}(\lambda)\subset R_i(E)$, for all $\lambda\in E$, and thus $\widehat{\sT_w}(E)\subset R_w(E)$ for every $w\in W_0$. 
 \end{Remark}

\begin{Lemma}\label{lemCommutation_relation_Zlambda_Hw}
Let $\lambda\in Y$ and $w\in W_0$. Then there exists $(\theta_{v,w,\lambda})\in (\cR[Y^+])^{[1,w]}$ (where $[1,w]=\{v\in W_0\mid v\leq w\}$) such that \[\Zb^\lambda \Tb_w=\sum_{v\in [1,w]} \Tb_v \theta_{v,w,\lambda}\] and \begin{enumerate}
\item $\theta_{v,w,\lambda}\in \cR[Y]$, for all $v\in[1,w]$,

\item $\theta_{w,w,\lambda}=\Zb^{w^{-1}(\lambda)}$,

\item for all $v\in [1,w]$, we have $ \supp^Z(\theta_{v,w,\lambda})\subset \widetilde{\sT_{w^{-1}}}(\lambda)$. 

\end{enumerate}
\end{Lemma}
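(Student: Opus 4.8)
The natural approach is induction on $\ell(w)$. For $w=1$ one has $Z^\lambda T_1 = Z^\lambda$, so $\theta_{1,1,\lambda}=Z^\lambda$ works: (1) and (2) are immediate, and (3) holds because $\widetilde{\sT_1}(\lambda)=\{\lambda\}$. For the inductive step, pick $i\in I_A$ with $\ell(r_iw)<\ell(w)$, put $w'=r_iw$, so $\ell(w)=\ell(w')+1$ and $T_w=T_iT_{w'}$ by (BL2). The engine is to commute $Z^\lambda$ past $T_i$: by Remark~\ref{r_explicit_BL_relations}(2),
\[
Z^\lambda T_i \;=\; T_i Z^{r_i(\lambda)} \;+\;(q-1)\,P,\qquad P:=\frac{Z^\lambda-Z^{r_i(\lambda)}}{1-Z^{-\alpha_i^\vee}}\in\cR[Y],
\]
where $\supp^Z(P)\subseteq \widetilde{\sT_i}(\lambda)$ by \eqref{e_computation_BL} and $r_i(\lambda)\in\widetilde{\sT_i}(\lambda)$ straight from the definition of $\widetilde{\sT_i}$. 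Right-multiplying by $T_{w'}$ gives $Z^\lambda T_w = T_i\bigl(Z^{r_i(\lambda)}T_{w'}\bigr)+(q-1)\,P\,T_{w'}$.

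Next apply the induction hypothesis to $Z^\mu T_{w'}$ for $\mu=r_i(\lambda)$ and for each $\mu\in\supp^Z(P)$ --- all of these $\mu$ lying in $\widetilde{\sT_i}(\lambda)$ --- writing $Z^\mu T_{w'}=\sum_{v\in[1,w']}T_v\theta_{v,w',\mu}$ with the three properties, and rewrite each product $T_iT_v$ via (BL2) as an element of $\cR T_v\oplus \cR T_{r_iv}$ (namely $T_{r_iv}$ if $r_iv>v$, and $(q-1)T_v+qT_{r_iv}$ otherwise). Both $v$ and $r_iv$ lie in $[1,w]$: one has $v\le w'<w$, while for $r_iv$ one uses $r_iv<v<w$ in the first case and the lifting property of the Bruhat order (valid since $r_iw=w'<w$) in the second. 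Collecting the coefficient of each $T_v$, $v\in[1,w]$, yields $Z^\lambda T_w=\sum_{v\in[1,w]}T_v\theta_{v,w,\lambda}$ with each $\theta_{v,w,\lambda}$ a finite $\cR$-linear combination of various $\theta_{v',w',\mu}$, hence in $\cR[Y]$; this is (1). For (2), one checks that the only term landing in the $T_w$-slot is $v'=w'$, $\mu=r_i(\lambda)$, via $T_i\bigl(T_{w'}\theta_{w',w',r_i(\lambda)}\bigr)=T_w Z^{w'^{-1}(r_i(\lambda))}=T_wZ^{w^{-1}(\lambda)}$ (using $\theta_{w',w',r_i(\lambda)}=Z^{w'^{-1}(r_i(\lambda))}$ from the inductive hypothesis and $w^{-1}=w'^{-1}r_i$), since every other $T_iT_v$ or $T_v$ with $v\le w'$ contributes only to slots $T_u$ with $u<w$. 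For (3), the point is that $\ell(w^{-1})=\ell(w'^{-1})+1$ with $w^{-1}=w'^{-1}r_i$, so any reduced word for $w'^{-1}$ extended by $r_i$ is a reduced word for $w^{-1}$; hence $\widetilde{\sT_{w'^{-1}}}\bigl(\widetilde{\sT_i}(\lambda)\bigr)\subseteq \widetilde{\sT_{w^{-1}}}(\lambda)$ (using Lemma~\ref{l_sT_as_a_union} to move the union over reduced words of $w'^{-1}$ through), and since $\supp^Z(\theta_{v',w',\mu})\subseteq\widetilde{\sT_{w'^{-1}}}(\mu)$ for $\mu\in\widetilde{\sT_i}(\lambda)$, we get $\supp^Z(\theta_{v,w,\lambda})\subseteq\widetilde{\sT_{w^{-1}}}(\lambda)$. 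When $\lambda\in Y^+$ all these supports lie in $Y^+$, since $\cT$ is a convex cone, so the $\theta_{v,w,\lambda}$ lie in $\cR[Y^+]$.

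The routine content is the two-line manipulations with (BL2) and (BL4) and the assembly of the linear combinations; the steps that need care are the bookkeeping for (2) --- verifying that no spurious term contributes to the $T_w$-coefficient --- and the compatibility in (3) between the operators $\widetilde{\sT_i}$ and reduced decompositions of $w^{-1}$, for which the crucial observation is precisely that a reduced word for $w^{-1}$ may be chosen to end in $r_i$. These, together with the small input from the lifting property needed to keep all indices inside $[1,w]$, are the only non-mechanical parts.
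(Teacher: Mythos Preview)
Your proof is correct. The paper's own proof is much terser: it cites an external result (\cite[Lemma~2.8]{hebert2022principal}) for the triangularity statement $Z^\lambda T_w - T_w Z^{w^{-1}(\lambda)} \in \bigoplus_{v<w} T_v\, \cR[Y]$, and invokes \eqref{eqSupport_Zlambda_Twinverse} from Lemma~\ref{lemBernstein_Lusztig_relation_inverse} for the support bound~(3). Your direct induction on $\ell(w)$ establishes all three claims simultaneously and is self-contained, in effect unpacking the proofs of both cited results into a single argument; the underlying mechanism (commute $Z^\lambda$ past $T_i$ via (BL4), then recurse) is the same, but your version does not need the external reference. One minor presentational slip: your ``first case'' and ``second case'' labels for the position of $r_iv$ relative to $v$ are swapped relative to the order in your parenthetical description of $T_iT_v$, though the mathematical content is correct either way.
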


\begin{proof}
By   \cite[Lemma 2.8]{hebert2022principal} (here $\cR$ is  not necessarily a field, but this does not change the proof) and \eqref{eqSupport_Zlambda_Twinverse}, we have  \[\Zb^\lambda \Tb_w-\Tb_w \Zb^{w^{-1}(\lambda)}\in \left(\bigoplus_{v<w} \Tb_w \cR[Y]\right)\cap \left(\bigoplus_{\mu\in \widetilde{\sT_{w^{-1}}}(\lambda)} \HCW Z^\mu\right),\] which proves the lemma.
\end{proof}

The following theorem  is similar to \cite[Theorem 4.30]{abdellatif2019completed}. Our proof is however different, since we do not know how to define $Z^{-\mu} h Z^\mu$, for $\mu\in Y$ and $h\in \HBL$, contrary to what is done in \cite[4.5]{abdellatif2019completed}.

\begin{Theorem}\label{t_center}
The center $\sZ(\HBL)$ of $\HBL$ is $\RY^{W_0}$.
\end{Theorem}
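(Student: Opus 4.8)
The plan is to prove the two inclusions $\RY^{W_0}\subseteq\sZ(\HBL)$ and $\sZ(\HBL)\subseteq\RY^{W_0}$ separately. I first record a fact used throughout: by Theorem~\ref{thmConvolution_summable_family} together with (BL3), the map $\RY_{\WAF}\hookrightarrow\HBL$ sending $\sum_\lambda c_\lambda Z^\lambda$ to the same series (i.e.\ taking $h_\lambda=c_\lambda\cdot 1$) is an injective homomorphism of $\cR$-algebras; in particular $\RY_{\WAF}$ is a commutative subalgebra of $\HBL$ and $\RY^{W_0}\subseteq\HBL$.

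For $\RY^{W_0}\subseteq\sZ(\HBL)$: let $z=\sum_{\lambda\in Y^+}c_\lambda Z^\lambda\in\RY^{W_0}$. It commutes with every $Z^\mu$ ($\mu\in Y^+$) since $\RY_{\WAF}$ is commutative. To see $z$ commutes with $T_i$ for $i\in I_A$, I expand in the $(T_wZ^\lambda)$-basis using Remark~\ref{r_explicit_BL_relations}(2) in the form $Z^\lambda T_i=T_iZ^{r_i(\lambda)}+(q-1)\tfrac{Z^\lambda-Z^{r_i(\lambda)}}{1-Z^{-\alpha_i^\vee}}$: after re-indexing, the $T_i$-component of $zT_i$ is $\sum_\lambda c_\lambda Z^{r_i(\lambda)}=z$ by $W_0$-invariance, matching the $T_i$-component $z$ of $T_iz$, while the $T_1$-component of $zT_i$ is $(q-1)\sum_\lambda c_\lambda\tfrac{Z^\lambda-Z^{r_i(\lambda)}}{1-Z^{-\alpha_i^\vee}}$, which vanishes because the $\lambda$- and $r_i(\lambda)$-summands cancel (using $c_\lambda=c_{r_i(\lambda)}$ and the $r_i$-antisymmetry of $\tfrac{Z^\lambda-Z^{r_i(\lambda)}}{1-Z^{-\alpha_i^\vee}}$) and the $r_i$-fixed $\lambda$'s contribute $0$; the rearrangement is legitimate by almost finiteness. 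Finally, for arbitrary $a=\sum_{w\in W_0}T_w\theta_w\in\HBL$ (Proposition~\ref{propDescription_HBL_H_basis}), distributing over the summable family (Theorem~\ref{thmConvolution_summable_family}) gives $za=\sum_w(zT_w)\theta_w=\sum_wT_w(z\theta_w)=\sum_wT_w(\theta_wz)=az$, where $zT_w=T_wz$ follows by writing $T_w$ as a product of the $T_i$ along a reduced decomposition via (BL2). So $z\in\sZ(\HBL)$.

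For $\sZ(\HBL)\subseteq\RY^{W_0}$: let $z\in\sZ(\HBL)$ and write $z=\sum_{w\in W_0}T_w\theta_w$ with $(\theta_w)\in(\RY_{\WAF})^{W_0}$ as in Proposition~\ref{propDescription_HBL_H_basis}. The argument has two steps: (i) $\theta_w=0$ for $w\ne 1$, so $z=\theta_1\in\RY_{\WAF}$; (ii) $\theta_1\in\RY^{W_0}$. Step (ii) is short given (i): from $T_iz=zT_i$, comparing $T_i$-components in the $(T_wZ^\lambda)$-basis (that of $T_iz=T_i\theta_1$ is $\theta_1$, that of $zT_i=\theta_1T_i$ is $r_i\cdot\theta_1$, by the computation above) yields $r_i\cdot z=z$ for all $i\in I_A$, hence $z\in\RY^{W_0}$. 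For step (i) I use that $z$ commutes with $Z^\mu$ for every $\mu\in Y^+$. Expanding $Z^\mu z$ via Lemma~\ref{lemCommutation_relation_Zlambda_Hw}, $Z^\mu T_w=T_wZ^{w^{-1}(\mu)}+\sum_{v<w}T_v\theta_{v,w,\mu}$ with $\supp^Z(\theta_{v,w,\mu})\subseteq\widetilde{\sT_{w^{-1}}}(\mu)$, and comparing $T_v$-components of $Z^\mu z$ and $zZ^\mu$ via the uniqueness in Proposition~\ref{propDescription_HBL_H_basis} (the inner series lie in $\RY_{\WAF}$ and the rearrangements are summable, using the finiteness statements of \S\ref{ss_WAF_sets} and Proposition~\ref{P_equivalence_finiteness}), I obtain the triangular system
\[
  \bigl(Z^\mu-Z^{v^{-1}(\mu)}\bigr)\,\theta_v \;=\; \sum_{w>v}\theta_{v,w,\mu}\,\theta_w
  \qquad(v\in W_0,\ \mu\in Y^+).
\]
Specialising to regular $\mu\in Y^{++}$, so $v^{-1}(\mu)<_{Q^\vee}\mu$ whenever $v\ne 1$, I compare the $\leq_{Q^\vee}$-leading parts of the two sides: the left side is supported on $(\mu+\supp^Z(\theta_v))\cup(v^{-1}(\mu)+\supp^Z(\theta_v))$ with the $\mu$-level uncancelled, whereas each right-hand term is supported in $\widetilde{\sT_{w^{-1}}}(\mu)+\supp^Z(\theta_w)\subseteq\conv_{Q^\vee}(W_0\cdot\mu)+\supp^Z(\theta_w)$; feeding in the finiteness of the set $M$ of maximal elements of $W_0\cdot\bigcup_w\supp^Z(\theta_w)$ (Proposition~\ref{p_Weyl_almost_finite_sets}) together with the strict height drop of Lemma~\ref{lemGrowth_dominant_height_segments}, I run a well-founded induction (organised by depth for $\leq^{++}$, refined by Bruhat length) to force $\theta_v=0$ for all $v\ne 1$.

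I expect step (i) to be the main obstacle. In contrast with the reductive case, $W_0$ is infinite, so the support $\{w\in W_0\mid\theta_w\ne 0\}$ of a central element need not be finite and there is no Bruhat-maximal index from which to start a descending induction; this is precisely why the conjugation argument of \cite[4.5]{abdellatif2019completed}, which would require forming $Z^{-\mu}(\cdot)Z^\mu$ for $\mu\notin Y^+$, is unavailable here. Making the induction in step~(i) precise—solving the displayed triangular system ``from the top down''—requires carefully combining the support-triangularity of $Z^\mu T_w$ (Lemma~\ref{lemCommutation_relation_Zlambda_Hw}), the growth of dominant height along line segments (Lemma~\ref{lemGrowth_dominant_height_segments}), and the Weyl-almost-finiteness bookkeeping of \S\ref{ss_WAF_sets}; this is the technical heart of the proof.
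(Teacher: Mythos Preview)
Your inclusion $\RY^{W_0}\subseteq\sZ(\HBL)$ and step~(ii) of the reverse inclusion are correct and essentially coincide with the paper's argument.

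The gap is in step~(i). Your displayed triangular system
\[
\bigl(Z^\mu-Z^{v^{-1}(\mu)}\bigr)\theta_v=\sum_{w>v}\theta_{v,w,\mu}\,\theta_w
\]
is correct, but the proposed ``well-founded induction organised by depth for $\leq^{++}$, refined by Bruhat length'' is not a proof: you do not specify the well-founded set, the order, or the inductive hypothesis, and---as you yourself note---there is no Bruhat-maximal element of $\{w\mid\theta_w\ne0\}$ from which to start a descent. Invoking the finite set $M$ of maximal elements of $W_0\cdot\bigcup_w\supp^Z(\theta_w)$ together with Lemma~\ref{lemGrowth_dominant_height_segments} does not by itself produce a well-founded order on the indices $v\in W_0$; and the right-hand side above involves, for each $v$, an a priori infinite family of $w$'s, so one cannot simply ``peel off the top''. (Your appeal to Proposition~\ref{P_equivalence_finiteness} is also misplaced: that result concerns the change of basis between $T_\lambda$ and $Z^\lambda$, not the summability you need here.)

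The paper avoids any such induction via a single extremality argument, and the key finiteness input you are missing is condition~(2) of Proposition~\ref{propDescription_HBL_H_basis}: for each fixed $\mu\in Y^+$, the set $\{v\in W_0\mid\mu\in\supp(\theta_v)\}$ is \emph{finite}. The paper proceeds as follows. Assume some $\theta_w\ne0$; pick $\mu$ maximal for $\leq_{Q^\vee}$ in $S_w:=\bigcup_{v\ge w}\supp(\theta_v)$; then pick $w'$ Bruhat-maximal in $\{v\ge w\mid\mu\in\supp(\theta_v)\}$, which is now a finite set. Take $\lambda\in Y\cap C^v_f$ and compare the coefficient of $Z^{\lambda+\mu}$ in $\coeff^Z_{w'}(zZ^\lambda)=\coeff^Z_{w'}(Z^\lambda z)$. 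On the $zZ^\lambda$ side this coefficient is nonzero since $\mu\in\supp(\theta_{w'})$. On the $Z^\lambda z$ side, Lemma~\ref{lemCommutation_relation_Zlambda_Hw} gives $\coeff^Z_{w'}(Z^\lambda z)=\sum_{w''\ge w'}\theta_{w',w'',\lambda}\theta_{w''}$; since every $\lambda'\in\supp(\theta_{w',w'',\lambda})$ satisfies $\lambda'\le_{Q^\vee}\lambda$, any decomposition $\lambda+\mu=\lambda'+\mu'$ forces $\mu'\ge_{Q^\vee}\mu$, hence $\mu'=\mu$ by maximality of $\mu$, hence $w''=w'$ by maximality of $w'$. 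The sole surviving term is $Z^{w'^{-1}(\lambda)}\theta_{w'}$, and matching forces $w'^{-1}(\lambda)=\lambda$, so $w'=1$ by regularity of $\lambda$; since $w\le w'$, also $w=1$. This double maximality (first in $\mu$, then in $w'$) collapses your infinite triangular system to a single equation and is the missing idea.
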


\begin{proof}
Let $z\in \sZ(\HBL)\setminus\{0\}$. Using Proposition~\ref{propDescription_HBL_H_basis}, we write $z=\sum_{w\in W_0} \Tb_w \theta_w$, where $\theta_w\in\RY_{\WAF}$, for all $w\in W_0$. For $w\in W_0$, set $S_w=\bigcup_{v\in W_0\mid v\geq w} \supp (\theta_w)\subset Y^+$.  Let $w\in W_0$ be such that $\theta_w\neq 0$. Let $\mu$ be a maximal element of $S_w$ for $\leq_{Q^\vee}$. By Proposition~\ref{propDescription_HBL_H_basis}, $\{v\in W_0\mid v\geq w\text{ and }\mu\in \supp(\theta_v)\}$ is finite. We choose a maximal element $w'$ of this set (for the Bruhat order). 

Let $\lambda\in Y\cap C^v_f$.  Then by Theorem~\ref{thmConvolution_summable_family}, we have \begin{equation}\label{eqCommutation_z}\Zb^\lambda z=z \Zb^\lambda=\sum_{u\in W_0} \Tb_u \theta_u \Zb^\lambda.\end{equation}  On the other side, with the notation of Lemma~\ref{lemCommutation_relation_Zlambda_Hw}, \[\begin{aligned}\Zb^\lambda z &= \Zb^\lambda \sum_{u\in W_0} \Tb_u \theta_u\\ 
&= \sum_{u\in W_0}\sum_{u'\in [1,u]}\Tb_{u'} \theta_{u',u,\lambda}\theta_u\\
&= \sum_{u'\in W_0} \Tb_{u'} \sum_{u\geq u'}\theta_{u',u,\lambda}\theta_u.\end{aligned}\]

Therefore, with the notation of Definition~\ref{d_coeff_w_Z}, we have $\coeff^Z_{w'}(Z^\lambda z)=\sum_{w''\geq w'}\theta_{w',w'',\lambda}\theta_{w''}$. 

Using \eqref{eqCommutation_z}, we deduce:  \begin{equation}\label{e_lambda_mu_support_1}
    \lambda+\mu\in \supp (\sum_{w''\geq w'}\theta_{w',w'',\lambda}\theta_{w''}).
\end{equation}

Let $w''\in W_0$ be such that $w''\geq w'$. Let $\nu\in  \supp(\theta_{w',w'',\lambda}\theta_{w''})$. Then we can write $\nu=\lambda'+\mu'$, where $\lambda'\in \supp(\theta_{w',w'',\lambda})$ and $\mu'\in \supp(\theta_{w''})$.  Then by Lemma~\ref{lemCommutation_relation_Zlambda_Hw}, Remark~\ref{r_Rw} and \cite[Lemma 4.17 (2)]{abdellatif2019completed}, we have $\lambda'\leq_{Q^\vee}\lambda$. Suppose $\nu=\lambda+\mu$. Then $\mu'=(\lambda-\lambda')+\mu\geq_{Q^\vee} \mu$. By assumption on $\mu$, we deduce that $\mu'=\mu$. By assumption on $w'$ we deduce $w''=w'$. Thus we proved that for every $w\in ]w',\infty[$, we have: \[\lambda+\mu\notin \supp(\theta_{w',w'',\lambda}\theta_{w''}).\] Using \eqref{e_lambda_mu_support_1} we deduce that:
 \[\lambda+\mu\in \supp(\theta_{w',w',\lambda}\theta_{w'})=\supp(\Zb^{w'^{-1}(\lambda)}\theta_{w'})=w'^{-1}(\lambda)+\supp(\theta_{w'}).\] Thus there exists $\mu'\in \supp(\theta_{w'})$ such that $w'^{-1}(\lambda)+\mu'=\lambda+\mu$. Consequently, $\mu'=\lambda-w'^{-1}(\lambda)+\mu\geq_{Q^\vee}\mu$ and thus $\mu'=\mu$, which proves that $w'=1$, since $\lambda$ is regular. As $w\leq w'$ we deduce that $w=1$. Therefore $z\in \RY_{\WAF}$. 

Write $z=\sum_{\lambda\in Y^+} c_\lambda \Zb^\lambda$, where $(c_\lambda)\in \cR^Y$. Let $w\in W_0$. By Lemma~\ref{lemCommutation_relation_Zlambda_Hw} we have \begin{equation}\label{eqCommutation_z_T_w}
z \Tb_w=\sum_{\lambda\in Y^+} c_\lambda \Tb_w \Zb^{w^{-1}(\lambda)}+h'=\Tb_w z=\sum_{\lambda\in Y^+} c_\lambda \Tb_w \Zb^\lambda= \Tb_w\sum_{\lambda\in Y^+}  c_\lambda\Zb^\lambda,
\end{equation} for some $h'\in \bigoplus_{v\in [1,w[ } \Tb_v  \RY_\WAF$. Therefore  
\[\coeff^Z_w(zT_w)=\sum_{\lambda\in Y^+} c_\lambda \Zb^{w^{-1}(\lambda)}=\sum_{\lambda\in Y^+} c_\lambda \Zb^\lambda.\] Consequently $z\in \RY^{W_0}$. Hence $\sZ(\HBL)\subset \RY^{W_0}$.

Let now $z=\sum_{\lambda\in Y^+} a_\lambda\Zb^{\lambda}\in \RY^{W_0}$. Let $i\in {I_A}$. Write $z=x+y$, where $x=\sum_{\lambda\in Y^+\cap \ker(\alpha_i)} a_\lambda \Zb^\lambda$ and $y=\sum_{\lambda\in Y^+\mid\alpha_i(\lambda)>0} a_\lambda (\Zb^{\lambda}+\Zb^{r_i(\lambda)})$. Then $x$ and $y$ commute with $\Tb_i$ and thus $z$ commutes with $\Tb_i$. Consequently $z$ commutes with any element of $\HCW$. Let $b\in \HBL$. Write $b=\sum_{w\in W_0} \Tb_w \theta_w$, with $(\theta_w)\in (\RY_{\WAF})^{W_0}$ as in Proposition~\ref{propDescription_HBL_H_basis}. Then $(\Tb_w \theta_w)_{w\in W_0}$ is  summable in $\HBL$ and thus by Theorem~\ref{thmConvolution_summable_family}, $(z \Tb_w \theta_w)$ and $(\Tb_w \theta_w z)$ are summable in $\HBL$ and we have: 
 \[z (\sum_{w\in W_0} \Tb_w \theta_w)=\sum_{w\in W_0} (z \Tb_w \theta_w)=\sum_{w\in W_0} (\Tb_w \theta_w z)=(\sum_{w\in W_0} \Tb_w \theta_w) z,\] which proves that $\RY^{W_0}\subset\sZ(\HBL)$ and completes the proof of the theorem.

\end{proof}

\section{Finiteness results for changing bases between $Z$-basis and $T$-basis}\label{s_finiteness_results}

Recall that $(\HCW)^{Y^+}_{\WAF}$ is the set of families of $(\HCW)^{Y^+}$ whose support is Weyl almost finite. In Definition~\ref{d_sT}, we introduced, for every $w\in W_0$, a map $\overline{\sT_w}$ sending subsets of $Y^+$ on subsets of $Y^+$ and we proved that $\supp^Z(T_{w(\tilde{\lambda})})$ is contained in  $\overline{\sT_w}(\tilde{\lambda})$, for $\tilde{\lambda}\in Y^{++}$ and $w\in W_0$ (see Proposition~\ref{P_BL_support_IM_basis}). In this technical section, we prove  a finiteness result concerning the $\overline{\sT_w}$ (see Lemma~\ref{lemFiniteness_support_TWhat}). This enables us to express series of the form $\sum_{\lambda\in Y^+}h_\lambda T_\lambda$, with $(h_\lambda)\in (\HCW)^{Y^+}_{\WAF}$   as a series $\sum_{\lambda\in Y^+}h_\lambda' Z^\lambda$, with $(h_\lambda')\in (\HCW)^{Y^+}_{\WAF}$, and conversely (see Theorem~\ref{t_finiteness_coefficients}). This is the main technical achievement of this paper.

\subsection{Quotients of stabilizers on  a line segment}\label{ss_Quotient_stabilizers}

Recall that if  $E\subset \A$ is non-empty, we denote by $W_{0,E}$ its fixator.

In our proof of Lemma~\ref{lemFiniteness_support_TWhat}, we use the following property: if $\lambda,\lambda'\in Y^+$ and $\nu\in ]\lambda,\lambda'[_{Q^\vee}$, we have: \begin{equation}\label{e_expected_property}
   \text{if }E\subset W_0^\nu\text{ is finite, then }E\cdot \lambda\text{ is finite.} 
\end{equation} We prove this property in this subsection (see Corollary~\ref{c_finiteness_property_segments}). This leads us to studying the finiteness of $W_{0,\nu}/W_{0,[\lambda,\lambda']}$ for $\lambda,\lambda'\in \T$ and $\nu\in ]\lambda,\lambda'[$. 

Let $\lambda,\lambda'\in \T$, $\lambda\neq \lambda'$ and $\nu\in ]\lambda,\lambda'[$. In general,  $W_{0,\nu}$ has no reason to be smaller than $W_{0,\lambda}$ or $W_{0,\lambda'}$. For example if $i\in {I_A}$, $\lambda\in C^v_f$, $\lambda'\in r_i(C^v_f)$ and $\nu\in ]\lambda,\lambda'[$ is the element satisfying $\alpha_i(\nu)=0$, then $W_{0,\lambda}=W_{0,\lambda'}=\{1\}$ and $\{1,r_i\}\subset W_{0,\nu}$. The aim of this subsection is to prove that nevertheless, $W_{0,\nu}$ is not ``much bigger'' than $W_{0,\lambda}$ (or $W_{0,\lambda'}$) in the sense that it is contained in a finite union of translates of $W_{0,\lambda}$. Indeed, let $D=\lambda+\R(\lambda-\lambda')$. We have $W_{0,\lambda}\cap W_{0,\lambda'}=W_{0,D}$. We prove in Proposition~\ref{propFiniteness_quotients_stabilizers} that $W_{0,\nu}/W_{0,D}$ is finite and hence $W_{0,\nu}=\bigsqcup_{vW_{0,D}\in W_{0,\nu}/W_{0,D}} v W_{0,D}\subset  \bigcup_{vW_{0,D}\in W_{0,\nu}/W_{0,D}} v W_{0,\lambda}$.

\begin{Remark}
\begin{enumerate}
    \item In the case where $\cS$ is associated with an affine or a size $2$ Kac-Moody matrix, we can avoid this subsection, using the following reasoning. In this case, we have $\cT=\mathring{\cT}\sqcup \A_{in}$, where $\mathring{\cT}$ is the interior of the Tits cone and  $\A_{in}=\bigcap_{i\in {I_A}}\ker(\alpha_i)$ (indeed in this case, $W_0$ is the only infinite standard parabolic subgroup of $W_0$ and hence we have the claim by \cite[Proposition 3.12 f]{kac1994infinite}). Let $\lambda\in \cT$. If $\lambda\in \A_{in}$ then $W_0^{\lambda}=\{1\}$ and therefore \eqref{e_expected_property} is vacuous.  If $\lambda\in \mathring{\cT}$, then for $w\in W_0$ and  $\nu \in ]\lambda,w(\lambda)[$, $W_{0,\nu}$ is finite since $\mathring{\cT}$ is a convex cone. Therefore for every infinite subset $E$ of $W_0$, $E\cdot \nu$ is infinite, which proves \eqref{e_expected_property}.

    \item  In general however, $Y^+$ can contain an element $\lambda$ which has an infinite $W_0$-orbit and an infinite stabilizer. This is for example the case if $\cS$ is associated with a Kac-Moody matrix of size greater than $2$, with all non-diagonal coefficients with absolute value greater than $2$, by \cite[Proposition 1.3.21]{kumar2002kac}. Then if $i\in {I_A}$ and $\lambda\in Y^{+}$ satisfies $\alpha_i(\lambda)>0$ and $\alpha_j(\lambda)=0$ for every $j\in {I_A}\setminus\{i\}$, we have that $W_{0,\lambda}=\langle r_j\mid j\in {I_A}\setminus \{i\}\rangle$ is infinite and $W_0^{\lambda}$ is infinite. In that case, \eqref{e_expected_property} is not obvious.
\end{enumerate}

\end{Remark}

\begin{Proposition}\label{propFiniteness_quotients_stabilizers}
Let $\lambda,\nu\in \T$, $\lambda\neq \nu$. Then $W_{0,\nu}/(W_{0,\lambda}\cap W_{0,\nu})$ is finite if and only if there exists $\lambda'\in \T$ such that $\nu\in ]\lambda,\lambda'[$.
\end{Proposition}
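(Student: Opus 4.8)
The plan is to transfer the statement to a single Coxeter group — the parabolic subgroup $W_{0,\nu}$ — acting on its own Tits cone, and then to invoke the structure theory of Tits cones.

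\emph{Reduction and the local cone at $\nu$.} An element of $W_0$ fixing the two distinct points $\lambda$ and $\nu$ fixes the affine line $D=\lambda+\R(\nu-\lambda)$ pointwise, so $W_{0,\lambda}\cap W_{0,\nu}=W_{0,D}$. Conjugating by an element of $W_0$ carrying $\nu$ to $\nu^{++}$ (which affects neither side of the asserted equivalence), I may assume $\nu\in\overline{C^v_f}$; then by the description of stabilizers in \S\ref{sss_parabolic_subgroups}, $W_{0,\nu}=W_J:=\langle r_i\mid i\in J\rangle$ with $J=\{i\in I_A\mid\alpha_i(\nu)=0\}$. Since $W_{0,D}=\Stab_{W_J}(\lambda)$, orbit–stabilizer gives $[W_{0,\nu}:W_{0,D}]=|W_J\cdot\lambda|=|W_J\cdot u|$ with $u:=\lambda-\nu\neq 0$ (as $W_J$ fixes $\nu$). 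Write $\overline{C^v_{f,J}}=\{x\mid\alpha_i(x)\ge 0\ \forall i\in J\}$ and $\T_J=\bigcup_{w\in W_J}w\cdot\overline{C^v_{f,J}}$ for the Tits cone of $W_J$ on $\A$. The first elementary step is: for $z\in\A$, one has $\nu+\varepsilon z\in\T$ for all small $\varepsilon>0$ if and only if $z\in\T_J$. Indeed, $w^{-1}(\nu+\varepsilon z)\in\overline{C^v_f}$ reads $\alpha_k(w^{-1}\nu)+\varepsilon\alpha_k(w^{-1}z)\ge 0$ for all $k$; for small $\varepsilon$ this forces $w^{-1}\nu\in\overline{C^v_f}$, hence $w^{-1}\nu=\nu$, i.e. $w\in W_J$, after which the conditions for $k\in J$ become $w^{-1}z\in\overline{C^v_{f,J}}$. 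Now $\lambda=\nu+u\in\T$ and convexity of the cone $\T$ give $u\in\T_J$ always, while the existence of $\lambda'\in\T$ with $\nu\in{}]\lambda,\lambda'[$ amounts to $\nu-\varepsilon u\in\T$ for small $\varepsilon>0$, i.e. to $-u\in\T_J$. So the proposition reduces to: \emph{for $u\in\T_J$, the orbit $W_J\cdot u$ is finite if and only if $-u\in\T_J$.}

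\emph{The Coxeter lemma.} The direction ``finite $\Rightarrow -u\in\T_J$'' is easy: replace $u$ by a $W_J$-translate lying in $\overline{C^v_{f,J}}$; a $\leq_{Q^\vee_\R}$-minimal element $\check u$ of the finite orbit $W_J\cdot u$ must have $\alpha_i(\check u)\le 0$ for all $i\in J$ (otherwise $r_i\check u<_{Q^\vee_\R}\check u$), so $-\check u\in\overline{C^v_{f,J}}\subseteq\T_J$ and hence $-u\in W_J\cdot(-\check u)\subseteq\T_J$. For the converse, assume $u,-u\in\T_J$; after translating, $u=\hat u$ is $J$-dominant, and applying the previous step to $-u$ produces a $J$-antidominant $\check u\in W_J\cdot u$. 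By the standard inequality $wx\leq_{Q^\vee_\R}x$ for $x$ $J$-dominant (\cite[1.4]{kumar2002kac}) every element of the orbit satisfies $\check u\leq_{Q^\vee_\R}w\hat u\leq_{Q^\vee_\R}\hat u$, so $W_J\cdot u$ lies in $\{y\mid\check u\leq_{Q^\vee_\R}y\leq_{Q^\vee_\R}\hat u\}$, which is compact because $\{\alpha_i^\vee\mid i\in J\}$ is a free family. Thus $W_J\cdot u$ is bounded.

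\emph{The obstacle, and finishing.} The one nontrivial point is that a bounded $W_J$-orbit is finite — equivalently, that $\T_J\cap(-\T_J)$ is exactly the locus of points with finite $W_J$-orbit. I would obtain this from the structure of Tits cones: decomposing the sub-diagram on $J$ into connected components $W_J=\prod_k W_{J_k}$ and $u$ accordingly (the reflections in one component acting trivially on the coroots of the others), on a finite-type component the orbit is finite for trivial reasons, on an affine-type component $u$ and $-u$ both lying in the component's Tits cone forces the relevant part of $u$ into the one-dimensional ``imaginary'' directions, and on an indefinite-type component it forces it into the lineality of that Tits cone, which is pointwise fixed; see \cite[Ch.~3, Ch.~5]{kac1994infinite} (and the partition of $\T$ into vectorial faces recalled in \S\ref{ss_Standard_apartment}). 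A self-contained alternative is to show that $\htt(\hat u-w\hat u)\to\infty$ as $\ell(w)\to\infty$ along minimal coset representatives of $W_J/\Stab_{W_J}(\hat u)$, using the telescoping identity $\hat u-w\hat u=\sum_k\alpha_{i_k}(\hat u)\beta_k^\vee$ over a reduced word $w=r_{i_1}\cdots r_{i_{\ell(w)}}$ (with $\beta_k^\vee$ the associated positive coroots) together with the bound $\alpha_{i_k}(\hat u)\ge\min\{\alpha_j(\hat u)\mid j\in J,\ \alpha_j(\hat u)>0\}>0$ for the indices $i_k$ not stabilising $\hat u$; this contradicts boundedness unless $\Stab_{W_J}(\hat u)$ has finite index in $W_J$. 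Combining the three steps yields the proposition. The technical heart is precisely this last step, the relation between boundedness/finiteness of $W_J$-orbits and membership in $\T_J\cap(-\T_J)$.
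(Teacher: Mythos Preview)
Your approach is correct in outline and genuinely different from the paper's. You reduce everything to a single statement about the parabolic Tits cone: for $u\in\T_J$, the orbit $W_J\cdot u$ is finite iff $-u\in\T_J$. The paper instead argues each direction separately and more concretely. For the forward direction ($\exists\lambda'\Rightarrow$ finite), the paper subdivides $[\lambda,\lambda']$ into the finitely many vectorial faces it meets, and at each special point $\lambda_i$ shows that any minimal representative of $W_{0,\lambda_i}/W_{0,[\lambda,\lambda']}$ has a reduced expression whose ``non-parabolic'' reflections all correspond to roots in the finite set $\Phi(\lambda,\lambda')=\{\alpha\in\Phi_+:\alpha(\lambda)\alpha(\lambda')<0\}$; this bounds the length outright. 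For the backward direction the paper's argument is essentially your ``minimal element of a finite orbit is antidominant'' observation, phrased for the extrapolated point $x_t=\nu+t(\nu-\lambda)$. Your reduction is more symmetric and conceptual; the paper's argument is more self-contained and yields an explicit bound in terms of the wall-crossing set.

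Two points need tightening. First, in the local-cone claim you write ``for small $\varepsilon$ this forces $w^{-1}\nu\in\overline{C^v_f}$'', but the $w$ witnessing $\nu+\varepsilon z\in\T$ may a priori depend on $\varepsilon$; you need the finite-faces result (R\'emy, cited in the paper) to get a single $w$ working on a half-open germ of the segment. Second, your alternative (b) for ``bounded orbit $\Rightarrow$ finite orbit'' is not complete as written: the telescoping $\hat u-w\hat u=\sum_k\alpha_{i_k}(\hat u)\beta_k^\vee$ is correct, but for a minimal coset representative a reduced expression can contain arbitrarily many simple reflections lying in $\Stab_{W_J}(\hat u)$ (think of $r_1r_2r_3$ in $A_3$ modulo $\langle r_1,r_2\rangle$), so your lower bound on the number of ``non-stabilising'' indices is not justified. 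Since the orbit need not lie in a lattice coset (as $\alpha_j(u)\in\R$), boundedness alone does not give finiteness, and this step genuinely requires work. Your route (a) via the finite/affine/indefinite trichotomy for the connected components of $J$ does go through, using that $\T_{J_k}\cap(-\T_{J_k})$ equals the $W_{J_k}$-fixed subspace in the non-finite cases; but note that this imports exactly the kind of structure result the paper's hands-on argument avoids.
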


\begin{proof}
Let $\lambda,\lambda'\in \T$, $\lambda'\neq \lambda$. Take $\nu\in ]\lambda,\lambda'[$. Let us prove that $W_{0,\nu}/(W_{0,\lambda}\cap W_{0,\nu})$ is finite. 
By \cite[5.1.5 Théorème (iv)]{remy2002groupes}, $[\lambda,\lambda']$ is contained in a finite number of vectorial faces. Let $\tau:[0,1]\rightarrow [\lambda,\lambda']$ be the affine parametrization such that $\tau(0)=\lambda$ and $\tau(1)=\lambda'$. Let $n\in  \N$, $t_0=0<t_1<\ldots <t_n =1$ be such that for all $i\in \llbracket 0,n-1\rrbracket$, $]\tau(t_i),\tau(t_{i+1})[$ is contained in a vectorial face $F_i^v$ and $\tau(t_i),\tau(t_{i+1})\notin F_i^v$. For $i\in \llbracket 0,n\rrbracket$, set $\lambda_i=\tau(t_i)$. Let $\mu\in ]\lambda,\lambda'[\setminus\{\lambda_i\mid i\in \llbracket 0,n\rrbracket\}$. Then $W_{0,\mu}=W_{0,[\lambda,\lambda']}$. Indeed, let $i\in \llbracket 0,n-1\rrbracket$ be such that $\mu\in F_i$. Then $W_{0,F_i}=W_{0,\mu}$. Thus every $w\in W_{0,\mu}$ fixes a neighborhood of $\mu$ in $[\lambda,\lambda']$, and thus fixes $[\lambda,\lambda']$. In particular, $W_{0,\mu}/W_{0,[\lambda,\lambda']}$ is finite. Thus we only need to prove that $W_{0,\lambda_i}/W_{0,[\lambda,\lambda']}$ is finite for every $i\in \llbracket 1,n-1\rrbracket$.

Let $i\in \llbracket 1,n-1\rrbracket$. Choose $\mu\in F_{i-1}^v$. Then maybe considering $w\cdot F^v_{i-1}$, for some $w\in W_0$, we may assume that $F_{i-1}^v\subset \overline{C^v_f}$. As $\lambda_i\in \overline{F^v_i}$, we also have $\lambda_i\in \overline{C^v_f}$.   Set $J_A=\{j\in I_A\mid \alpha_j(\mu)=0\}$, $J_i=\{j\in J_A\mid \alpha_j(\lambda_i)=0\}$ and $\tilde{J_i}=J_A\sqcup J_i$. Then by Subsection~\ref{sss_parabolic_subgroups}, we have $W_{0,\lambda_i}=W_{J_i}=\langle r_j\mid j\in \tilde{J_i}\rangle$ and $W_{0,\mu}=W_{J_A}=\langle r_j\mid j\in J_A\rangle$. Set $W_0^{J_i}=\{w\in W_{0,\lambda_i}\mid wr_j>w, \forall j\in J_i\}$. By \cite[Proposition 2.4.4]{bjorner2005combinatorics}, for every $w\in W_{0,\lambda_i}$, there exists a unique $w^{J_i}\in W_0^{J_i},w_{J_i}\in W_{J_i}$ such that $w=w^{J_i}w_{J_i}$. Moreover, $\ell(w)=\ell(w^{J_i})+\ell(w_{J_i})$.

 Let  $w\in W_0^{J_i}\subset W_{0,\lambda_i}$. Write \[w=w_{k+1}r_{j_k}w_{k}\ldots r_{j_2}w_2 r_{j_1} w_1,\] with $w_m\in W_{J_A}$ for $m\in \llbracket 1,k+1\rrbracket$, $j_m\in J_A$, for $m\in \llbracket 1,k\rrbracket$ and $\ell(w)=\sum_{m=1}^{k+1} \ell(w_i)+k$. Note that $w_1=1$, by \cite[Lemma 2.4.3]{bjorner2005combinatorics}. For $m\in \llbracket 1,k\rrbracket$, set \[t_m= (w_m r_{j_{m-1}} \ldots r_{j_1} w_1)^{-1} r_{j_m} r_{j_{m-1}} \ldots r_{j_1} w_1.\] 
 Let $m\in \llbracket 1,k\rrbracket$. Suppose $t_m\in W_{J_A}$. Set \[w'=w_{k+1}r_{j_k}\ldots \hat{r}_{j_m} \ldots r_{j_1}w_1=w_{k+1}r_{j_k} \ldots r_{j_{m+1}} w_{m+1} w_m r_{j_{m-1}}\ldots r_{j_{1}}w_1.\] Then $w =w't_m$. Write $w'=w'^{J_i}w'_{J_i}$, with $w'^{J_i}\in W_0^{J_i},w'_{J_i}\in  W_{J_i}$. Then $w=w'^{J_i}.(w'_{J_i}t_m)$. By  \cite[Proposition 2.4.4 (i)]{bjorner2005combinatorics}, $w=w'^{J_i}$. But $\ell(w'^{J_i})\leq \sum_{m=1}^{k+1}\ell(w_i)+k<\ell(w)$: a contradiction. Therefore $R\subset W_{0,\lambda_i}\setminus W_{J_A}$.

Let $m\in \llbracket 1,k\rrbracket$. Denote by $\alpha_{t_m}\in \Phi_+$ the root such that $t_m$ fixes $\alpha_{t_m}^{-1}(\{0\})$. Then $\alpha_{t_m}(\lambda_i)=0$ and $\alpha_{t_m}(\mu)\neq 0$. In particular, $\alpha_{t_m}(\lambda)\alpha_{t_m}(\lambda')<0$. Therefore \[\alpha_{t_m}\in \Phi(\lambda,\lambda'),\] where \[\Phi(\lambda,\lambda')=\{\alpha\in \Phi_+\mid \alpha(\lambda)<0\}\cup \{\alpha\in \Phi_+\mid\alpha(\lambda')<0\}.\] By \cite[Proposition 3.12 c)]{kac1994infinite}, $K=|\Phi(\lambda,\lambda')|$ is finite.   By \cite[Lemma 1.3.1]{bjorner2005combinatorics}, $|\{t_m\mid m\in \llbracket 1,k\rrbracket\}|=k$ and thus we have \[k\leq K.\] We have \[w= w_{k+1} r_{j_k} w_{k+1}^{-1}. (w_{k+1}w_k) r_{j{k-1}} (w_{k+1}w_k)^{-1}\ldots (w_{k+1}w_k\ldots w_2)r_{j_1} (w_{k+1}\ldots w_2)^{-1}.(w_{k+1}\ldots w_1).\] Consequently we can write \[w(\mu)= \tilde{r}_k\tilde{r}_{k-1}\ldots \tilde{r}_{1}(\mu),\] where for $m\in \llbracket 1,k\rrbracket$, $\tilde{r}_m\in \{r_{\alpha}\mid\alpha \in \Phi(\lambda,\lambda')\}$. This proves that $W_0^{J_i}\cdot \mu$ is finite, i.e $W_{0,\lambda_i}/W_{0,\mu}$ is finite, which proves the first implication.

\medskip

Conversely, let $\lambda,\nu\in \T$ be such that $W_{0,\nu}/(W_{0,\lambda}\cap W_{0,\nu})$ is finite. 
 By \cite[5.1.5 Théorème (iv)]{remy2002groupes}, $[\nu,\lambda]$ is contained in a finite number of vectorial faces. As $W_0$ acts by affine maps on $\A$, we have $W_{0,\lambda_1}\cap W_{0,\nu}=W_{0,[\lambda,\nu]}$ for all $\lambda_1\in ]\nu,\lambda]$. Therefore, up to replacing $\lambda$ by an element of $]\nu,\lambda]$ close enough to $\nu$, we may assume that $\lambda$ belongs to a vectorial face $F^v$ containing $\nu$ in its closure. Up to translating $F^v$ by an element of $W_0$, we may also assume $F^v\subset \overline{C^v_f}$. Let $J_A\subset {I_A}$ be such that $F^v=F^v(J_A)=\{x\in \A\mid \alpha_i(x)=0, \forall i\in {I_A}\setminus J_A, \alpha_i(x)=0,\forall i\in {I_A}\setminus J_A\}$. Let $K_A=\{i\in {I_A}\mid \alpha_i(\nu)>0\}$. Then $K_A\supset J_A$ since $\nu$ belongs to $\overline{F^v}$.  Therefore $W_{0,\nu}=\langle r_i\mid i\in K_A\rangle\supset W_{0,\lambda}=\langle r_i\mid i\in J_A\rangle$. We define $\leq_{K_A}$ on $\A$ by $x\leq_{K_A} y$ if and only if $y-x\in \bigoplus_{i\in {K_A}} \R_{\geq 0}\alpha_i^\vee$, for $x,y\in \A$.

  Let $t\in \R_{>0}$ and consider $x_t=\nu+t(\nu-\lambda)$.  By assumption, $W_{0,\nu}\cdot x_t$ is finite. Let $w\in W_{0,\nu}$ be such that $w(x_t)$ is maximal in $W_{0,\nu} \cdot x_t$ for $\leq_{K_A}$. Let $j\in {K_A}$. Then $r_j.w(x_t)\leq_{K_A} x_t$ and thus $\alpha_j(w(x_t))\leq 0$. Let $j\in {I_A}\setminus {K_A}$. Then $\alpha_j(w(x_t))=\alpha_j(\nu+tw\cdot (\nu-\lambda))=\alpha_j(\nu)+tw.\alpha_j(\nu-\lambda)$ and for $t$ small enough, we have $\alpha_j(w(x_t))>0$. Therefore $w(x_t)\in \overline{C^v_f}$ and thus $x_t\in \T$. As $\nu\in ]\lambda,x_t[$, we get the lemma. 

\end{proof}

\begin{Corollary}\label{c_finiteness_property_segments}
    Let $\lambda,{\lambda'}\in \cT$, $\lambda\neq {\lambda'}$ and $\nu\in ]\lambda,{\lambda'}[$. Let $E\subset W_0$ be such that $E\cdot \nu$ is finite. Then $|E\cdot \lambda|\leq |E\cdot \nu|.|W_{0,\nu}/W_{0,[\lambda,{\lambda'}]}|$. In particular, $E\cdot \lambda$ is finite.
\end{Corollary}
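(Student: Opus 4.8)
The plan is to run an orbit--counting argument, using Proposition~\ref{propFiniteness_quotients_stabilizers} to control the size of the relevant quotient of stabilizers.

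First I would record the identity $W_{0,\lambda}\cap W_{0,\nu}=W_{0,[\lambda,\lambda']}$. Indeed $W_0$ acts linearly on $\A$, and since $\nu\in\,]\lambda,\lambda'[$ the element $\lambda'$ is a linear combination of $\lambda$ and $\nu$; hence every point of $[\lambda,\lambda']$, being an affine combination of $\lambda$ and $\lambda'$, is a linear combination of $\lambda$ and $\nu$. Thus any $w\in W_0$ fixing both $\lambda$ and $\nu$ fixes $[\lambda,\lambda']$; the reverse inclusion is clear. Now $\nu\in\,]\lambda,\lambda'[$ with $\lambda'\in\cT$, so Proposition~\ref{propFiniteness_quotients_stabilizers} applies and $W_{0,\nu}/(W_{0,\lambda}\cap W_{0,\nu})=W_{0,\nu}/W_{0,[\lambda,\lambda']}$ is finite; write $N$ for its cardinality. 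By orbit--stabilizer inside the group $W_{0,\nu}$, whose stabilizer of $\lambda$ is exactly $W_{0,\lambda}\cap W_{0,\nu}$, we also have $|W_{0,\nu}\cdot\lambda|=N$.

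Next I would partition $E$ according to the value of $w\cdot\nu$, $w\in E$: there are $|E\cdot\nu|$ parts. For each part choose a representative $w_0\in E$. Then any $w$ in that part satisfies $w_0^{-1}w\cdot\nu=\nu$, i.e.\ $w\in w_0W_{0,\nu}$, whence $w\cdot\lambda\in w_0\cdot(W_{0,\nu}\cdot\lambda)$, a set of cardinality $N$. Summing over the $|E\cdot\nu|$ parts gives
\[
|E\cdot\lambda|\le |E\cdot\nu|\cdot N=|E\cdot\nu|\cdot|W_{0,\nu}/W_{0,[\lambda,\lambda']}|,
\]
and finiteness of $E\cdot\lambda$ follows from finiteness of $E\cdot\nu$ and of $N$.

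There is no serious obstacle here: the core is just the orbit--stabilizer principle. The only points needing a short verification are the identity $W_{0,\lambda}\cap W_{0,\nu}=W_{0,[\lambda,\lambda']}$, immediate from linearity of the $W_0$-action together with collinearity of $\lambda,\nu,\lambda'$, and the invocation of Proposition~\ref{propFiniteness_quotients_stabilizers}, which is precisely what guarantees the quotient $W_{0,\nu}/W_{0,[\lambda,\lambda']}$ is finite to begin with.
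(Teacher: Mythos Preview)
Your proof is correct and follows essentially the same approach as the paper: both pick representatives for $E\cdot\nu$, observe that each fibre of $E\to E\cdot\nu$ lies in a single coset $w_0 W_{0,\nu}$, and bound the image of such a coset under the action on $\lambda$ by $|W_{0,\nu}/W_{0,[\lambda,\lambda']}|$ via Proposition~\ref{propFiniteness_quotients_stabilizers}. Your version is slightly more explicit in verifying $W_{0,\lambda}\cap W_{0,\nu}=W_{0,[\lambda,\lambda']}$ and in phrasing the key step as orbit--stabilizer, but the argument is the same.
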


\begin{proof}
  Let $F'\subset W_{0,\nu}$ be a set of representatives of $W_{0,\nu}/W_{0,[\lambda,{\lambda'}]}$. Then $F'$ is finite by Propositio n~\ref{propFiniteness_quotients_stabilizers}.  Let $F\subset W_0$ be finite and such that $E\cdot \nu=\{w(\nu)\mid w\in F\}$.  We have $E\subset \bigcup_{w\in F} w W_{0,\nu}=\bigcup_{w\in F,w'\in F'}ww'W_{0,[\lambda,{\lambda'}]}$ and thus $E\cdot \lambda=\{ww'(\lambda)\mid w\in F,w')\in F'\}$. 
\end{proof}

\subsection{Inclusion of  $\sT_w$ in $\sS_w$}\label{ss_inclusion_Tw_Sw}

In this subsection, we prove that if $w\in W_0$ and $\tau\in Y^+$, then $\sT_w(\tau)$ is contained in a set $\sS_w(\tau)$ that we introduce below. We deduce a finiteness result for $\sT_w$, see Proposition~\ref{p_Inclusion_TCCw_Sw} and Corollary~\ref{corFiniteness_w_mu_in_Tw_lambda}.

\subsubsection{The set $\sS_{w}$}

Fix $\lambda\in Y^+$.
For any $w \in W_0$, define:
\begin{align}
  \sS_w(\lambda) = \left\{ v(\lambda) \suchthat v \leq w \textand v(\lambda) \leq_{Q^\vee} w(\lambda) \right\}
\end{align}
It is easy to see that if $\tilde{\lambda}\in Y^{++}$, then we have:
\begin{align}
 \sS_w(\tilde{\lambda}) = \{ w(\tilde{\lambda}) \}
\end{align}
Directly from the definition, we also see that every element of $\sS_w(\tau)$ is less than or equal to $w(\tau)$ in the dominance order.

\begin{Proposition}\label{prop_finiteness_property_Sw}
  Fix $\lambda\in Y^+$, and $\mu\in Y$. Then the set $H=\{w\in W_0\mid \mu\in \sS_w(\lambda)\}$ is stable by right multiplication by $W_{0,\lambda}$ and $H/W_{0,\lambda}$ is finite.
\end{Proposition}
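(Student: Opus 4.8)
The plan is to deduce both assertions from two facts: (a) $H$ is a union of left cosets of $W_{0,\lambda}$ — this is precisely the claimed stability under right multiplication by $W_{0,\lambda}$ — and (b) $\{w(\lambda)\mid w\in H\}$ is finite. Granting these, the map $w\mapsto w(\lambda)$, which induces the usual bijection $W_0/W_{0,\lambda}\overset{\sim}{\rightarrow}W_0\cdot\lambda$, restricts to a bijection from $H/W_{0,\lambda}$ onto $\{w(\lambda)\mid w\in H\}$, so $H/W_{0,\lambda}$ is finite.

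For (a), I would first assume $\mu\in W_0\cdot\lambda$ (otherwise $H=\emptyset$). Unwinding the definition of $\sS_w(\lambda)$, one has $w\in H$ if and only if (i) $\mu\leq_{Q^\vee}w(\lambda)$, and (ii) the left coset $C_\mu=\{v\in W_0\mid v(\lambda)=\mu\}$ meets the Bruhat down-set $\{v\in W_0\mid v\leq w\}$. For $u\in W_{0,\lambda}$ we have $(wu)(\lambda)=w(\lambda)$, so (i) is unchanged under $w\mapsto wu$. For (ii) I would invoke that $W_{0,\lambda}$ is a parabolic subgroup of the Coxeter group $W_0$ (a facet stabilizer, hence conjugate to a standard parabolic): for such a subgroup a left coset $C$ meets $\{v\mid v\leq w\}$ if and only if its minimal-length element $c^0$ satisfies $c^0\leq w$, equivalently $c^0\leq w^0$ where $w^0$ is the minimal-length representative of $wW_{0,\lambda}$ — here one uses that $c^0$ lies below every element of $C$ and that the projection $W_0\to W_0/W_{0,\lambda}$ is order-preserving. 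Hence (ii) depends only on the coset $wW_{0,\lambda}$, and therefore $w\in H\iff wu\in H$. When $\lambda\in Y^{++}$, so that $W_{0,\lambda}$ is a \emph{standard} parabolic, I would instead give a self-contained induction on $\ell(u)$: write $u=r_j u'$ with $\alpha_j(\lambda)=0$ and $\ell(u)=\ell(u')+1$, take (inductively) a witness $v\leq wu'$ with $v(\lambda)=\mu$, and use transitivity of the Bruhat order together with the lifting property \cite{bjorner2005combinatorics} and the fact that $r_j$ fixes $\lambda$ to produce a witness $v'\in\{v,vr_j\}$ with $v'\leq wu$ and $v'(\lambda)=\mu$.

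For (b): if $w\in H$ then $\mu\leq_{Q^\vee}w(\lambda)$, while by \eqref{e_GR2.4} applied to $w(\lambda)\in Y^+$ we have $w(\lambda)\leq_{Q^\vee}\bigl(w(\lambda)\bigr)^{++}=\lambda^{++}$. Hence $w(\lambda)\in[\mu,\lambda^{++}]_{\leq_{Q^\vee}}$, a finite set since $(Y,\leq_{Q^\vee})\cong(\Z^{n},\leq)$. This gives (b) and completes the argument. I expect the main obstacle to be the stability step (a) — specifically the Coxeter-combinatorial input that, for a possibly non-standard parabolic subgroup, meeting the Bruhat interval below $w$ is a property of the coset $wW_{0,\lambda}$ only; for non-dominant $\lambda$ the subgroup $W_{0,\lambda}$ is merely conjugate to a standard parabolic, so the convenient right-multiplication/descent arguments available in the standard case are not directly applicable and one must rely on the general theory of minimal-length coset representatives (equivalently, the gate property in the Coxeter complex).
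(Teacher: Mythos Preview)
Your approach matches the paper's: for stability the paper also passes to minimal coset representatives, invoking \cite[Proposition 2.5.1]{bjorner2005combinatorics} to get $v':=\pr_{W_0^{\lambda}}(v)\leq \pr_{W_0^{\lambda}}(w)=\pr_{W_0^{\lambda}}(wu)\leq wu$, and for finiteness it uses exactly your interval argument $w(\lambda)\in[\mu,\lambda^{++}]_{\leq_{Q^\vee}}$.

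Your concern about non-dominant $\lambda$ is well placed and applies equally to the paper's proof as written: \cite[Proposition 2.5.1]{bjorner2005combinatorics} is stated for \emph{standard} parabolic subgroups, and the paper only defines $W_0^{\tilde\lambda}$ and $\pr_{W_0^{\tilde\lambda}}$ for $\tilde\lambda\in\overline{C^v_f}$, so the projection step is not literally justified for general $\lambda\in Y^+$. Note, however, that the only downstream use is Corollary~\ref{corFiniteness_w_mu_in_Tw_lambda}, which needs just the finiteness of $\{w(\lambda)\mid w\in H\}$ --- your part (b) --- and that part is unaffected by this issue.
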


\begin{proof}
    Let $w\in H$ and $v\in W_{0,\lambda}$. Then by definition, there exists $v\in [1,w]$ such that $v(\lambda)=\mu$. Let $u\in W_{0,\tau}$. Then by \cite[Proposition 2.5.1]{bjorner2005combinatorics}, if $v'=\pr_{W_0^{\lambda}}(v)$, then we have $v'\leq \pr_{W_0^{\lambda}}(w)=\pr_{W_0^{\lambda}}(wu)$. By \cite[Proposition 2.4.4]{bjorner2005combinatorics}, we have $\pr_{W_0^{\lambda}}(wu)\leq wu$, which proves that $v'\leq wu$. Then $\mu=v'(\lambda)$ with $v'\in [1,wu]$, $v'(\lambda)\leq_{Q^\vee} wu(\lambda)$ and  hence $\mu\in \sS_{wu}(\lambda)$. Therefore  $wu\in H$ and we have the desired stability.

    Now if $w\in H$, we have $\mu\leq_{Q^\vee} w(\lambda)\leq_{Q^\vee} \lambda^{++}$, by definition and by \eqref{e_GR2.4}. Therefore $w(\lambda)$ belongs to the finite set $[\mu,\lambda^{++}]_{\leq_{Q^\vee}}$, which proves the lemma.
\end{proof}

\subsubsection{Facts about $\sT_{w}$}

\begin{Lemma}
Let $\lambda\in Y$, $w\in W_0$ and $\mu\in \sT_w(\lambda)$. Then there exists $v\in [1,w]$ such that $\mu=v(\lambda)$.
\end{Lemma}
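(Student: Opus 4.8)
The plan is to argue by induction on $\ell(w)$, peeling off the leftmost simple reflection of a reduced word witnessing $\mu\in\sT_w(\lambda)$ and then appealing to the subword characterization of the Bruhat order. If $\ell(w)=0$, then $w=1$, $\sT_1(\lambda)=\{\lambda\}$, and $\mu=\lambda=1\cdot\lambda$ with $1\in[1,1]$, so there is nothing to prove.

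For the inductive step, suppose $\ell(w)=k\geq 1$ and the statement holds for all elements of length $<k$. Since $\mu\in\sT_w(\lambda)=\bigcup_{\underline{m}\in\Red(w)}\sT_{\underline{m}}(\lambda)$, fix $\underline{m}=(r_{i_1},\dots,r_{i_k})\in\Red(w)$ with $\mu\in\sT_{\underline{m}}(\lambda)$ and put $w'=r_{i_2}\cdots r_{i_k}$. As a suffix of a reduced word is reduced, $(r_{i_2},\dots,r_{i_k})\in\Red(w')$, so $\ell(w')=k-1$, $w=r_{i_1}w'$ and $\ell(w)=\ell(w')+1$. By definition $\sT_{\underline{m}}(\lambda)=\sT_{i_1}\bigl(\sT_{(r_{i_2},\dots,r_{i_k})}(\lambda)\bigr)$, and by Lemma~\ref{l_sT_as_a_union} together with the definition of $\sT_{i_1}$ on subsets, this set equals $\bigcup_{\nu}\sT_{i_1}(\nu)$ with $\nu$ ranging over $\sT_{(r_{i_2},\dots,r_{i_k})}(\lambda)\subset\sT_{w'}(\lambda)$. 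Hence there is some $\nu\in\sT_{w'}(\lambda)$ with $\mu\in\sT_{i_1}(\nu)$, and by the induction hypothesis there is $v'\in[1,w']$ with $\nu=v'(\lambda)$.

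It then remains to inspect $\sT_{i_1}(\nu)$: in all cases $\mu\in\{\nu,r_{i_1}(\nu)\}=\{v'(\lambda),\,r_{i_1}v'(\lambda)\}$. If $\mu=v'(\lambda)$, take $v=v'$; since $v'\leq w'$ and $w'\leq w$ (delete the first letter of $\underline{m}$), we get $v\in[1,w]$. If $\mu=r_{i_1}v'(\lambda)$, take $v=r_{i_1}v'$: fixing a reduced word for $v'$, which by $v'\leq w'$ is a subword of $(r_{i_2},\dots,r_{i_k})$, we see $v$ is represented by a (possibly non-reduced) subword of the reduced word $(r_{i_1},r_{i_2},\dots,r_{i_k})$ of $w$, hence $v\leq w$ by the subword property of the Bruhat order \cite[Theorem 2.2.2]{bjorner2005combinatorics}. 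Either way $\mu=v(\lambda)$ with $v\in[1,w]$, which closes the induction. The only slightly delicate point is this last Bruhat-order bookkeeping — namely that prepending a letter to, or deleting a letter from, a subword of a reduced expression of $w$ still yields an element $\leq w$; everything else is a routine unwinding of the definitions of $\sT_i$ and $\sT_w$.
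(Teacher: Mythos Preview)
Your proof is correct and takes essentially the same approach as the paper: both unwind the recursive definition of $\sT_{\underline{m}}$ to exhibit $\mu$ as $s_1\cdots s_k(\lambda)$ with each $s_j\in\{1,r_{i_j}\}$, and then invoke the subword property of the Bruhat order (\cite[Corollary 2.2.3]{bjorner2005combinatorics}). The only difference is cosmetic: the paper does this in one step by directly reading off the sequence $(s_1,\dots,s_k)$, whereas you wrap the same argument in an induction on $\ell(w)$.
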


\begin{proof}
    Let $\underline{m}=(i_1,\ldots,i_k)$ be a reduced writing of $w$. We have $\mu\in \sT_{\underline{m}}(\lambda)\subset \sT_{w}(\lambda)$.  By definition of $\sT_{\underline{m}}(\lambda)$, there exists $(s_1,\ldots,s_k)\in \prod_{j=1}^k \{1,r_{i_j}\}$ such that $\mu=s_1\ldots s_k(\lambda)$. Set $v=s_1\ldots s_k$. Then $v\leq w$ by \cite[Corollary 2.2.3]{bjorner2005combinatorics} and $\mu=v(\lambda)$. 
\end{proof}

\begin{Lemma}\label{lemInequality_TCC_w}
Let $w\in W_0$ and $\lambda\in Y^+$. Then for all  $\mu\in \sT_{w}(\lambda)$, we have  $\mu \leq_{Q^\vee} w(\tau)$.
\end{Lemma}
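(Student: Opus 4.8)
The plan is to prove the bound $\mu\leq_{Q^\vee}w(\lambda)$ (the symbol $\tau$ in the statement being $\lambda$) by induction on $k=\ell(w)$. The base case $k=0$ is trivial, since then $\sT_1(\lambda)=\{\lambda\}$. For the inductive step, given $\mu\in\sT_w(\lambda)$ I would choose a reduced word $\underline{m}=(r_{i_1},\dots,r_{i_k})\in\Red(w)$ with $\mu\in\sT_{\underline m}(\lambda)$, and peel off the \emph{last} letter: set $i=i_k$, $w''=wr_i$ and $\underline{m'}=(r_{i_1},\dots,r_{i_{k-1}})\in\Red(w'')$, so that $\ell(w'')=k-1$. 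By Lemma~\ref{l_sT_as_a_union} we have $\sT_{\underline m}(\lambda)=\bigcup_{\nu\in\sT_i(\lambda)}\sT_{\underline{m'}}(\nu)$, so either $\mu\in\sT_{\underline{m'}}(r_i(\lambda))$, or $\alpha_i(\lambda)<0$ and $\mu\in\sT_{\underline{m'}}(\lambda)$.

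In the first case, $r_i(\lambda)$ lies in $Y\cap\T=Y^+$ because $\T$ is $W_0$-stable, and $\sT_{\underline{m'}}(r_i(\lambda))\subset\sT_{w''}(r_i(\lambda))$; the induction hypothesis applied to $(w'',r_i(\lambda))$ yields $\mu\leq_{Q^\vee}w''(r_i(\lambda))=w(\lambda)$. In the second case the induction hypothesis applied to $(w'',\lambda)$ gives $\mu\leq_{Q^\vee}w''(\lambda)$, and it remains to see that $w''(\lambda)\leq_{Q^\vee}w(\lambda)$. Here I would write $w(\lambda)-w''(\lambda)=w(\lambda-r_i(\lambda))=\alpha_i(\lambda)\,w(\alpha_i^\vee)$ and use that, $\underline{m}$ being reduced, $\ell(wr_i)<\ell(w)$, so $w(\alpha_i)\in\Phi\setminus\Phi_+$ is a negative real root and hence $w(\alpha_i^\vee)\in-Q^\vee_+$; since $\alpha_i(\lambda)$ is a negative integer, the product $\alpha_i(\lambda)\,w(\alpha_i^\vee)$ lies in $Q^\vee_+$, which is precisely $w''(\lambda)\leq_{Q^\vee}w(\lambda)$. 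Transitivity of $\leq_{Q^\vee}$ then gives $\mu\leq_{Q^\vee}w(\lambda)$ and closes the induction.

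The only step that is not pure bookkeeping is the assertion that $\ell(wr_i)<\ell(w)$ forces $w(\alpha_i^\vee)$ to be a negative coroot, i.e.\ to lie in $-Q^\vee_+$; this uses the $W_0$-equivariant correspondence between real roots and real coroots (matching simple roots with simple coroots and respecting signs) together with the standard length criterion $\ell(wr_i)<\ell(w)\iff w(\alpha_i)\in\Phi\setminus\Phi_+$, for which I would cite \cite[1.3]{kumar2002kac}. Everything else — unwinding the recursive definition of $\sT_w$ along a reduced word — mirrors the computation already carried out in Lemma~\ref{lemBernstein_Lusztig_relation_inverse}, so I expect no real difficulty beyond being careful about which letter is removed and about when reducedness is invoked.
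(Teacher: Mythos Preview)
Your argument is correct and is essentially the paper's own proof: both peel off the last letter of a reduced word, apply the induction hypothesis to the shorter word $w''=wr_i$ acting on $r_i(\lambda)$ (and on $\lambda$ in the case $\alpha_i(\lambda)<0$), and then use $\ell(wr_i)<\ell(w)\Rightarrow w(\alpha_i^\vee)\in -Q^\vee_+$ to compare $w''(\lambda)$ with $w(\lambda)$. The only differences are cosmetic (the paper writes $c=-\alpha_i(\lambda)>0$ and computes $w'(\lambda)=w(\lambda)+c\,w(\alpha_i^\vee)$, while you keep the sign in $\alpha_i(\lambda)$).
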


\begin{proof}
We proceed by induction on $\ell(w)$. If $\ell(w)\leq 1$, the result is clear so we assume $\ell(w)\geq 2$. Let $\underline{m}$ be a reduced writing of $w$. Write $\underline{m} = (\underline{m'}, r_i)$ where $i\in I_A$ and  $\ell(w) = \ell(\underline{m'}) + 1$. Let $w'$ be the element of $W_0$ corresponding to $\underline{m'}$.  Our assumption is equivalent to the fact that  $w(\alpha_i^\vee)$ is a negative root.  

We compute:
\begin{align}
  \sT_{(\underline{m'},i)}(\lambda) = \sT_{\underline{m'}} (\sT_{i} (\lambda)).
\end{align}
There are two cases.

Case 1: The easier case of $\langle \lambda, \alpha_i \rangle \geq 0$. In this case, $\sT_{i}(\lambda) = \left\{ r_i(\lambda) \right\}$. So
$\sT_{(\underline{m'},i)}(\lambda) = \sT_{\underline{m'}}(r_i(\lambda))$. By induction, all elements of this set are less than or equal to $w'(r_i(\lambda)) = w(\lambda)$.

Case 2: The case of $\langle \lambda, \alpha_i \rangle < 0$. We have  $\sT_{i}(\lambda) = \left\{ r_i(\lambda), \lambda \right\}$, so by Lemma~\ref{l_sT_as_a_union}, we have:
\begin{align}
\sT_{(\underline{m'},r_i)}(\lambda) = \sT_{\underline{m'}}(r_i(\lambda)) \cup \sT_{\underline{m'}}(\lambda).
\end{align}
Every element of $\sT_{\underline{m'}}(r_i(\lambda))$ is less than or equal to $w(\lambda)$ by induction. Every element of $\sT_{\underline{m'}}(\lambda)$ is less than or equal to $w'(\lambda)$ by induction.

Let $c = -\langle \lambda, \alpha_i \rangle > 0$, so we have $r_i(\lambda) = \lambda + c \alpha_i^\vee$. We compute:
\begin{align}
w'(\lambda) = w(r_i(\lambda)) = w(\lambda) + c w(\alpha_i^\vee)
\end{align}
Because $c > 0$ and $w(\alpha_i^\vee) < 0$, we conclude that $w'(\lambda) <_{Q^\vee} w(\lambda)$. So every element of $\sT_{\underline{m'}}(\lambda)$ is less than or equal to $w(\lambda)$.
\end{proof}

\begin{Proposition}\label{p_Inclusion_TCCw_Sw}
 Let $\lambda\in Y^+$ and $w\in W_0$. Then  we have:
  \begin{align}
    \sT_w(\lambda) \subseteq \sS_w(\lambda)
  \end{align}
\end{Proposition}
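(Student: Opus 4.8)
The plan is to observe that this proposition is an immediate consequence of the two lemmas that directly precede it, namely the unnamed Lemma asserting that every $\mu\in\sT_w(\lambda)$ is of the form $v(\lambda)$ for some $v\in[1,w]$, together with Lemma~\ref{lemInequality_TCC_w}, which bounds $\mu\leq_{Q^\vee}w(\lambda)$. So the proof itself will be a single short paragraph: given $\mu\in\sT_w(\lambda)$, the first lemma produces $v\leq w$ with $\mu=v(\lambda)$, and the second lemma gives $v(\lambda)=\mu\leq_{Q^\vee}w(\lambda)$; these are precisely the two conditions defining membership in $\sS_w(\lambda)=\{v(\lambda)\mid v\leq w\text{ and }v(\lambda)\leq_{Q^\vee}w(\lambda)\}$, so $\mu\in\sS_w(\lambda)$, establishing the claimed inclusion.

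In other words, no new argument is needed here; the substantive content has already been carried out. The only genuine work lies in Lemma~\ref{lemInequality_TCC_w}, whose inductive proof peels off a rightmost simple reflection $r_i$ from a reduced word for $w$ and splits into the cases $\langle\lambda,\alpha_i\rangle\geq0$ and $\langle\lambda,\alpha_i\rangle<0$: in the first case $\sT_i(\lambda)=\{r_i(\lambda)\}$ and one applies the inductive hypothesis to $w'$ acting on $r_i(\lambda)$; in the second case $\sT_i(\lambda)=\{r_i(\lambda),\lambda\}$, and one needs the observation that $w'(\lambda)=w(\lambda)+c\,w(\alpha_i^\vee)$ with $c>0$ and $w(\alpha_i^\vee)<0$ (the latter being equivalent to $\ell(wr_i)<\ell(w)$, i.e. to $r_i$ being the dropped letter), so that $w'(\lambda)<_{Q^\vee}w(\lambda)$ and the bound for the branch through $\lambda$ is still $\leq_{Q^\vee}w(\lambda)$. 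That inductive step, and the parallel combinatorial input identifying the elements of $\sT_{\underline m}(\lambda)$ as subwords $s_1\cdots s_k(\lambda)$ with $s_j\in\{1,r_{i_j}\}$ (hence $\leq w$ by the subword characterization of the Bruhat order), are the only real obstacles, and both are already dispatched above.

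Thus for the proof of Proposition~\ref{p_Inclusion_TCCw_Sw} I would simply write: let $\mu\in\sT_w(\lambda)$; by the preceding Lemma there is $v\in[1,w]$ with $\mu=v(\lambda)$, and by Lemma~\ref{lemInequality_TCC_w} we have $v(\lambda)=\mu\leq_{Q^\vee}w(\lambda)$, so $\mu=v(\lambda)\in\sS_w(\lambda)$ by the definition of $\sS_w(\lambda)$. Since $\mu$ was arbitrary, $\sT_w(\lambda)\subseteq\sS_w(\lambda)$, as desired.
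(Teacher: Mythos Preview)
Your proposal is correct and matches the paper's approach exactly: the paper in fact gives no separate proof of Proposition~\ref{p_Inclusion_TCCw_Sw}, treating it as immediate from the two preceding lemmas (the unnamed lemma producing $v\in[1,w]$ with $\mu=v(\lambda)$, and Lemma~\ref{lemInequality_TCC_w} giving $\mu\leq_{Q^\vee}w(\lambda)$), which is precisely what you do.
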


\begin{Corollary}\label{corFiniteness_w_mu_in_Tw_lambda}
    Let $\lambda,\mu\in Y^{+}$. Then $\{w(\lambda)\mid w\in W_0,\mu\in \sT_w(\lambda)\}$ is finite. 
\end{Corollary}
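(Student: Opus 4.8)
The plan is to deduce the statement immediately from the two preceding results, Proposition~\ref{p_Inclusion_TCCw_Sw} and Proposition~\ref{prop_finiteness_property_Sw}, which already contain all the substantive work. Set $H = \{w \in W_0 \suchthat \mu \in \sS_w(\lambda)\}$. First I would note that, by Proposition~\ref{p_Inclusion_TCCw_Sw}, we have $\sT_w(\lambda) \subseteq \sS_w(\lambda)$ for every $w \in W_0$, so that $\{w \in W_0 \suchthat \mu \in \sT_w(\lambda)\} \subseteq H$. Consequently $\{w(\lambda) \suchthat w \in W_0,\ \mu \in \sT_w(\lambda)\} \subseteq \{w(\lambda) \suchthat w \in H\}$, and it suffices to prove that this last set is finite.

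Next I would invoke Proposition~\ref{prop_finiteness_property_Sw}, which asserts that $H$ is stable under right multiplication by $W_{0,\lambda}$ and that $H/W_{0,\lambda}$ is finite. Since $(wu)(\lambda) = w(\lambda)$ for all $u \in W_{0,\lambda}$, the assignment $w \mapsto w(\lambda)$ descends to a well-defined map $H/W_{0,\lambda} \to W_0 \cdot \lambda$ whose image is exactly $\{w(\lambda) \suchthat w \in H\}$. Hence $|\{w(\lambda) \suchthat w \in H\}| \leq |H/W_{0,\lambda}| < \infty$, which completes the argument.

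There is essentially no obstacle remaining at this stage: the real content has been isolated in Proposition~\ref{p_Inclusion_TCCw_Sw} (the inclusion $\sT_w \subseteq \sS_w$, proved by induction on $\ell(w)$ via Lemma~\ref{lemInequality_TCC_w} and the parabolic coset combinatorics of $W_0/W_{0,\lambda}$) and in Proposition~\ref{prop_finiteness_property_Sw} (the finiteness of $H/W_{0,\lambda}$, which rests on confining $w(\lambda)$ to the finite interval $[\mu,\lambda^{++}]_{\leq_{Q^\vee}}$). The one point to handle carefully is the final counting step: one must phrase it in terms of cosets rather than elements, since $H$ itself is typically infinite when $W_{0,\lambda}$ is infinite — it is precisely the passage to $H/W_{0,\lambda}$ that makes the image $\{w(\lambda) \suchthat w \in H\}$ finite.
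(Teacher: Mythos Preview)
Your proof is correct and follows exactly the same approach as the paper, which simply states that the corollary follows from Proposition~\ref{p_Inclusion_TCCw_Sw} and Proposition~\ref{prop_finiteness_property_Sw}; you have merely spelled out the details of how those two results combine.
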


\begin{proof}
    This follows from Proposition~\ref{p_Inclusion_TCCw_Sw} and Proposition~\ref{prop_finiteness_property_Sw}.
\end{proof}

\subsection{Depth for elements far from the dominant cone}\label{ss_control_depth}

In this subsection, we prove Lemma~\ref{lemDominant_height_simple_reflexion}, which studies the behavior of the depth of the elements of $]\lambda,r_i(\lambda)[_{Q^\vee}$, when $\lambda\in W_0\cdot \tilde{\lambda}$ goes ``far'' from $C^v_f$, for a fixed $\tilde{\lambda}\in Y^{++}$ and $i\in {I_A}$. For example, assume that the centralizer $C(r_i)$ is finite (which is the case when the Kac-Moody matrix $A$ has size $2$). Then this lemma asserts that if $\tilde{\lambda}\in Y^{++}$ is regular and  $w\in W_0$ has ``large'' length, then the elements of $]w(\tilde{\lambda}),r_iw(\tilde{\lambda})[_{Q^\vee}$ have ``large'' depth. 

\begin{Lemma}\label{l_characterization_centralizer_reflection}
    Let $i\in I_A$ and $w\in W_0$. Then $w\in C(r_i)$ if and only if $w(\alpha_i^\vee)=\pm \alpha_i^\vee$.
\end{Lemma}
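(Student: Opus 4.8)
The plan is to translate the relation ``$w\in C(r_i)$'', i.e. $wr_iw^{-1}=r_i$ in $\mathrm{GL}(\A)$, into a statement about the real coroot $w(\alpha_i^\vee)$ by writing out the action of a conjugate of $r_i$. First I would record that for every $v\in\A$,
\[
wr_iw^{-1}(v)=w\bigl(w^{-1}(v)-\alpha_i(w^{-1}(v))\alpha_i^\vee\bigr)=v-(w.\alpha_i)(v)\,w(\alpha_i^\vee),
\]
using $(w.\alpha_i)(v)=\alpha_i(w^{-1}(v))$. Since $(w.\alpha_i)(w.\alpha_i^\vee)=\alpha_i(\alpha_i^\vee)=2$, this map is precisely the reflection attached to the real root $w.\alpha_i\in\Phi$ with attached real coroot $w(\alpha_i^\vee)\in\Phi^\vee$; in particular its $(-1)$-eigenspace in $\A$ is the line $\R\,w(\alpha_i^\vee)$, whereas the $(-1)$-eigenspace of $r_i$ is $\R\alpha_i^\vee$.

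For the implication $w\in C(r_i)\Rightarrow w(\alpha_i^\vee)=\pm\alpha_i^\vee$: if $wr_iw^{-1}=r_i$, then these two reflections have the same $(-1)$-eigenspace, so $\R\,w(\alpha_i^\vee)=\R\alpha_i^\vee$. As $w(\alpha_i^\vee)$ is a real coroot lying on this line, the reducedness property $\R\alpha_i^\vee\cap\Phi^\vee=\{\pm\alpha_i^\vee\}$ quoted from \cite[1.2.2 (2)]{kumar2002kac} forces $w(\alpha_i^\vee)=\pm\alpha_i^\vee$.

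For the converse, assume $w(\alpha_i^\vee)=\varepsilon\alpha_i^\vee$ with $\varepsilon\in\{\pm1\}$. Here I would invoke the standard fact (see \cite[\S1.3]{kumar2002kac}) that $\beta\mapsto\beta^\vee$ is a well-defined bijection from the real roots onto the real coroots, satisfying $(-\beta)^\vee=-\beta^\vee$. The real coroot attached to $w.\alpha_i$ is $w(\alpha_i^\vee)=\varepsilon\alpha_i^\vee$, which is also the coroot attached to $\varepsilon\alpha_i$; injectivity of the map therefore gives $w.\alpha_i=\varepsilon\alpha_i$. Substituting $w.\alpha_i=\varepsilon\alpha_i$ and $w(\alpha_i^\vee)=\varepsilon\alpha_i^\vee$ into the displayed formula yields $wr_iw^{-1}(v)=v-\varepsilon^2\alpha_i(v)\alpha_i^\vee=r_i(v)$ for all $v\in\A$, i.e. $w\in C(r_i)$.

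The only delicate ingredient is the converse, where one needs that a real coroot determines its real root; this is exactly the bijectivity of $\Phi\to\Phi^\vee$, and it is the step using the full Kac-Moody root-datum axioms rather than only the reducedness of $\Phi^\vee$. If one preferred not to cite this, one could argue by hand: applying $wr_iw^{-1}$ to $\alpha_i^\vee$ (which lies in its $(-1)$-eigenspace $\R\alpha_i^\vee$) gives $-\alpha_i^\vee=\alpha_i^\vee-(w.\alpha_i)(\alpha_i^\vee)\varepsilon\alpha_i^\vee$, hence $(w.\alpha_i)(\alpha_i^\vee)=2\varepsilon$, so $\beta:=\varepsilon(w.\alpha_i)\in\Phi$ satisfies $\beta(\alpha_i^\vee)=2$ and has coroot $\alpha_i^\vee$; one would then still have to show $\beta=\alpha_i$, which again is the injectivity statement, so quoting \cite{kumar2002kac} is the cleanest route. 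All the remaining steps are one-line computations.
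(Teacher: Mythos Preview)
Your proof is correct and the forward direction is essentially identical to the paper's, just rephrased via the $(-1)$-eigenspace rather than the direct equality $r_iw(x)=wr_i(x)$.

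For the converse the paper takes a slightly different route: it first reduces to the case $w(\alpha_i^\vee)=\alpha_i^\vee$ by replacing $w$ with $wr_i$ when $\varepsilon=-1$ (using $r_i(\alpha_i^\vee)=-\alpha_i^\vee$), and then cites \cite[1.3.11 Theorem (b5)]{kumar2002kac} directly to conclude $w\in C(r_i)$. Your approach instead invokes the bijectivity of $\Phi\to\Phi^\vee$ to deduce $w.\alpha_i=\varepsilon\alpha_i$ and finishes by a one-line computation. Both are valid and of comparable length; the paper's reduction trick avoids having to name the root--coroot bijection explicitly, while your argument is more self-contained once that bijection is granted. Your closing remark correctly identifies that any proof of the converse must at some point use a statement equivalent to this injectivity, which is precisely what Kumar's theorem supplies in the paper's version.
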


\begin{proof}
    Let $w\in C(r_i)$. Let $x\in Y$. Then we have \[r_iw(x)=w(x)-\langle w(x),\alpha_i\rangle \alpha_i^\vee=wr_i(x)=w(x)-\langle x,\alpha_i\rangle w(\alpha_i^\vee).\] As $\alpha_i(Y)\neq \{0\}$, we deduce $w(\alpha_i^\vee)\in \R \alpha_i^\vee\cap \Phi^\vee=\{\pm \alpha_i^\vee\}$. 

    Conversely, let $w\in W_0$ be such that $w(\alpha_i^\vee)=\pm \alpha_i^\vee$. Up to considering $wr_i$ instead of $w$, we may assume $w(\alpha_i^\vee)=\alpha_i^\vee$. Then $w\in C(r_i)$, by \cite[1.3.11 Theorem (b5)]{kumar2002kac}. 
\end{proof}

\begin{Lemma}\label{lemDominant_height_simple_reflexion}
  
Let  $i\in {I_A}$.   For $\lambda\in Y^+$, set \[m_\lambda(i)=\min\{\htt(\lambda^{++}-\mu^{++})\mid \mu\in ]\lambda,r_i(\lambda)[_{Q^\vee} \}\in \N\] (if $]\lambda,r_i(\lambda)[=\emptyset$, i.e if $\alpha_i(\lambda)\leq 1$, one sets $m_\lambda(i)=+\infty$). Let $\tilde{\lambda}\in Y^{++}$. Then:\begin{enumerate}
\item for all $\lambda\in {W_0}\cdot\tilde{\lambda}$ and $\lambda'\in C(r_i)\cdot\lambda$, we have  $m_\lambda(i)=m_{\lambda'}(i)$.

\item Let ${k}\in \N$. Then  there exists a finite set $F\subset W_0\cdot\tilde{\lambda}$ such that for all  $\lambda\in W_0\cdot \tilde{\lambda}$ satisfying $m_\lambda(i)\leq {k}$, we have $\lambda\in C(r_i)\cdot F$.
\end{enumerate}  

\end{Lemma}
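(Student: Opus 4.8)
The plan is to handle the two parts separately, the second being the substantial one.

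\emph{Part (1).} Write $\lambda'=w(\lambda)$ with $w\in C(r_i)$. By Lemma~\ref{l_characterization_centralizer_reflection}, $w(\alpha_i^\vee)=\pm\alpha_i^\vee$, and since $w$ commutes with $r_i$ we have $w(r_i(\lambda))=r_i(\lambda')$. Hence the linear automorphism $w$ carries the segment $[\lambda,r_i(\lambda)]$ onto $[\lambda',r_i(\lambda')]$ and the coset $\lambda+Q^\vee$ onto $\lambda'+Q^\vee$, so it restricts to a bijection $]\lambda,r_i(\lambda)[_{Q^\vee}\to\,]\lambda',r_i(\lambda')[_{Q^\vee}$. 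As $(w(\mu))^{++}=\mu^{++}$ for every $\mu$ and $(\lambda')^{++}=\lambda^{++}$, the sets $\{\htt(\lambda^{++}-\mu^{++})\mid \mu\in\,]\lambda,r_i(\lambda)[_{Q^\vee}\}$ and $\{\htt((\lambda')^{++}-\mu'^{++})\mid\mu'\in\,]\lambda',r_i(\lambda')[_{Q^\vee}\}$ coincide; taking minima gives $m_\lambda(i)=m_{\lambda'}(i)$, the convention $\min\emptyset=+\infty$ being respected since $w$ maps an empty segment to an empty one.

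\emph{Part (2), reduction.} Suppose $m_\lambda(i)\leq k$. Then $]\lambda,r_i(\lambda)[_{Q^\vee}$ is nonempty and contains some $\mu$ with $\htt(\tilde\lambda-\mu^{++})\leq k$; by Lemma~\ref{lemGrowth_dominant_height_segments} applied to $x=\lambda$ and $r_i$, one moreover has $\tilde\lambda-\mu^{++}\in Q^\vee_+\setminus\{0\}$, so $\mu^{++}$ lies in the finite set $D=\{\nu\in Y^{++}\mid \tilde\lambda-\nu\in Q^\vee_+,\ 1\leq\htt(\tilde\lambda-\nu)\leq k\}$. It therefore suffices to show that for each $\nu\in D$ the $C(r_i)$-stable set $\cL_\nu=\{\lambda\in W_0\cdot\tilde\lambda\mid\ ]\lambda,r_i(\lambda)[_{Q^\vee}\cap(W_0\cdot\nu)\neq\emptyset\}$ meets only finitely many $C(r_i)$-orbits; choosing one representative from each such orbit then produces the finite set $F\subset W_0\cdot\tilde\lambda$ in the statement.

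\emph{Part (2), the key step.} Given $\lambda\in\cL_\nu$, pick $\mu\in\,]\lambda,r_i(\lambda)[_{Q^\vee}\cap W_0\cdot\nu$ and any $v\in W_0$ with $v(\mu)=\nu$, and set $\beta^\vee=v(\alpha_i^\vee)\in Q^\vee$. Since $\mu-\lambda\in\Z\alpha_i^\vee$, one gets $v(\lambda)=\nu+p\beta^\vee$ and $v(r_i(\lambda))=\nu+q\beta^\vee$ for integers $p,q$ of opposite sign with $p-q=\alpha_i(\lambda)$; both points lie in $W_0\cdot\tilde\lambda\subset Y^+$, so by \eqref{e_GR2.4} we have $p\beta^\vee\leq_{Q^\vee}\tilde\lambda-\nu$ and $q\beta^\vee\leq_{Q^\vee}\tilde\lambda-\nu$. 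Using that one of $p,q$ is $\geq1$ and the other $\leq-1$, these inequalities give $-(\tilde\lambda-\nu)\leq_{Q^\vee_\R}\beta^\vee\leq_{Q^\vee_\R}\tilde\lambda-\nu$, so $\beta^\vee$ lies in a bounded box and hence ranges, as $\lambda$ varies, over a finite set $B$ of real coroots. Now swap $\lambda$ and $r_i(\lambda)$ if necessary (harmless, as $r_i\in C(r_i)$) so that the endpoint with positive coefficient along $\beta^\vee$ is $v(\lambda)=\nu+p\beta^\vee$, $p\geq1$, when $\beta^\vee\in Q^\vee_+$, respectively $v(r_i(\lambda))=\nu+q\beta^\vee$, $q\leq-1$, when $-\beta^\vee\in Q^\vee_+$; taking heights in the corresponding inequality and using that a positive real coroot has height $\geq1$ bounds $p$ (resp.\ $|q|$) by $k$. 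Finally, fix for each $\beta^\vee\in B$ an element $v_0=v_0(\beta^\vee)$ with $v_0(\alpha_i^\vee)=\beta^\vee$; by Lemma~\ref{l_characterization_centralizer_reflection}, any $v$ with $v(\alpha_i^\vee)=\beta^\vee$ is of the form $v_0c$ with $c\in C^+(r_i):=\Stab_{W_0}(\alpha_i^\vee)\subset C(r_i)$, whence $\lambda=c^{-1}\big(v_0^{-1}(\nu)+p\,\alpha_i^\vee\big)$ (resp.\ $r_i(\lambda)=c^{-1}(v_0^{-1}(\nu)+q\,\alpha_i^\vee)$). The bracketed vector ranges over a finite set, so $\lambda$ (resp.\ $r_i(\lambda)$) lies in finitely many $C^+(r_i)$-orbits, hence $\lambda$ lies in finitely many $C(r_i)$-orbits, completing the argument.

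The main obstacle is precisely the case where $C(r_i)$ — equivalently $\Stab_{W_0}(\alpha_i^\vee)$ — is infinite: then $\lambda$ cannot be controlled by bounding the length of a Weyl element sending $\tilde\lambda$ to $\lambda$, and a naive descent by simple reflections fails, since applying $r_j$ with $j\neq i$ does not preserve the bound $m_\lambda(i)\leq k$. The argument above circumvents this by extracting a \emph{bounded} constraint (the box containing $\beta^\vee$, together with the height bound on its coefficient) which pins $\lambda$ down only modulo $\Stab_{W_0}(\alpha_i^\vee)$ — exactly the indeterminacy permitted by part (1). One can alternatively feed the interior point $\mu$ into Corollary~\ref{c_finiteness_property_segments} of Subsection~\ref{ss_Quotient_stabilizers} to transfer finiteness from data about $W_0\cdot\nu$ to $\lambda$; I expect the coroot-box argument to be cleaner, but the stabilizer-quotient results provide a fallback.
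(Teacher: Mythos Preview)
Your proof is correct. Both parts work, and part~(2) shares its central idea with the paper: translate an interior point of $]\lambda,r_i(\lambda)[_{Q^\vee}$ to the dominant chamber by some $v\in W_0$, set $\beta^\vee=v(\alpha_i^\vee)$, show that $\beta^\vee$ ranges over a finite set, and then observe that $v$ is determined by $\beta^\vee$ only up to $\Stab_{W_0}(\alpha_i^\vee)\subset C(r_i)$, which is exactly the ambiguity allowed by the conclusion.

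Where you differ from the paper is in execution. The paper fixes the specific interior point $\mu=\lambda-\alpha_i^\vee$ (invoking Lemma~\ref{lemDominant_height_segments} to see that this realises $m_\lambda(i)$), uses the minimal representative $w_\mu$, and bounds $\beta_\lambda^\vee$ by noting that the \emph{line} $\R\beta_\lambda^\vee$ passes through two points of the finite set $E_k(\tilde\lambda)$ and then using $\R\alpha^\vee\cap\Phi^\vee=\{\pm\alpha^\vee\}$. It then argues that two $\lambda$'s with the same pair $(\tau,\beta^\vee)$ differ by an element of $C(r_i)$ up to a shift by $2\alpha_i^\vee$. You instead take any interior $\mu$ of small depth, bound $\beta^\vee$ directly in a coordinate box via the inequalities $p\beta^\vee,q\beta^\vee\leq_{Q^\vee}\tilde\lambda-\nu$ with $p,q$ of opposite sign, and then bound the integer coefficient $p$ (or $|q|$) by $k$ using $\htt(\beta^\vee)\geq 1$. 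Your route avoids the auxiliary Lemma~\ref{lemDominant_height_segments} and the line-through-two-points trick, at the cost of an extra (easy) height bound on $p$; overall it is slightly more self-contained. The paper's approach, on the other hand, packages the finiteness of $\{\beta^\vee\}$ more conceptually and yields the sharper ``at most two $C(r_i)$-orbits per $(\tau,\beta^\vee)$'' statement.

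One cosmetic remark: your sentence about swapping $\lambda$ and $r_i(\lambda)$ is a bit tangled. It would read more cleanly to first normalise to $p\geq 1$, $q\leq -1$ (possible since $r_i\in C(r_i)$), and then split on the sign of $\beta^\vee$: if $\beta^\vee\in Q^\vee_+$ bound $p$ and pin down $c(\lambda)$; if $-\beta^\vee\in Q^\vee_+$ bound $|q|$ and pin down $c(r_i(\lambda))$, hence $\lambda$ via $r_i\in C(r_i)$.
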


\begin{proof}
(1) Let $\lambda\in {W_0}\cdot \tilde{\lambda}$ and $\lambda'\in C(r_i)\cdot \lambda$. Write $\lambda'=w(\lambda)$ with $w\in C(r_i)$. We have $]\lambda',r_i(\lambda')[_{Q^\vee}=w\cdot ]\lambda,r_i(\lambda)[_{Q^\vee}$, thus $(]\lambda',r_i(\lambda')[_{Q^\vee})^{++}=(]\lambda,r_i(\lambda)[_{Q^\vee})^{++}$ and hence $m_\lambda(i)=m_{\lambda'}(i)$.

(2)   Let ${k}\in \N$ and let \[X_{k}(\tilde{\lambda})=\{\lambda\in W_0\cdot \tilde{\lambda} \mid \ m_{\lambda}\leq {k},\langle \lambda,\alpha_i\rangle>0\}.\] Note that for every $\lambda\in X_{k}(\tilde{\lambda})$, we have $\langle \lambda,\alpha_i\rangle\geq 2$, since $]\lambda,r_i(\lambda)[_{Q^\vee}\neq \emptyset$ and thus $r_i(\lambda)\neq \lambda-\alpha_i^\vee$. Let \[E_{k}(\tilde{\lambda})=\{\tau\in \tilde{\lambda}-\bigoplus_{i\in I_A} \Z_{\geq 0} \alpha_i^\vee\mid \htt(\tilde{\lambda}-\tau)\leq {k}\}.\]

Let $\lambda\in X_{k}(\tilde{\lambda})$. Let $\mu=\mu_\lambda=\lambda-\alpha_i^\vee$.  Then $\mu\in ]\lambda,r_i(\lambda)[_{Q^\vee}$ and thus $\mu\in Y^+$.  By Lemma~\ref{lemDominant_height_segments}, \begin{equation}\label{e_equation_m_lambda}
m_{\lambda}(i)=\htt( \tilde{\lambda}-\mu^{++})\leq {k}.
\end{equation}

We have $\mu=w_\mu(\mu^{++})\in ]\lambda,r_i(\lambda)[_{Q^\vee}$ and hence \[\mu^{++}\in ]w_\mu^{-1}(\lambda),w_{\mu}^{-1}r_i(\lambda)[_{Q^\vee}.\]

Set $\nu=\nu_\lambda=\max_{Q^\vee}\big((w_{\mu})^{-1}(\lambda),w_{\mu}^{-1}r_i(\lambda)\big)$. We have $w_\mu^{-1}r_i(\lambda)=w_{\mu}^{-1}r_i w_\mu.w_\mu^{-1}(\lambda)$ and thus \begin{equation}\label{e_w_lambda}
    \{w_\mu^{-1}(\lambda),w_\mu^{-1}r_i(\lambda)\}=\{\nu,w_{\mu}^{-1}r_i w_\mu(\nu)\}.
\end{equation} Set  \[\beta_\lambda^\vee=(w_{\mu})^{-1}(\alpha_i^\vee)\text{ and }\beta_\lambda=(w_{\mu})^{-1}(\alpha_i).\]

We have \begin{equation}\label{e_mu_plus_interval}
    \mu^{++}\in ]\nu,w_\mu^{-1}r_iw_\mu(\nu)[_{Q^\vee}=]\nu,\nu-\langle\nu,\beta_\lambda\rangle \beta_\lambda^\vee[_{Q^\vee}.
\end{equation}

Therefore we have $\min_{Q^\vee}(\nu,w_\mu^{-1}r_iw_\mu(\nu))\leq_{Q^\vee}\mu^{++}\leq_{Q^\vee} \max_{Q^\vee}(\nu,w_\mu^{-1}r_iw_\mu(\nu))$  and using \eqref{e_w_lambda}, we deduce $\mu^{++}\leq_{Q^\vee}\nu$. As $\mu^{++}\in E_{k}(\tilde{\lambda})$ (by \eqref{e_equation_m_lambda} and as $\nu\leq_{Q^\vee}\tilde{\lambda}$ (by \eqref{e_GR2.4}), we have \[\nu\in E_{k}(\tilde{\lambda}). \]

According to \eqref{e_mu_plus_interval}, we also have \[\nu+\R(\mu^{++}-\nu)=\nu+\R\beta_\lambda^\vee\] and hence \[\R\beta_\lambda^\vee\in \{\R(\tau-\tau')\mid \ \tau,\tau'\in E_{k}(\tilde{\lambda})\},\] which is a finite set. Moreover, $\beta_\lambda^\vee\in \Phi^\vee$ and $\R\alpha^\vee\cap \Phi^\vee=\{\alpha^\vee,-\alpha^\vee\}$ for every $\alpha^\vee\in \Phi^\vee$. Therefore \[\{\beta_\lambda^\vee\mid \lambda\in X_{k}(\tilde{\lambda})\}\] is finite.

For $\tau\in E_{k}(\tilde{\lambda})$,  set $X_{k}(\tilde{\lambda},\tau)=\{\lambda\in X_{k}(\tilde{\lambda})\mid (\lambda-\alpha_i^\vee)^{++}=\tau\}$. Let $\tau\in E_{k}(\tilde{\lambda})$ be such that $E_{k}(\tilde{\lambda},\tau)$ is non-empty. Let $\lambda,\lambda'\in E_{k}(\tilde{\lambda},\tau)$ be such  that $\beta_{\lambda}^\vee=\beta_{\lambda'}^\vee$. Then $w_{\lambda'-\alpha_i^\vee}(w_{\lambda-\alpha_i^\vee})^{-1}(\alpha_i^\vee)=\alpha_i^\vee$ and  thus $w_{\lambda'-\alpha_i^\vee}(w_{\lambda-\alpha_i^\vee})^{-1}\in C(r_i)$, by Lemma~\ref{l_characterization_centralizer_reflection}.   Therefore: \[(w_{\lambda-\alpha_i^\vee})^{-1} C(r_i)=(w_{\lambda'-\alpha_i^\vee})^{-1} C(r_i)\] and thus $C(r_i)w_{\lambda-\alpha_i^\vee}=C(r_i)w_{\lambda'-\alpha_i^\vee}.$ Consequently: \begin{align*}C(r_i)\cdot w_{\lambda-\alpha_i^\vee}(\tau)&=C(r_i)\cdot w_{\lambda'-\alpha_i^\vee}(\tau)\\
&=C(r_i)\cdot w_{\lambda-\alpha_i^\vee}((\lambda-\alpha_i^\vee)^{++})\\
&=C(r_i)\cdot w_{\lambda'-\alpha_i^\vee}((\lambda'-\alpha_i^\vee)^{++})\\
&=C(r_i)\cdot(\lambda- \alpha_i^\vee)=C(r_i)\cdot(\lambda'-\alpha_i^\vee).\end{align*} 
Take $u \in C(r_i)$ such that $\lambda'-\alpha_i^\vee=u(\lambda-\alpha_i^\vee).$ Then $u(\lambda-\alpha_i^\vee)=u(\lambda)-u(\alpha_i^\vee)\in \{u(\lambda)\pm \alpha_i^\vee\}$, by Lemma~\ref{l_characterization_centralizer_reflection}. Therefore $\lambda'\in  \{u(\lambda),u(\lambda)+2\alpha_i^\vee\}$. Let $F$ be a finite subset of $X_{k}(\tilde{\lambda})$ such that $\{\beta_\lambda^\vee\mid\lambda\in X_{k}(\tilde{\lambda})\}=\{\beta_\lambda^\vee\mid \lambda\in F\}$. Then: \[X_{{k}}(\tilde{\lambda})=\bigcup_{\tau\in X_{k}(\tilde{\lambda})}X_{k}(\tilde{\lambda},\tau)\subset\{C(r_i)\cdot \lambda\mid \lambda\in F\}\cup \{C(r_i).(\lambda+2\alpha_i^\vee)\mid \lambda\in F\text{ and }\lambda+2\alpha_i^\vee\in Y^+\}.\] Lemma follows. 
\end{proof}

\subsection{Finiteness result for $\overline{\sT_w}$}\label{ss_finiteness_result_Tw_bar}
We now prove the main technical result of this paper. This enables to expand $T$-series in $Z$-series.

\begin{Lemma}\label{lemFiniteness_support_TWhat}
Let $\tilde{\lambda}\in Y^{++}$ and $\mu\in Y^+$. Then $\{w(\lambda)\mid w\in W_0, \lambda\in W_0\cdot \tilde{\lambda} \text{ such that } \overline{\sT_w}(\lambda)\ni \mu\}$ is finite. In particular, $\{\lambda\in W_0\cdot\tilde{\lambda}\mid \mu\in \supp^{Z}(\Tb_\lambda)\}$ is finite. 
\end{Lemma}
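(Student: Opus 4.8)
The plan is to fix $\tilde{\lambda}\in Y^{++}$ and $\mu\in Y^+$ and run an induction on the nonnegative integer $d:=\htt(\tilde{\lambda}-\mu^{++})$. If $\mu^{++}\not\leq_{Q^\vee}\tilde{\lambda}$ then the set is empty (since $\overline{\sT_w}(\lambda)\subseteq\conv_{Q^\vee}(W_0\cdot\tilde{\lambda})$ by Lemma~\ref{l_t}(1), and then Lemma~\ref{lemInequality_convex_hull_orbit} applies), so assume $d\geq 0$. Given $w$, $\lambda\in W_0\cdot\tilde{\lambda}$ with $\mu\in\overline{\sT_w}(\lambda)$, I would unwind the definition to a reduced word $(r_{i_1},\dots,r_{i_k})\in\Red(w)$ and a chain $\nu_k=\lambda$, $\nu_{j-1}\in\overline{\sT_{i_j}}(\nu_j)$, $\dots$, $\nu_0=\mu$. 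Call step $j$ a \emph{reflection step} if $\nu_{j-1}=r_{i_j}(\nu_j)$, an \emph{interior step} if $\nu_{j-1}\in\,]\nu_j,r_{i_j}(\nu_j)[_{Q^\vee}$, and a \emph{stalling step} if $\nu_{j-1}=\nu_j\neq r_{i_j}(\nu_j)$ (forcing $\alpha_{i_j}(\nu_j)<0$). The mechanism I would exploit is that $\htt(\tilde{\lambda}-\nu_j^{++})$ is constant on reflection and stalling steps and strictly increases (through a positive integer, by Lemma~\ref{lemGrowth_dominant_height_segments} and $\nu_j\in Y^+$) on interior steps; hence it grows from $0$ at $\nu_k$ to $d$ at $\nu_0$, every $\nu_j$ has $\nu_j^{++}\leq_{Q^\vee}\tilde{\lambda}$ and $\htt(\tilde{\lambda}-\nu_j^{++})\leq d$, and a chain realizing a $\mu$ with $d\geq 1$ must contain an interior step.

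For the base case $d=0$ one has $\mu\in W_0\cdot\tilde{\lambda}$ and the chain has no interior step; inspecting the three step types gives $\nu_{j-1}\in\sT_{i_j}(\nu_j)$ throughout, so $\mu\in\sT_w(\lambda)\subseteq\sS_w(\lambda)$ by Proposition~\ref{p_Inclusion_TCCw_Sw}. Thus $\mu\leq_{Q^\vee}w(\lambda)$, and $w(\lambda)\in W_0\cdot\tilde{\lambda}$ gives $w(\lambda)\leq_{Q^\vee}\tilde{\lambda}$ by \eqref{e_GR2.4}; so $w(\lambda)\in[\mu,\tilde{\lambda}]_{\leq_{Q^\vee}}$, a finite set.

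For $d\geq 1$, assuming the statement for all pairs of strictly smaller $d$, I would let $j^\dagger$ be the largest index with step $j^\dagger$ not a reflection step; then $\nu_{j^\dagger}=r_{i_{j^\dagger+1}}\cdots r_{i_k}(\lambda)=:\rho\in W_0\cdot\tilde{\lambda}$, and writing $w'=r_{i_1}\cdots r_{i_{j^\dagger-1}}$ (a prefix, hence $w'r_{i_{j^\dagger}}>w'$) one has $w(\lambda)=w'\bigl(r_{i_{j^\dagger}}(\rho)\bigr)$ and $\mu\in\overline{\sT_{w'}}(\nu_{j^\dagger-1})$. If step $j^\dagger$ is an \emph{interior step}: after possibly replacing $\rho$ by $r_{i_{j^\dagger}}(\rho)$ (using that $m_{\,\cdot\,}(i_{j^\dagger})$ is $C(r_{i_{j^\dagger}})$-invariant, Lemma~\ref{lemDominant_height_simple_reflexion}(1)), the inequality $m_\rho(i_{j^\dagger})\leq\htt(\tilde{\lambda}-\nu_{j^\dagger-1}^{++})\leq d$ and Lemma~\ref{lemDominant_height_simple_reflexion}(2) force $\rho\in C(r_{i_{j^\dagger}})\cdot F$ for a finite $F$, say $\rho=c(\phi)$ with $c\in C(r_{i_{j^\dagger}})$, $\phi\in F$; since $c$ commutes with $r_{i_{j^\dagger}}$ and preserves $Q^\vee$, also $\nu_{j^\dagger-1}=c(\psi)$ with $\psi\in\,]\phi,r_{i_{j^\dagger}}(\phi)[_{Q^\vee}$, a finite set. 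Meanwhile $\tau:=\nu_{j^\dagger-1}^{++}$ satisfies $1\leq\htt(\tilde{\lambda}-\tau)\leq d$, so $\htt(\tau-\mu^{++})<d$ and $\tau$ lies in a finite set; the inductive hypothesis then puts $w'(\nu_{j^\dagger-1})=(w'c)(\psi)$ in a finite set $\Sigma'$. Putting $E=\{g\in W_0\mid g(\psi)\in\Sigma'\}$, we have $E\cdot\psi\subseteq\Sigma'$ finite, and as $\psi$ lies strictly between $\phi$ and $r_{i_{j^\dagger}}(\phi)$, Corollary~\ref{c_finiteness_property_segments} upgrades this to $E\cdot r_{i_{j^\dagger}}(\phi)$ finite; since $w(\lambda)=(w'c)\bigl(r_{i_{j^\dagger}}(\phi)\bigr)\in E\cdot r_{i_{j^\dagger}}(\phi)$ and there are only finitely many triples $(i_{j^\dagger},\phi,\psi)$, $w(\lambda)$ lies in a fixed finite set $\mathcal{F}$ depending only on $\tilde{\lambda},\mu,d$.

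If instead step $j^\dagger$ is a \emph{stalling step}, then $\nu_{j^\dagger-1}=\rho$, $\alpha_{i_{j^\dagger}}(\rho)<0$, $\mu\in\overline{\sT_{w'}}(\rho)$, so $(w',\rho)$ is again a configuration for the \emph{same} $(\tilde{\lambda},\mu)$ whose chain has one fewer stalling step, while $w(\lambda)=w'(\rho)+|\alpha_{i_{j^\dagger}}(\rho)|\,w'(\alpha_{i_{j^\dagger}}^\vee)$ with $w'(\alpha_{i_{j^\dagger}}^\vee)\in\Phi^\vee$ a positive coroot. Iterating this reduction terminates — the number of stalling steps strictly decreases, and a stalling-free chain (still having an interior step when $d\geq 1$) falls under the previous case — and writes $w(\lambda)=x+q$ with $x\in\mathcal{F}$ and $q\in Q^\vee_+$; since $w(\lambda),x\in W_0\cdot\tilde{\lambda}$, \eqref{e_GR2.4} gives $q\leq_{Q^\vee}\tilde{\lambda}-x$, so $w(\lambda)$ ranges over a finite set. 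This finishes the induction, and the ``in particular'' clause follows by taking $\lambda=\tilde{\lambda}$ in Proposition~\ref{P_BL_support_IM_basis}(1). The hard part is exactly the uniformity: $\lambda,w$, the reduced word and the chain vary together, an interior step only controls an \emph{interior} point of a segment while $w(\lambda)$ depends on the \emph{endpoint}, and the stalling steps break the left/right symmetry (the asymmetry between $\overline{\sT_i}(\lambda)=\,]\lambda,r_i(\lambda)]_{Q^\vee}$ and $\lambda\in\overline{\sT_i}(\lambda)$); this is why Lemma~\ref{lemDominant_height_simple_reflexion} (to localize the relevant orbit elements before applying Corollary~\ref{c_finiteness_property_segments}) and the auxiliary descent on the number of stalling steps are both essential, going beyond the naive depth heuristic of the Remark after Lemma~\ref{lemGrowth_dominant_height_segments}.
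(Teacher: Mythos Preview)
Your argument is correct and follows the same inductive strategy as the paper's proof, invoking the same key inputs (Proposition~\ref{p_Inclusion_TCCw_Sw}, Lemma~\ref{lemDominant_height_simple_reflexion}, Corollary~\ref{c_finiteness_property_segments}) and ending with the same sandwich $x\leq_{Q^\vee}w(\lambda)\leq_{Q^\vee}\tilde{\lambda}$ for $x$ in a finite set. The only organizational difference is the choice of pivot: the paper takes the \emph{first interior} step (from the $\lambda$-side), so that the earlier portion of the chain lies in $\sT$ and the bound $\tilde{w}s(\lambda')\leq_{Q^\vee}w(\lambda)$ follows in one stroke from $\sT_w\subseteq\sS_w$, whereas you take the \emph{last non-reflection} step and then peel off stalling steps via your secondary descent---that iteration, with each stalling step contributing a positive coroot, is essentially reproving that same inequality by hand, so the paper's organization is slightly more economical but not materially different.
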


\begin{proof}
  Let $\lambda\in W_0\cdot \tilde{\lambda}$. If $\mu\in \overline{\sT_w}(\lambda)$ for some $w\in W_0$, then by Lemma~\ref{lemGrowth_dominant_height_segments}, we have  $\mu^{++}\leq_{Q^\vee}\lambda^{++}=\tilde{\lambda}$. 
  For $k \in {\Z_{\geq 0}}$, we set $\sP_k$ to be the statement: 

``for all $\tilde{\lambda}\in Y^{++}$, for all $\mu\in Y^+$ such that $\tilde{\lambda}\geq_{Q^\vee} \mu^{++}$ and $\htt(\tilde{\lambda}-\mu^{++})\leq k$, there exists a finite set $F(\tilde{\lambda},\mu)\subset W_0\cdot\tilde{\lambda}$ such that for all $w\in W_0$, for all $\lambda\in W_0\cdot \tilde{\lambda}$,  \[\mu\in \overline{\sT_{w}}(\lambda) \Rightarrow w(\lambda)\in F(\tilde{\lambda},\mu). \text{''}\]

We will prove $\sP_k$ for all $k$ by induction, which is the Lemma. Let us first prove $\sP_0$. Let $\tilde{\lambda}\in Y^+$ and $\mu\in W_0\cdot\tilde{\lambda}$. Let $\lambda\in W_0\cdot\tilde{\lambda}$ and $w\in W_0$ be such that $\mu\in \overline{\sT_w}(\lambda)$. Then by Lemma~\ref{lemGrowth_dominant_height_segments}, we have  $\mu\in \sT_w(\lambda)$. Using Proposition~\ref{p_Inclusion_TCCw_Sw}, we deduce $\mu\leq_{Q^\vee} w(\lambda)\leq_{Q^\vee} \tilde{\lambda}$. Consequently, $\sP_0$ is true, with  $F(\tilde{\lambda},\mu)=[\mu,\tilde{\lambda}]_{\leq_{Q^\vee}}\cap W_0\cdot \tilde{\lambda}$. 

Let $k\in \Z_{\geq 1}$ be such that $\sP_{k-1}$ is true. Let $\tilde{\lambda}\in Y^{++}$ and $\mu\in Y^+$ be such that $\mu^{++}\leq_{Q^\vee} \tilde{\lambda}$ and $\htt(\tilde{\lambda}-\mu)=k$. Using Lemma~\ref{lemDominant_height_simple_reflexion}, we choose $K\in {\Z_{\geq 0}}$ such that for every $w\in W_0$, for all $i\in {I_A}$, \[\left(\exists \nu\in ]w(\tilde{\lambda}),r_iw(\tilde{\lambda})[_{Q^\vee}\mid \htt(\tilde{\lambda}-\nu^{++})\leq k\right) \Rightarrow \left( \exists v\in C(r_i),u\in  W_0\mid  w(\tilde{\lambda})=vu(\tilde{\lambda}),\ell(u)\leq K\right).\]

Let $\lambda\in W_0\cdot\tilde{\lambda}$, $w\in W_0$ and assume $\mu\in \overline{\sT_w}(\lambda)$. Let $\underline{m}\in \Red(w)$. Then $\mu\in \overline{\sT_{\underline{m}}}(\lambda)$. Write $\underline{m}=(s_n,\ldots,s_1)$, with $n\in {\Z_{\geq 0}}$ and $s_i\in \{r_j\mid j\in {I_A}\}$ for all $i \in \llbracket 1,n\rrbracket$. 
Then there exists a sequence $\lambda_0,\lambda_1,\ldots,\lambda_n\in Y^+$ such that $\lambda_0=\lambda$, $\lambda_n=\mu$ and $\lambda_{i+1}\in \overline{\sT_{s_i}}(\lambda_i)$ for all $i\in \llbracket 0,n-1\rrbracket$. 
As $\mu\notin W_0\cdot \tilde{\lambda}$, there exists $n''\in \llbracket 1,n-1\rrbracket$ such that $\lambda_{n''+1}\notin \sT_{s_{n''}}(\lambda_{n''})$. Let $n'$ be the smallest integer satisfying this property.
 Let $\underline{m'}=(s_{n'-1},\ldots,s_1)$, $s=s_{n'}$ and $\underline{\tilde{m}}=(s_n,\ldots,s_{n'+1})$. Let $\lambda'=\lambda_{n'}$, $\nu'=\lambda_{n'+1}$. Then \begin{equation}\label{e_lambda'_T_m'}
\nu'\in ]\lambda',s(\lambda')[_{Q^\vee},  \mu\in \overline{\sT_{\underline{\tilde{m}}}}(\nu')\text{ and }
    \lambda'\in \sT_{\underline{m'}}(\lambda).
\end{equation}

Write $\lambda'=vu(\tilde{\lambda})$, with $v\in C(s)$ and $u\in W_0$ such that $\ell(u)\leq K$. Then: \[\begin{aligned} \nu',s(\nu') \in \,\,]\lambda',s(\lambda')[\,\,=\,\, ]vu(\tilde{\lambda}),svu(\tilde{\lambda})[ \,\, 
=  \,\,]vu(\tilde{\lambda}),vsu(\tilde{\lambda})[ \,\,= vu\cdot ]\tilde{\lambda},u^{-1}su(\tilde{\lambda})[. \end{aligned}\]

Set \begin{equation}\label{e_definition_nu}
    \nu=u^{-1}v^{-1}s(\nu')\in ]\tilde{\lambda},u^{-1}su(\tilde{\lambda})[.
\end{equation} Let $\tilde{w}=s_{n}\ldots s_{n'+1}\in W_0$.  Then $\tilde{w}(\nu')=\tilde{w}svu(\nu)$. 

By \eqref{e_lambda'_T_m'} and Lemma~\ref{l_t} (2), we have: \[
    \tilde{w}svu(\tilde{\lambda})=\tilde{w} s(\lambda')\in \sT_{(\underline{\tilde{m}},s)}(\sT_{\underline{m'}}(\lambda))=\sT_{(\underline{\tilde{m}},s,\underline{m'})}(\lambda).
\] Using Proposition~\ref{p_Inclusion_TCCw_Sw} we deduce \begin{equation}\label{e_bound_w_lambda}
\tilde{w}svu(\tilde{\lambda})\leq_{Q^\vee} w(\lambda)\leq_{Q^\vee} \tilde{\lambda}.\end{equation}

By \eqref{e_lambda'_T_m'}, we have $\mu\in \overline{\sT_{\underline{\tilde{m}}}}(\nu')$,  thus $\sP_{k-1}$ implies that  $\tilde{w}(\nu')\in F((\nu')^{++},\mu)$  or, said differently, \begin{equation}\label{e_tilde_w_s_v_u}
    \tilde{w}svu(\nu)\in F(\nu^{++},\mu).
\end{equation}

Let $E=\{x\in W_0\mid x(\nu)\in F(\nu^{++},\mu)\}$. Then $E\cdot \nu=F(\nu^{++},\mu)$ is finite and thus $E\cdot \tilde{\lambda}$ is finite by Corollary~\ref{c_finiteness_property_segments} and \eqref{e_definition_nu}.

By \eqref{e_tilde_w_s_v_u}, $\tilde{w}svu\in E$ and thus according to \eqref{e_bound_w_lambda}, we can take \[F(\tilde{\lambda},\mu)=  \bigcup_{\tau\in E\cdot \tilde{\lambda}}[\tau,\tilde{\lambda}]_{\leq_{Q^\vee}}.  \]

This proves $\sP_k$. The second part of the Lemma follows, using Proposition~\ref{P_BL_support_IM_basis}. 
\end{proof}

\begin{Theorem}\label{t_finiteness_coefficients}
 For $\mu\in Y^+$, one sets $S^Z(T,\mu)=\{\lambda\in Y^+\mid \mu\in \supp^Z(T_\lambda)\}$ and  $S^T(Z,\mu)=\{\lambda\in Y^+\mid \mu\in \supp^T(Z_\lambda)\}$. 
 Let $S$ be a Weyl almost finite subset of $Y^+$ and $\mu\in Y^+$. Then \[S\cap S^Z(T,\mu)\text{ and }S\cap S^T(Z,\mu)\] are finite.  
\end{Theorem}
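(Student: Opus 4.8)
The plan is to reduce the statement to Lemma~\ref{lemFiniteness_support_TWhat}, which already controls one $W_0$-orbit at a time, by showing that only finitely many orbits can contribute. The mechanism for this reduction is the dominance triangularity of the change of basis from Proposition~\ref{P_BL_support_IM_basis}, combined with the finiteness of $\leq_{Q^\vee}$-intervals in $Y$.

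First I would dispose of the case $S=\emptyset$, and otherwise use that $S$ being Weyl almost finite means $W_0\cdot S$ is almost finite, so there is a finite set $J\subset Y$ with $W_0\cdot S\subset J-Q^\vee_+$. Now if $\lambda\in S\cap S^Z(T,\mu)$, then $\mu\in\supp^Z(T_\lambda)$, so by Proposition~\ref{P_BL_support_IM_basis} we have $\mu\leq^{++}\lambda$, i.e.\ $\mu^{++}\leq_{Q^\vee}\lambda^{++}$; and since $\lambda^{++}\in W_0\cdot S$ we get $\lambda^{++}\leq_{Q^\vee}j$ for some $j\in J$. Thus $\lambda^{++}$ lies in $\bigcup_{j\in J}[\mu^{++},j]_{\leq_{Q^\vee}}$, which is finite since each interval $[\mu^{++},j]_{\leq_{Q^\vee}}$ is finite (as $(Y,\leq_{Q^\vee})\cong(\Z^n,\leq)$) and $J$ is finite. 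The identical argument, using $\supp^T(Z^\lambda)\subset\,\,]-\infty,\lambda]_{\leq^{++}}$ from Proposition~\ref{P_BL_support_IM_basis}, shows that $\lambda\in S\cap S^T(Z,\mu)$ also forces $\lambda^{++}$ into this same finite set. Let $\cF\subset Y^{++}$ denote the (finite) collection of dominant cocharacters so obtained.

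To conclude I would write
\[
S\cap S^Z(T,\mu)\subset\bigcup_{\tilde\lambda\in\cF}\{\lambda\in W_0\cdot\tilde\lambda\mid\mu\in\supp^Z(T_\lambda)\},
\]
each term on the right being finite by Lemma~\ref{lemFiniteness_support_TWhat} and $\cF$ being finite, so the union is finite. For $S\cap S^T(Z,\mu)$ I would argue in the same way, now invoking Proposition~\ref{P_equivalence_finiteness}: its property~(1) holds, since that is precisely the final assertion of Lemma~\ref{lemFiniteness_support_TWhat}, so its property~(2) holds too, giving that each $\{\lambda\in W_0\cdot\tilde\lambda\mid\mu\in\supp^T(Z^\lambda)\}$ is finite. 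I do not expect a genuine obstacle here: all the real difficulty has been absorbed into Lemma~\ref{lemFiniteness_support_TWhat}, and the only thing requiring attention is to apply the triangularity in the correct direction — any cocharacter occurring in $\supp^Z(T_\lambda)$ or in $\supp^T(Z^\lambda)$ has dominant representative \emph{below} $\lambda^{++}$, which is exactly what pins $\lambda^{++}$ between $\mu^{++}$ and the finite set $J$.
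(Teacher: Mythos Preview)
Your proof is correct and follows essentially the same route as the paper's own argument: bound $\lambda^{++}$ into a finite $\leq_{Q^\vee}$-interval using Proposition~\ref{P_BL_support_IM_basis} and the Weyl almost finiteness of $S$, then conclude orbit by orbit via Lemma~\ref{lemFiniteness_support_TWhat} (for the $T\to Z$ direction) and Proposition~\ref{P_equivalence_finiteness} (for the $Z\to T$ direction). The only cosmetic difference is that the paper takes the finite bounding set inside $Y^{++}$ from the start, whereas you take $J\subset Y$ and then intersect; the content is identical.
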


\begin{proof}
 Let $\lambda\in S\cap (S^Z(T,\mu)\cup S^T(Z,\mu))$. Let $F$ be a finite subset of $Y^{++}$ such that $S^{++}\subset \bigcup_{\nu\in F}]-\infty,\nu]_{\leq_{Q^\vee}}$.  Then by Proposition~\ref{P_BL_support_IM_basis}, we have $\mu^{++}\leq_{Q^\vee}\lambda^{++} \leq_{Q^\vee}\nu$, for some $\nu\in F$.  Equivalently, $\lambda^{++}$ belongs to $E$, where $E=Y^{++}\cap \bigcup_{\nu\in F} [\mu^{++},\nu]_{\leq_{Q^\vee}}$ is a finite set. Now if $\tilde{\lambda}\in E$, then by Lemma~\ref{lemFiniteness_support_TWhat} and Proposition~\ref{P_equivalence_finiteness}, $E^1_{\tilde\lambda}=W_0\cdot \tilde{\lambda}\cap S^Z(T,\mu)$ and $E^2_{\tilde\lambda}=W_0\cdot \tilde{\lambda}\cap S^T(Z,\mu)$  are finite. Therefore $S\cap S^Z(T,\mu)\subset \bigcup_{\tilde{\lambda}\in E}E^1_{\tilde{\lambda}}$ and  $S\cap S^T(Z,\mu)\subset \bigcup_{\tilde{\lambda}\in E}E^2_{\tilde{\lambda}}$ are finite. 
    \end{proof}

\section{Completed Iwahori-Hecke algebra: the $T$-basis viewpoint}\label{s_T_basis}

In \S\ref{s_Z_presentation}, we defined the completed Iwahori-Hecke algebra $\HBL$ as the set of series $\sum_{\lambda \in Y^+} h_\lambda Z^\lambda$, such that $(h_\lambda)\in (\HCW)^{Y^+}_{\WAF}$.  This is convenient for computations, but is not adapted to regarding elements of $\HBL$ as $I$-bi-invariant functions on $G^{\geq 0}$. 

In this section, we describe $\HBL$ as the set of series $\sum_{\lambda\in Y^+} h_\lambda T_\lambda$, where $(h_\lambda)\in (\HCW)^{Y^+}_{\WAF}$ (see Theorem~\ref{t_T_presentation_HBL} and Theorem~\ref{c_T_version_completed_algebra}). We also give a description of $\HBL$ as a set of series $\sum_{\bw\in W^+}a_\bw T_{\bw}$, where $(a_{\bw})\in \cR^{W^+}$ satisfies an explicit support condition. This gives explicit realization of $\HBL$ as functions on $G^{\geq 0}$ satisfying an explicit support condition.

We then compare $\HBL$ and the algebra $\widetilde{\cH}$ defined in \cite{abdellatif2022erratum} (see \S\ref{ss_comparison_HBL_tilde_cH}), and eventually we explain why the right-hand version of the construction with series of the form $\sum Z^\lambda h_\lambda$ does not work (see \S\ref{ss_definitionr_right}).

\subsection{$T$-series in $\HBL$}\label{ss_T_series_in_HBL}

\begin{Theorem}\label{t_T_presentation_HBL}
    Let $\cR$ be  a ring satisfying Assumption~\ref{a_assumption_ring}. Let $\cS$ be a Kac-Moody root datum and $\cH$ be the associated Iwahori-Hecke algebra over $\cR$ defined in \S\ref{ss_Def_IH_algebras}. Let $\HBL$ be the completed Iwahori-Hecke algebra defined in \S\ref{ss_definition_BL_completed}.  \begin{enumerate}
        \item Let $J$ be a set and $(a_j)_{j\in J}\in \cH^J$. Then $(a_j)$ is summable in the sense of Definition~\ref{d_Summable_family} if and only if:\begin{enumerate}
            \item  $\bigcup_{j\in J}\supp^T(a_j)$ is Weyl almost finite, 

            \item for every $\lambda\in Y^+$, $\{j\in J\mid \lambda\in \supp^T(a_j)\}$ is finite. 
        \end{enumerate}

Moreover if $(a_j)_{j\in J}$ is summable, then $\sum_{j\in J} a_j=\sum_{\lambda\in Y^+}h_\lambda T_\lambda$, where $h_\lambda=\sum_{j\in J}\coeff^T_\lambda(a_j)$, for every $\lambda\in Y^+$. 

 \item For every $(h_\lambda)\in (\HCW)^{Y^+}_{\WAF}$, $(h_\lambda T_\lambda)_{\lambda\in Y^+}$ is summable in $\HBL$ and for every $h\in \HBL$, there exists a unique $(h_\lambda)\in (\HCW)^{Y^+}_{\WAF}$ such that $h=\sum_{\lambda\in Y^+} h_\lambda T_\lambda$.
    \end{enumerate} 
\end{Theorem}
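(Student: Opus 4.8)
The plan is to reduce both parts to two facts already in hand: the triangularity of the change of basis between the $Z$-basis and the $T$-basis (Proposition~\ref{P_BL_support_IM_basis}) and the finiteness statement Theorem~\ref{t_finiteness_coefficients}; the combinatorics of Weyl almost finite sets (Proposition~\ref{p_Weyl_almost_finite_sets}, Lemma~\ref{lemInequality_convex_hull_orbit}, \eqref{e_GR2.4}) then does the rest. I would prove part (2) first. Given $(h_\lambda)\in(\HCW)^{Y^+}_{\WAF}$, each $h_\lambda T_\lambda$ lies in $\cH\subseteq\HBL$ and $\supp^Z(h_\lambda T_\lambda)\subseteq\supp^Z(T_\lambda)$; by Proposition~\ref{P_BL_support_IM_basis} and Lemma~\ref{lemInequality_convex_hull_orbit} every $\nu\in\supp^Z(T_\lambda)$ satisfies $\nu^{++}\leq_{Q^\vee}\lambda^{++}$, so writing $M$ for the finite set of maximal elements of $W_0\cdot\supp((h_\lambda))$ furnished by Proposition~\ref{p_Weyl_almost_finite_sets} we get $\bigl(\bigcup_\lambda\supp^Z(h_\lambda T_\lambda)\bigr)^{++}\subseteq M-Q^\vee_+$, hence the union is Weyl almost finite by \eqref{e_GR2.4}. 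The pointwise finiteness condition of Definition~\ref{d_Summable_family} at $\mu$ amounts to finiteness of $\supp((h_\lambda))\cap S^Z(T,\mu)$, which is Theorem~\ref{t_finiteness_coefficients}; so $(h_\lambda T_\lambda)_\lambda$ is summable and $\sum_\lambda h_\lambda T_\lambda\in\HBL$ is defined. For existence of a $T$-presentation of an arbitrary $h=\sum_\mu g_\mu Z^\mu\in\HBL$, expand each $Z^\mu$ in the $T$-basis and set $h_\lambda:=\sum_\mu g_\mu\coeff^T_\lambda(Z^\mu)$; this is a finite sum because $\coeff^T_\lambda(Z^\mu)\neq0$ forces $\mu\in S^T(Z,\lambda)$ and $\supp((g_\mu))\cap S^T(Z,\lambda)$ is finite by Theorem~\ref{t_finiteness_coefficients}, and the same dominance estimate as above (now $\lambda^{++}\leq_{Q^\vee}\mu^{++}$) shows $(h_\lambda)$ has Weyl almost finite support. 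That $\sum_\lambda h_\lambda T_\lambda=h$ is checked coefficientwise in the $Z$-basis: $\coeff^Z_\nu(\sum_\lambda h_\lambda T_\lambda)=\sum_{\lambda,\mu}g_\mu\coeff^T_\lambda(Z^\mu)\coeff^Z_\nu(T_\lambda)$ is a finite double sum (the pairs $(\lambda,\mu)$ occurring have $\lambda$ in a fixed Weyl almost finite set intersected with $S^Z(T,\nu)$, and $\mu$ in $\supp((g_\mu))\cap S^T(Z,\lambda)$), and reordering it and using $\sum_\lambda\coeff^T_\lambda(Z^\mu)T_\lambda=Z^\mu$ gives $\sum_\mu g_\mu\coeff^Z_\nu(Z^\mu)=g_\nu$.

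For uniqueness it suffices to show that $\sum_\lambda h_\lambda T_\lambda=0$ with $(h_\lambda)$ Weyl almost finite forces every $h_\lambda=0$. If not, let $E=\supp((h_\lambda))\neq\emptyset$ and let $M\subseteq E^{++}$ be the finite set of $\leq_{Q^\vee}$-maximal elements of $W_0\cdot E$ (Proposition~\ref{p_Weyl_almost_finite_sets}); pick $\nu_0\in M$ and $\lambda_0\in E$ with $\lambda_0^{++}=\nu_0$, so $h_{\lambda_0}\neq0$. If $\lambda_0\in\supp^Z(T_\lambda)$ for some $\lambda\in E$, then $\nu_0=\lambda_0^{++}\leq_{Q^\vee}\lambda^{++}\leq_{Q^\vee}m$ for some $m\in M$, and maximality of $\nu_0$ forces $\lambda^{++}=\nu_0$, i.e.\ $\lambda\in W_0\cdot\lambda_0$; then $\supp^Z(T_\lambda)\cap W_0\cdot\lambda=\{\lambda\}$ (Proposition~\ref{P_BL_support_IM_basis}(2)) gives $\lambda=\lambda_0$. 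Hence $\coeff^Z_{\lambda_0}(\sum_\lambda h_\lambda T_\lambda)=h_{\lambda_0}\coeff^Z_{\lambda_0}(T_{\lambda_0})$, and since $\coeff^Z_{\lambda_0}(T_{\lambda_0})\in(\HCW)^\times$ by Proposition~\ref{P_BL_support_IM_basis}(2) while the whole sum vanishes, $h_{\lambda_0}=0$, a contradiction. This proves part (2).

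For part (1) the equivalence of the two summability conditions is symmetric under interchanging $Z$ and $T$, so it is enough to prove one implication. Assuming the conditions of Definition~\ref{d_Summable_family} with $S=\bigcup_j\supp^Z(a_j)$ Weyl almost finite: for each $j$ one has $\supp^T(a_j)\subseteq\bigcup_{\mu\in\supp^Z(a_j)}\supp^T(Z^\mu)$, so $\bigcup_j\supp^T(a_j)\subseteq\bigcup_{\mu\in S}\supp^T(Z^\mu)$, whose dominant part lies in $M-Q^\vee_+$ for a finite $M$, hence is Weyl almost finite; and for fixed $\lambda$, $\{j\mid\lambda\in\supp^T(a_j)\}\subseteq\bigcup_{\mu\in S\cap S^T(Z,\lambda)}\{j\mid\mu\in\supp^Z(a_j)\}$ is finite by Theorem~\ref{t_finiteness_coefficients} and the hypothesis. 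The converse is verbatim with $Z$ and $T$ swapped (using $\supp^Z(T_\mu)\subseteq\,]-\infty,\mu]_{\leq^{++}}$ and $S^Z(T,\lambda)$). For the formula, set $\tilde h_\lambda:=\sum_j\coeff^T_\lambda(a_j)$, a finite sum with Weyl almost finite support by the above; then $\sum_\lambda\tilde h_\lambda T_\lambda\in\HBL$ by part (2), and computing $\coeff^Z_\nu$, interchanging a finite double sum over $(j,\lambda)$ exactly as in part (2), and using $\sum_\lambda\coeff^T_\lambda(a_j)T_\lambda=a_j$, gives $\coeff^Z_\nu(\sum_\lambda\tilde h_\lambda T_\lambda)=\sum_j\coeff^Z_\nu(a_j)=\coeff^Z_\nu(\sum_j a_j)$; so the two elements of $\HBL$ agree, and uniqueness in part (2) identifies $\tilde h_\lambda$ with the $T$-coefficients of $\sum_j a_j$.

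The genuinely delicate point is the uniqueness step: it is where one must combine the \emph{strict} triangularity in Proposition~\ref{P_BL_support_IM_basis}(2) — that $\supp^Z(T_\lambda)$ meets the orbit $W_0\cdot\lambda$ only at $\lambda$, with invertible leading $Z$-coefficient — with the fact (Proposition~\ref{p_Weyl_almost_finite_sets}) that a Weyl almost finite set has only finitely many maximal dominant representatives, all realized inside the set. Everything else is routine once Theorem~\ref{t_finiteness_coefficients} (ultimately Lemma~\ref{lemFiniteness_support_TWhat}) is available, so there is no further serious obstruction.
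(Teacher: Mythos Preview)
Your proof is correct and follows essentially the same approach as the paper: both rely on the triangularity of Proposition~\ref{P_BL_support_IM_basis}, the finiteness of Theorem~\ref{t_finiteness_coefficients}, and the same maximal-orbit argument for uniqueness. The only organizational difference is that you establish part~(2) first and then invoke it for the formula in part~(1), whereas the paper proves part~(1) first and derives the summability and existence statements of part~(2) as immediate applications.
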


\begin{proof}
    (1) Let $(a_j)\in \cH^J$ and assume that $a_j$ is summable in the sense of Definition~\ref{d_Summable_family}.  Where $U$ denotes either the symbol $T$ or the symbol $Z$, one sets $S^U=\bigcup_{j\in J} \supp^U(a_j)$. By assumption, $S^Z$ is Weyl almost finite and thus by Proposition~\ref{p_Weyl_almost_finite_sets}, there exists a finite subset $F$ of $Y^{++}$ such that $(S^Z)^{++}\subset Y^{++}\cap \bigcup_{\nu\in F} ]-\infty, \nu]_{\leq_{Q^\vee}}$.  If $j\in J$, then \begin{equation}\label{e_support_T_aj}\supp^T(a_j)\subset \bigcup_{\nu\in \supp^Z(a_j)} \supp^T(Z^\nu)\end{equation} and hence \[S^T=\bigcup_{j\in J} \supp^T(a_j)\subset \bigcup_{\nu\in S^Z}\supp^T(Z^\nu).\] Using Proposition~\ref{P_BL_support_IM_basis} (1), we deduce that $(S^T)^{++}\subset Y^{++}\cap \bigcup_{\nu\in F}]-\infty,\nu]_{\leq_{Q^\vee}}$ is   almost finite and hence that $S^T$ is Weyl almost finite.
    
    For  $\lambda\in Y^+$ and $U\in \{T,Z\}$, we set $J^U(\lambda)=\{j\in J\mid \lambda\in \supp^U(a_j)\}$. Let $\lambda\in Y^+$. For $j\in J^T(\lambda)$, there exists $\nu\in \supp^Z(a_j)$ such that $\lambda\in \supp^T(Z^\nu)$, by \eqref{e_support_T_aj}. Therefore \[J^T(\lambda)\subset \bigcup_{\nu\in S^Z\mid \lambda\in \supp^T(Z^\nu)} J^Z(\nu).\] The $J^Z(\nu)$ are finite by assumption. As $S^Z$ is Weyl almost finite by assumption,   we deduce that $J^T(\lambda)$ is finite, using Theorem~\ref{t_finiteness_coefficients}.  Thus we proved that a summable family satisfies (a) and (b).  To prove the other direction, we remark that conditions (a) and (b) are the conditions for a family to be summable, except that we swapped $Z$ and $T$. Thus it suffices to exchange $Z$ and $T$ in our proof.

    It remains to prove the last assertion of (1). Let $(a_j)\in \cH^J$ be a summable family and set $h_\lambda=\sum_{j\in J}\coeff^T_\lambda(a_j)$, for $\lambda\in Y^+$.   Let $\mu\in Y^+$. Then  we have \begin{align*}\coeff^Z_\mu(\sum_{\lambda\in Y^+}h_\lambda T_\lambda)=\sum_{\lambda\in Y^+}\coeff^Z_\mu(h_\lambda T_\lambda)
    &=\sum_{\lambda\in Y^+}\sum_{j\in J} \coeff^Z_\mu(\coeff^T_\lambda(a_j) T_\lambda)\\
    &=\sum_{j\in J} \sum_{\lambda\in Y^+}\coeff_\lambda^T(a_j) \coeff^Z_\mu(T_\lambda)\\
    &= \sum_{j\in J} \coeff_\mu^Z(\sum_{\lambda\in Y^+} \coeff^T_\lambda(a_j) T_\lambda)\\
    &=\sum_{j\in J}\coeff^Z_\mu(a_j)=\coeff^Z_\mu(\sum_{j\in J}a_j),\end{align*} where we can permute the sums since only finitely many non-zero terms appear. Therefore $\sum_{\lambda\in Y^+} h_\lambda T_\lambda =\sum_{j\in J} a_j$, which proves (1).

    (2) Let $(h_\lambda)\in (\HCW)^{Y^+}_{\WAF}$. Then $(h_\lambda T_\lambda)$ clearly satisfies (a) and (b) of (1) and thus $(h_\lambda T_\lambda)$ is summable by (1). 

    Let $h\in \HBL$. Write $h=\sum_{\lambda\in Y^+} \tilde{h}_\lambda Z^\lambda$, with $(\tilde{h}_\lambda)\in (\HCW)^{Y^+}_{\WAF}$. For $\lambda\in Y^+$, set $h_\lambda=\sum_{\mu\in Y^+}\coeff^T_\lambda(\tilde{h}_\mu Z^\mu)$. Then  by (1) applied with $J=Y^+$ and $a_\mu=\tilde{h}_\mu Z^\mu$, for $\mu \in J$, the family $(h_\lambda)$ is a well defined element of $(\HCW)^{Y^+}_{\WAF}$  and we have $h=\sum_{\lambda\in Y^+}h_\lambda T_\lambda$. It remains to prove the uniqueness in (2). Let $(h_\lambda^{(1)}),(h_\lambda^{(2)}) \in (\HCW)^{Y^+}_{\WAF}$. We assume that $(h_\lambda^{(1)})\neq (h_\lambda^{(2)})$. Let us prove that \begin{equation}\label{e_difference}
     \sum_{\lambda\in Y^+} h_\lambda^{(1)} T_\lambda\neq \sum_{\lambda\in Y^+}h_\lambda^{(2)} T_\lambda.   
    \end{equation}  Let $S=\{\lambda\in Y^+\mid h_\lambda^{(1)}\neq h_\lambda^{(2)}\}.$ As $S\subset \supp((h_\lambda^{(1)})_{\lambda\in Y^+})\cup \supp((h_\lambda^{(2)})_{\lambda\in Y^+})$, the set $S$ is Weyl almost finite. Let $\nu\in S$ be such that $\nu^{++}$ is maximal in $S^{++}$, which exists by Proposition~\ref{p_Weyl_almost_finite_sets}. Then by (1) and Proposition~\ref{P_BL_support_IM_basis}, we have, for $i\in \{1,2\}$: \[\pr_\nu^Z(\sum_{\mu\in Y^+}h_\mu^{(i)} T_\mu)=\sum_{\mu\in Y^+}h_\mu^{(i)}\pr^Z_\nu(T_\mu)=h_\nu^{(i)} \pr_\nu^Z( T_\nu)\text{ and }\pr_\nu^Z( T_\nu)\in (\HCW)^\times Z^\nu.\] Therefore by Remark~\ref{r_invertibility_Z_T}, we have $\pr_\nu^Z(\sum_{\mu\in Y^+}h_\mu^{(1)} T_\mu)\neq \pr_\nu^Z(\sum_{\mu\in Y^+}h_\mu^{(2)} T_\mu)$ and in particular, we have \eqref{e_difference}, which proved  the desired uniqueness.

\end{proof}

\begin{Theorem}\label{c_T_version_completed_algebra}
    Let $(h_\lambda),(j_\lambda)\in (\HCW)^{Y^+}_{\WAF}$. Then \[(\sum_{\lambda\in Y^+} h_\lambda T_\lambda) (\sum_{\mu\in Y^+} j_\mu T_\mu)=\sum_{\nu\in Y^+} k_\nu T_\nu,\] where $k_\nu$ is the well defined (i.e with finitely many non-zero terms) sum $\sum_{\lambda,\mu\in Y^+}\coeff^T_\nu(h_\lambda T_\lambda j_\mu T_\mu)$, for every $\nu\in Y^+$. Moreover, if $M_h=\max_{Q^\vee} W_0\cdot \supp((h_\lambda))$ and $M_j=\max_{Q^\vee} W_0\cdot \supp((j_\mu))$, then for all $\nu\in \supp(k)$, we have $\nu^{++}\leq_{Q^\vee} \tilde{\lambda}+\tilde{\mu}$ for some $\tilde{\lambda}\in M_h$ and $\tilde{\mu}\in M_j$. 
\end{Theorem}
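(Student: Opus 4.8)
The plan is to derive the statement as a formal consequence of Theorem~\ref{t_T_presentation_HBL} and of the summability machinery of \S\ref{s_Z_presentation}; in effect this result is a corollary of what precedes it, and no step is genuinely difficult. Write $h=\sum_{\lambda\in Y^+}h_\lambda T_\lambda$ and $j=\sum_{\mu\in Y^+}j_\mu T_\mu$; we may assume both are nonzero, since otherwise $hj=0$ and $k=0$. By Theorem~\ref{t_T_presentation_HBL}(2) the families $(h_\lambda T_\lambda)_{\lambda\in Y^+}$ and $(j_\mu T_\mu)_{\mu\in Y^+}$ are summable in $\HBL$, with sums $h$ and $j$. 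Writing $h$ and $j$ also in their $Z$-representations $\sum_\lambda a_\lambda Z^\lambda$ and $\sum_\mu b_\mu Z^\mu$, the convolution is by Definition~\ref{d_convolution_product} equal to $\sum_{\lambda,\mu}(a_\lambda Z^\lambda)(b_\mu Z^\mu)$, which by Theorem~\ref{thmConvolution_summable_family} is the sum of a summable family and depends only on $h$ and $j$; applying Theorem~\ref{thmConvolution_summable_family} instead to the summable families $(h_\lambda T_\lambda)_\lambda$ and $(j_\mu T_\mu)_\mu$ (which have the same sums) therefore shows that $(h_\lambda T_\lambda\cdot j_\mu T_\mu)_{(\lambda,\mu)\in(Y^+)^2}$ is summable in $\HBL$ and $hj=\sum_{\lambda,\mu}h_\lambda T_\lambda j_\mu T_\mu$, where each product is computed in $\cH$. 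The one point here requiring care — and it is not really an obstacle — is precisely this identification of the product of Definition~\ref{d_convolution_product}, phrased through $Z$-representations, with $\sum_{\lambda,\mu}h_\lambda T_\lambda j_\mu T_\mu$; it is exactly what the representation-independence in Theorem~\ref{thmConvolution_summable_family}(2) provides.

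Next I would expand this sum in the $T$-basis. Each $h_\lambda T_\lambda j_\mu T_\mu$ is an element of $\cH$, so Theorem~\ref{t_T_presentation_HBL}(1), applied to the summable family $(h_\lambda T_\lambda j_\mu T_\mu)_{(\lambda,\mu)}$, gives $hj=\sum_{\nu\in Y^+}k_\nu T_\nu$ with $k_\nu=\sum_{\lambda,\mu}\coeff^T_\nu(h_\lambda T_\lambda j_\mu T_\mu)$; moreover condition (b) of that statement says that for each fixed $\nu$ only finitely many pairs $(\lambda,\mu)$ yield a nonzero term $\coeff^T_\nu(h_\lambda T_\lambda j_\mu T_\mu)$, so $k_\nu$ is well defined. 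This proves the first assertion.

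For the support bound, suppose $\nu\in\supp(k)$. Then $\coeff^T_\nu(h_\lambda T_\lambda j_\mu T_\mu)\neq 0$ for some $\lambda\in\supp((h_\lambda))$ and $\mu\in\supp((j_\mu))$. Using the decomposition $\cH=\bigoplus_{\tau\in Y^+}\HCW T_\tau$ of Proposition~\ref{p_HCW_bimodule}, left multiplication by $h_\lambda\in\HCW$ yields $\coeff^T_\nu(h_\lambda T_\lambda j_\mu T_\mu)=h_\lambda\,\coeff^T_\nu(T_\lambda j_\mu T_\mu)$, so $\nu\in\supp^T(T_\lambda j_\mu T_\mu)$; then Corollary~\ref{c_support_Tlambda_Tmu} (with middle factor $j_\mu\in\HCW$) gives $\nu^{++}\leq_{Q^\vee}\lambda^{++}+\mu^{++}$, using that $\lambda^{++}+\mu^{++}\in\overline{C^v_f}$ is its own dominant representative. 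Since $\supp((h_\lambda))$ is Weyl almost finite and $\lambda^{++}\in W_0\cdot\supp((h_\lambda))$, Proposition~\ref{p_Weyl_almost_finite_sets} provides $\tilde\lambda\in M_h$ with $\lambda^{++}\leq_{Q^\vee}\tilde\lambda$, and likewise $\tilde\mu\in M_j$ with $\mu^{++}\leq_{Q^\vee}\tilde\mu$; adding these inequalities (compatibility of $\leq_{Q^\vee}$ with addition, \cite[Lemma 4.17 (1)]{abdellatif2019completed}) gives $\nu^{++}\leq_{Q^\vee}\tilde\lambda+\tilde\mu$, which completes the argument.
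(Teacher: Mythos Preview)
Your proof is correct and follows exactly the approach the paper takes: the paper's own proof is the single sentence ``This follows from Theorem~\ref{thmConvolution_summable_family}, Theorem~\ref{t_T_presentation_HBL} and Corollary~\ref{c_support_Tlambda_Tmu},'' and you have spelled out precisely how those three ingredients combine, including the support bound via Proposition~\ref{p_Weyl_almost_finite_sets}.
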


\begin{proof}
    This follows from Theorem~\ref{thmConvolution_summable_family}, Theorem~\ref{t_T_presentation_HBL} and Corollary~\ref{c_support_Tlambda_Tmu}. 
\end{proof} 

\begin{Remark}
  It would be more natural to take Theorem~\ref{c_T_version_completed_algebra} as a definition of $\HBL$, but we do not know how to prove directly that it is well defined. 
\end{Remark}

Recall from Proposition~\ref{p_HCW_bimodule} that $\cH=\bigoplus_{\lambda\in Y^+} \HCW  T_\lambda$ and that $\HCW T_\lambda=\bigoplus_{w\in W_0} \cR T_{w(\lambda).w}$ for every $\lambda\in Y^+$. Thus if $(h_\lambda)\in (\HCW)^{Y^+}_{\WAF}$, then we can write the element  $h=\sum_{\lambda\in Y^+}h_\lambda T_\lambda$ as a formal series $\sum_{\bw\in W^+} a_{\bw}T_{\bw}$, with $a_{\bw}\in \cR$ for every $\bw\in W^+$ (recall that $W^+=W_0\ltimes Y^+$). We call it the expansion of $h$ in the basis $(T_{\bw})_{\bw\in W^+}$.  For $\bw\in W^+$, define $\coeff^T_{\bw}:\HBL\rightarrow \cR$ by $\coeff^T_{\bw}(h)=a_{\bw}$ if $h=\sum_{\bv\in W^+} a_{\bv} T_{\bv}\in \HBL$. 

To avoid confusion, we sometimes add an index $\HCW$ to $\coeff^T_\lambda$: we write $\coeff^T_{\lambda,\HCW}:\HBL\rightarrow \HCW$ the map defined by $\coeff^T_{\lambda,\HCW}(\sum_{\mu\in Y^+}h_\mu T_\mu)=h_\lambda$, for $\sum_{\lambda\in Y^+} h_\lambda T_\lambda\in \HBL$ (this is well defined by the uniqueness in Theorem~\ref{t_T_presentation_HBL}).

\begin{Theorem}\label{c_T_presentation}
The algebra $\HBL$ is equal to the set of series $\sum_{\bw\in W^+} a_{\bw} T_{\bw}$ such that 
\begin{enumerate}
    
\item $a_{\lambda.w}\in \cR$ for every $\lambda.w\in W^+$,

\item $\{\lambda\in Y^+\mid  \exists w\in W_0, a_{\lambda.w}\neq 0\}$ is Weyl almost finite,

\item for every $\lambda\in Y^+$, $\{w\in W_0, a_{w(\lambda).w}\neq 0\}$ is finite.
\end{enumerate} Moreover every element of $\HBL$ can be decomposed uniquely as a series of this form.

For $a=\sum_{\bu\in W^+}a_{\bu}\Tb_{\bu}$ and $b=\sum_{\bv\in W^+}b_{\bv}\Tb_{\bv}$ in $\HBL$, we have $a b=\sum_{\bw\in W^+} c_{\bw} \Tb_{\bw}$, where $c_{\bw}=\sum_{\bu,\bv\in W^+} \coeff_{\bw}^T (a_{\bu}b_{\bv}\Tb_{\bu} \Tb_{\bv})$, for $\bw\in W^+$. These sums are well defined in the sense that they contain only finitely many non-zero terms and for all $\nu\in Y^+$, \[\left\{\bw\in \{w(\nu)\cdot w\mid w\in W_0\}\mid \exists  \bu,\bv\in W^+, \coeff^T_{\bw}(a_\bu T_\bu b_{\bv}T_{\bv})\neq 0\right\}\] is finite. 
\end{Theorem}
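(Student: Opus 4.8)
The plan is to derive everything from the $T$-series description of $\HBL$ already established in Theorem~\ref{t_T_presentation_HBL} together with the bimodule decomposition of Proposition~\ref{p_HCW_bimodule}. First I would observe that passing from a family $(h_\lambda)\in(\HCW)^{Y^+}_{\WAF}$ to its expansion $\sum_{\bw\in W^+}a_{\bw}T_{\bw}$ is simply a matter of expanding each $h_\lambda T_\lambda$ in the $\cR$-basis $(T_{w(\lambda)\cdot w})_{w\in W_0}$ of $\HCW T_\lambda$; this is a bijective correspondence by Proposition~\ref{p_HCW_bimodule}(1). Under this correspondence, for fixed $\lambda$, writing $h_\lambda=\sum_{w\in W_0}b_w T_w$, one has $a_{w(\lambda)\cdot w}=$ (the coefficient of $T_{w(\lambda)\cdot w}$ in $h_\lambda T_\lambda$), and the support of $h_\lambda$ as an element of $\HCW$ is finite if and only if $\{w\in W_0\mid a_{w(\lambda)\cdot w}\neq 0\}$ is finite. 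Likewise $\{\lambda\in Y^+\mid h_\lambda\neq 0\}=\{\lambda\in Y^+\mid\exists w\in W_0,\ a_{w(\lambda)\cdot w}\neq 0\}$, which is exactly condition (2). Hence conditions (1)--(3) are literally a rewriting of ``$(h_\lambda)\in(\HCW)^{Y^+}_{\WAF}$'', and the uniqueness of the decomposition is the uniqueness statement of Theorem~\ref{t_T_presentation_HBL}(2). The one subtlety to check is that the reindexing is consistent: each $\bw\in W^+$ is uniquely of the form $w(\lambda)\cdot w$ with $\lambda=\bw\cdot(\text{the }Y^+\text{-part written dominantly})$—more precisely, if $\bw=\mu\cdot w$ with $\mu\in Y^+$, then $T_{\mu\cdot w}$ lies in $\HCW T_\lambda$ precisely for $\lambda=w^{-1}(\mu)$, so the indexing sets ``$\lambda.w$'' in (1) and ``$w(\lambda).w$'' in (3) match up correctly.

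For the multiplication formula, I would invoke Theorem~\ref{c_T_version_completed_algebra} directly. Given $a=\sum_\bu a_\bu T_\bu$ and $b=\sum_\bv b_\bv T_\bv$ in $\HBL$, group the terms by their $Y^+$-component: write $a=\sum_{\lambda\in Y^+}h_\lambda T_\lambda$ and $b=\sum_{\mu\in Y^+}j_\mu T_\mu$ with $h_\lambda=\sum_{w:w^{-1}(\cdot)}a_{w(\lambda)\cdot w}T_w$ etc. Then Theorem~\ref{c_T_version_completed_algebra} gives $ab=\sum_{\nu}k_\nu T_\nu$ with $k_\nu=\sum_{\lambda,\mu}\coeff^T_\nu(h_\lambda T_\lambda j_\mu T_\mu)$, a finite sum. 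Re-expanding $k_\nu T_\nu$ in the $(T_{\bw})$-basis and collecting, one gets $c_\bw=\sum_{\bu,\bv}\coeff^T_\bw(a_\bu b_\bv T_\bu T_\bv)$; the scalars $a_\bu,b_\bv$ can be pulled out since they are in $\cR$ and the whole expression is $\cR$-bilinear. The finiteness of the number of nonzero terms in this double sum over $\bu,\bv$ follows because, after fixing $\nu$, only finitely many $(\lambda,\mu)$ contribute to $k_\nu$ (that is the content of Theorem~\ref{c_T_version_completed_algebra}, ultimately Theorem~\ref{thmConvolution_summable_family} and Corollary~\ref{c_support_Tlambda_Tmu}), and for each such pair only finitely many $w,w'$ (those in the supports of $h_\lambda$, $j_\mu$) contribute, and each product $T_{w(\lambda)\cdot w}T_{w'(\mu)\cdot w'}$ expands into finitely many $T_\bw$'s within a single $\HCW$-bimodule piece $\HC_{[\mu',\ldots]}$.

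For the last displayed finiteness assertion—that for each $\nu\in Y^+$ the set of $\bw$ of the form $w(\nu)\cdot w$ ($w\in W_0$) for which some $\coeff^T_\bw(a_\bu T_\bu b_\bv T_\bv)\neq 0$ is finite—I would argue as follows. Such a $\bw$ has $T$-component lying over $\nu$, i.e.\ $\bw$ contributes to $k_\nu T_\nu$ in the grouped form. By Theorem~\ref{c_T_version_completed_algebra}, $k_\nu\in\HCW$, and $k_\nu T_\nu$ expands, via Proposition~\ref{p_HCW_bimodule}(1), into only finitely many $T_{w(\nu)\cdot w}$ (namely those $w$ with $w$ in the finite support of $k_\nu\in\HCW$). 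Hence the set in question is contained in $\{w(\nu)\cdot w\mid w\in\supp(k_\nu)\}$, which is finite. Combining with the fact (also from Theorem~\ref{c_T_version_completed_algebra}) that $k_\nu$ is itself a finite sum of $\coeff^T_\nu(h_\lambda T_\lambda j_\mu T_\mu)$, one sees each relevant $\coeff^T_\bw(a_\bu T_\bu b_\bv T_\bv)$ is accounted for by finitely many data.

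The main obstacle, I expect, is purely bookkeeping: being careful about the two ways of indexing elements of $W^+$ (by $\mu\cdot w$ with $\mu=w(\lambda)$ versus by $\lambda$ together with $w$), and making sure that ``pulling the scalars $a_\bu,b_\bv$ out of $\coeff^T_\bw$'' is legitimate—this is fine because $\coeff^T_\bw$ is an $\cR$-linear functional and the convolution on $\HBL$ is $\cR$-bilinear, but it must be spelled out. There is no new analytic or combinatorial difficulty here: all the genuine finiteness work was done in Theorem~\ref{t_finiteness_coefficients}, Theorem~\ref{thmConvolution_summable_family}, Corollary~\ref{c_support_Tlambda_Tmu} and Theorem~\ref{c_T_version_completed_algebra}, and this theorem is a repackaging of those results in the $(T_\bw)_{\bw\in W^+}$-basis together with the observation that this basis refines the $(h_\lambda T_\lambda)$ description via Proposition~\ref{p_HCW_bimodule}.
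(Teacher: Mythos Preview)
Your treatment of the first part (the bijection between families $(h_\lambda)\in(\HCW)^{Y^+}_{\WAF}$ and series $\sum_{\bw}a_{\bw}T_{\bw}$ satisfying (1)--(3)) is correct and matches the paper's argument: it is indeed a direct consequence of Proposition~\ref{p_HCW_bimodule} and Theorem~\ref{t_T_presentation_HBL}.

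However, your argument for the multiplication formula and the two finiteness claims has a genuine gap: \emph{cancellation}. You argue that for fixed $\bw$ with $\tau(\bw)=\nu$, only finitely many $(\lambda,\mu)$ have $\coeff^T_\nu(h_\lambda T_\lambda j_\mu T_\mu)\neq 0$, and then conclude finiteness of the set of $(\bu,\bv)$ with $\coeff^T_{\bw}(a_\bu T_\bu b_\bv T_\bv)\neq 0$. But these are different statements. For $\bu\in W^+(\lambda)$ and $\bv\in W^+(\mu)$, the grouped coefficient $\coeff^T_\nu(h_\lambda T_\lambda j_\mu T_\mu)$ is the \emph{sum} $\sum_{\bu',\bv'}a_{\bu'}b_{\bv'}\coeff^T_\nu(T_{\bu'}T_{\bv'})$ over $\bu'\in W^+(\lambda)$, $\bv'\in W^+(\mu)$; this sum can vanish while individual terms $a_\bu b_\bv\coeff^T_{\bw}(T_\bu T_\bv)$ do not. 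So knowing that only finitely many $(\lambda,\mu)$ contribute to $k_\nu$ does not bound the set of $(\bu,\bv)$ giving nonzero individual contributions. The same cancellation issue undermines your argument for the last displayed finiteness assertion: the set of $\bw$ over $\nu$ for which \emph{some} individual term is nonzero need not be contained in $\{w(\nu)\cdot w\mid w\in\supp(k_\nu)\}$.

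The paper confronts this explicitly (``A difficulty is that\ldots we might have $\coeff^T_{\bw}(a_{\bu}T_{\bu}b_{\bv}T_{\bv})\neq 0$ but $\coeff^T_{\bw}(\pr_\nu^T(h_\lambda T_\lambda j_\mu T_\mu))=0$'') and circumvents it by extending scalars to $\tilde{\cR}=\cR[U,V]$, choosing an injection $\sigma:W^+\to\N$, and replacing $a_\bu$ by $\tilde a_\bu=a_\bu U^{\sigma(\bu)}$ and $b_\bv$ by $\tilde b_\bv=b_\bv V^{\sigma(\bv)}$. Over $\tilde{\cR}$, distinct $(\bu,\bv)$ contribute to distinct monomials in $U,V$, so no cancellation is possible; the finiteness results for the grouped products $\tilde h_\lambda T_\lambda\tilde j_\mu T_\mu$ (available from Theorem~\ref{c_T_version_completed_algebra} over $\tilde{\cR}$) then transfer to the individual terms, and specialization $U,V\mapsto 1$ yields the claim over $\cR$. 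Without some such device, your argument does not close.
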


\begin{proof}
Let $(h_\lambda)\in (\HCW)^{Y^+}_{\WAF}$. Let $h=\sum_{\lambda\in Y^+} h_\lambda T_\lambda\in \HBL$. For $\lambda.w\in W^+$, set $a_{\lambda.w}=\coeff^T_{\lambda.w}(h_{w^{-1}(\lambda)})\in \cR$.  Let $S=\supp^T(h)=\{\lambda\in Y^+\mid h_\lambda\neq 0\}$. By Proposition~\ref{p_HCW_bimodule}, $\{\lambda\in Y^+\mid \exists w\in W_0, a_{\lambda.w}\neq 0\}\subset W_0\cdot S$, which is Weyl almost finite since $S$ is. Let $\lambda\in Y^+$. Then by Proposition~\ref{p_HCW_bimodule}, $\{w\in W_0\mid a_{w(\lambda).w}\neq 0\}= \{w\in W_0\mid \coeff^T_{w(\lambda).w}(h_\lambda T_\lambda)\neq 0\}$, which is finite. Thus the expansion in the basis $(T_{\bw})_{\bw\in W^+}$ of $h$ satisfies (1), (2) and (3). 

Conversely, let $(a_{\bw})\in \cR^{W^+}$ satisfying (2) and (3). By Proposition~\ref{p_HCW_bimodule} and Remark~\ref{r_invertibility_Z_T}f, for every $\lambda\in Y^+$, there exists a unique $h_\lambda\in \HCW$ such that $\sum_{w\in W_0} a_{w(\lambda).w}T_{w(\lambda).w}=h_\lambda T_\lambda$. Then by (2), $\supp(h_\lambda)$ is Weyl almost finite and we have $\sum_{\bw\in W^+} a_\bw T_{\bw}=\sum_{\lambda\in Y^+}h_\lambda T_\lambda$.  This proves the first part of the corollary.

Let now $a=\sum_{\bu\in W^+}a_\bu T_\bu,b=\sum_{\bv\in W^+} b_\bv T_\bv\in \HBL$. Let $c=a b\in \HBL$. Write $a=\sum_{\lambda\in Y^+}h_\lambda T_\lambda$ and $b=\sum_{\mu\in Y^+} j_\mu T_\mu$, with $(h_\lambda),(j_\mu)\in (\HCW)^{Y^+}_{\WAF}$.  For $\bu=\lambda.u\in W^+$, we set \[\tau(\bu)=u^{-1}(\lambda)\in Y^+.\] For $\lambda\in Y^+$, we set \[W^+(\lambda)=\{w(\lambda).w\mid w\in W_0\}\] Write $c=\sum_{\bw\in W^+} c_{\bw}T_{\bw}$. Fix $\bw\in W^+$. We want to prove that $\{(\bu,\bv)\in (W^+)^2\mid \coeff^T_{\bw}(a_\bu T_{\bu} b_{\bv} T_{\bv})\neq 0\}$ is finite. For this we would like to use Theorem~\ref{c_T_version_completed_algebra}. A difficulty is that  if $\bu,\bv\in W^+$, $\lambda=\tau(\bu)$, $\mu=\tau(\bv)$ and  $\nu=\tau(\bw)$, we might have $\coeff^T_{\bw}(a_{\bu}T_{\bu} b_{\bv} T_{\bv})\neq 0$  but $\coeff^T_{\bw}(\pr_\nu^T(h_\lambda T_\lambda j_\mu T_\mu))= 0$, where $\pr_\nu:\HBL\rightarrow \cH$ is defined as in Definition~\ref{d_Z_T_support}. To circumvent this problem, we apply Theorem~\ref{c_T_version_completed_algebra} to variants $\tilde{a},\tilde{b}$ of $a$ and $b$, where $\tilde{a},\tilde{b}\in \HBL_{\tilde{\cR}}$, where $\tilde{\cR}$ is a ring containing $\cR$.

Let $\tilde{\cR}=\cR[U,V]$, where $U$ and $V$ are indeterminates. Let $\sigma:W^+\rightarrow \N$ be an injective map. For $\bu\in W^+$, set $\tilde{a}_{\bu}=a_{\bu} U^{\sigma(\bu)}$ and $\tilde{b}_{\bv}=b_{\bv} V^{\sigma(\bv)}$. Set $\tilde{a}=\sum_{\bu\in W^+} \tilde{a}_{\bu} T_{\bu}\in \HBL_{\tilde{\cR}}$ and $\tilde{b}=\sum_{\bv\in W^+} \tilde{b}_{\bv} T_{\bv}\in \HBL_{\tilde{\cR}}$. Write $\tilde{a}=\sum_{\lambda\in Y^+}\tilde{h}_\lambda T_\lambda$ and 
$\tilde{b}=\sum_{\mu\in Y^+} \tilde{j}_\mu T_\mu$,
with $(\tilde{h}_\lambda), (\tilde{j}_\mu)\in (\cH_{W_0,\tilde{\cR}})^{Y^+}_{\WAF}$. 

Let $\bu,\bv\in W^+$. Assume that $\coeff^T_{\bw}(a_\bu T_{\bu} b_{\bv} T_{\bv})\neq 0$. Then \begin{equation}\label{e_coeff_product}
    \sum_{\bu'\in W^+(\tau(\bu)), \bv'\in W^+(\tau(\bv))}\coeff^T_{\bw}(\tilde{a}_{\bu'} T_{\bu'} \tilde{b}_{\bv'} T_{\bv'})\neq 0,
\end{equation} since the  $U^{\sigma(\bu)}V^{\sigma(\bv)}$ component of the left hand side is $\coeff^T_{\bw}(a_{\bu} T_{\bu} b_{\bv}T_{\bv})\neq 0$. Now if $\nu=\tau(\bw)$, then $\tilde{h}_{\tau(\bu)}T_{\tau(\bu)}=\sum_{\bu'\in W^+(\tau(\bu))} \tilde{a}_{\bu'} T_{\bu'}$ and $\tilde{j}_{\tau(\bv)}T_{\tau(\bv)}=\sum_{\bv'\in W^+(\tau(\bv))} \tilde{b}_{\bv'}T_{\bv'}$. Therefore  \[\pr^T_{\nu}(\tilde{h}_{\tau(\bu)}T_{\tau(\bu)}\tilde{j}_{\tau(\bv)} T_{\tau(\bv)})=\sum_{\bu'\in W^+(\tau(\bu)),\bv'\in W^+(\tau(\bv))}\pr^T_{\nu}(\tilde{a}_{\bu'} T_{\bu'} \tilde{b}_{\bv'}T_{\bv'}). \]

Moreover \begin{align*}\coeff^T_{\bw}(\tilde{h}_{\tau(\bu)}T_{\tau(\bu)}\tilde{j}_{\tau(\bv)}T_{\tau(\bv)})&=\coeff^T_{\bw}(\pr_\nu^T(\tilde{h}_{\tau(\bu)} T_{\tau(\bu)} \tilde{j}_{\tau(\bv)} T_{\tau(\bv)}))\\&=\sum_{\bu'\in W^+(\tau(\bu)), \bv'\in W^+(\tau(\bv))}\coeff^T_{\bw}(\tilde{a}_{\bu'} T_{\bu'} \tilde{b}_{\bv'} T_{\bv'})\\&\neq 0 \text{ by \eqref{e_coeff_product}},\end{align*}  and in particular $\pr_\nu^T(\tilde{h}_{\tau(\bu))} T_{\tau(\bu)} \tilde{j}_{\tau(\bv)} T_{\tau(\bv)})\neq 0$.  Thus we proved that for every $\bu,\bv\in W^+$, we have:
\[\coeff^T_{\bw}(a_{\bu}T_{\bu} b_{\bv}T_{\bv})\neq 0 \Rightarrow \coeff^T_{\tau(\bw),\HCW}(\tilde{h}_{\tau(\bu)}T_{\bu} \tilde{j}_{\tau(\bv)} T_{\bv})\neq 0. \]

By Theorem~\ref{c_T_version_completed_algebra} we deduce that there exist finite sets $F_1',F_2'\subset Y^+$ such that for all $\bu,\bv\in W^+$, we have: \begin{equation}\label{e_condition_nonzero1} \coeff^T_{\tau(\bw),\HCW}(a_{\bu} T_{\bu} b_{\bv}T_{\bv})\neq 0\Rightarrow \tau(\bu)\in F_1',\tau(\bv)\in  F_2'.\end{equation} Set $F_i=\{(w(\lambda).w)\mid w\in W_0,\lambda\in F_i\}=\bigsqcup_{\lambda\in F_i}W^+( \lambda)$, for $i\in \{1,2\}$. By condition (3), $F_1$ and $F_2$ are finite and we have, for all $\bu,\bv\in W^+$: \begin{equation}\label{e_condition_nonzero2} \coeff^T_{\bw}(a_{\bu} T_{\bu} b_{\bv}T_{\bv})\neq 0 \Rightarrow \bu\in F_1, \bv\in  F_2. \end{equation}

We have \begin{align*}
    \coeff^T_{\bw}(ab) &= \coeff^T_{\bw}(\sum_{\lambda\in Y^+,\mu\in Y^+}h_\lambda T_\lambda j_\mu T_\mu)\\ &= \coeff^T_{\bw}(\pr_{\nu}(\sum_{\lambda,\mu\in Y^+} h_\lambda T_\lambda j_\mu T_\mu))\\
    &= \coeff^T_{\bw}(\pr_\nu(\sum_{\lambda\in F_1',\mu\in F_2'}h_\lambda T_\lambda j_\mu T_\mu))\\
    &=\sum_{\lambda\in F_1',\mu\in F_2'}\coeff^T_{\bw}(\pr_\nu(h_\lambda T_\lambda j_\mu T_\mu))\\  
    &=\sum_{\bu\in F_1,\bv\in F_2}\coeff^T_\bw(\pr_\nu(a_{\bu}T_{\bu} b_{\bv}T_{\bv}))  \\
    &=  \sum_{\bu\in F_1,\bv\in F_2}\coeff^T_\bw(a_{\bu}T_{\bu} b_{\bv}T_{\bv}).
\end{align*}
 Moreover, write $\tilde{a} \tilde{b}=\sum_{\tau\in Y^+} \tilde{k}_\tau T_\tau$, with $(\tilde{k}_\tau)\in (\cH_{W_0,\tilde{\cR}})^{Y^+}_{\WAF}$. Let $\nu\in Y^+$ and \[E=\{\bw\in W^+(\nu)\mid \exists (\bu,\bv)\in (W^+)^2,\coeff^T_{\bw}(a_\bu T_\bu b_\bv T_{\bv})\neq 0\}.\] Let $\bw\in W^+.$ Assume that $\bw\in E$ and take $\bu,\bv\in W^+$ such that $\coeff^T_{\bw}(a_\bu T_\bu b_\bv T_\bv)\neq 0$. We have \[\tilde{k}_\nu T_\nu=\sum_{\bw'\in W^+(\nu),\bu',\bv'\in W^+} \coeff^T_{\bw'}(\tilde{a}_{\bu}\tilde{b}_{\bv} T_{\bu} T_{\bv})T_{\bw'}. \] and the $U^{\sigma(\bu)}V^{\sigma(\bv)}$ component of $\coeff^T_{\bw}(\tilde{k}_\nu T_\nu)$ is $\coeff^T_{\bw}(a_\bu T_\bu b_{\bv} T_{\bv})\neq 0$.  Therefore $E$ is contained in $\{\bw'\in W^+(\nu)\mid \coeff^T_{\bw'}(\tilde{k}_\nu T_\nu)\neq 0\}$, which is finite. 
\end{proof}

For example $\sum_{w\in W_0} T_w\notin \HBL$, except if $W_0$ is finite. When $\cH$ is associated with a Kac-Moody group $G=\mathbf{G}(\cK)$, where $\cK$ is a non-Archimedean local field, then if $K=\mathbf{G}(\cO)$ and $I$ is the Iwahori subgroup, then we have $K=\bigsqcup_{w\in W_0} IwI$ and thus $\sum_{w\in W_0} T_w$ would correspond to $\One_{K}$.

\subsection{Comparison of $\HBL$ with the algebra constructed by Abdellatif and H\'ebert}\label{ss_comparison_HBL_tilde_cH}

We now compare $\HBL$ and the ``completed Iwahori-Hecke algebra $\widetilde{\cH}$'' defined in \cite{abdellatif2019completed} and \cite{abdellatif2022erratum}. 

 Let $\cR[\![Y^+]\!]$ be the subalgebra of $\RY$ consisting of the elements whose support is contained in $Y^+$. 

\begin{Definition}\label{d_Wv_AF}
 A subset $E$ of $Y^+$ is called \textbf{$W_0$-almost finite} if $u.E$ is almost finite, for every $u\in W_0$. Let $\sA$ be the subalgebra of $\cR[\![Y^+]\!]$ consisting of the elements having support contained in $Y^+$ and  $W_0$-almost finite support. The completed Iwahori-Hecke  defined in \cite{abdellatif2022erratum} is the algebra $\tilde{\cH}=\bigoplus_{w\in W_0} T_w \sA=\bigoplus_{w\in W_0}\sA T_w$, equipped with the product defined in \cite[Theorem 3.12]{abdellatif2022erratum}. 
\end{Definition}

In general, we have $\RY_{\WAF}\subset \sA\subset \RY$. The elements of $\widetilde{\cH}$ involve finitely many elements of $W_0$ and thus when $W_0$ is infinite, $\widetilde{\cH}$ is not  contained in $\HBL$. For example $\tilde{\lambda}\in C^v_f\cap Y$, then $\sum_{w\in W_0}T_w Z^{w(\tilde{\lambda})}\in \HBL$ but it is not an element of $\widetilde{\cH}$. By Example~\ref{ex_Weyl_almost_finite_sets}, if the Kac-Moody matrix is indefinite of size $2$, then every subset of $Y^+$ is almost finite. In particular, we have $\RY_\WAF=\sA=\cR[\![Y^+]\!]$ and thus $\widetilde{\cH}\subset \HBL$.

In general however,   $\widetilde{\cH}$ and $\widehat{\cH}$ are not comparable for $\subset$, by Lemma~\ref{lemCounter_example_u-almost_finite_not_Weyl_almost_finite} below.

\begin{Lemma}\label{lemCounter_example_u-almost_finite_not_Weyl_almost_finite}
We assume that $\A$ is associated with an indecomposable affine Kac-Moody matrix. Let $\lambda\in Y\cap C^v_f$ and $i\in I_A$. Let $E=\{w(\lambda)+\alpha_i^\vee \mid w\in W_0\}$.  Then $E$ is $u$-almost finite for every $u\in W_0$ but $E$ is not Weyl almost finite.
\end{Lemma}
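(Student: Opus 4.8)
The plan is to prove the two assertions separately; given the machinery of Section~\ref{ss_WAF_sets} and Example~\ref{ex_Weyl_almost_finite_sets}, both are short verifications.

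First I would show $E$ is $u$-almost finite for every $u\in W_0$. Since $u$ acts linearly and left translation by $u$ is a bijection of $W_0$, one has $u\cdot E=\{uw(\lambda)+u(\alpha_i^\vee)\mid w\in W_0\}=(W_0\cdot\lambda)+u(\alpha_i^\vee)$. As $\lambda\in Y\cap C^v_f\subseteq Y^{++}$ we have $\lambda^{++}=\lambda$, so $w(\lambda)\leq_{Q^\vee}\lambda$ for all $w\in W_0$ by \eqref{e_GR2.4}, i.e.\ $W_0\cdot\lambda\subseteq\lambda-Q^\vee_+$. Hence $u\cdot E\subseteq(\lambda+u(\alpha_i^\vee))-Q^\vee_+$, which has the form $J-Q^\vee_+$ with $J$ a singleton, so $u\cdot E$ is almost finite. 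In particular, taking $u=1$, $E$ itself is almost finite.

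Next I would show $E$ is not Weyl almost finite, i.e.\ that $W_0\cdot E$ is not almost finite. Writing $W_0\cdot E=\{(vw)(\lambda)+v(\alpha_i^\vee)\mid v,w\in W_0\}$ and choosing the representative $w=v^{-1}$ yields the inclusion $\lambda+W_0\cdot\alpha_i^\vee\subseteq W_0\cdot E$. Now $\alpha_i^\vee\notin Y^+$: by Example~\ref{ex_Weyl_almost_finite_sets}(2), in the affine case $Y^+=(\bigcap_{j\in I_A}\ker\alpha_j)\cup\delta^{-1}(\R_{>0})$ with $\delta(\alpha_j^\vee)=0$ for all $j$, whereas $\alpha_i(\alpha_i^\vee)=2\neq0$ and $\delta(\alpha_i^\vee)=0$. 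By Example~\ref{ex_Weyl_almost_finite_sets}(1) (equivalently by \eqref{e_AH4.8}) it follows that $\{\alpha_i^\vee\}$ is not Weyl almost finite, i.e.\ $W_0\cdot\alpha_i^\vee$ is not almost finite. A translate of a non-almost-finite set is non-almost-finite (if $\lambda+W_0\cdot\alpha_i^\vee\subseteq J-Q^\vee_+$ then $W_0\cdot\alpha_i^\vee\subseteq(J-\lambda)-Q^\vee_+$), and any superset of a non-almost-finite set is non-almost-finite; combining, $W_0\cdot E$ is not almost finite, so $E$ is not Weyl almost finite.

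There is no serious obstacle here -- the one point requiring a little care will be setting up the inclusion $\lambda+W_0\cdot\alpha_i^\vee\subseteq W_0\cdot E$ correctly, since the Weyl element translating $\alpha_i^\vee$ must be absorbed into the $W_0$-orbit of $\lambda$, together with accurately quoting the affine description of $Y^+$ to locate $\alpha_i^\vee$ outside it. All the substantive content (the structure of $Y^+$ in affine type, and the fact that single-orbit almost finiteness is governed by membership in $Y^+$) is already available in the excerpt.
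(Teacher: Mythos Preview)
Your proof is correct and follows essentially the same route as the paper's: bound $u\cdot E$ above by $\lambda+u(\alpha_i^\vee)$ via \eqref{e_GR2.4}, then embed $\lambda+W_0\cdot\alpha_i^\vee$ into $W_0\cdot E$ and use $\alpha_i^\vee\notin Y^+$ together with \eqref{e_AH4.8} to conclude. The only difference is that the paper also verifies $E\subset Y^+$ (via $\delta(E)=\delta(\lambda)>0$), which is not strictly needed for the lemma as stated but is required for the intended application to $\tilde{\cH}$, since Definition~\ref{d_Wv_AF} is formulated for subsets of $Y^+$.
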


\begin{proof}
 Then by \cite[Proposition 5.8]{kac1994infinite}, there exists a $W_0$-invariant map $\delta$ in the dual $X$ of $Y$ such that $\cT=\delta^{-1}(\R_{>0})\cup \bigcap_{i\in {I_A}}\ker(\alpha_i)$. Then $\delta(\alpha_i^\vee)=0$, for all $i\in {I_A}$.

We have $\delta(E)=\delta(\{\lambda\})>0$ and thus $E\subset Y^+$. Let $u\in W_0$. Then by \eqref{e_GR2.4}, $\mu\leq_{Q^\vee} \lambda+u(\alpha_i^\vee)$, for all $\mu\in u.E$ and thus $E$ is $u$-almost finite. 

We have $W_0\cdot E=\{uw(\lambda)-u(\alpha_i^\vee)\mid u,w\in W_0\}=\{v(\lambda)+u(\alpha_i^\vee)\mid u,v\in W_0\}$. Suppose that $E$ is Weyl almost finite. Then $W_0\cdot E$ and thus $\{\lambda+u(\alpha_i^\vee)\mid u\in W_0\}$ is almost finite.  Therefore there exists a finite set $F\subset Y^+$ such that for all $u\in W_0$, there exists $\nu_u\in F$ such that $\lambda+u(\alpha_i^\vee)\leq_{Q^\vee} \nu_u$. Therefore $u(\alpha_i^\vee)\leq_{Q^\vee} \nu_u-\lambda$ for all $u\in W_0$. Thus there exists $u\in W_0$ such that $u(\alpha_i^\vee)$ is maximal for $\leq_{Q^\vee}$ among $W_0\cdot \alpha_i^\vee$. Using \cite[Lemma 4.8]{abdellatif2019completed} we deduce $\alpha_i^\vee\in \cT$. But then we reach a contradiction since $\alpha_i^\vee\notin \delta^{-1}(\R_{>0})\cup \bigcap_{j\in {I_A}}\ker(\alpha_i)$. Thus $E$ is not Weyl almost finite.
\end{proof}

It seems that $\HBL$ is more adapted than $\widetilde{\HC}$ to handle $T$-series, see for example \eqref{e_T_series_not_in_tilde_H}. We do not know if every element of $\widetilde{\HC}$ admits a $T$-series expansion. However we can study the $T$-expansion of $\widetilde{\HC}\cap \HBL=\bigoplus_{w\in W_0} \Tb_w \RY_\WAF$. By Lemma~\ref{lemNecessity_infinite_length_IM_BL}, it seems that $\widetilde{\HC}\cap \HBL$ is complicated to describe in terms of $T$-series.

For $h=\sum_{w\in W_0} a_w T_w\in \HCW$, we set $\ell(h)=\max\{\ell(w)\mid w\in W_0, a_w\neq 0\}$.

\begin{Lemma}\label{lemNecessity_infinite_length_IM_BL}
Assume $q\neq 1$.  Let $i_1,i_2\in {I_A}$. We assume that $r_{i_1}r_{i_2}$ has infinite order. For $k\in {\Z_{\geq 0}}$, we set $w_k= \ldots r_{i_1}r_{i_2}r_{i_1}$, where the product has $k$ factors. Then  $\ell(T_{w_k^{-1}}^{-1} T_{w_k}^{-1})=2k-1$, for $k\in \Z_{> 0}$. 
\end{Lemma}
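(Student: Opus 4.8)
The plan is to compute $T_{w_k^{-1}}^{-1}T_{w_k}^{-1}$ inductively, keeping careful track of the top-degree (longest) terms in the $T_w$-basis of $\HCW$. Note that $w_k$ is a reduced alternating word of length $k$; write $s=r_{i_1}$, $t=r_{i_2}$, so $w_k^{-1}$ is again an alternating word of length $k$ (ending in $r_{i_1}$ if $k$ is odd, in $r_{i_2}$ if $k$ is even, and more importantly: $w_k^{-1} = w_k$ when we track only which letters appear, since both are alternating words in $s,t$). The key elementary fact is that for a simple reflection $r_i$, $T_{r_i}^{-1} = q^{-1}T_{r_i} - (1-q^{-1})$ by \eqref{e_inverse_T_i}, so $\ell(T_{r_i}^{-1}) = 1$ with leading term $q^{-1}T_{r_i}$; and more generally for $w$ with a reduced word $r_{j_1}\cdots r_{j_k}$, one has $T_w^{-1} = T_{r_{j_k}}^{-1}\cdots T_{r_{j_1}}^{-1}$, whose expansion in the $T$-basis has a unique top term $q^{-\ell(w)}T_{w^{-1}}$ (all other terms being supported on strictly shorter elements). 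Hence $\ell(T_w^{-1}) = \ell(w)$ with leading coefficient $q^{-\ell(w)}$.

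First I would record the general principle: if $h,h'\in\HCW$ with leading terms $c\,T_u$ (degree $\ell(u)$) and $c'\,T_{u'}$ (degree $\ell(u')$) respectively, and if $\ell(uu') = \ell(u)+\ell(u')$, then by (BL2) the product $hh'$ has leading term $cc'\,T_{uu'}$, of degree $\ell(u)+\ell(u')$; and if instead $\ell(uu') < \ell(u)+\ell(u')$ the degree of $hh'$ could drop. So the whole point is to check that multiplying $T_{w_k^{-1}}^{-1}$ (leading term $q^{-k}T_{w_k^{-1}}$... wait, $T_{w_k^{-1}}^{-1}$ has leading term $q^{-k}T_{(w_k^{-1})^{-1}} = q^{-k}T_{w_k}$) by $T_{w_k}^{-1}$ (leading term $q^{-k}T_{w_k^{-1}}$) produces a leading term of degree $2k-1$, not $2k$. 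Now $w_k\cdot w_k^{-1} = e$ has length $0$, so there is massive cancellation; the claim $\ell = 2k-1$ says the product lands exactly one step below the naive maximum $2k$. The natural way to see this: expand $T_{w_k^{-1}}^{-1}T_{w_k}^{-1} = T_{w_k^{-1}}^{-1}T_{w_k}^{-1}$, and use that $T_{w_k}^{-1} = T_{w_k}^{-1}$, grouping as $(T_{w_k^{-1}}^{-1})(T_{w_k}^{-1})$ and writing both in reduced form letter by letter; alternatively, and more cleanly, prove by induction on $k$ that $T_{w_k^{-1}}^{-1}T_{w_k}^{-1}$ has a unique top-degree term of degree $2k-1$. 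For the inductive step, relate $w_{k+1}$ to $w_k$ by prepending one alternating letter, use the recursions $T_{w_{k+1}}^{-1} = T_{w_k}^{-1}T_{r_j}^{-1}$ and $T_{w_{k+1}^{-1}}^{-1} = T_{r_j}^{-1}T_{w_k^{-1}}^{-1}$ for the appropriate $j$, so that $T_{w_{k+1}^{-1}}^{-1}T_{w_{k+1}}^{-1} = T_{r_j}^{-1}(T_{w_k^{-1}}^{-1}T_{w_k}^{-1})T_{r_j}^{-1}$, and analyze how the degree-$(2k-1)$ part is transformed by left and right multiplication by $T_{r_j}^{-1} = q^{-1}T_{r_j} - (1-q^{-1})$, invoking $q\neq 1$ to guarantee the relevant coefficient is nonzero and that no unexpected collapse occurs.

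The main obstacle — and the step I'd spend the most care on — is controlling the top-degree component through the conjugation $x\mapsto T_{r_j}^{-1}\,x\,T_{r_j}^{-1}$. Naively $\ell$ could jump from $2k-1$ to $2k+1$, and one must show it jumps to exactly $2(k+1)-1 = 2k+1$; so in fact the degree does go up by $2$ each time, and the real content is that the leading term of $T_{w_k^{-1}}^{-1}T_{w_k}^{-1}$ is of the form $c_k\,T_{u_k}$ with $u_k$ a specific element of length $2k-1$ (an alternating word in $s,t$) and $c_k \neq 0$, such that left-multiplying by $r_j$ lengthens $u_k$ by $1$ on one side but the matching right-multiplication by $r_j$ would shorten it — OR lengthens it — and the arithmetic of which case occurs forces length $2k+1$. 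Here the hypothesis that $r_{i_1}r_{i_2}$ has infinite order is essential: it guarantees that all the alternating words $w_k$ are reduced and that the $u_k$ never satisfy a braid relation that would cause collapse; and $q\neq 1$ ensures the coefficients $(1-q^{-1})$ appearing in $T_{r_j}^{-1}$ don't vanish, so the cross terms that determine the degree-$(2k+1)$ part are genuinely present. I would set up a precise inductive hypothesis naming $u_k$ explicitly (e.g. $u_k = $ the alternating word of length $2k-1$ starting and ending with $r_{i_2}$, say) and its nonzero leading coefficient, then verify the one-step computation using (BL2) and \eqref{e_inverse_T_i} directly; this reduces everything to a finite, bounded-support computation in the Hecke algebra of the infinite dihedral group $\langle r_{i_1}, r_{i_2}\rangle$, where all products of $T$'s are governed by the classical rank-two formulas.
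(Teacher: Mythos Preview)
Your approach is correct and in fact rests on the same observation as the paper's proof, but you take a more circuitous route. Your recursion $T_{w_{k+1}^{-1}}^{-1}T_{w_{k+1}}^{-1}=T_{r_{j'}}^{-1}\bigl(T_{w_k^{-1}}^{-1}T_{w_k}^{-1}\bigr)T_{r_{j'}}^{-1}$, if unwound all the way down to $k=1$, gives exactly the palindromic expression
\[
T_{w_k^{-1}}^{-1}T_{w_k}^{-1}=T_{j}^{-1}\cdots T_{i_2}^{-1}\,T_{i_1}^{-2}\,T_{i_2}^{-1}\cdots T_{j}^{-1},
\]
which is what the paper writes down directly in one line. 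From here the paper simply observes that $T_{i_1}^{-2}=aT_{i_1}+b$ with $a\neq 0$ (this is where $q\neq 1$ is used), so the leading term of the whole product is $q^{-2(k-1)}a\,T_{r_j\cdots r_{i_2}r_{i_1}r_{i_2}\cdots r_j}$, which has length $2k-1$ because alternating words in $r_{i_1},r_{i_2}$ are reduced. This one-shot reading replaces your induction entirely and resolves the uncertainty you flagged (``shorten it --- OR lengthens it''): once the product is written in this palindromic form with $2(k-1)$ outer factors $T_i^{-1}$ and a single middle factor of degree~$1$, it is immediate that each outer $T_i^{-1}$ contributes its leading part $q^{-1}T_i$ and the resulting word $r_j\cdots r_{i_2}r_{i_1}r_{i_2}\cdots r_j$ is reduced. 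Your guess that $u_k$ starts and ends with $r_{i_2}$ is not quite right (it alternates with $k$), but this is irrelevant once you adopt the direct computation.
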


\begin{proof}
Let $k\in {\Z_{\geq 0}}$ and write $w_k=r_j\ldots r_{i_2}r_{i_1}$, with $j\in \{i_1,i_2\}$. Then \[\Tb_{w_k^{-1}}^{-1} \Tb_{w_k}^{-1}=\Tb_{j}^{-1}\ldots \Tb_{i_2}^{-1} \Tb_{i_1}^{-2} \Tb_{i_2}^{-1}\ldots \Tb_j^{-1}.\] We have $\Tb_{i}^{-2},\Tb_i^{-1}\in (\cR \Tb_{i}\oplus \cR)\setminus \cR$ for all $i\in {I_A}$. For $i\in {I_A}$, we have $\Tb_i^{-1}=q^{-1}\Tb_i+q^{-1}-1$ and $\Tb_i^{-2}=a\Tb_i+b$, with $a\in \cR\setminus \{0\}$. Then we have $\Tb_{j}^{-1}\ldots \Tb_{i_2}^{-1} \Tb_{i_1}^{-2} \Tb_{i_2}^{-1}\ldots \Tb_j^{-1}=q^{-2(k-1)}a\Tb_{r_j\ldots r_{i_2} r_{i_1}r_{i_2}\ldots r_j}+h$, for some $h\in \HC_{W_0}$ such that $\ell(h)\leq 2(k-1)$, which proves the lemma.
\end{proof}

Assume for example that the Kac-Moody matrix defining $\cS$ has size $2$ and that $W_0$ is infinite. Let  $\lambda\in C^v_f\cap Y$. Then \begin{equation}\label{e_T_series_not_in_tilde_H}
    \sum_{w\in W_0} T_{w(\lambda)}\in \HBL\setminus \widetilde{\cH}.
\end{equation} Indeed, by Proposition~\ref{P_BL_support_IM_basis}, we have $\coeff^Z_{w(\lambda)}(T_{w(\lambda)})=q^{-\ell(w)}\delta^{-1/2}(\lambda)T_{w}T_{w^{-1}}$, for $w\in W_0$. By Lemma~\ref{lemNecessity_infinite_length_IM_BL}, we deduce the claim. 

\subsection{Swapping left and right does not work}\label{ss_definitionr_right}

We defined the completed Iwahori-Hecke algebra as the set of series $\sum_{\lambda\in Y^+}h_\lambda Z^\lambda$, with $(h_\lambda)\in (\HCW)^{Y^+}_{\WAF}$. We saw that this is exactly the set of series $\sum_{\lambda\in Y^+} h_\lambda T_\lambda$, with $(h_\lambda)\in (\HCW)^{Y^+}_{\WAF}$. 

It seems also natural to ask whether we could also consider series of the form $\sum_{\lambda\in Y^+} \Zb^\lambda  h_\lambda$ or $\sum_{\lambda\in Y^+} T_\lambda h_\lambda$, for $(h_\lambda)\in (\HCW)^{Y^+}_{\WAF}$. However, as we shall see below, if we did so, then we could not pass from $Z$-series to $T$-series, by Proposition~\ref{p_non-zero_coefficient}. 

For $\nu\in Y^+$, define $\coeff_\nu^{T,\mathrm{right}}:\HC\rightarrow \cR$ by  $\coeff_\nu^{T,\mathrm{right}}(\sum_{\lambda\in Y^+} T_\lambda h_\lambda)=h_\nu$, for $(h_\lambda)\in (\HCW)^{(Y^+)}$, which is well defined by Remark~\ref{r_invertibility_Z_T}. 

\begin{Lemma}\label{l_interval_Bruhat}
    Let $w\in W_0$ and $i\in {I_A}$ be such that $r_iw>w$. Then $[1,r_iw]=[1,w]\cup r_i [1,w]$. Moreover for $u\in [1,r_iw]$, we have: \[(r_iu<u\Rightarrow r_iu\leq w), (r_iu>u\Rightarrow u\leq w). \]
\end{Lemma}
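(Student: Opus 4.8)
This is a standard fact about the weak/Bruhat order on a Coxeter group, specialized to the situation where we add a left descent. The claim has two parts: first the set-theoretic decomposition $[1,r_iw]=[1,w]\cup r_i[1,w]$ (this is a union, not necessarily disjoint), and second the implications describing, for $u\le r_iw$, how $u$ and $r_iu$ relate to $w$. I would prove the second part first and then deduce the first, or equivalently prove both in parallel by analyzing the two cases $r_iu<u$ and $r_iu>u$.

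\textbf{Step 1: the lifting property.} The key tool is the standard lifting property of the Bruhat order (see e.g. \cite[Proposition 2.2.7]{bjorner2005combinatorics}): if $s$ is a simple reflection with $sv>v$ and $su<u$, then $u\le v \iff su\le v \iff u\le sv$, and moreover $su\le v$ and $u\le sv$ always. I would apply this with $s=r_i$ and $v=w$ (recall $r_iw>w$ is our hypothesis). So: if $u\in[1,r_iw]$ and $r_iu<u$, the lifting property immediately gives $r_iu\le w$ (and $u\le r_iw$, which we already knew). If instead $u\in[1,r_iw]$ and $r_iu>u$, then apply the lifting property to $r_iu$ in place of $u$: now $r_i(r_iu)=u<r_iu$, so the hypothesis $r_iu\le r_iw$ together with lifting gives $u\le w$. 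This establishes both implications in the ``Moreover'' clause.

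\textbf{Step 2: the decomposition.} For ``$\supseteq$'': if $u\le w$ then trivially $u\le w\le r_iw$, so $[1,w]\subseteq[1,r_iw]$; and $r_iu\le r_iw$ follows again from the lifting property (since $r_i(r_iu)=u<r_iu$ when... hmm, need care) — more directly, $u\le w<r_iw$ and $r_i\cdot$ multiplication: by the subword characterization, a reduced word for $r_iw$ starting with $r_i$ contains a reduced word for $w$ as the tail, so any subword of that tail, prefixed or not by $r_i$, is a subword of $r_iw$; hence $r_i[1,w]\subseteq[1,r_iw]$. For ``$\subseteq$'': let $u\le r_iw$. If $r_iu<u$, Step 1 gives $r_iu\le w$, so $u=r_i(r_iu)\in r_i[1,w]$. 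If $r_iu>u$, Step 1 gives $u\le w$, so $u\in[1,w]$. In either case $u\in[1,w]\cup r_i[1,w]$, completing the proof.

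\textbf{Main obstacle.} There is essentially no obstacle here — this is a textbook lemma, and the only thing to be careful about is invoking the lifting property with the correct orientation (which of $u$, $r_iu$ is the larger, and whether we are comparing against $w$ or $r_iw$). The cleanest write-up just cites \cite[Proposition 2.2.7]{bjorner2005combinatorics} (the lifting property) and possibly the subword property \cite[Theorem 2.2.2]{bjorner2005combinatorics} for the $\supseteq$ direction of the decomposition, then does the four-line case analysis above. I would keep the proof to a short paragraph rather than spelling out every subword manipulation.
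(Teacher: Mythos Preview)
Your approach matches the paper's: both use the lifting property \cite[Proposition 2.2.7]{bjorner2005combinatorics} and the case split on whether $r_iu<u$ or $r_iu>u$, and the paper likewise cites 2.2.7 for the $\supseteq$ direction rather than subwords. Two minor imprecisions to fix: your stated three-way equivalence ``$u\le v \iff su\le v \iff u\le sv$'' is not correct as written (the implication $u\le sv\Rightarrow u\le v$ fails --- take $v=1$, $u=s$), though you only actually invoke the valid direction $u\le sv\Rightarrow su\le v$; and in the case $r_iu>u$ you treat $r_iu\le r_iw$ as a hypothesis when it is in fact a conclusion of lifting --- the same application of lifting (with $s\notin D_L(u)$, $s\in D_L(r_iw)$) already gives $u\le w$ directly, so your detour through $r_iu$ is unnecessary.
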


\begin{proof}
    Let $u\in [1,w]$ and $u'=r_iu$. If $u'<u$, then $u'<u<w<r_iw$. If $u'>u$, then by \cite[Proposition 2.2.7]{bjorner2005combinatorics} (applied with $(u,r_iw)$) we have $r_iu\leq r_iw$ and thus $u'\in [1,r_iw]$. Consequently $[1,w]\cup r_i[1,w]\subset [1,r_iw]$. 
    
    Conversely, take $u\in [1,r_iw]$. Set $w'=r_iw$ and $u'=r_iu$. First assume $u'<u$. Then by \cite[Proposition 2.2.7]{bjorner2005combinatorics} we have $u'\leq r_i w'=w$ and thus $u=r_iu'\in r_i [1,w]$. Assume now $u'>u$. Then by \cite[Proposition 2.2.7]{bjorner2005combinatorics}, we have $u\leq r_iw'=w$ and thus $u\in [1,w]$. Therefore $[1,w]\cup r_i[1,w]=[1,r_iw]$. 
\end{proof}

\begin{Lemma}\label{l_computation_inverse}
Let $x$ be an indeterminate. For $u,w\in W_0$ such that $u\leq w$, define $a_{u,w}(x)\in \Z[x]$ as follows. Set $a_{1,1}=1$ and for $w\in W_0\setminus \{1\}$, $u\in [1,w]$ and $i\in {I_A}$ such that $v<w$, where $v=r_iw$.  Set: \[a_{u,w}(x)=\begin{cases}a_{r_iu,v}(x) &\text{ if }r_iu<u \\ xa_{r_iu,v}(x)-(x-1)a_{u,v}(x) \ &\text{ if }r_i u>u   
    \end{cases}.\] Then   we have  $q^{\ell(w)}T_{w}^{-1}=\sum_{u\in [1,w]}a_{u,w}(q)T_u$ and $\deg a_{u,w}=\ell(w)-\ell(u)$ for all $u,w\in W_0$ such that $u\leq w$. In particular when $u\leq w$, we have $a_{u,w}\neq 0$. 
\end{Lemma}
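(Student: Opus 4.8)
The plan is to prove the displayed identity $q^{\ell(w)}T_w^{-1}=\sum_{u}a_{u,w}(q)T_u$ together with the degree formula $\deg a_{u,w}=\ell(w)-\ell(u)$ by a single induction on $\ell(w)$; as a byproduct this shows that the recursion defining the $a_{u,w}$ is consistent, i.e.\ independent of the auxiliary choice of $i$, since $q^{\ell(w)}T_w^{-1}$ makes no reference to $i$. Throughout one reads $a_{u',v}:=0$ whenever $u'\not\le v$, which is what makes the terms appearing in the recursion meaningful. The base case $w=1$ is immediate: $T_1^{-1}=T_1=1=a_{1,1}(q)T_1$ and $\deg a_{1,1}=0$.

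For the inductive step fix $w\neq 1$ and $i\in I_A$ with $v:=r_iw<w$, so $\ell(v)=\ell(w)-1$ and, by (BL2), $T_w=T_iT_v$, hence $T_w^{-1}=T_v^{-1}T_i^{-1}$. I would then write
\[
q^{\ell(w)}T_w^{-1}=\bigl(q^{\ell(v)}T_v^{-1}\bigr)\bigl(qT_i^{-1}\bigr)=\Bigl(\sum_{u'\in[1,v]}a_{u',v}(q)\,T_{u'}\Bigr)\bigl(T_i+(1-q)\bigr),
\]
using the inductive hypothesis for $v$ and \eqref{e_inverse_T_i} in the form $qT_i^{-1}=T_i+(1-q)$. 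Now expand each product $T_{u'}T_i$ by the quadratic relation (BL2): it equals the single term $T_{u'r_i}$ when $\ell(u'r_i)=\ell(u')+1$, and $(q-1)T_{u'}+qT_{u'r_i}$ when $\ell(u'r_i)=\ell(u')-1$. Re-collecting the coefficient of each basis element $T_u$ (and using Lemma~\ref{l_interval_Bruhat}, applied with $v$ in the role of its ``$w$'' and $w$ in the role of its ``$r_iw$'', to see exactly which $u'\in[1,v]$ can contribute to a given $u$ and to split into the two descent cases) yields the recursion of the statement. The mechanism is a cancellation: when $u$ admits the relevant descent, the contribution $(q-1)a_{u,v}(q)$ coming from the $(q-1)T_{u'}$ term of $T_uT_i$ is exactly killed by the contribution $(1-q)a_{u,v}(q)$ coming from the $(1-q)T_{u'}$ summand, so only $a_{r_iu,v}(q)$ (resp.\ $q\,a_{r_iu,v}(q)-(q-1)a_{u,v}(q)$) survives.

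Finally the degree formula drops out of the recursion, again inductively. In the case $r_iu<u$ one has $a_{u,w}=a_{r_iu,v}$ and $\ell(v)-\ell(r_iu)=(\ell(w)-1)-(\ell(u)-1)=\ell(w)-\ell(u)$, so $\deg a_{u,w}=\ell(w)-\ell(u)$ with nonzero leading coefficient by induction (here $r_iu\le v$ by Lemma~\ref{l_interval_Bruhat}). In the case $r_iu>u$ one has $a_{u,w}=x\,a_{r_iu,v}-(x-1)\,a_{u,v}$, where $u\le v$ by Lemma~\ref{l_interval_Bruhat} so $a_{u,v}\neq 0$; by the inductive degree bounds $\deg\bigl(x\,a_{r_iu,v}\bigr)\le\ell(w)-\ell(u)-1$ while $\deg\bigl((x-1)\,a_{u,v}\bigr)=\ell(w)-\ell(u)$, so the two terms have distinct degrees, no cancellation of leading terms can occur, and $\deg a_{u,w}=\ell(w)-\ell(u)$ with leading coefficient $-(\text{leading coeff of }a_{u,v})\neq 0$; in particular $a_{u,w}\neq 0$ whenever $u\le w$. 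The main obstacle is exactly the combinatorial bookkeeping in the inductive step: one must track, via Lemma~\ref{l_interval_Bruhat} and the quadratic relation, precisely which products $T_{u'}T_i$ feed into the coefficient of a given $T_u$ and verify that every intermediate index stays inside the correct Bruhat interval; once this is done carefully, the remaining manipulations in $\HCW$ are routine.
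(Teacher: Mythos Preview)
There is a left/right mismatch in your inductive step. From $T_w=T_iT_v$ you correctly deduce $T_w^{-1}=T_v^{-1}T_i^{-1}$, so the factor $qT_i^{-1}$ sits on the \emph{right} of the inductive sum. Expanding $T_{u'}T_i$ via (BL2) then produces terms indexed by $u'r_i$, and collecting the coefficient of $T_u$ gives $a_{ur_i,v}(q)$ when $ur_i<u$ and $q\,a_{ur_i,v}(q)-(q-1)a_{u,v}(q)$ when $ur_i>u$. This is a recursion in $u\mapsto ur_i$, not the recursion in $u\mapsto r_iu$ stated in the lemma, and the two are genuinely different: already for $w=r_1r_2$ in a rank-two group with $m_{12}\ge 3$, your computation yields a $T_{r_2r_1}$ term, which lies outside $[1,r_1r_2]$. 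For the same reason Lemma~\ref{l_interval_Bruhat}, which describes $[1,r_iv]=[1,v]\cup r_i[1,v]$ via \emph{left} multiplication, does not control the set $\{u'r_i:u'\in[1,v]\}$ that your expansion actually produces; invoking it as you do does not sort the contributions correctly.

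The paper's argument instead places $qT_i^{-1}$ on the \emph{left}, writing
\[
q^{\ell(w)}T_w^{-1}=qT_i^{-1}\cdot q^{\ell(v)}T_v^{-1}=(T_i-(q-1))\sum_{u'\in[1,v]}a_{u',v}(q)\,T_{u'}.
\]
Expanding $T_iT_{u'}$ by (BL2) now gives terms in $r_iu'$, and Lemma~\ref{l_interval_Bruhat} applies verbatim to split $[1,w]=[1,v]\cup r_i[1,v]$ into the two descent cases $r_iu<u$ and $r_iu>u$, yielding exactly the stated recursion. Your separate degree argument for $a_{u,w}$ is correct and independent of this issue, but the identification with the expansion of $q^{\ell(w)}T_w^{-1}$ requires the left-multiplication set-up used in the paper.
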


\begin{proof}
 If $w=1$, the result is clear. Let $w\in W_0\setminus\{1\}$ and take $i\in {I_A}$ such that $v<w$, where $v=r_iw$.  Assume  that $q^{\ell(v)}T_v^{-1}=\sum_{u\in [1,v]}a_{u,v}(q)T_u$. We have \[q^{\ell(w)}T_w^{-1}=qT_i^{-1} q^{\ell(v)}T_v^{-1}=(T_i-(q-1))q^{\ell(v)}T_v^{-1}.\]

Let $E=\{u\in [1,v]\mid r_iu<u\}$ and $F=[1,v]\setminus E$. Write $q^{\ell(w)}T_w^{-1}=\sum_{u\in [1,w]}b_{u} T_u$.
    We have \begin{align*}
        q T_i^{-1} \sum_{u\leq v}a_{u,v}(q)T_u &= qT_i^{-1}\sum_{u\in E} a_{u,v}(q)T_u+qT_i^{-1} \sum_{u\in F}a_{u,v}(q) T_u\\
        &=qT_i^{-1} \sum_{u\in E} a_{u,v}(q) T_i T_{r_iu}+(T_i-(q-1))\sum_{u\in F}a_{u,v}(q)T_u\\
        &=\sum_{u\in E}qa_{u,v}(q) T_{r_iu}+\sum_{u\in F}a_{u,v}(q)T_{r_iu}-\sum_{u\in F}(q-1)a_{u,v}(q)T_u.
    \end{align*}

    Let $u\in [1,w]$. Then by Lemma~\ref{l_interval_Bruhat}, we have \[(u\in r_i(E)\Leftrightarrow r_iu>u), (u\in r_i(F)\Leftrightarrow r_iu<u), (u\in F \Leftrightarrow r_iu>u).\] Therefore 

  \[b_u=\begin{cases}a_{r_iu,v}(q) &\text{ if }r_iu<u \\ qa_{r_iu,v}(q)-(q-1)a_{u,v}(q) \ &\text{ if }r_i u>u   
    \end{cases}\] and consequently $b_u=a_{u,w}(q)$ for $u\in [1,w]$. By induction we deduce the lemma.
\end{proof}

Let $h\in \cH$. According to Proposition~\ref{p_HCW_bimodule} we can write $h=\sum_{\lambda\in Y^+} T_\lambda h_\lambda$. For $\lambda\in Y^+$, we set $\coeff^{T,\mathrm{right}}_{\lambda}(h)=h_\lambda$.

\begin{Proposition}\label{p_non-zero_coefficient}
We assume that $q\in \cR^\times$ is  such that for every $u,w\in W_0$ such that $u\leq w$, we have $a_{u,w}(q)\neq 0$, with the notation of Lemma~\ref{l_computation_inverse} (when $\cR$ is not countable, such a $q$ always exist).    Let $\lambda\in Y\cap C^v_f$.   Then for all $w\in W_0$, $\coeff_\lambda^{T,\mathrm{right}}(\Zb^{w(\lambda)})$ is nonzero. 
\end{Proposition}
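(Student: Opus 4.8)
\textbf{Proof plan for Proposition~\ref{p_non-zero_coefficient}.}

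The plan is to express $\Zb^{w(\lambda)}$ explicitly in terms of the elements $T_\mu h$ (with $\mu\in Y^+$, $h\in\HCW$), extract the coefficient in front of $T_\lambda$, and show it is a nonzero multiple of some $a_{u,v}(q)$, which is nonzero by hypothesis. First I would invoke Lemma~\ref{l_pi_lambda_Z_w_lambda}: since $\lambda\in Y\cap C^v_f$ is regular, $W_{0,\lambda}=\{1\}$, so every $w\in W_0$ lies in $W_0^{\lambda}$ and we have $\pi_\lambda(\Zb^{w(\lambda)})=\pi_\lambda(q^{\ell(w)}\delta^{-1/2}(\lambda)\,T_{w^{-1}}^{-1}T_w^{-1}\,T_{w(\lambda)})$, where $\pi_\lambda:\cH_{\leq^{++}\lambda}\twoheadrightarrow\cH_{\leq^{++}\lambda}/\cH_{<^{++}\lambda}$ is the projection of Subsection~\ref{ss_Z_to_T_to_Z}. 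The key point is that $\coeff^{T,\mathrm{right}}_\lambda$ only sees the top stratum: by Proposition~\ref{P_BL_support_IM_basis}, $\supp^T(\Zb^{w(\lambda)})\subset\,]-\infty,w(\lambda)]_{\leq^{++}}=\,]-\infty,\lambda]_{\leq^{++}}$, and the only $\mu$ in this set with $\mu^{++}=\lambda$ that can contribute to the $T_\lambda$ summand (in the decomposition $\Zb^{w(\lambda)}=\sum_\mu T_\mu h_\mu$, using $T_\mu\HCW=\bigoplus_v\cR T_{\mu\cdot v}$ from Proposition~\ref{p_HCW_bimodule}) is $\mu=\lambda$ itself. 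Hence $\coeff^{T,\mathrm{right}}_\lambda(\Zb^{w(\lambda)})=\coeff^{T,\mathrm{right}}_\lambda\big(\pr^T_\lambda(\Zb^{w(\lambda)})\big)$, and $\pr^T_\lambda$ factors through $\pi_\lambda$.

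Next I would compute the right-hand side of the Lemma~\ref{l_pi_lambda_Z_w_lambda} identity modulo $\cH_{<^{++}\lambda}$. Using $T_{w(\lambda)}=T_w T_\lambda T_w^{-1}$ (from \cite[Corollary 4.3]{bardy2016iwahori}, noting $w$ is the minimal representative $w_{w(\lambda)}$ here) and $T_\lambda\in\cR^\times Z^\lambda$ is not quite right—rather, since $\lambda\in C^v_f$ we have $\lambda=\lambda^{++}$, so $T_\lambda\in\cR^\times Z^\lambda$ by Theorem~\ref{t_BL_relations}, giving $\delta^{1/2}(\lambda)^{-1}T_\lambda=Z^\lambda$. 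So the expression becomes $q^{\ell(w)}\delta^{-1/2}(\lambda)T_{w^{-1}}^{-1}T_w^{-1}T_w T_\lambda T_w^{-1}=q^{\ell(w)}\delta^{-1/2}(\lambda)T_{w^{-1}}^{-1}T_\lambda T_w^{-1}$. Now $T_\lambda$ is central-ish only up to the Bernstein–Lusztig commutation, but modulo $\cH_{<^{++}\lambda}$ we can move it: by Lemma~\ref{lemCommutation_relation_Zlambda_Hw} (or directly from \eqref{eqSupport_Zlambda_Twinverse} and Lemma~\ref{lemGrowth_dominant_height_segments}), $Z^\lambda T_w^{-1}\equiv T_w^{-1}Z^{w(\lambda)}\pmod{\cH_{<^{++}\lambda}}$ is false in general; instead the cleanest route is to push everything to the right: write $\pr^T_\lambda(\Zb^{w(\lambda)})=h\,T_\lambda$ with $h\in\HCW$ and determine $h$. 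From Proposition~\ref{P_BL_support_IM_basis}(2) applied to $\lambda'=w(\lambda)$ we already know $\pr^T_{w(\lambda)}(Z^{w(\lambda)})=\big(q^{\ell(w)}\delta^{-1/2}(\lambda)T_{w^{-1}}^{-1}T_w^{-1}\big)T_{w(\lambda)}$; but I need the $T_\lambda$-component, not the $T_{w(\lambda)}$-component. So I would instead write $\Zb^{w(\lambda)}=\sum_{\mu\in Y^+}T_\mu h_\mu$ and isolate $\mu=\lambda$: by the support bound $\supp^T(Z^{w(\lambda)})\subset\conv_{Q^\vee}(W_0\cdot\lambda)$ and the fact that the $T$-basis decomposition in Proposition~\ref{p_HCW_bimodule} has $T_\lambda\HCW=\bigoplus_v\cR T_{\lambda\cdot v}$, so $\coeff^{T,\mathrm{right}}_\lambda(Z^{w(\lambda)})=\sum_{v\in W_0}c_{\lambda\cdot v}T_v$ where $c_{\lambda\cdot v}=\coeff^T_{\lambda\cdot v}(Z^{w(\lambda)})$.

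The cleanest concrete argument: reduce to the rank-one-in-spirit computation. Since $\lambda$ is regular and $w(\lambda)=w(\lambda)$ with $w$ minimal, choose a reduced word $w=r_{i_1}\cdots r_{i_k}$. Then $Z^{w(\lambda)}=T_w Z^\lambda T_w^{-1}\cdot(\text{correction})$—more precisely $T_{w(\lambda)}=T_wT_\lambda T_w^{-1}$, so $Z^{w(\lambda)}=\delta^{-1/2}(\lambda)T_wT_\lambda T_w^{-1}$. Expanding $T_w^{-1}=q^{-\ell(w)}\sum_{u\le w}a_{u,w}(q)T_u$ by Lemma~\ref{l_computation_inverse}, and then repeatedly applying the Bernstein–Lusztig relation (BL4)/\eqref{e_Z_lambda_Tinverse} to commute $Z^\lambda$ (equivalently $T_\lambda$ up to scalar) past the $T_u$'s, one sees that the unique term producing $T_\lambda$ on the right (with no residual $Z$-shift, i.e. landing in the $\mu=\lambda$ block) comes from the ``straight-through'' term: $T_w T_\lambda T_{w^{-1}}^{-1}$ contributing $q^{\ell(w)}\delta^{-1/2}(\lambda)T_{w^{-1}}^{-1}\cdots$—here I expect the coefficient of $T_\lambda$ on the right to be $q^{-\ell(w)}\delta^{-1/2}(\lambda)\,a_{1,w}(q)\cdot(\text{top part of }T_w)$, and in any case a nonzero $\cR$-linear combination whose leading piece involves $a_{u,w}(q)$ for the maximal $u$, which is $a_{w,w}(q)=1$ or $a_{1,w}(q)$ depending on orientation. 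The main obstacle is precisely bookkeeping which $a_{u,w}(q)$ survives in $\coeff^{T,\mathrm{right}}_\lambda$ after all the commutations and confirming no cancellation occurs — this requires tracking the Bernstein–Lusztig relations carefully, but since all the $a_{u,w}(q)$ are nonzero by hypothesis and the height-strictly-decreasing behavior of Lemma~\ref{lemGrowth_dominant_height_segments} prevents distinct terms in the top stratum from colliding, the surviving coefficient is a single nonzero scalar multiple of some $a_{u,w}(q)$, hence nonzero. I would conclude that $\coeff^{T,\mathrm{right}}_\lambda(Z^{w(\lambda)})\ne 0$ for all $w\in W_0$.
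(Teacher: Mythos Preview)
Your overall strategy is right and matches the paper's: invoke Lemma~\ref{l_pi_lambda_Z_w_lambda}, expand a $T^{-1}$ factor via Lemma~\ref{l_computation_inverse}, and isolate the $T_\lambda$-block. But the execution has a genuine gap.

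First, the identity $Z^{w(\lambda)}=\delta^{-1/2}(\lambda)T_wT_\lambda T_w^{-1}$ that you write down in your ``cleanest concrete argument'' is false for $w\neq 1$: the relation $T_\mu=\delta^{1/2}(\mu)Z^\mu$ holds only for dominant $\mu$, and $w(\lambda)$ is not dominant. What Lemma~\ref{l_pi_lambda_Z_w_lambda} actually gives (after substituting $T_{w(\lambda)}=T_wT_\lambda T_w^{-1}$ and simplifying $T_w^{-1}T_w$) is
\[
Z^{w(\lambda)}=q^{\ell(w)}\delta^{-1/2}(\lambda)\,T_{w^{-1}}^{-1}\,T_\lambda\,T_w^{-1}+h_w,\qquad h_w\in\cH_{<^{++}\lambda},
\]
and the factor $q^{\ell(w)}T_{w^{-1}}^{-1}$ is \emph{not} $T_w$.

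Second, you expand the wrong inverse and then plan to commute via Bernstein--Lusztig. The clean move is to expand the \emph{left} factor using Lemma~\ref{l_computation_inverse},
\[
q^{\ell(w)}T_{w^{-1}}^{-1}=\sum_{u\le w^{-1}}a_{u,w^{-1}}(q)\,T_u,
\]
and then use the Iwahori--Matsumoto relations \eqref{e_IM_relations_l} (not the Bernstein--Lusztig relations): since $\lambda\in C^v_f$, every $\alpha_i(\lambda)>0$, so inductively $T_uT_\lambda=T_{u(\lambda)\cdot u}$ \emph{exactly}, with no lower-order corrections. This gives
\[
Z^{w(\lambda)}=\delta^{-1/2}(\lambda)\sum_{u\le w^{-1}}a_{u,w^{-1}}(q)\,T_{u(\lambda)\cdot u}\,T_w^{-1}+h_w.
\]
By Proposition~\ref{p_HCW_bimodule}, $T_{u(\lambda)\cdot u}T_w^{-1}\in T_{u(\lambda)}\HCW$, and since $\lambda$ is regular, $u(\lambda)=\lambda$ forces $u=1$. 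Also $\coeff^{T,\mathrm{right}}_\lambda(h_w)=0$ because $]-\infty,\lambda[_{\leq^{++}}$ is $W_0$-stable, so $\cH_{<^{++}\lambda}$ is a two-sided $\HCW$-module (Proposition~\ref{p_HCW_bimodule}(3)). Hence $\coeff^{T,\mathrm{right}}_\lambda(Z^{w(\lambda)})=\delta^{-1/2}(\lambda)\,a_{1,w^{-1}}(q)\,T_w^{-1}\neq 0$ by hypothesis. Your vague appeal to ``no cancellation because of height-decreasing behavior'' is unnecessary once you compute $T_uT_\lambda$ exactly; the single surviving term is identified on the nose.
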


\begin{proof}
By Lemma~\ref{l_pi_lambda_Z_w_lambda},  there exists $h_w\in \HC_{<^{++}\lambda}$ such that
 \[\Zb^{w(\lambda)}=q^{\ell(w)}\Tb_{w^{-1}}^{-1} \Zb^\lambda \Tb_w^{-1}+h_w=q^{\ell(w)}\delta^{-1/2}(\lambda)\Tb_{w^{-1}}^{-1} T_\lambda  \Tb_w^{-1}+h_w.\]
 
 Using \cite[Proposition 4.1]{bardy2016iwahori} and Lemma~\ref{l_computation_inverse}, we deduce that \[Z^{w(\lambda)}=q^{\ell(w)} \delta^{-1/2}(\lambda) \sum_{u\in [1,w^{-1}]}a_{u,w^{-1}}T_{u(\lambda)\cdot u} T_{w}^{-1}+h_w.\] By Proposition~\ref{p_HCW_bimodule}), we have $\coeff^{T,\mathrm{right}}_{\lambda}(h_w)=0$.  By Proposition~\ref{p_HCW_bimodule} and by assumption on $q$, we have $\coeff^{T,\mathrm{right}}_{\lambda}(Z^{w(\lambda)})=q^{\ell(w)}\delta^{-1/2}(\lambda)a_{1,w^{-1}}(q)T_{w}^{-1}\neq 0$, which proves the lemma.
\end{proof}

\begin{appendix}

  \newcommand{\Hbkp}{\cH_{\mathrm{BKP}}}
  \newcommand{\Hbgr}{\cH_{\mathrm{BGR}}}
  
\section{Comparing Braverman-Kazhdan-Patnaik conventions with Bardy-Panse-Gaussent-Rousseau conventions}
\label{sec:appendix}

The goal of this appendix is to write down an explicit anti-isomorphism between the Iwahori-Hecke algebras considered by Braverman-Kazhdan-Patnaik and Bardy-Panse-Gaussent-Rousseau that is compatible with $T$-basis (\S \ref{sec:matching-BKP-and-BGR-bases}).

As in the main body of the paper, let $\mathbf{G}$ be a split Kac-Moody group, $G=\mathbf{G}(\cK)$, where $\cK$ is a non-Archimedean local field, let $I$ denote the Iwahori subgroup, and let $K$ denote the spherical subgroup. Let $\bT$ denote the maximal torus of $\mathbf{G}$, and let $T = \bT(\cK)$.

Let $\pi$ denote a uniformizer of the ring of integers $\cO$ of $\cK$. We identify $Y$ with the cocharacter lattice of $\bT$. Given a cocharacter $\lambda \in Y$, consider the induced homomorphism $\cK^\times \rightarrow T$. We write $\pi^\lambda$ for the image of $\pi$ under this homomorphism.

We define the positive Cartan semigroup $G^{+}$ and the negative Cartan semigroup $G^-$ by
\begin{align*}
  G^+ = \bigcup_{\lambda \in Y^{++}}  K \pi^\lambda K \\
  G^- = \bigcup_{\lambda \in Y^{++}}  K \pi^{-\lambda} K
\end{align*}
Let $\cR$ be the base ring as in the body of the paper, and consider the two Iwahori-Hecke algebras
\begin{align}
  \label{eq:g-plus-hecke-alg}
  \cH^+ = \cR_{\mathrm{fin}}[I \backslash G^+ \!\!/I] 
\end{align}
and
\begin{align}
  \cH^- = \cR_{\mathrm{fin}}[I \backslash G^-\!\!/I], 
\end{align}
where if $E$ is a set, $\cR_{\mathrm{fin}}[E]$ denotes the set of functions from $E$ to $\cR$ with finite support. 

The semigroup denoted $G^{\geq 0}$ in the main body of the paper is equal to $G^-$. The reason is the minus sign that appears in the action of  the torus $T$ on the standard apartment $\AA$ in the definition of the masure (see \S \ref{subsec:intro-conventions}). Correspondingly, the Iwahori-Hecke algebra $\cH$ in the main body of the paper is equal to $\cH^-$. 

The inverse anti-involution $\iota : G \rightarrow G$ restricts to mutually inverse anti-isomorphisms $\iota : G^+ \rightarrow G^-$ and $\iota : G^- \rightarrow G^+$. This induces mutually inverse anti-isomorphisms:
\begin{align}
  \iota : \cH^+ \rightarrow \cH^- \\
  \iota : \cH^- \rightarrow \cH^+ 
\end{align}

Let $W_0$ be the (vectorial) Weyl group, and let  $Y^+$ denote the integral Tits cone. Then we know
\begin{align}
I \backslash G^+ \!\!/I = Y^+ \rtimes W_0
\end{align}
and
\begin{align}
I \backslash G^- \!\!/I = Y^- \rtimes W_0,
\end{align}
where $Y^-=-Y^+$.

For any $x = \pi^\mu w \in Y^+ \rtimes W_0 \cup Y^- \rtimes W_0$, write:
\begin{align}
  \TT_{x} = \TT_{\pi^\mu w} = \One_{I \pi^\mu wI}
\end{align}
Then we have:
\begin{align}
  \iota(\TT_{x}) = \TT_{x^{-1}}
\end{align}

\subsection{The BKP and BGR Hecke algebras}

For $ \pi^{\mu} w \in Y^+\rtimes W_0 $, the elements $\TT_{\pi^{\mu} w}$ agree with the basis (called $T_{ \pi^{\mu} w}$ there) considered by Braverman, Kazhdan, and Patnaik \cite{braverman2014affine}. To summarize, we have the following.
\begin{Proposition}
  \label{claim:bkp-basis}
  The Iwahori-Hecke algebra $\Hbkp$ considered by Braverman, Kazhdan, and Patnaik is naturally isomorphic to $\cH^+$.
  For  $ \pi^{\mu} w \in Y^+ \rtimes W_0$, we have:
  \begin{align}
   T_{ \pi^{\mu} w} = \TT_{ \pi^{\mu} w} 
  \end{align}
 where $T_{ \pi^{\mu} w}$ denotes the basis they defined in \cite{braverman2014affine}.
\end{Proposition}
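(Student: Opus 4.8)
The plan is to unwind the definition of $\Hbkp$ from \cite{braverman2014affine} and match it term-by-term with our $\cH^+$. Braverman, Kazhdan, and Patnaik work in the untwisted affine setting and define their Iwahori-Hecke algebra as the space of compactly supported $I$-biinvariant functions on the positive sub-semigroup of $G$, equipped with the convolution whose structure is governed by the counting formula for $|IxI \cap yIz^{-1}I/I|$ --- that is, exactly the formula \eqref{e_product_H_L} used to bypass the Haar measure. Our $\cH^+ = \cR_{\mathrm{fin}}[I\backslash G^+\!\!/I]$ is defined by the same recipe, so the content of the proposition is that the two setups describe the same semigroup and the same basis indexing.

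First I would recall that, under the conventions of \S\ref{subsec:intro-conventions} shared with \cite{braverman2016iwahori} (and with \cite{braverman2014affine}), the torus element $\pi^\lambda$ acts on the standard apartment by $\mu \mapsto \mu + \lambda$, so the positive semigroup appearing in \cite{braverman2014affine} is built from $Y^{++}$ on the same side as our $G^+$; in particular their positive Cartan semigroup coincides with ours. Then I would invoke the Iwahori-Matsumoto decomposition to get the bijection $I\backslash G^+\!\!/I = Y^+\rtimes W_0$ used by both references, so that the indicator functions $\One_{I\pi^\mu wI}$ with $\pi^\mu w \in Y^+\rtimes W_0$ form a common $\cR$-basis. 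By definition $\TT_{\pi^\mu w} = \One_{I\pi^\mu wI}$, and $T_{\pi^\mu w}$ of \cite{braverman2014affine} is by definition the same indicator function, so the two bases agree element-by-element.

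It remains to check that the two algebra structures coincide, i.e. that the structure constants match. Both products are of the form $\TT_{\mathbf v}\TT_{\mathbf w} = \sum_{\mathbf u} a^{\mathbf u}_{\mathbf v,\mathbf w}\TT_{\mathbf u}$ with $a^{\mathbf u}_{\mathbf v,\mathbf w} = |(I n_{\mathbf v} I \cap n_{\mathbf u} I n_{\mathbf w}^{-1} I)/I|$ as in \eqref{e_structure_coefficients}; since $G^+$, $I$, and the chosen representatives $n_{\mathbf w}$ are the same objects in both papers, these cardinalities agree, and hence the identity map on basis elements is an $\cR$-algebra isomorphism $\Hbkp \xrightarrow{\sim} \cH^+$.

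The main obstacle --- really the only nontrivial point --- is matching normalization conventions: one must verify that \cite{braverman2014affine} normalizes so that the basis element attached to $\pi^\mu w$ is the bare indicator $\One_{I\pi^\mu wI}$ (and not, say, a $q$-rescaling, as happens for the Bernstein--Lusztig elements $\Theta^\lambda$), and that their ``positive'' direction is the $\mu \mapsto \mu+\lambda$ one rather than its opposite. Both are settled by direct inspection of the relevant definitions in \cite{braverman2014affine}, after which the comparison is automatic; this is precisely the conventions-tracking this appendix is designed to record.
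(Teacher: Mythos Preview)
Your proposal is correct and matches the paper's approach: the paper's own justification is simply that Braverman, Kazhdan, and Patnaik define their Iwahori-Hecke algebra to be exactly \eqref{eq:g-plus-hecke-alg}, so the proposition holds by definition with the indexing matching exactly. Your write-up is more detailed than necessary, but the content is the same.
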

We will call $\Hbkp$ the BKP Hecke algebra, and call the above indexing the BKP conventions. Note that the proposition follows by definition because Braverman, Kazhdan, and Patnaik define their Iwahori-Hecke algebra to be \eqref{eq:g-plus-hecke-alg}. In particular, the indexing matches exactly.

For $ \pi^{-\mu} w \in Y^- \rtimes W_0$, the elements $\TT_{ \pi^{-\mu} w}$ agree with the basis (called $T_{\mu\cdot w}$ there) considered by Bardy-Panse, Gaussent, and Rousseau \cite{bardy2016iwahori}. To summarize, we have the following.
\begin{Proposition}
  \label{claim:bgr-basis}
  The Iwahori-Hecke algebra $\Hbgr$ considered by Bardy-Panse, Gaussent, and Rousseau is naturally isomorphic to $\cH^-$.
  For  $ \pi^{-\mu} w \in Y^- \rtimes W_0$, we have:
  \begin{align}
   T_{\mu\cdot w} = \TT_{ \pi^{-\mu} w} 
  \end{align}
 where $T_{\mu\cdot w}$ is the basis they defined in \cite{bardy2016iwahori}.
\end{Proposition}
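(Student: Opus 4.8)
The plan is to show that, once the standard-apartment sign convention of \S\ref{subsec:intro-conventions} is unwound, $\Hbgr$ is literally the convolution algebra $\cR_{\mathrm{fin}}[I\backslash G^-\!\!/I]$ and the two bases match term by term. Recall that in \cite{bardy2016iwahori} the Iwahori--Hecke algebra is defined to be the space of $I$-biinvariant, finitely supported $\cR$-valued functions on the semigroup denoted $G^+$ there, equipped with convolution, and that by the Iwahori--Matsumoto decomposition \cite[1.11]{bardy2016iwahori} this semigroup is $\bigsqcup_{\bw\in W^+}In_\bw I$ with $W^+=Y^+\rtimes W_0$; their basis vector $T_{\mu\cdot w}$ is by definition $\One_{In_{\mu\cdot w}I}$. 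So the proposition reduces to two bookkeeping checks: that the semigroup called $G^+$ in \cite{bardy2016iwahori} is the semigroup called $G^-$ here, and that one may take $n_{\mu\cdot w}=\pi^{-\mu}n_w$ for a lift $n_w\in N$ of $w\in W_0$.

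For the first check I would recall, as in \S\ref{ss_KM_groups} and \S\ref{subsec:intro-conventions}, that the masure of \cite{gaussent2008kac, rousseau2016groupes} is built with $T$ acting on $\AA=Y\otimes\R$ so that $\pi^\lambda$ translates by $-\lambda$. Hence the Tits preorder on the masure that defines $G^+$ in \cite[1.5]{bardy2016iwahori} picks out, on the torus part, the elements $\pi^{-\lambda}$ with $\lambda$ dominant rather than the $\pi^{\lambda}$; combined with the decomposition $K\pi^{-\lambda}K=\bigsqcup_{\nu\in W_0\cdot\lambda,\,w\in W_0}I\pi^{-\nu}wI$ (the Kac-Moody analogue of the Cartan decomposition refined by Iwahori double cosets, already used in the introduction), taking the union over $\lambda\in Y^{++}$ gives $G^+_{\mathrm{BGR}}=\bigcup_{\lambda\in Y^{++}}K\pi^{-\lambda}K=G^-$. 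For the second check, under $W^a\simeq N/\bT(\cO)$ the class of $\pi^{-\mu}$ is the translation $t_\mu$, because $\pi^{-\mu}$ acts on $\AA$ by $\nu\mapsto\nu+\mu$; so $n_{\mu\cdot w}$ may be chosen to be $\pi^{-\mu}n_w$, whence $In_{\mu\cdot w}I=I\pi^{-\mu}wI=\TT_{\pi^{-\mu}w}$ in the notation of this appendix. Since convolution is computed by the same cardinality count \eqref{e_structure_coefficients} on both sides, the identity map on functions is then an isomorphism $\Hbgr\overset{\sim}{\rightarrow}\cH^-$ sending $T_{\mu\cdot w}$ to $\TT_{\pi^{-\mu}w}$, which is the assertion.

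The main obstacle is precisely this sign bookkeeping: there are two independent places where a sign could flip --- the direction in which $T$ acts on $\AA$ in the definition of the masure, and the choice of which cone is declared the ``positive'' direction for the Tits preorder in \cite[1.5]{bardy2016iwahori} --- and one has to verify that these combine to a single net sign, so that $G^+_{\mathrm{BGR}}$ is $G^-$ and not $G^+$. This is exactly the point flagged in \S\ref{subsec:intro-conventions}. Once it is settled, everything else is an unwinding of definitions, since both $\Hbgr$ and $\cH^-$ are, verbatim, the finitely supported functions on $I\backslash G^-\!\!/I$ with the convolution product, and the bijection $Y^+\rtimes W_0\to Y^-\rtimes W_0$, $\mu\cdot w\mapsto\pi^{-\mu}w$, matches the two labellings of the $T$-basis.
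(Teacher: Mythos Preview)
Your argument is correct in outline, but it takes a genuinely different route from the paper's.

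You directly unwind the sign convention: you argue that the action $\pi^\lambda\cdot\nu=\nu-\lambda$ forces $n_{\mu\cdot w}$ to be (a $\bT(\cO)$-translate of) $\pi^{-\mu}n_w$, hence $G^+_{\mathrm{BGR}}=G^-$ as subsets of $G$, and the two bases of indicator functions match tautologically. This is exactly the analysis the paper decides \emph{not} to carry out. Right after stating the proposition, the authors write that the BGR matching ``is a more subtle convention to match since Bardy-Panse, Gaussent, and Rousseau use the masure to define the Iwahori-Hecke algebra. However, we will not directly analyze the masure definition.'' Instead, the paper proves the proposition algebraically: it embeds both $\cH^-$ (via $\iota$ applied to the BKP embedding) and $\Hbgr$ (via the BGR Bernstein--Lusztig theorem) into the Garland--Grojnowski algebra $\cH_{GG,Z}$, observes that the images coincide and that the identification matches the generators $T_w$ and $Z^\lambda$ for $\lambda$ dominant, and then uses the Iwahori--Matsumoto relations (which are proved independently in both \cite{muthiah2018iwahori} and \cite{bardy2016iwahori}) to propagate the matching to all $T$-basis elements.

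What each approach buys: your direct argument is shorter and conceptually transparent once one trusts the sign convention, but it requires the reader to accept a careful reading of the masure-based definition of $n_\bw$ and of the Tits preorder in \cite{bardy2016iwahori}---precisely the point you flag as the ``main obstacle.'' The paper's detour through the Bernstein--Lusztig presentation is longer but relies only on algebraic relations that are stated and proved explicitly in both source papers, so it sidesteps any risk of misreading the masure conventions. Your proof is a valid alternative; just be aware that the paper deliberately avoids it.
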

We will call $\Hbgr$ the BGR Hecke algebra, and call the above indexing the BGR conventions. This is a more subtle convention to match since Bardy-Panse, Gaussent, and Rousseau use the masure to define the Iwahori-Hecke algebra. However, we will not directly analyze the masure definition. Instead, we will use the algebraic characterization of the BGR Hecke algebra and its $T$-basis using the Iwahori-Matsumoto relations and the Bernstein-Lusztig presentation to match conventions.

\subsection{Algebraic characterization of the algebras and $T$-basis}

\subsubsection{Iwahori-Matsumoto relations}

To start we will recall the Iwahori-Matsumoto relations in this setting. 

\begin{Proposition}
For $\pi^\mu w \in Y^+ \rtimes W_0 \cup Y^- \rtimes W_0$ and $i \in {I_A}$, we have:
\begin{align}
  \label{eq:iwahori-matsumoto-relations}
 \TT_{\pi^\mu w r_i} =  
  \begin{cases}
    \TT_{\pi^\mu w} \TT_{r_i} &\textif \langle \mu, w(\alpha_i) \rangle > 0  \textor \left[\langle \mu, w(\alpha_i) \rangle = 0 \textand \ell(wr_i) > \ell(w) \right] \\
\TT_{\pi^\mu w} \TT^{-1}_{r_i} &\textif \langle \mu, w(\alpha_i) \rangle < 0  \textor \left[ \langle \mu, w(\alpha_i) \rangle = 0 \textand \ell(wr_i) < \ell(w) \right]
  \end{cases}
\end{align}
\end{Proposition}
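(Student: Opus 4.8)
The plan is to deduce \eqref{eq:iwahori-matsumoto-relations} from the Iwahori--Matsumoto relations already recorded in the body, namely \eqref{e_IM_relations_l} and \eqref{e_IM_relations_r} (both of which are \cite[Proposition 4.1]{bardy2016iwahori}), translating between the two torus-action conventions by means of the anti-isomorphism $\iota$. Throughout, observe that for a character $w(\alpha_i)$ and a cocharacter $\mu$ one has $\langle \mu, w(\alpha_i)\rangle = (w(\alpha_i))(\mu)$, so the pairing in \eqref{eq:iwahori-matsumoto-relations} is exactly the quantity ``$w(\alpha_i)(\lambda)$'' of \eqref{e_IM_relations_r}. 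First I would treat $\mu \in Y^-$: here $\pi^\mu w$ lies in $G^-$, and writing $\mu = -\lambda$ with $\lambda \in Y^+$, the identity $\TT_{\pi^{-\lambda}w} = \One_{I\pi^{-\lambda}wI}$ (which is how the basis element denoted $T_{\lambda\cdot w}$ by Bardy-Panse, Gaussent and Rousseau is defined \emph{as an element of the convolution algebra} $\cR_{\mathrm{fin}}[I\backslash G^-/I]$, so we need only this much of Proposition~\ref{claim:bgr-basis}, not the finer comparison it really asserts) turns \eqref{eq:iwahori-matsumoto-relations}, after substituting $\langle\mu, w(\alpha_i)\rangle = -\langle\lambda, w(\alpha_i)\rangle$, into exactly \eqref{e_IM_relations_r}. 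The ``pairing $=0$'' clauses of \eqref{eq:iwahori-matsumoto-relations} are the ones spelled out in \eqref{e_IM_relations_l}; they are the same in \eqref{e_IM_relations_r}, where they are merely written more tersely.

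For $\mu \in Y^+$ I would apply $\iota : \cH^+ \to \cH^-$, an anti-isomorphism with $\iota(\TT_x) = \TT_{x^{-1}}$ and $\iota(\TT_{r_i}^{\pm 1}) = \TT_{r_i}^{\pm 1}$ (using $n_i^{-1} \in I n_i I$). Applying $\iota$ to the claimed identity $\TT_{\pi^\mu w r_i} = \TT_{\pi^\mu w}\,\TT_{r_i}^{\pm 1}$ and pushing the torus part to the left via $(\pi^\mu w r_i)^{-1} = r_i w^{-1}\pi^{-\mu} = \pi^{-r_i w^{-1}\mu}\, r_i w^{-1}$ and $(\pi^\mu w)^{-1} = w^{-1}\pi^{-\mu} = \pi^{-w^{-1}\mu}\, w^{-1}$ yields, with $\lambda = w^{-1}\mu \in Y^+$ and $v = w^{-1}$, the identity $\TT_{\,r_i\,\pi^{-\lambda}v} = \TT_{r_i}^{\pm 1}\,\TT_{\pi^{-\lambda}v}$ in $\cH^-$ (noting $r_i\,\pi^{-\lambda}v = \pi^{-r_i\lambda}\, r_i v$). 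This is a left-multiplication relation, and it is precisely \eqref{e_IM_relations_l} for the element $\lambda\cdot v$. The side conditions match because the pairing is $W_0$-invariant, so $\langle\mu, w(\alpha_i)\rangle = \langle w^{-1}\mu, \alpha_i\rangle = \alpha_i(\lambda)$, while $\ell(wr_i) = \ell(r_i v)$ and $\ell(w) = \ell(v)$ since $\ell(g) = \ell(g^{-1})$ in a Coxeter group.

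The substantive input is thus entirely \cite[Proposition 4.1]{bardy2016iwahori}, which we take as given; the remainder is bookkeeping across three conflicting conventions -- the $+/-$ normalization of the torus action on the apartment, the left/right swap effected by $\iota$, and the BGR indexing of the $T$-basis -- and the one place to be slightly careful is the (readily avoided) risk of circularity with Proposition~\ref{claim:bgr-basis}, which I handle by using only that $\One_{I\pi^{-\lambda}wI}$ is what BGR call $T_{\lambda\cdot w}$ inside the convolution algebra.

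If a self-contained argument is preferred, the case $\mu \in Y^-$ can instead be proved from scratch: the product $\TT_{\pi^\mu w}\TT_{r_i}$ lies in the span of $\TT_{\pi^\mu w}$ and $\TT_{\pi^\mu w r_i}$ because $\langle I, n_i\rangle = I \sqcup I n_i I$; the rank-one Iwahori--Hecke relation in this minimal parahoric (which modulo its pro-unipotent radical is an $\mathrm{SL}_2$ or $\mathrm{PGL}_2$ over the residue field) shows the product equals $\TT_{\pi^\mu w r_i}$ or $(q-1)\TT_{\pi^\mu w} + q\,\TT_{\pi^\mu w r_i}$ according to whether the gallery distance in the masure from the fundamental alcove increases or decreases upon right-multiplying $\pi^\mu w$ by $r_i$ -- and in the latter case $\TT_{\pi^\mu w r_i} = \TT_{\pi^\mu w}\TT_{r_i}^{-1}$ by \eqref{e_inverse_T_i}. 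I expect the only genuinely nontrivial step to be matching this geometric dichotomy with the explicit arithmetic condition -- the sign of $\langle\mu, w(\alpha_i)\rangle$, and, when it vanishes, the sign of $\ell(wr_i) - \ell(w)$ -- via the length formula for elements of $W^a = W_0 \ltimes Y$; this is exactly the content that \cite[Proposition 4.1]{bardy2016iwahori} packages, which is why citing it is the efficient route.
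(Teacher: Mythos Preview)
Your approach is correct and essentially the mirror image of the paper's: both arguments cite a known Iwahori--Matsumoto relation for one sign of $\mu$ and transport it to the other via $\iota$, but the paper treats $\mu\in Y^+$ as the base case (citing \cite[Theorem~3.1]{muthiah2018iwahori} for $\cH^+=\Hbkp$) and obtains $\mu\in Y^-$ by applying $\iota$ to the left-handed version \cite[(3.17)]{muthiah2018iwahori}, whereas you take $\mu\in Y^-$ as the base case (using \eqref{e_IM_relations_r}, i.e.\ \cite[Proposition~4.1]{bardy2016iwahori}) and obtain $\mu\in Y^+$ via $\iota$ and \eqref{e_IM_relations_l}. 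The bookkeeping you carry out for the $Y^+$ case is exactly right, and your observation that the terser phrasing of \eqref{e_IM_relations_r} agrees with the fully spelled-out dichotomy in \eqref{eq:iwahori-matsumoto-relations} is also correct. Your circularity concern is reasonable in the context of the appendix, whose purpose is to \emph{verify} Proposition~\ref{claim:bgr-basis}; the paper sidesteps it by grounding everything in the BKP side (Proposition~\ref{claim:bkp-basis} holding by definition), while you sidestep it by noting that \eqref{e_IM_relations_l}--\eqref{e_IM_relations_r} are established in the body for the convolution algebra $\cH=\cH^-$ itself, so no appeal to Proposition~\ref{claim:bgr-basis} is needed. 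Either route works; the paper's choice of \cite{muthiah2018iwahori} is perhaps more natural given that the proposition is stated in BKP-style notation, but yours has the virtue of relying only on citations already used in the body.
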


We use the identification $\cH^+ = \Hbkp$. For $Y^+ \rtimes W_0$, this formula is exactly \cite[Theorem 3.1]{muthiah2018iwahori}. For $Y^- \rtimes W_0$, we apply the anti-isomorphism $\iota$ to \cite[3.17]{muthiah2018iwahori}, which is a left-handed version of \cite[Theorem 3.1]{muthiah2018iwahori}. 

Observe, that a version of the Iwahori-Matsumoto relations is also proved by Bardy-Panse, Gaussent, and Rousseau (\cite[Proposition 4.1]{bardy2016iwahori}) for $\Hbgr$. One can immediately verify that relations \eqref{eq:iwahori-matsumoto-relations} exactly match those relations under the isomorphism of Proposition \ref{claim:bgr-basis}. In particular, Proposition \ref{claim:bgr-basis} is compatible with Iwahori-Matsumoto relations.

\subsubsection{Bernstein-Lusztig generators}

Recall that we can consider the Garland-Grojnowski version of the Iwahori-Hecke algebra \cite{garland1995affineheckealgebrasassociated}. This is generated by a copy of the (vectorial) Hecke algebra, which is spanned by $\TT_{w}$ for $w \in W$. We also have a lattice $\left\{\Theta^\mu\right\}_{\mu \in Y}$ of elements isomorphic to the coweight lattice $Y$. The Hecke algebra interacts with the lattice via the Bernstein relation. For all $i\in {I_A}$ and  $\mu \in Y$, we have: 
\begin{align}
  \label{eq:2}
 \TT_i \Theta^\mu - \Theta^{r_i(\mu)} \TT_i = (q-1) \frac{ \Theta^\mu - \Theta^{r_i(\mu)}}{1 - \Theta^{-\alpha_i^\vee}} 
\end{align}
Let us write
\begin{align}
  \label{eq:3}
\cH_{GG,\Theta} = \langle \TT_w, \Theta^\mu \suchthat w \in W_0, \mu \in Y \rangle
\end{align}
for this algebra.

Let us write another copy of this algebra generated by $T_{w}$ for $w \in W$ and a lattice $\left\{ Z^{\mu} \right\}_{\mu \in Y}$. They interact via the same Bernstein relation:
\begin{align}
  \label{eq:4}
  T_i Z^\mu - Z^{r_i(\mu)} T_i = (q-1) \frac{Z^{\mu} - Z^{r_i(\mu)}}{1 - Z^{-\alpha_i^\vee}}
\end{align}
Let us write
\begin{align}
  \label{eq:3}
\cH_{GG,Z} = \langle T_w, Z^\mu \suchthat w \in W, \mu \in Y \rangle
\end{align}
for this algebra.

\subsubsection{Restriction to Tits cone}

Braverman, Kazhdan, and Patnaik proved that the map $\TT_i \mapsto \TT_i$ and $\TT_{\pi^\lambda} \mapsto q^{\htt(\lambda)} \Theta^\lambda$ 
for $\lambda$ dominant defines an injective homomorphism:
\begin{align}
  \label{eq:5}
  \cH^+ = \Hbkp \hookrightarrow \cH_{GG,\Theta}
\end{align}
Furthermore, the image is exactly the subalgebra $\langle \TT_w, \Theta^\mu \suchthat w \in W_0, \mu \in Y^+ \rangle$.

Bardy-Panse, Gaussent, Rousseau proved that the map $T_i \mapsto T_i$ and $T_{\lambda} \mapsto q^{\htt(\lambda)}Z^\lambda$
for $\lambda$ dominant defines an 
injective homomorphism:
\begin{align}
  \label{eq:7}
  \Hbgr \hookrightarrow \cH_{GG,Z}
\end{align}
Furthermore, the image is exactly the subalgebra $\langle \TT_w, Z^\mu \suchthat w \in W_0, \mu \in Y^+ \rangle$.

\subsubsection{Compatibility with the anti-isomorphism $\iota$}

Define $\iota(T_i) = T_i$, and $\iota(Z^\mu) = \Theta^{\mu}$. It is clear that this extends to an anti-isomorphism:
\begin{align}
  \label{eq:8}
  \iota : \cH_{GG,Z} \rightarrow \cH_{GG,\Theta}
\end{align}
Moreover, it is clear that there is a unique embedding $\cH^- \hookrightarrow \cH_{GG,Z}$ such that the following diagram commutes:
\begin{equation}
  \label{eq:6}
 \begin{tikzcd}
\cH^- \arrow[r, "\iota"] \arrow[d, hook] & \cH^+ \arrow[d, hook] \\
{\cH_{GG,Z}} \arrow[r, "\iota"]          & {\cH_{GG,\Theta}}    
\end{tikzcd} 
\end{equation}
The image of $\cH^- \hookrightarrow \cH_{GG,Z}$ exactly agrees with the image of $\Hbgr$, so we can identify $\cH^- = \Hbgr$. Moreover, this identification is compatible with the Bernstein-Lusztig generators, the Iwahori-Matsumoto relations, and the anti-dominant $T$-basis elements of $\cH^-$ under the indexing stated in Proposition \ref{claim:bgr-basis}. Therefore, it is compatible for all $T$-basis elements. We therefore verify Proposition \ref{claim:bgr-basis}.

\subsection{Matching BKP and BGR bases}
\label{sec:matching-BKP-and-BGR-bases}
Now that we have explained Propositions \ref{claim:bkp-basis} and \ref{claim:bgr-basis}, and can identify $\Hbkp = \cH^+$ and $\Hbgr = \cH^-$, we can identify these two algebras and their $T$-bases.  The anti-isomorphism $\iota: \cH^{-} \rightarrow \cH^{+}$ therefore gives rise to an anti-isomophism $\iota : \Hbgr \rightarrow \Hbkp$. Fix $\mu \in Y^+$ and $w \in W$. Applying Propositions \ref{claim:bkp-basis} and \ref{claim:bgr-basis} we have
\begin{align}
  \label{eq:1}
\iota(T_{\mu\cdot w}) = T_{w^{-1} \pi^{\mu}}
\end{align} 
where the left hand side is written in BGR notation and the right hand side is written in BKP notation. So given any formula in BGR involving the $T$-basis can be reinterpreted in the BKP conventions by applying $\iota$. Remember: $\iota$ is an anti-isomorphism, so one needs to reverse the order.

\begin{Warning}
  We have $\iota(T_w) = T_{w^{-1}}$, so we need to be careful when writing $T_w$ to emphasize which conventions we are using. 
\end{Warning}

\end{appendix}

\bibliographystyle{amsalpha}
\bibliography{bibliographie.bib}

\end{document}